\newtheorem {thm}   {Theorem}
\newtheorem* {thm*}   {Theorem}
\newtheorem* {prp*}   {Proposition}
\newtheorem {lem}      [thm]    {Lemma}
\newtheorem {cnj}      [thm]    {Conjecture}
\newtheorem {que}      [thm]    {Question}
\newtheorem {cor}  [thm] {Corollary}
\newtheorem* {cor*} {Corollary}
\newtheorem {prp}[thm]  {Proposition}
\newtheorem {dfn} [thm]    {Definition}
\newtheorem {rmk} [thm]    {Remark}
\newtheorem* {rmk*} {Remark}
\newcounter{AbcT}
\theoremstyle{definition}
\numberwithin{equation}{section}
\numberwithin{thm}{section}
\renewcommand{\a}{\alpha}
\renewcommand{\b}{\beta}
\renewcommand{\d}{\delta}
\newcommand{\e}{\varepsilon}
\newcommand{\f}{\varphi}
\newcommand{\g}{\gamma}
\renewcommand{\l}{\lambda}
\renewcommand{\o}{\omega}
\renewcommand{\O}{\Omega}
\newcommand{\s}{\sigma}
\newcommand{\R}{{\bf R}}
\newcommand{\Q}{{\bf Q}}
\newcommand{\Z}{{\bf Z}}
\newcommand{\C}{{\bf C}}
\newcommand{\F}{{\bf F}}
\renewcommand{\P}{{\bf P}}
\newcommand {\cF} {{\mathcal F}}
\newcommand {\cH} {{\mathcal H}}
\newcommand {\cA} {{\mathcal A}}
\newcommand {\cB} {{\mathcal B}}
\newcommand {\cP} {{\mathcal P}}
\newcommand {\cC} {{\mathcal C}}
\newcommand {\cD} {{\mathcal D}}
\newcommand {\cQ} {{\mathcal Q}}
\newcommand {\cE} {{\mathcal E}}
\newcommand {\cX} {{\mathcal X}}
\newcommand {\cR} {{\mathcal R}}
\newcommand {\cT} {{\mathcal T}}
\newcommand{\wt}{\widetilde}
\DeclareMathOperator{\Id}{Id}
\DeclareMathOperator{\dist}{dist}
\title[Self-similar measures]%
{Self-similar measures associated to a homogeneous system of three maps}
\author{Ariel Rapaport}
\address{Ariel Rapaport, Department of Mathematics, Technion, Haifa, Israel}
\email{arapaport@technion.ac.il}
\author{P\'eter P. Varj\'u}
\address{P\'eter P. Varj\'u, Centre for Mathematical Sciences, Wilberforce Road, Cambridge CB3 0WB, UK}
\email{pv270@dpmms.cam.ac.uk}
\thanks{
AR was supported by the Herchel Smith Fund at the university of Cambridge. PV has received funding from the European Research Council (ERC) under the European Union’s Horizon 2020 research and innovation programme (grant agreement No. 803711). PV was supported by the Royal Society.}
\keywords{self-similar measure, exact overlaps, dimension of measures, Mahler measure, entropy}
\subjclass[2010]{28A80, 42A85}
\begin{document}

\begin{abstract}
We study the dimension of self-similar measures associated to a homogeneous iterated function system of three contracting
similarities on $\R$ and other more general IFS's.
We extend some of the theory recently developed for Bernoulli convolutions to this setting.
In the setting of three maps a new phenomenon occurs, which has been highlighted by recent examples
of Baker, and B\'ar\'any, K\"aenm\"aki.
To overcome the difficulties stemming from this phenomenon, we develop novel techniques, including an extension of
Hochman's entropy increase method to a function field setup.
\end{abstract}

\maketitle

\section{Introduction}\label{sc:intro}

By an iterated function system (IFS), we mean a finite collection
\[
\Phi=\{f_j(x)=\l_j x+t_j:j=1,\ldots,m\}
\]
of contracting similarities on $\R$.
It is well known (see \cite{Hut}) that there
exists a unique nonempty compact $K_{\Phi}\subset\R$
so that
\[
K_{\Phi}=\cup_{j=1}^m f_{j}(K_{\Phi}).
\]
It is called
the attractor or self-similar set corresponding to $\Phi$.

Certain natural measures are associated to $\Phi$. Indeed, given a probability vector $p=(p_1,\ldots,p_m)$ there exists a unique Borel probability measure $\mu$ on $\R$ that satisfies
\[
\mu=\sum_{j=1}^m p_j \cdot f_j \mu
\]
(again, see \cite{Hut}), where $f_j \mu$ is the pushforward of $\mu$ under $f_j$.
The measure $\mu$ is called the self-similar measure associated to $\Phi$ and $p$. It is always supported on $K_\Phi$, and $\mathrm{supp}(\mu)=K_\Phi$ when $p$ is strictly positive.
We refer the reader to the surveys \cites{PSS-60y, Hoc-ICM, Var-ICM} for more background on self-similar
sets and measures.

In this paper, we are chiefly concerned with homogeneous IFS's, which means
$\l_1=\ldots=\l_m=\l$ for some $\l\in(0,1)$.
In this case, we have an alternative characterization of the measure $\mu$.
It is the law of the random variable
\[
\sum_{n=0}^{\infty} t_{\xi_n}\l^n,
\]
where $\xi_n$ is a sequence of independent random variables with law
\[
\P(\xi_n=j)=p_j \:\text{ for }\:j=1,\ldots,m.
\]

Determining the dimension of self-similar measures is a fundamental problem in fractal geometry.
There are several notions of dimension for measures, but many of them coincide for self-similar measures, since they
are exact dimensional, which was proved by Feng and Hu \cite{FH-dimension}.
We say that a measure $\mu$ is exact dimensional, if the limit
\[
\lim_{r\to 0}\frac{\log\mu([x-r,x+r])^{-1}}{\log r^{-1}}
\]
exists and is equal to a constant for $\mu$-almost all $x$.
In this case, we write $\dim \mu$ for this constant and call it the dimension of $\mu$.

If the IFS satisfies the open set condition, which roughly speaking says that the sets $\{f_j(K_\Phi)\}_{j=1}^m$ are nearly disjoint, then
the dimension of the associated self-similar measures can be computed as
\begin{equation}\label{eq:simdim}
\dim\mu = \frac{\sum p_j\log p_j^{-1}}{\sum p_j\log \l_j^{-1}}.
\end{equation}
A closely related analogous result for self-similar sets was first obtained by Moran \cite{Mor-self-similar}.
For a precise statement and proof in the setting of measures, we refer to \cite{Edg-integral}*{Theorem 5.2.5}.
It is also known that the right hand side of \eqref{eq:simdim} is always an upper bound for $\dim\mu$, see
\cite{Edg-integral}*{Corollary 5.2.3} for a proof.

It is natural to speculate if the formula \eqref{eq:simdim} may hold more generally without assuming the open
set condition.
However, there are two obvious obstructions.
First, $\dim\mu$ cannot exceed $1$, the dimension of the underlying space.
Second, the right hand side of \eqref{eq:simdim} depends on the IFS and it may yield different values if we realize $\mu$
as the self-similar measure associated to different IFS's.
For example, one may artificially increase the entropy of the probability vector appearing on the right hand side of \eqref{eq:simdim}
by repeating the same map more than once in the IFS.
A more subtle instance of the same phenomenon is formalized in the following definition.

\begin{dfn}\label{df:EO}
We say that an IFS $\Phi=\{f_1,\ldots,f_m\}$ contains exact overlaps if there are
$\o\neq\o'\in\{1,\ldots,m\}^n$ for some $n\in\Z_{>0}$ such that
\[
f_{\o_1}\circ\ldots\circ f_{\o_n}=f_{\o'_1}\circ\ldots\circ f_{\o'_n}.
\]
In other words, the IFS does not contain exact overlaps if and only if the maps generate a free semi-group.
\end{dfn}

It is a folklore conjecture, going back to a question of K\'aroly Simon in the setting of self-similar sets (see \cite{Simon_EO_conjecture}), that there are no
obstructions to \eqref{eq:simdim} other than the two we discussed above.
More formally, this can be stated as follows.

\begin{cnj}\label{cn:EO}
Let $\mu$ be the self-similar measure associated to the IFS $\Phi$ and the probability vector
$(p_1,\ldots,p_m)$.
If $\Phi$ does not contain exact overlaps then
\[
\dim\mu=\min\Big\{\frac{\sum p_j\log p_j^{-1}}{\sum p_j\log \l_j^{-1}},1\Big\}.
\]
\end{cnj}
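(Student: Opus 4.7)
This is a major open problem in fractal geometry; I describe the general strategy that has been successful for Bernoulli convolutions (the $m=2$ case) and that the abstract indicates will be extended here. The plan is a two-step scheme. First, apply Hochman's entropy increase method: if $\dim \mu < \min(s,1)$ where $s$ denotes the similarity dimension on the right hand side of \eqref{eq:simdim}, then there exists a sequence $n_k \to \infty$ and distinct words $\omega, \omega' \in \{1,\ldots,m\}^{n_k}$ with
\[
|f_{\omega_1}\circ\cdots\circ f_{\omega_{n_k}}(0) - f_{\omega'_1}\circ\cdots\circ f_{\omega'_{n_k}}(0)| \le e^{-\alpha_k n_k}, \qquad \alpha_k \to \infty.
\]
Second, exploit the algebraic structure of $\lambda$ and the $t_j$ to upgrade super-exponential closeness to exact coincidence.

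For the second step, I would express the difference above as $P_{\omega,\omega'}(\lambda)$, where $P_{\omega,\omega'}$ is a polynomial of degree $\le n_k - 1$ whose coefficients are $\Z$-linear combinations of the differences $t_i - t_j$. When $\lambda$ is algebraic, the natural route is to combine an effective lower bound for $|P(\lambda)|$ in terms of the Mahler measure of the minimal polynomial of $\lambda$ with a pigeonholing/entropy argument producing enough distinct near-coincidences to force a formal identity among the $P_{\omega,\omega'}$, and hence an exact overlap. For transcendental $\lambda$, the analogue of Varj\'u's parameter-space covering arguments would be the intended tool.

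The main obstacle, and where I expect the novelty of this paper to lie, is the three-map phenomenon exposed by the recent examples of Baker and B\'ar\'any--K\"aenm\"aki. With three translations the coefficients of $P_{\omega,\omega'}$ lie in a genuinely richer set than the $\{-1,0,1\}$ coefficients of Bernoulli convolutions, so super-exponential cancellation can arise through algebraic relations that do not correspond, even indirectly, to exact overlaps. To bypass this I would follow the abstract's suggestion and run the entropy increase step not over $\R$ but over the function field $\Q(t_1, t_2, t_3)$, viewing the $t_j$ as formal indeterminates and tracking the entropy of the law of $P_\omega$ as a random polynomial. The point is that super-exponential concentration in a function-field norm should be rigid enough to force a genuine formal identity, which is exactly an exact overlap in Definition \ref{df:EO}.

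I would expect the first, real-valued, entropy-increase step to be largely routine given Hochman's framework, and for the bulk of the work to lie in setting up the function-field entropy theory quantitatively and in combining it with the Diophantine analysis of $\lambda$. The full transcendental case and the general $m \ge 3$ setting seem to require further genuinely new ideas, so I would anticipate the actual theorem proved to be a conditional or restricted version of the conjecture, for instance under Diophantine hypotheses on $\lambda$ or on the $t_j$.
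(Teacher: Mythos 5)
You correctly identify that this statement is an open conjecture rather than a theorem of the paper: the paper proves it only conditionally (Theorem~\ref{th:EO}, assuming an affirmative answer to Question~\ref{q:BV1}) and unconditionally only for $\lambda\in(2^{-2/3},1)$ with equal weights on three maps (Theorem~\ref{th:large-lambda}). Your two-step scheme --- Hochman entropy increase producing super-exponential closeness, followed by a Diophantine upgrade to exact coincidence --- and your diagnosis of the Baker / B\'ar\'any--K\"aenm\"aki obstruction are the right skeleton.

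Two points of your sketch, however, diverge from what the paper actually does, and a third is a genuine omission. First, the function field is not $\Q(t_1,t_2,t_3)$ with the translations as indeterminates: the paper works in $\F[[X]]$ with $X$ playing the role of the contraction parameter $\lambda$, the translations normalized to $0,1,\tau$, and the curve $\tau=R(\lambda)$ for $R\in\cR_L$ serving as the algebraic object. The rigidity you want --- super-exponential concentration forcing a formal identity --- appears as lower semicontinuity of the function-field dimension $\dim\mu_R$ combined with compactness of $\cR_L\cap\Q[[X]]$ (Corollary~\ref{cr:h-semi-cont}, Proposition~\ref{pr:no-bad-curves}). Second, you call the entropy-increase step ``largely routine given Hochman's framework,'' but rebuilding Hochman's full machinery over $\F[[X]]$ (Sections~\ref{sc:entC}--\ref{sc:proof-dim=h}) to prove $\dim\mu_R=h(R)$ (Proposition~\ref{pr:dim=h}) is precisely where the paper's main novelty lies: it is what replaces the separation of algebraic approximants given by Mahler's root-separation bound in the Bernoulli case, and that separation fails outright for three maps because the approximating curves can intersect. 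Third, you omit the Diophantine ingredient that the paper isolates as its explicit conditional hypothesis --- the Mahler-measure bound of Question~\ref{q:BV1} --- whose Bernoulli-case proof rests on a transversality property that fails for polynomials of the form $P_1P_2-P_3P_4$ with $\pm1,0$-coefficient factors. The paper sidesteps this via the multiplicity bound of Lemma~\ref{lm:Dimitrov} and the self-affine measures built from higher derivatives of the random series $A_R$ (Sections~\ref{sc:lb on h(R,lam,M)} and~\ref{lb ent ord M}), which do not appear in your sketch but are essential to both Theorem~\ref{th:EO} and Theorem~\ref{th:large-lambda}.
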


This conjecture has been proved recently in several cases starting with the breakthrough of Hochman \cite{Hoc-self-similar}.
Hochman proved the conjecture if the IFS satisfies the exponential separation condition, in particular if
all the parameters $\l_j$ and $t_j$ are algebraic numbers, see \cite{Hoc-self-similar}*{Theorem 1.1}
for the precise statement.
This has been extended by the first author \cite{Rap-EO} to the case when only the contraction parameters $\l_j$ are assumed to be algebraic.
The second author \cite{Var-Bernoulli} proved the conjecture for Bernoulli convolutions with transcendental parameters, and together with
Hochman's results, this proves the conjecture when $m=2$ and $\l_1=\l_2$.

Our aim in this paper is to extend these results to the case $m=3$ and $\l_1=\l_2=\l_3$.
While we are unable to achieve this goal, we extend the results of Breuillard and the second author \cite{BV-transcendent},
which is one of the main ingredients in \cite{Var-Bernoulli}.
As an application, we improve Hochman's bound for the dimension of the set of exceptional parameters for which Conjecture
\ref{cn:EO} fails.
We also prove the conjecture conditionally on a suitable generalization of another result of Breuillard and the second author.
We prove the conjecture unconditionally if the common contraction factor of the maps is at least $2^{-2/3}$.

We move on to state the results of our paper in the case $m=3$, $\l_1=\l_2=\l_3$ and $p_1=p_2=p_3=1/3$.
In fact, we prove them in greater generality, but for the sake of simplicity, we postpone introducing
the general setting to Section \ref{sc:general-setting}.
By an affine change of coordinates, we can conjugate the IFS so that $t_1=0$, $t_2=1$ and $t_3=\tau$ for
some $\tau\in\R$.
Therefore, in this section and the next one, we work with the IFS
\begin{equation}\label{eq:3maps}
f_1(x)=\lambda x,\quad f_2(x)=\lambda x+1, \quad f_3(x)=\lambda x +\tau
\end{equation}
and with the probability vector $p_1=p_2=p_3=1/3$.

Our first result states that if Conjecture \ref{cn:EO} fails for the IFS \eqref{eq:3maps} for some
parameters $(\lambda,\tau)$, then they are approximated by algebraic parameters with very small error
such that the IFS contains a lot of exact overlaps for these approximating parameters.

Before we can state this, we need to introduce a quantity to measure the amount of exact overlaps.
For a set $U\subset(0,1)\times\R$ and $n\in\Z_{\ge 0}$, we consider the random function
$A_{U}^{(n)}:U\to\R$ defined by
\[
A_{U}^{(n)}(\l,\tau)=\sum_{k=0}^{n-1}T_{\xi_k}(1,\tau)\l^k,
\]
where $\xi_k$ are independent random variables taking the values $1$, $2$ and $3$ with equal probability, and
$T_1(Y_1,Y_2)$, $T_2(Y_1,Y_2)$ and $T_3(Y_1,Y_2)$ are the linear forms $0$, $Y_1$ and $Y_2$, respectively.
(The reason for this apparently strange notation will become clear later.)
The entropy rate (also known as Garsia entropy in the literature) is defined as
\[
h(U)=\lim_{n\to\infty}\frac{1}{n}H(A_{U}^{(n)})=\inf\frac{1}{n}H(A_{U}^{(n)}),
\]
where $H(\cdot)$ denotes Shannon entropy.
We note that the sequence $n\mapsto H(A_{U}^{(n)})$ is subadditive, hence the above limit exists and it equals the
infimum.

The case when $U=\{(\l,\tau)\}$ is a single point is of special interest, and we will write $A_{\l,\tau}^{(n)}$
and $h(\l,\tau)$ to simplify notation.
It is immediate from the definition that
\[
h(\l,\tau)\leq\log 3
\]
and equality holds if and only if the IFS \eqref{eq:3maps} does not contain exact overlaps for $\l$ and $\tau$.
The entropy drop $\log 3-h(\l,\tau)$ provides a way to quantify the amount of exact overlaps.

We will see that exact overlaps occur in the IFS \eqref{eq:3maps} if and only if $(\l,\tau)$ satisfy a certain kind
of polynomial equation.
These equations vanish along certain curves in the parameter space, which play an important role in the theory,
so we introduce notation for them.

We write $\cR$ for the set of meromorphic functions on the open unit disc that are ratios of two power series with coefficients
$-1$, $0$ and $1$.
We denote by $\Gamma$ the set of curves $\gamma\subset (0,1)\times\R$
that are of one of the following two forms
\begin{enumerate}
\item $\gamma=\{(\l,\tau)\in(0,1)\times\R:\tau=R(\l)\}$ for some $R\in\cR$,
\item $\gamma=\{(\l_0,\tau):\tau\in\R\}$ for some fixed $\l_{0}\in(0,1)$. 
\end{enumerate} 
If $\gamma$ is of the first form, we call it non-degenerate, otherwise it is degenerate.
We will see that the zero set of the polynomial equations mentioned above are always finite unions
of curves in $\Gamma$, but not all elements of $\Gamma$ occur in this way.

Given $n,l\ge1$ we denote by $\cP_l^{(n)}$ the set of polynomials in $\Z[X]$ with degree strictly less than $n$ and with coefficients bounded in absolute value by $l$. Now we are ready to state our first result.

\begin{thm} \label{th:BV2}
Let $(\lambda,\tau)\in(0,1)\times\R$ and let $\mu$ be the self-similar measure associated
to the IFS \eqref{eq:3maps} with equal probability weights.
Suppose the IFS does not contain exact overlaps and
\[
\dim\mu < \min\Big\{\frac{\log 3}{\log\l^{-1}},1\Big\}.
\]
Then for every $\e>0$ and $N\ge1$, there
exist $n\ge N$ and $(\eta,\sigma)\in (0,1)\times\R$ such
that,
\begin{enumerate}
\item $|\lambda-\eta|,|\tau-\sigma|\le\exp(-n^{1/\e})$;
\item $\frac{1}{n\log\eta^{-1}} H(A_{\eta,\s}^{(n)})\le \dim\mu+\e$;
\item $h(\gamma)\ge \min\{\log3,\log\l^{-1}\}-\e$ for all $\g\in\Gamma$ with $(\eta,\s)\in\gamma$.
\end{enumerate}
In particular, $\eta$ is a root of a nonzero polynomial in $\cP_{2n}^{(2n)}$ and $\s$ can be written in the form
$\sigma=P_{1}(\eta)/P_{2}(\eta)$ for some $P_{1},P_{2}\in\cP_{1}^{(n)}$ such that $P_{2}(\eta)\ne0$.
\end{thm}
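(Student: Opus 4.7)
The overall plan is to extend the Breuillard--Varj\'u approach~\cite{BV-transcendent} from Bernoulli convolutions (a one-parameter family) to the two-parameter family~\eqref{eq:3maps}. The hypothesis $\dim\mu<\min\{\log 3/\log\lambda^{-1},1\}$ combined with an appropriate version of Hochman's entropy-increase machinery (developed elsewhere in the paper) should yield a sequence of scales $n\to\infty$ along which
\[
\frac{1}{n\log\lambda^{-1}}H(A_{\lambda,\tau}^{(n)})\le\dim\mu+\e/3.
\]
The reason is that if this ratio stayed uniformly above $\dim\mu+c$ for all large $n$, the entropy-increase mechanism would upgrade the ``no exact overlaps'' hypothesis into a dimension lower bound $\dim\mu+c'$, a contradiction. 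Because $A_{\eta,\sigma}^{(n)}$ is a polynomial function of $(\eta,\sigma)$ of total degree bounded by a multiple of $n$ and the entropy $H(A^{(n)}_{\cdot,\cdot})$ is upper semicontinuous (entropy can only decrease when additional branches coincide), the bound above persists throughout a ball of radius $\exp(-n^{1/\e})$ around $(\lambda,\tau)$, yielding both~(1) and~(2).

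The algebraicity assertion at the end of the theorem is a direct unpacking of~(2): since $H(A_{\eta,\sigma}^{(n)})<n\log 3$, there exist distinct $\o\ne\o'\in\{1,2,3\}^n$ with $A_{\eta,\sigma}^{(n)}(\o)=A_{\eta,\sigma}^{(n)}(\o')$, and each such coincidence reads $P(\eta)+\sigma Q(\eta)=0$ for some $P,Q\in\cP_1^{(n)}$. Two essentially independent coincidences give $\sigma=P_1(\eta)/P_2(\eta)$ with $P_1,P_2\in\cP_1^{(n)}$ and, upon elimination, a nonzero polynomial in $\cP_{2n}^{(2n)}$ annihilating $\eta$. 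The nondegeneracy $P_2(\eta)\ne 0$ and the nonvanishing of the resultant will be guaranteed by property~(3) itself, which prevents $(\eta,\sigma)$ from lying on a degenerate element of $\Gamma$.

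The main obstacle, genuinely new to the three-map setting, is enforcing~(3). After the above construction $(\eta,\sigma)$ sits on some coincidence curve $\gamma_1\in\Gamma$, and one must arrange both (a) that $h(\gamma_1)$ is close to the maximum $\min\{\log 3,\log\lambda^{-1}\}$, and (b) that no other $\gamma\in\Gamma$ with small $h(\gamma)$ passes through $(\eta,\sigma)$. Part~(b) is comparatively soft: $\Gamma$ is countable (indexed by $\cR$ together with vertical lines), so generic points on $\gamma_1$ avoid all other members of $\Gamma$. Part~(a) is the heart of the matter: if $(\lambda,\tau)$ itself lay close to a curve $\gamma\in\Gamma$ with $h(\gamma)\le\min\{\log 3,\log\lambda^{-1}\}-\e$, then running the self-similar dimension theory along $\gamma$ would yield a bound of order $h(\gamma)/\log\lambda^{-1}$ for $\dim\mu$, incompatible with the entropy-increase output of the first step; this forces $(\lambda,\tau)$ away from such curves and lets one select $\gamma_1$ with large entropy rate. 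Making this repulsion simultaneous across all scales $n$, inside the tiny ball of radius $\exp(-n^{1/\e})$, is the step I expect to be hardest, and it will require quantitative Mahler-measure and height estimates in the spirit of~\cite{BV-transcendent} to control the complexity of the admissible curves $\gamma_1\in\Gamma$ one can reach at scale $n$.
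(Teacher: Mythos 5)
Your high-level reading is correct --- the theorem extends the Breuillard--Varj\'u scheme to two parameters, item (3) is the genuinely new difficulty, and the conclusion about algebraicity of $(\eta,\sigma)$ does follow from reading off two essentially independent coincidence equations --- but the mechanism you propose for enforcing (3) would not work, and there are technical errors in the first step. For (3), you argue that if $(\lambda,\tau)$ lay close to a low-entropy curve $\gamma\in\Gamma$, ``running the self-similar dimension theory along $\gamma$'' would bound $\dim\mu$ by $h(\gamma)/\log\lambda^{-1}$, contradicting your first step. This fails for two reasons. First, a point near but not on $\gamma$ inherits no dimension constraint from $\gamma$; the measure $\mu_{\lambda,\tau}$ is only controlled by curves through $(\lambda,\tau)$ itself. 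Second, even if the bound held, nothing would be contradicted: the hypothesis of the theorem is precisely that $\dim\mu$ is small, and your first step (by design) shows only that some scaled entropies are small along a subsequence. The paper's actual repulsion result, Corollary \ref{cr:no-bad-curves}, is obtained by a completely different route: from the ``no exact overlaps'' hypothesis one has $h(\lambda,\tau)=\log 3$, and the engine is the lower semicontinuity of the entropy rate $h$ of curves, proved via the function-field self-similar measure theory of Section \ref{sc:ssm-Cx} (an analogue of Hochman's entropy-increase theorem over $\F[[X]]$, Proposition \ref{pr:dim=h}); together with coefficient-wise compactness of $\cR_L\cap\Q[[X]]$ this shows a low-entropy limit curve would pass through $(\lambda,\tau)$, contradicting $h(\lambda,\tau)=\log 3$. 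The subtle residual case where the curves' singularities converge to $\lambda$ needs the separate argument of Lemma \ref{lm:singularities}. None of this appears in your outline, and it is the central new contribution of the proof.

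The first paragraph has a further problem: under the standing ``no exact overlaps'' hypothesis the Shannon entropy $H(A_{\lambda,\tau}^{(n)})$ equals $n\log 3$ exactly for every $n$, so it can never be forced below $n\log\lambda^{-1}(\dim\mu+\e/3)$ when $\dim\mu<\log 3/\log\lambda^{-1}$. What the paper actually controls is the entropy at a scale, $H(\mu_{\lambda,\tau}^{(n)};r)$ with $r\approx n^{-Cn\log n}$, and the scales $n$ with small scaled entropy are produced by contradiction within the double-induction of Section \ref{sc:BV2-general-proof}. Relatedly, the claim that a small entropy at $(\lambda,\tau)$ ``persists throughout a ball of radius $\exp(-n^{1/\e})$'' by upper semicontinuity is inverted: there is no nontrivial entropy bound at $(\lambda,\tau)$ to persist, and the smallness of $H(A_{\eta,\sigma}^{(n)})$ comes from locating an algebraic $(\eta,\sigma)\in\cX^{(n)}$ with genuine exact overlaps (Proposition \ref{prp:approx by alg ifs when ent is small}, using Lemma \ref{lem:close root single poly}), while the radius $\exp(-n^{1/\e})$ enters from the indirect hypothesis, not from any continuity argument. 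Finally, the Mahler-measure separation you defer to at the end (Lemma \ref{lem:dist between distinct roots}) is used to separate distinct \emph{points} of $\cX^{(n)}$ from one another so that the algebraic approximations stabilize; it cannot be used to separate curves, which can intersect --- that is precisely the obstruction that forces the function-field machinery.
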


We will see later, that items (2) and (3)
in the conclusion imply that $(\eta,\s)$ satisfy at least two independent polynomial equations
associated to exact overlaps provided $\e$ is small enough.
From this, we can deduce the algebraicity of $\eta$ and $\s$ and this is how the last claim of the theorem will follow.

Theorem \ref{th:BV2} is a direct analogue of the main result in \cite{BV-transcendent}, which is one of the key ingredients
of the proof of Conjecture \ref{cn:EO} for Bernoulli convolutions in \cite{Var-Bernoulli}.
To be precise, our approximation is slightly weaker than the analogous result in \cite{BV-transcendent} 
in that the exponent $1/\e$ is replaced by an explicit function of $n$ slowly tending to infinity in \cite{BV-transcendent}.
This change is not likely to be of any significance in applications, and it allows for a slightly simpler proof.

The following corollary is an easy consequence of Theorem \ref{th:BV2}, as we will see in Section \ref{sc:prf of apx thm}.

\begin{cor}\label{cr:Mike}
The set of parameters $(\l,\tau)\in(0,1)\times\R$ for which Conjecture \ref{cn:EO} fails for the IFS \eqref{eq:3maps} with equal probability weights is of Hausdorff dimension $0$.
\end{cor}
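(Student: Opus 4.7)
The plan is to apply Theorem \ref{th:BV2} at every point of the exceptional set and then use a counting argument to cover that set efficiently.

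First, I identify the exceptional set $E\subset(0,1)\times\R$ precisely. Since the right-hand side of \eqref{eq:simdim} is always an upper bound on $\dim\mu$, and since the conclusion of Conjecture \ref{cn:EO} is vacuous whenever the IFS contains exact overlaps, the set $E$ of $(\l,\tau)$ at which the conjecture fails coincides with the set of $(\l,\tau)\in(0,1)\times\R$ for which \eqref{eq:3maps} has no exact overlaps and
\[
\dim\mu<\min\Big\{\frac{\log 3}{\log\l^{-1}},1\Big\}.
\]
This is precisely the hypothesis of Theorem \ref{th:BV2}.

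Next, I fix $\e=1/2$ and apply the theorem. For every $N\ge1$ and every $(\l,\tau)\in E$, there exist $n\ge N$ and an approximant $(\eta,\s)$ at sup-norm distance at most $\exp(-n^{2})$ from $(\l,\tau)$, where $\eta$ is a root of a nonzero polynomial in $\cP_{2n}^{(2n)}$ and $\s=P_{1}(\eta)/P_{2}(\eta)$ for some $P_{1},P_{2}\in\cP_{1}^{(n)}$. I bound the number $M_n$ of such pairs $(\eta,\s)$ for a given $n$: since $|\cP_{2n}^{(2n)}|\le(4n+1)^{2n}$ and every polynomial of degree below $2n$ has at most $2n$ roots, the number of possible $\eta$ is at most $2n(4n+1)^{2n}$, and for each $\eta$ the number of possible $\s$ is at most $|\cP_{1}^{(n)}|^{2}=3^{2n}$. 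Hence $M_n\le\exp(Cn\log n)$ for some absolute constant $C>0$.

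The union over $n\ge N$ of these approximants covers $E$ by boxes of side $2\exp(-n^{2})$. For any $s>0$, the $s$-dimensional Hausdorff measure therefore satisfies
\[
\cH^{s}(E)\le(2\sqrt{2})^{s}\sum_{n\ge N}M_n\exp(-sn^{2})\le(2\sqrt{2})^{s}\sum_{n\ge N}\exp\bigl(Cn\log n-sn^{2}\bigr),
\]
and the right-hand side tends to $0$ as $N\to\infty$ because $n^{2}$ dominates $n\log n$. Consequently $\cH^{s}(E)=0$ for every $s>0$, giving $\dim_{H}E=0$.

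I do not foresee any real obstacle once Theorem \ref{th:BV2} is granted: the argument is a routine counting-and-covering computation whose only input from the theorem is the explicit algebraic description of $\eta$ and $\s$ in its last sentence. The only choice to be made is to pick $\e<1$ so that the super-exponential decay $\exp(-n^{1/\e})$ beats the merely quasi-polynomial growth $M_n\le\exp(Cn\log n)$; the value $\e=1/2$ is ample.
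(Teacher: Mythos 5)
Your proposal is correct and is essentially the same argument the paper gives: cover the exceptional set by balls of radius $\exp(-n^2)$ centred at the algebraic approximants $(\eta,\sigma)$ supplied by Theorem \ref{th:BV2}, count those approximants (the paper's count for the general IFS \eqref{eq:IFS} is $|F^{(n)}|\le(5L^2n)^{5n}$, matching your $\exp(Cn\log n)$ in the three-map case), and conclude that $\sum_n |F^{(n)}|\exp(-\d n^2)\to 0$ as the starting index $N\to\infty$, hence the Hausdorff dimension is $0$. The only cosmetic difference is that the paper bounds the $\d$-dimensional Hausdorff content $\cH^\d_\infty$ rather than the Hausdorff measure $\cH^s$, but since the covering radii $\exp(-n^2)$ shrink to zero as $N\to\infty$, the two formulations are equivalent here.
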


This improves the result of Hochman \cite{Hoc-self-similar}, where the bound on the dimension of the exceptional parameters is $1$.
However, in \cite{Hoc-self-similar} the result is proved for packing dimension instead of Hausdorff dimension, and so our result is
weaker in this respect.

In addition, Theorem \ref{th:BV2} provides an abundance of explicit examples of transcendental parameters $(\l,\tau)$
for which Conjecture \ref{cn:EO} holds.
Indeed there are many results in the literature about transcendence measures of classical constants that excludes an
approximation as in the conclusion of Theorem \ref{th:BV2}.
In particular, Conjecture \ref{cn:EO} holds if $\l\in\{\ln 2, e^{-1/4},\pi/4\}$ and $\tau$ is arbitrary.
For more examples and discussion, we refer to the comments after Corollary 5 in \cite{BV-transcendent} and the references therein.

Another key ingredient in \cite{Var-Bernoulli} is the main result of \cite{BV-entropy}, which provides an estimate for the Mahler measure
of a parameter $\l$ if the corresponding Bernoulli convolution has lots of exact overlaps.
Mahler measure is a widely used quantity in number theory to measure the complexity of an algebraic number.
We recall the definition.
Let $\l$ be an algebraic number with minimal polynomial
\[
P(x)=a_n(x-\l_1)\ldots(x-\l_n)\in\Z[x]
\]
so that $a_n$ is the leading coefficient, and $\l_1,\ldots,\l_n$ are the roots (including $\l$).
Then we define the Mahler measure of $\l$ as
\[
M(\l)=|a_n|\prod_{j:|\l_j|>1}|\l_j|.
\]
For convenience, we define $M(\lambda)=\infty$ if $\lambda$ is transcendental.

An analogue of the main result of \cite{BV-entropy} would give an affirmative answer to the following question.

\begin{que}\label{q:BV1}
Is it true that for all $\e>0$, there is $M$ such that the following holds?
Let $(\l,\tau)\in(\e,1-\e)\times\R$ be such that $h(\l,\tau)\le \min\{\log 3,\log\lambda^{-1}\} -\e$ and $h(\g)\ge\min\{\log 3,\log\lambda^{-1}\}-M^{-1}$
for all $\g\in\Gamma$ with $(\l,\tau)\in\gamma$.
Then $M(\l)\le M$.
\end{que}

We note that a condition about the entropy rate of curves passing through $(\l,\tau)$ is necessary.
Indeed, we have, for example, $h(\gamma)=\log3-(2/3)\log 2$ for the non-degenerate curve associated to the function
$R(X)=1$, and hence $h(\lambda,1)\le \log 3-(2/3)\log 2$ for any $\l\in(0,1)$ even if $\l$
is transcendental.
We need to rule out examples of this type.

The second main result of the paper is the following.

\begin{thm}\label{th:EO}
Suppose that the answer to Question \ref{q:BV1} is affirmative.
Then Conjecture \ref{cn:EO} holds for the IFS \eqref{eq:3maps} for any
$(\l,\tau)\in(0,1)\times\R$ with equal probability weights.
\end{thm}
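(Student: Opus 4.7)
The plan is to argue by contradiction, adapting the three-ingredient strategy used in \cite{Var-Bernoulli} for the $m=2$ case of Conjecture \ref{cn:EO}: the approximation theorem (Theorem \ref{th:BV2}), the Mahler measure bound (Question \ref{q:BV1}), and a Diophantine elimination step. Suppose $(\lambda,\tau)\in(0,1)\times\R$ provides a counterexample, so the IFS \eqref{eq:3maps} has no exact overlaps and yet $\dim\mu<s:=\min\{\log 3/\log\lambda^{-1},1\}$. The strict inequality $\log\lambda^{-1}\cdot\dim\mu<\min\{\log 3,\log\lambda^{-1}\}$ leaves a positive entropy gap to play with.

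After choosing parameters carefully (first apply Question \ref{q:BV1} with a parameter matched to this gap to fix a constant $M$, then apply Theorem \ref{th:BV2} with an $\e$ small enough that the hypotheses of Question \ref{q:BV1} become satisfied at the approximating points), one obtains scales $n_k\to\infty$ and algebraic approximations $(\eta_k,\sigma_k)\in(0,1)\times\R$ such that $|\lambda-\eta_k|+|\tau-\sigma_k|\le 2\exp(-n_k^{1/\e})$, the entropy $h(\eta_k,\sigma_k)$ falls below $\min\{\log 3,\log\eta_k^{-1}\}$ by a definite amount (coming from item (2) of Theorem \ref{th:BV2}), and $h(\gamma)\ge\min\{\log 3,\log\eta_k^{-1}\}-\e$ for every $\gamma\in\Gamma$ passing through $(\eta_k,\sigma_k)$ (item (3)). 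The assumed affirmative answer to Question \ref{q:BV1} then yields the uniform bound $M(\eta_k)\le M$. Theorem \ref{th:BV2} additionally supplies polynomial witnesses $Q_k\in\cP_{2n_k}^{(2n_k)}$ with $Q_k(\eta_k)=0$, together with a rational parametrisation $\sigma_k=P_{1,k}(\eta_k)/P_{2,k}(\eta_k)$ where $P_{1,k},P_{2,k}\in\cP_1^{(n_k)}$.

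The concluding step, which is the main obstacle, is to derive a contradiction from these three inputs: super-exponential approximation, uniform Mahler measure bound, and polynomial witnesses of polynomially-controlled bit-complexity. Following the Diophantine scheme of \cite{Var-Bernoulli}, a Taylor expansion of $Q_k$ at $\eta_k$ together with the size bounds $\|Q_k\|_\infty\le 2n_k$ and $\deg Q_k<2n_k$ gives $|Q_k(\lambda)|\le n_k^{O(1)}\exp(-n_k^{1/\e})$, a value smaller than any lower bound on a nonzero $|Q_k(\lambda)|$ that is accessible from the bounded Mahler measure of $\eta_k$ (which restricts how $Q_k$ factors in $\Z[X]$). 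Running the analogous estimates through the rational parametrisation $\sigma_k=P_{1,k}(\eta_k)/P_{2,k}(\eta_k)$ pins down $\tau$ as well, and ultimately forces $Q_k(\lambda)=0$ for some $k$. Thus $\lambda$ is algebraic. The IFS \eqref{eq:3maps} now has algebraic contraction ratio and, by hypothesis, no exact overlaps, so the theorem of the first author \cite{Rap-EO} applies and yields $\dim\mu=s$, contradicting $\dim\mu<s$ and completing the proof.
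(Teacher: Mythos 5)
Your high-level plan (contradiction, Theorem \ref{th:BV2} for approximation, Question \ref{q:BV1} for the uniform Mahler bound $M(\eta_k)\le M$, a Diophantine step, and finally the algebraic case) matches the skeleton of the paper's argument, but the concluding step contains the essential gap. You claim that the Diophantine scheme of \cite{Var-Bernoulli} ``ultimately forces $Q_k(\lambda)=0$, thus $\lambda$ is algebraic.'' This is precisely what the paper argues \emph{cannot} be done in the three-map setting. The Bernoulli-convolution elimination relies on a multiplicity/transversality bound for polynomials with coefficients in $\{-1,0,1\}$; the analogous polynomials here have the form $P_1\wt P_2 - P_2\wt P_1$ with $P_i,\wt P_i\in\cP_L^{(n')}$, and their leading nonzero coefficients are \emph{not} bounded independently of the degree (the first $N$ coefficients can vanish). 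As the authors say in Section \ref{sc:Ideas2}, they cannot rule out arbitrarily high-order roots of such products, so there is no Mahler-measure-based lower bound on a nonzero value of $Q_k$ at a transcendental $\lambda$ --- your inequality chain has no contradiction at the end, and $\lambda$ algebraic does not follow.

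What the paper actually does at this juncture is genuinely new and is what Sections \ref{sc:lb on h(R,lam,M)}--\ref{lb ent ord M} are for. It picks a scale $n'$ with $M^{-4n'}<|\lambda_0-\eta|<M^{-3n'}$, applies Hochman's entropy theorem (Theorem \ref{th:Hoc}) to get $H(\mu_{\l_0,\tau_0}^{(n')};\cD_{c^{n'}})\le h_0 n'$ with $h_0<\min\{\log\l_0^{-1},H(p)\}$, and from the resulting nearly-colliding pairs extracts polynomials $Q=P_1Y_1+P_2Y_2$. Via Lemma \ref{lm:Dimitrov-precise} (exactly because $M(\eta)\le M$) it shows $\eta$ is a root of order at least $K$ of all the differences $P_1\wt P_2-P_2\wt P_1$, which is an order-$K$ tangency statement, not a vanishing statement. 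This is encoded as a bound $h(R,\eta,K)\le h_0$ on the entropy rate of the random vector of first $K$ derivatives $B_{R,\eta,K}^{(n')}$ (a self-affine measure), and the contradiction then comes from Proposition \ref{pr:no-bad-bouquet}, which says such entropy rates cannot be small near a point without exact overlaps. That proposition is the analogue of Corollary \ref{cr:no-bad-curves} for the ``bouquet'' of derivative data, and its proof (via Propositions \ref{pr:ent-bouquet}, \ref{pr:complex}, \ref{pr:singular-bouquet}, the Tur\'an power-sum estimate, and the function-field machinery of Section \ref{sc:ssm-Cx}) is the bulk of the new work. Your proposal omits this entire mechanism, so the proof does not close.
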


We do not know the answer to Question \ref{q:BV1}, but we can easily deduce the following weaker statement from the results of
\cite{BV-entropy}.

\begin{thm}\label{th:BV1}
For all $\e>0$, there is $M$ such that the following holds.
Let $(\l,\tau)\in(0,1)\times\R$ be such that $h(\l,\tau)< (2/3)\log 2-\e$.
Then $M(\l)\le M$.
\end{thm}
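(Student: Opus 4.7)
The plan is to deduce Theorem \ref{th:BV1} from the main result of \cite{BV-entropy} by a direct conditioning argument that trades three branches for two. Let $B = \{0 \le k < n : \xi_k = 3\}$ be the (random) set of indices where the third branch is used, and set $W = \sum_{k \notin B} \one_{\xi_k=2}\,\l^k$. Then
$$
A^{(n)}_{\l,\tau} \;=\; \tau\sum_{k\in B}\l^k \;+\; W,
$$
and given $B = S$ the first summand is a deterministic function of $S$, while the indicators $\one_{\xi_k=2}$ for $k \notin S$ are conditionally i.i.d. Bernoulli$(1/2)$. Since conditioning does not increase Shannon entropy, and the constant term may be subtracted,
$$
H(A^{(n)}_{\l,\tau}) \;\ge\; H(A^{(n)}_{\l,\tau} \mid B) \;=\; \E\,H(W \mid B).
$$

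Let $X_n = \sum_{k=0}^{n-1}\eta'_k\l^k$ denote the $n$-th truncation of the Bernoulli convolution with $\eta'_k$ i.i.d. Bernoulli$(1/2)$, and write $h_{\mathrm{BC}}(\l) = \lim_n H(X_n)/n \le \log 2$ for its Garsia entropy. For a fixed $S \subseteq \{0,\dots,n-1\}$, decompose $X_n = U_S + V_S$ into the independent pieces indexed by $S$ and $S^c$; subadditivity of Shannon entropy for independent summands yields
$$
H(X_n) \;\le\; H(U_S) + H(V_S) \;\le\; |S|\log 2 + H(V_S),
$$
using the crude bound $H(U_S)\le |S|\log 2$. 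Since $V_S$ has the same distribution as $W$ conditionally on $B = S$, averaging against the law of $B$ and using $\E|B| = n/3$ gives
$$
H(A^{(n)}_{\l,\tau}) \;\ge\; \E\,H(W \mid B) \;\ge\; H(X_n) - \tfrac{n}{3}\log 2.
$$

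Dividing by $n$ and passing to the limit produces the clean comparison $h_{\mathrm{BC}}(\l) \le h(\l,\tau) + \tfrac{1}{3}\log 2$. Under the hypothesis $h(\l,\tau) < \tfrac{2}{3}\log 2 - \e$ this forces $h_{\mathrm{BC}}(\l) < \log 2 - \e$, and the main theorem of \cite{BV-entropy} then furnishes a constant $M = M(\e)$ with $M(\l) \le M$. The whole argument is a bookkeeping reduction: its only tools are the three standard entropy inequalities deployed above, while all of the deep Diophantine and Fourier-analytic content is absorbed into the cited Bernoulli convolution estimate. No serious obstacle is expected; the numerical coefficient $2/3$ in the hypothesis matches precisely the expected proportion $2/3$ of indices $k$ with $\xi_k \ne 3$ preserved by the conditioning, so the reduction is sharp within this framework and no finer information about the laws of $U_S$ or $B$ is required.
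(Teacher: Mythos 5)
Your proof is correct and essentially identical to the paper's: both condition on which indices use the third branch, compare the resulting restricted Bernoulli-convolution sum to the full one via the entropy bound $H(V_S) \ge H(X_n) - |S|\log 2$, average to lose exactly $\tfrac13\log 2$, and close with the Mahler-measure estimate of \cite{BV-entropy} (the paper's Theorem \ref{th:entropy-Bernoulli}). The paper packages your subadditivity step as Lemma \ref{lm:entropy-Bernoulli}, stated via conditional entropy rather than subadditivity for independent summands, and labels the complementary set $A=B^{c}$, but these are purely notational differences.
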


The reader may be alarmed by the absence of any condition on the entropy rates of curves passing through
$(\l,\tau)$.
The explanation for this is the fact, which is a consequence of the above theorem, that there is no non-degenerate curve $\gamma\in\Gamma$
with $h(\gamma)<(2\log 2)/3$.
Therefore, the type of counterexamples that we discussed above will not arise.

As a consequence of this result and the techniques developed for the proof of Theorem \ref{th:EO},
we will prove the following theorem.

\begin{thm}\label{th:large-lambda}
Conjecture \ref{cn:EO} holds for the IFS \eqref{eq:3maps} for all $(\l,\tau)\in(2^{-2/3},1)\times \R$
with equal probability weights.
\end{thm}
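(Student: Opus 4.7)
The plan is to mirror the proof of Theorem \ref{th:EO}, substituting Theorem \ref{th:BV1} for a hypothetical positive answer to Question \ref{q:BV1}. The restriction $\l > 2^{-2/3}$ is precisely what allows this substitution, since it forces $\log \l^{-1} < (2/3)\log 2$, which makes the hypothesis of Theorem \ref{th:BV1} verifiable at the approximating parameters produced by Theorem \ref{th:BV2}.

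I would first dispose of algebraic $\l$ via Rapaport \cite{Rap-EO} and reduce to $\l$ transcendental. For $\l \in (2^{-2/3}, 1)$, $\log \l^{-1} < (2/3)\log 2 < \log 3$, so $\min\{\log 3/\log \l^{-1},\, 1\} = 1$, and the goal is $\dim \mu = 1$. Suppose for contradiction that the IFS \eqref{eq:3maps} has no exact overlaps but $\dim \mu < 1$, and set $\d := 1 - \dim \mu > 0$ and $c := (2/3)\log 2 - \log \l^{-1} > 0$.

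I would then choose $\e > 0$ small enough that $(\dim \mu + \e)(\log \l^{-1} + c/4) < (2/3)\log 2 - \e'$ for some fixed $\e' > 0$ depending only on $\d$ and $c$, and apply Theorem \ref{th:BV2} to obtain, for arbitrarily large $n$, a pair $(\eta, \s) \in (0,1) \times \R$ satisfying conclusions (1)--(3). From conclusion (2) and the subadditivity identity $h = \inf \tfrac{1}{n} H$,
\[
h(\eta, \s) \le \frac{1}{n} H(A^{(n)}_{\eta, \s}) \le (\dim \mu + \e) \log \eta^{-1}.
\]
Since $|\l - \eta| \le \exp(-n^{1/\e})$, for $n$ large we have $\log \eta^{-1} \le \log \l^{-1} + c/4$, and hence $h(\eta, \s) < (2/3)\log 2 - \e'$. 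Theorem \ref{th:BV1} then yields $M(\eta) \le M$ for a constant $M$ independent of $n$.

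To conclude, I would run the endgame of the proof of Theorem \ref{th:EO} unchanged: given conclusions (1)--(3) of Theorem \ref{th:BV2} together with the a priori bound $M(\eta) \le M$, items (2)--(3) force $(\eta, \s)$ to satisfy two independent polynomial relations associated to exact overlaps, pinning it (for each $n$) to a finite list of algebraic pairs whose first coordinate has bounded Mahler measure and degree at most $2n$; the super-exponential approximation in (1) combined with standard Diophantine estimates for algebraic numbers of bounded Mahler measure then contradicts the transcendence of $\l$. The principal obstacle, and the only place the hypothesis $\l > 2^{-2/3}$ enters, is verifying the strict inequality $h(\eta, \s) < (2/3)\log 2 - \e'$ needed to apply Theorem \ref{th:BV1}; for $\l \le 2^{-2/3}$ this slack vanishes and one would need the stronger Question \ref{q:BV1} instead.
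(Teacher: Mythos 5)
Your proposal correctly identifies the crucial step and matches the paper's approach: for $\l \in (2^{-2/3}, 1)$, the bound $\log\l^{-1} < (2/3)\log 2$ means the hypothesis $h(\eta,\s) < (2/3)\log 2 - \e'$ of Theorem~\ref{th:BV1} can be verified directly from item (2) of Theorem~\ref{th:BV2}, so that Theorem~\ref{th:BV1} substitutes for the hypothetical affirmative answer to Question~\ref{q:BV1}. Your inequality manipulations yielding $h(\eta,\s) < (2/3)\log 2 - \e'$ are correct, and this is exactly what the paper does.

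One caveat: your parenthetical gloss on what ``the endgame of Theorem~\ref{th:EO}'' consists of misrepresents the argument. You write that the super-exponential approximation combined with ``standard Diophantine estimates for algebraic numbers of bounded Mahler measure then contradicts the transcendence of $\l$.'' That is the naive chain-of-approximations argument from the Bernoulli convolution case, and the paper explains in Section~\ref{sc:Ideas2} (citing the examples of Baker and B\'ar\'any--K\"aenm\"aki) precisely why it \emph{cannot} work here: the relevant polynomials $P_1\wt P_2 - P_2\wt P_1$ may have roots of high multiplicity, so one cannot force the approximating parameters to stabilize by separation estimates alone. The actual endgame in Section~\ref{sc:EO-proof} instead invokes Hochman's Theorem~\ref{th:Hoc}, uses Lemma~\ref{lm:Dimitrov-precise} only to establish that $\eta$ is a root of bounded order $K$, and then derives the contradiction from Proposition~\ref{pr:no-bad-bouquet} (the ``bouquet'' entropy lower bound) rather than from the transcendence of $\l$. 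Since you explicitly defer to that endgame unchanged, your proposal is formally complete, but it is worth emphasizing that Proposition~\ref{pr:no-bad-bouquet}, not transcendence of $\l$, is the object being contradicted, and the apparatus of Sections~\ref{sc:lb on h(R,lam,M)}--\ref{lb ent ord M} is indispensable precisely because the argument you sketch fails.
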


We have finished stating the main results of this paper in the setting of IFS's consisting of three maps.
With the exceptions of Theorems \ref{th:BV1} and \ref{th:large-lambda}, we prove our results in greater
generality.
We introduce this more general framework in Section \ref{sc:general-setting}.

Theorems \ref{th:BV2}, \ref{th:EO} and \ref{th:large-lambda} are not routine generalizations of the
corresponding results in \cite{BV-transcendent} and \cite{Var-Bernoulli}.
In the setting of three maps, new phenomena arise leading to very significant new difficulties in the arguments.
Some of these have been highlighted in recent examples of Baker \cite{Bak-super-exponential} (see also \cite{Baker_sup_exp_II}) and
B\'ar\'any, K\"aenm\"aki \cite{BK-super-exponential} of
IFS's without exact overlap that have very bad separation properties.
Other similar examples have later been obtained by Chen \cite{Chen_sup_exp}.
We believe that the main contributions of this paper are not the above results themselves but the new concepts and
techniques we introduce to address the new difficulties.
We discuss the outline of our arguments and highlight the new techniques in the next section.

\subsection*{Acknowledgment}

We are indebted to Vesselin Dimitrov for suggesting the use of Lemmata \ref{lm:Dimitrov} and \ref{lm:Dimitrov-precise}
to us and for his permission to include them with their proofs in our paper.
We thank the anonymous referees for very helpful comments and suggestions.

\section{An overview of the paper}\label{sc:outline}

We outline the proofs of the results stated in the previous section and highlight the main new difficulties
that arise when we extend the theory from the case of Bernoulli convolutions.
We end the section by setting out the organization of the paper.

In this section, we do not give full details, and some of our discussions are imprecise.
Our aim is to guide the reader in understanding the roles played by the various parts of the proofs.
Everything will be repeated in a rigorous manner in later parts of the paper.

\subsection{Ideas from the proof of Theorem \ref{th:BV2}}\label{sc:ideas-th3}

We begin by discussing the proof of Theorem \ref{th:BV2} comparing it with the case of Bernoulli convolutions.

Bernoulli convolutions are parametrized by a single parameter $\l\in(0,1)$.
They are defined as the laws of the random variables
\[
\sum_{n=0}^{\infty}\pm\l^n,
\]
where the $\pm$ are independent unbiased random variables.

For Bernoulli convolutions, exact overlaps occur if and only if $\l$ is a root of a polynomial with coefficients $\pm1,0$.
Mahler \cite{Mah-discriminant} proved that two such numbers of degree at most $n$  are separated by at least
$n^{-Cn}$ for an absolute constant $C$.
Provided no exact overlaps occur for $\l$,
this fact can be used to find values of $n$ such that any two numbers of the form $\sum_{j=0}^{n-1}\pm\l^j$
are separated by at least $n^{-Cn}$, where $C$ is a (possibly different) constant depending on the distances of $\l$ to $0$ and $1$.

In brief, this argument goes as follows.
We pick some value of $n$.
If there are two  numbers of the form $\sum_{j=0}^{n-1}\pm\l^j$ of distance smaller than $n^{-Cn}$, then
a simple argument gives us a number $\eta_n$ of distance at most $n^{-Cn}$ to $\l$ (for some other constant $C$) that
is a root of a polynomial with coefficients $\pm1$.
Now we can use the above quoted result of Mahler to conclude that $\eta_n=\eta_{n+1}=\ldots$, provided
there are two numbers of the form  $\sum_{j=0}^{k-1}\pm\l^j$ at distance less than $k^{-Ck}$ for each $k=n,n+1,\ldots$.
This chain must break at some point, as otherwise we would have $\l=\eta_n$, which means exact overlaps would occur.
This argument, or more precisely a variant of it, is a critically important ingredient in the paper \cite{BV-transcendent}, where
the analogue of Theorem \ref{th:BV2} is proved for Bernoulli convolutions.

The above considerations cannot be adapted to the setting of the IFS \eqref{eq:3maps}.
To explain this, we first take a closer look at the polynomial equations describing exact overlaps.
In the setting of the IFS \eqref{eq:3maps}, the analogue of the numbers of the form $\sum_{j=0}^{n-1}\pm\l^n$
are the numbers of the form
\begin{equation}\label{eq:finite-sum-3maps}
\sum_{j=0}^{n-1}T_{\o_j}(1,\tau)\l^j,
\end{equation}
where $T_1,T_2,T_3$ are the same linear forms as in the previous section and $\o_j\in\{1,2,3\}$ for each $j$.
It can be seen that exact overlaps occur in the IFS \eqref{eq:3maps} if and only if $(\l,\tau)$ satisfies a non-trivial
polynomial equation of the form $Q(\l,1,\tau)=0$ for some $Q\in\cQ^{(n)}\subset\Z[X,Y_1,Y_2]$ and $n\ge1$, where
\[
\cQ^{(n)}=\Big\{\sum_{j=0}^{n-1}(T_{\o_j}(Y_1,Y_2)-T_{\o'_j}(Y_1,Y_2))X^j:\o,\o'\in\{1,2,3\}^n\Big\}.
\]
The set of solutions of an equation of the form $Q(\l,1,\tau)=0$ for $0\neq Q\in\cQ^{(n)}$ is a finite union of curves
in $\Gamma$ (defined in the previous section), at most one of which is non-degenerate.
(However, not all curves in $\Gamma$ occur in this way.)

It can also be seen that the existence of two numbers of small distance of the form \eqref{eq:finite-sum-3maps}
is equivalent to the existence of a polynomial $0\neq Q\in\cQ^{(n)}$ such that $Q(\l,1,\tau)$ is small.
Furthermore, this is equivalent to the existence of a curve $\gamma_n\in\Gamma$ passing near $(\l,\tau)$ on which $Q$ vanishes.
Unfortunately, some of the curves that arise in this way intersect each other, and for this reason there is no
lower bound on their distances.
This means that we are not able to conclude $\gamma_n=\gamma_{n+1}$ analogously to the argument for Bernoulli convolutions that we sketched above no matter how fast distances between points of the
form \eqref{eq:finite-sum-3maps} go to $0$.
In fact, examples given by Baker \cite{Bak-super-exponential} and B\'ar\'any, K\"aenm\"aki \cite{BK-super-exponential}
show that these distances may go to $0$ arbitrarily fast.

Luckily, the proof in \cite{BV-transcendent} does not require that none of the distances
between points of the form $\sum_{j=0}^{n-1}\pm\l^n$
is small.
It is enough to establish that only few of these distances are small.

To quantify this properly, we introduce the notion of entropy of a random variable $A$ at a scale $r\in\R_{>0}$ defined as
\begin{equation}\label{avg ent scl r}
H(A;r)=\int_{0}^1H(\lfloor r^{-1}A+t\rfloor)dt,
\end{equation}
where $H(\cdot)$ on the right stands for Shannon entropy. If $\mu$ is the distribution of $A$ we write $H(\mu;r)$ in place of $H(A;r)$.
This quantity expresses the entropy of $A$ with respect to a partition of $\R$ into intervals of length $r$ averaged
over translated copies of $A$.
Hochman \cite{Hoc-self-similar} used the same quantity without averaging.

We write
\[
A_{\l,\tau}^{(n)}=\sum_{j=0}^{n-1}T_{\xi_j}(1,\tau)\l^{j},
\]
where $\xi_0,\xi_1,\ldots$ is a sequence of independent random variables taking the values $1,2,3$ with equal probabilities.
Note that the distribution of $A_{\l,\tau}^{(n)}$ converges weakly to the self-similar measure.
To extend the argument from \cite{BV-transcendent} to the setting of the IFS \eqref{eq:3maps}, we need to find several values of $n$
such that
\begin{equation}\label{eq:initial-bound}
H(A_{\l,\tau}^{(n)};\exp(-Cn\log^2n))\ge hn,
\end{equation}
for a suitable $h$ that is only slightly below $\min\{\log\l^{-1},\log 3\}$.

Suppose that \eqref{eq:initial-bound} fails for some particular value of $n$.
In this case, there are many pairs of points in the range of $A_{\l,\tau}^{(n)}$
that are of distance less than $\exp(-Cn\log^2n)$.
To each such pair, their is a corresponding polynomial $Q\in\cQ^{(n)}$
such that $Q(\l,1,\tau)<\exp(-Cn\log^2n)$.
We will show that this collection of polynomials must satisfy one of two possible scenarios.
First, it may happen that all of these polynomials vanish on a curve $\gamma\in\Gamma$ that may be degenerate
or non-degenerate.
This curve passes in an arbitrarily small neighborhood of $(\l,\tau)$ if $n$ is sufficiently large.
Moreover, we also have $h(\gamma)\le h$.
The second alternative is that there is a point $(\eta,\sigma)$ near $(\l,\tau)$ where all of the polynomials in the collection
vanish, but there is no curve $\gamma\in\Gamma$ containing $(\eta,\sigma)$ where all the polynomials vanish.
In this case, $\eta$ and $\sigma$ can be shown to be roots of polynomials of degree at most $2n$ whose coefficients
can be controlled, and they satisfy a separation property similar to parameters of Bernoulli convolutions with exact overlaps.
(See Section \ref{sc:common-root-Q} for more details.)

If we can show that the second of the above two alternatives always holds whenever $n$ is sufficiently large, then we can carry out an argument analogous
to that we described in the beginning of the section in the setting of Bernoulli convolutions.
To achieve this, we prove the following result, which is where the main novelty lies in our proof of Theorem \ref{th:BV2}.

\begin{prp}\label{pr:no-bad-curves-3maps}
Let $(\l,\tau)\in(0,1)\times\R$ be such that the IFS \eqref{eq:3maps} contains no exact overlaps.
Then for all $h<\min\{\log \l^{-1},\log 3\}$, there is a neighborhood of $(\l,\tau)$ that
is not intersected by a curve $\gamma\in\Gamma$ with $h(\gamma)\le h$.
\end{prp}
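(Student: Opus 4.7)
The plan is to argue by contradiction by showing that the set $\bigcup_{\gamma} \gamma$, where the union is over all $\gamma \in \Gamma$ with $h(\gamma) \le h$, is closed in $(0,1) \times \R$. Closedness suffices because, whenever $(\l,\tau) \in \gamma$, the equivalence relation on $\{1,2,3\}^n$ defining $H(A_\gamma^{(n)})$ is refined by the one defining $H(A_{\l,\tau}^{(n)})$, so $h(\l,\tau) \le h(\gamma)$. The no-exact-overlaps hypothesis yields $h(\l, \tau) = \log 3 > h$, so no curve with $h(\gamma) \le h$ can pass through $(\l, \tau)$, and closedness then provides the required neighborhood.

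To prove closedness, assume $\gamma_k \in \Gamma$ with $h(\gamma_k) \le h$ and $(\l_k, \tau_k) \in \gamma_k$ converge to $(\l^*, \tau^*)$. After passing to a subsequence we may assume all $\gamma_k$ are of the same type. In the degenerate case, $\gamma_k = \{\l_k\} \times \R$ with $\l_k \to \l^*$, and the candidate limit $\gamma_\infty = \{\l^*\} \times \R$ automatically lies in $\Gamma$ and contains $(\l^*, \tau^*)$. In the non-degenerate case, parametrize $\gamma_k$ as the graph of $R_k = S_k / T_k$ for some choice of power series $S_k, T_k$ with coefficients in $\{-1,0,1\}$. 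By Tychonoff compactness of $\{-1,0,1\}^\N \times \{-1,0,1\}^\N$, pass to a further subsequence along which $(S_k, T_k) \to (S_\infty, T_\infty)$ coefficient-wise, and set $R_\infty = S_\infty/T_\infty \in \cR$. Uniform convergence of $R_k$ to $R_\infty$ on compact subsets of the unit disc where $T_\infty$ does not vanish, combined with $R_k(\l_k) = \tau_k \to \tau^*$, gives $R_\infty(\l^*) = \tau^*$, so $(\l^*, \tau^*)$ lies on $\gamma_\infty$, the graph of $R_\infty$.

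To establish $h(\gamma_\infty) \le h$, fix $n$. Two sequences $\omega, \omega' \in \{1,2,3\}^n$ are equivalent on a curve $\gamma$ precisely when an explicit algebraic identity holds: in the degenerate case, the $\{0,1\}$-coefficient polynomials $P_\omega - P_{\omega'}$ and $Q_\omega - Q_{\omega'}$ recording the positions of the digits $2$ and $3$ in $\omega$ must both vanish at the $\l^*$-coordinate, while in the non-degenerate case $R$ must equal the specific ratio $-(P_\omega - P_{\omega'})/(Q_\omega - Q_{\omega'})$ as a power series. Because the parameters converge coefficient-wise, any pair equivalent on $\gamma_k$ for infinitely many $k$ is equivalent on $\gamma_\infty$; since there are only finitely many pairs at level $n$, the partition of $\{1,2,3\}^n$ associated to $\gamma_\infty$ is coarser than that associated to $\gamma_k$ for all $k$ sufficiently large, and in particular $H(A_{\gamma_\infty}^{(n)}) \le H(A_{\gamma_k}^{(n)})$ eventually. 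A diagonal procedure combining this lower semicontinuity with $h(\gamma_k) \le h$ should yield $h(\gamma_\infty) \le h$, closing the contradiction.

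The main obstacle I anticipate is converting per-$n$ lower semicontinuity of $H(A_\gamma^{(n)})$ into semicontinuity of the entropy rate $h(\gamma) = \inf_n H(A_\gamma^{(n)})/n$: the scale $m_k$ witnessing $h(\gamma_k) \le h$ may grow arbitrarily with $k$, whereas the inclusion at level $n$ only holds for $k$ large relative to $n$. I expect the condition $h < \log \l^{-1}$ to enter precisely here, tying the rate at which $H(A_{\gamma_k}^{(n)})/n$ descends to $h(\gamma_k)$ to the natural scale $\l^n$ of the IFS (via the trivial estimate $H(\mu;\l^n) \le n \log \l^{-1}$) and thereby enabling the required diagonal. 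A secondary issue is handling degenerate limits in the non-degenerate case such as $T_\infty(\l^*) = 0$ or $T_\infty \equiv 0$; these should be resolvable by cancelling common factors of $S_\infty$ and $T_\infty$ or by passing to a further subsequence.
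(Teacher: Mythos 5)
Your reduction to closedness, the case split between degenerate and non-degenerate curves, and the passage to a coefficient-wise limit $R_\infty$ of the defining power series all match the paper's framework for Proposition \ref{pr:no-bad-curves}. But the two difficulties you flag at the end are not secondary obstacles to be patched — they are the heart of the matter, and the resolutions you sketch do not work.

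First, lower semicontinuity of the entropy rate cannot be obtained by the diagonal argument you propose, nor does $h<\log\lambda^{-1}$ rescue it. The quantity $h(\gamma)=\inf_n H(A_\gamma^{(n)})/n$ is an infimum of continuous functions, hence \emph{upper} semicontinuous — the wrong direction. The per-$n$ coarseness of the limit partition that you correctly observe gives $H(A_{\gamma_\infty}^{(n)})\le H(A_{\gamma_k}^{(n)})$ only for $k$ large relative to $n$, and the witnessing scale $m_k$ for $h(\gamma_k)\le h$ may grow with $k$; the trivial bound $H(\mu;\lambda^n)\le n\log\lambda^{-1}$ says nothing about how fast $H(A_{\gamma_k}^{(n)})/n$ descends, so there is a genuine race condition with no resolution at this level. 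The paper's fix is Proposition \ref{pr:dim=h}, which proves $h(R)=\dim\mu_R$ where $\dim\mu_R=\sup_n H(A_R;n)/n$ is a supremum of continuous functions and therefore lower semicontinuous. Proving that the sup equals the inf is a function-field analogue of Hochman's inverse theorem (the entropy increase machinery of Theorem \ref{thm:ent growth} plus the exponential-separation Lemmata \ref{lem:full ent} and \ref{lem:full ent2}), and is one of the main technical contributions of the paper; nothing softer suffices.

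Second, the case $T_\infty(\lambda^*)=0$ is not resolvable by cancelling common factors or passing to a further subsequence. Coefficient-wise convergence in the $|\cdot|$ metric does not imply uniform convergence near $\lambda^*$, so the curves $\gamma_k$ can pass arbitrarily close to $(\lambda^*,\tau^*)$ while the limit graph misses it entirely; one cannot conclude $(\lambda^*,\tau^*)\in\gamma_\infty$. The paper handles this in Lemma \ref{lm:singularities} by shrinking disks around $\lambda_k$, taking Hausdorff limits of the image sets $R_k(\overline B(\lambda_k,\delta_k))$, and using connectedness to produce a nontrivial real interval $[a,b]$ with $\dim\mu_{\lambda_0,\tau'}\le h_0/\log\lambda_0^{-1}$ for all $\tau'\in[a,b]$; this is then fed into Lemma \ref{lm:degenerate}, which in the transcendental case invokes the rational-translation case of Conjecture \ref{cn:EO} (Theorem \ref{thm:ver conj for int trans}, proved in the Appendix). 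It is \emph{here}, and also in the degenerate branch you treat as automatic, that the hypothesis $h<\log\lambda^{-1}$ actually enters — one needs the dimension to be below $1$ to run the dimension formula — not in the semicontinuity of $h$. Your proposal contains no analogue of either step.
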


Our strategy to prove this result considers the limiting distribution of $A_{\gamma}^{(n)}$ on a suitably chosen
space, and introduces a notion of dimension for it.
We will show that the dimension depends lower semi-continuously on $\gamma$ with respect to a suitably chosen
metric, and we will also show that the dimension equals $h(\gamma)$.
These two properties together imply that $h(\gamma)$ depends lower semi-continuously on $\gamma$.

Equipped with these tools we can then argue as follows.
Suppose there is a sequence $\gamma_n$ of curves that passes in smaller and smaller neighbourhoods of a point $(\l,\tau)$
and $h(\gamma_n)\le h$ for some $h$ and for all $n$.
By a compactness argument, we will be able to pass to a subsequence converging to some limit curve $\gamma$.
We will conclude that $h(\gamma)\le h$ and $(\l,\tau)\in\gamma$, hence $h(\l,\tau)\le h$.
This shows that the IFS \eqref{eq:3maps} has exact overlaps for this $(\l,\tau)$ if $h<\log 3$.
In fact, because of some additional difficulties, which we will point out below, the argument works only if we have $h<\log\l^{-1}$
in addition.

We introduce our setup.
We denote by $\F[[X]]$ the ring of formal power series over a field $\F$.
We assume throughout that $\F$ is countable and of characteristic $0$
(For the purposes of the applications in this paper, one may take $\F=\Q$).
We endow $\F[[X]]$ with the valuation $|R|=2^{-n}$, where $n$ is the index of the first non-zero coefficient of $R\in \F[[X]]$.
Then $|R_1-R_2|$ defines a metric on $\F[[X]]$, but the resulting topology is not locally compact.

Given $R\in \F[[X]]$, we define the $\F[[X]]$-valued random element
\[
A_R=\sum_{j=0}^{\infty} T_{\xi_j}(1,R(X))X^j,
\]
where $\xi_j$ are independent random variables taking the values $1,2,3$ with equal probabilities.
It is easy to see that the right hand side converges for every realisation of $\{\xi_j\}_{j\ge0}$.
One may consider the distribution of $A_R$ a function field analogue of a self-similar measure.

We turn to the above mentioned notion of dimension for measures on $\F[[X]]$.
Let $A=\a_0+\a_1X+\a_2X^2+\ldots$ be an $\F[[X]]$-valued random element, so $\a_0,\a_1,\ldots$
are $\F$-valued random variables.
We write
\[
H(A;n)=H\Big(\sum_{j=0}^{n-1}\a_j X^j\Big),
\]
where $H(\cdot)$ denotes Shannon entropy on the right hand side.
Now we define
\[
\dim A = \lim_{n\to \infty} \frac{H(A;n)}{n}
\]
provided the limit exists.
If $\mu$ is the law of $A$, we write $\dim \mu = \dim A$.
One may consider this notion an analogue of entropy dimension on $\R$.

It can be shown that $H(A_R;n)/n$ monotone increases to $\dim A_R$.
Since $H(A_R;n)$ is continuous in $R$, $\dim A_R$ is lower semi-continuous.
 
A major ingredient in our proof of Proposition \ref{pr:no-bad-curves-3maps} is the following result
which we prove by adapting Hochman's full machinery from \cite{Hoc-self-similar} to our function field setting.

\begin{prp}\label{pr:dim=h-3maps}
Let $\gamma\in\Gamma$ be a non-degenerate curve corresponding to a function $R\in\cR\cap\Q[[X]]$.
Then
\[
\dim A_R = h(\gamma).
\]
\end{prp}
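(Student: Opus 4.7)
The inequality $\dim A_R\le h(\gamma)$ is immediate. Writing $B_R^{(n)}=\sum_{j=0}^{n-1}T_{\xi_j}(1,R(X))X^j\in\F[[X]]$, the random analytic function $\lambda\mapsto A_\gamma^{(n)}(\lambda,R(\lambda))$ is determined by its Taylor expansion at $0$, which is precisely $B_R^{(n)}$; hence $H(A_\gamma^{(n)})=H(B_R^{(n)})$ and $h(\gamma)=\lim_n H(B_R^{(n)})/n$. On the other hand $A_R-B_R^{(n)}\in X^n\F[[X]]$, so $A_R\bmod X^n=B_R^{(n)}\bmod X^n$, and the data processing inequality yields $H(A_R;n)\le H(B_R^{(n)})$. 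Dividing by $n$ and letting $n\to\infty$ gives this direction.

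The substance of the proof is the reverse inequality $\dim A_R\ge h(\gamma)$, and the plan is to adapt Hochman's entropy-increase machinery from \cite{Hoc-self-similar} to the function-field setting. The law $\mu_R$ of $A_R$ on $\F[[X]]$ satisfies a self-similar relation $\mu_R=\nu\ast (X_\ast \mu_R)$ where $\nu=\tfrac13(\delta_0+\delta_1+\delta_{R(X)})$ and $X_\ast$ denotes pushforward by multiplication by $X$; iterating yields $\mu_R=\mu_R^{(n)}\ast (X^n_\ast \mu_R)$, with $\mu_R^{(n)}$ the law of $B_R^{(n)}$. Define entropy at scale $X^n$ by $H(\mu;n)=H(\mu\bmod X^n)$. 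The non-Archimedean structure of $\F[[X]]$ makes the basic convolution inequalities cleaner than their Euclidean analogues, because balls of radius $2^{-n}$ are cosets of the subgroup $X^n\F[[X]]$. The main technical step is to port Hochman's inverse theorem to this setting: if $H(\mu\ast\nu;n)$ is only slightly larger than $H(\mu;n)$ while $H(\nu;n)$ is substantial, then $\mu$ must exhibit atomic-like concentration at many intermediate scales.

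Equipped with this inverse theorem, the argument proceeds by contradiction. If $\dim A_R<h(\gamma)$, applying the inverse theorem to the iterated self-similar relation produces, for every $\e>0$ and infinitely many $n$, a significant collection of pairs $\xi\ne\xi'\in\{1,2,3\}^n$ with $B_R^{(n)}(\xi)\equiv B_R^{(n)}(\xi')\pmod{X^N}$ for some $N$ with $N/n\to\infty$. Each such pair gives a nonzero $Q\in\cQ^{(n)}$ vanishing at $(Y_1,Y_2)=(1,R(X))$ to $X$-adic order at least $N$, i.e., an identity $\sum_j(T_{\xi_j}-T_{\xi'_j})(1,R(X))X^j\equiv 0\pmod{X^N}$ with $N\gg n$. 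Exploiting the structure of $R\in\cR$ as a ratio of power series with $\{-1,0,1\}$ coefficients and the non-degeneracy of $\gamma$, such strong coincidences force exact identities in $\F[[X]]$, which identify pairs of branches of the IFS producing identical outputs and yield a reduced IFS with strictly fewer effective branches; running the argument recursively on the reduction closes the gap.

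The principal obstacle is the adaptation of Hochman's inverse theorem for convolution entropy, which is the deepest input from \cite{Hoc-self-similar}. While the non-Archimedean geometry of $\F[[X]]$ simplifies several aspects---balls are exact cosets, convolutions behave cleanly across disjoint scale bands---the multi-scale entropy porosity results and the delicate geometric lemmata describing how entropy is distributed among scales must be carefully rebuilt in $\F[[X]]$, and one has to verify that the quantitative estimates survive the translation and that no Euclidean-specific hypotheses enter tacitly.
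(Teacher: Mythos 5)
Your treatment of the easy inequality $\dim A_R\le h(\gamma)$ is correct, and you correctly flag that the substance lies in the reverse inequality and that the non-Archimedean structure of $\F[[X]]$ should help. But you overestimate how much of Hochman's machinery needs to be ported. The paper's proof (of the more general Proposition~\ref{pr:dim=h}) does \emph{not} transplant Hochman's inverse theorem; it bypasses it entirely. The key observation, exploited in Theorem~\ref{thm:ent growth}, is that in $\F[[X]]$ the conditional entropy $H(A;n\mid n-1)$ is the conditional entropy of the $(n-1)$-st coefficient given the preceding ones --- a measure on the group $\F$ itself --- and convolution in $\F[[X]]$ acts on this coefficient precisely by convolution in $\F$. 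So the whole entropy-growth step reduces to the elementary statement (Proposition~\ref{prop:ent increase in C}) that on a countable torsion-free abelian group, convolving a bounded-entropy measure with a measure of definite entropy must strictly increase entropy; this is proved by a Kaimanovich--Vershik-type iterated-convolution argument, nothing resembling the multi-scale porosity structure or the inverse theorem of \cite{Hoc-self-similar}. Your assessment that ``the quantitative estimates survive the translation'' addresses a harder problem than actually arises: the non-Archimedean geometry collapses the problem to a single-scale statement on $\F$.

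There is also a gap in your closing step. When, for a given $n$, a pair $\omega\ne\omega'$ yields $0\ne Q\in\cQ^{(n)}$ with $|Q(X,1,R(X))|\le 2^{-4n}$, the paper's Lemmata~\ref{lem:full ent} and \ref{lem:full ent2} do show (via a Wronskian-type bound on $\deg(P_1\wt P_2-P_2\wt P_1)$) that such near-coincidences force $Q(X,1,R(X))=0$ exactly --- so your ``force exact identities'' claim is on track. But what this buys is \emph{not} a reduced IFS with fewer effective branches. A single relation in $\cQ^{(n)}$ is an identity between two length-$n$ compositions of the original maps, not between the generators, and there is no clean passage to a smaller alphabet (this fails already for Bernoulli convolutions at Pisot parameters). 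What the paper actually extracts is the estimate $H(A_R^{(n)};4n)\ge n\,h(R)$ for a suitable unbounded set of $n$'s (all large $n$ if there is an exact overlap relation, or a sequence of $n$'s with $H(A_R^{(n)};4n)=nH(p)$ otherwise); combining this with the decomposition $A_R^{(4n)}=A_R^{(n)}+X^n\wt A_R^{(3n)}$ and iterating Theorem~\ref{thm:ent growth} across the scales $k\in[n,4n)$ where $H(A_R^{(n)};k+1\mid k)$ is non-negligible produces the contradiction with $\dim\mu_R<h(R)$. Your ``recursive reduction'' step needs to be replaced by this explicit lower bound on $H(A_R^{(n)};4n)$ plus the scale-by-scale convolution argument.
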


Now we return to our proof strategy of Proposition \ref{pr:no-bad-curves-3maps}.
Let $\gamma_n$ be a sequence of curves that pass in smaller and smaller neighbourhoods of a point $(\l,\tau)$
and let $h(\gamma_n)\le h$ for some $h$ and for all $n$.
The most significant case is when the curves are non-degenerate, so we only consider this now.
Let $R_n$ be the function in $\cR$ that defines $\gamma_n$.
For simplicity, we assume $R_n\in\Q[[X]]$.

Recall that functions in $\cR$ are ratios of two power-series
with coefficients $\pm1, 0$.
It can be seen that the $j$'th coefficient of a function in $\cR\cap\Q[[X]]$ is an integer of absolute value at most
$2^{j}$.
This means that there are only finitely many possibilities for each coefficients,
and so $\cR\cap\Q[[X]]$ is compact in $\Q[[X]]$.
This allows us to pass to a convergent subsequence of $R_n$, which we do without changing notation.
We denote the limit function by $R$ and the corresponding non-degenerate curve by $\gamma$.

The last hurdle in our proof of Proposition \ref{pr:no-bad-curves-3maps} is that functions in
$\cR\cap\Q[[X]]$ converging coefficientwise (i.e. in the $|\cdot|$ metric) may fail to converge uniformly in any neighbourhood of $\l$.
That is to say, it may happen that $(\l,\tau)\notin\gamma$.
We overcome this problem by showing that in such a situation, there is a non-trivial interval $[a,b]\subset\R$ such that
$\dim\mu_{\l,\tau'}\le h/\log \l^{-1}$ for all $\tau'\in[a,b]$.
Now using the case of Conjecture \ref{cn:EO} for the IFS \eqref{eq:3maps} when $\tau$ is rational, we can prove that
$\l$ is algebraic.
This case of the conjecture can be proved using the methods of \cite{Var-Bernoulli} without any significant changes.
We carry this out in Appendix \ref{sec:int trans case}.
Then using the results of Hochman \cite{Hoc-self-similar} for algebraic parameters, we can conclude
$h(\{\l\}\times \R)\le h$ provided $h\le\log \l^{-1}$.
Therefore, the degenerate curve $\{\l\}\times \R$ can be used in place of $\gamma$.

\subsection{Ideas from the proofs of Theorems \ref{th:EO} and \ref{th:large-lambda}}\label{sc:Ideas2}

Conjecture \ref{cn:EO} is proved in \cite{Var-Bernoulli} for Bernoulli convolutions for transcendental parameters.
One of the key ingredients of the proof is the transversality property of polynomials with coefficients
$\pm1, 0$ established by Solomyak \cite{Sol-Bernoulli}, see also \cite{PS-Bernoulli} and \cite{PS-transversality}.
Loosely speaking, this property is a quantitative refinement of the fact that such polynomials cannot have
a double root in the interval $[1/2,2/3]$.

It was pointed out to us by Dimitrov \cite{Dim-personal} that a weaker version of transversality,
first used by Hochman \cite{Hoc-self-similar}
in the context of self-similar measures, would suffice for the purposes of the argument in \cite{Var-Bernoulli}.
This weaker property stipulates that there is a number $K$ for each $\e>0$ such that no polynomial with coefficients
$\pm1,0$ can have a root of multiplicity higher than $K$ in $(0,1-\e)$.
We give more details below in Lemma \ref{lm:Dimitrov}.

In the contexts of Theorems \ref{th:EO} and \ref{th:large-lambda}, we would require this multiplicity bound for polynomials of the form
\begin{equation}\label{eq:P1234}
P_1P_2-P_3P_4,
\end{equation}
where
$P_1,P_2,P_3,P_4$ are all polynomials with coefficients $\pm1,0$.
Unfortunately, it may happen that the first $N$ coefficients of \eqref{eq:P1234} vanish for some large $N$ and then we cannot
bound the absolute values of the first few non-zero coefficients of \eqref{eq:P1234} independently of $N$.
This feature rules out the usual proofs for bounding the multiplicities of roots of \eqref{eq:P1234}.
In fact, we are unable to rule out the possibility that polynomials of the form \eqref{eq:P1234} may have roots of arbitrarily high multiplicity
in $(\e,1-\e)$.
However, we are able to show that this does not happen often, and this is the main novelty in the proofs of Theorems \ref{th:EO} and \ref{th:large-lambda}.

We outline the proofs of  Theorems \ref{th:EO} and \ref{th:large-lambda}.
The overall strategy follows \cite{Var-Bernoulli}; however, there are significant new difficulties as mentioned above,
which we explain further.
We suppose to the contrary that there is $(\l_0,\tau_0)\in(0,1)\times\R$ that violates Conjecture \ref{cn:EO}.
That is, the IFS \eqref{eq:3maps} contains no exact overlaps for these parameters, yet
$\dim\mu_{\l_0,\tau_0}<\min\{1,\log 3/\log \l^{-1}_{0}\}$, where $\mu_{\l_0,\tau_0}$ is the corresponding self-similar measure. Let $\e>0$ be small with respect to $(\l_{0},\tau_{0})$, and let $M>1$ be large with respect to $\e>0$.
We apply Theorem \ref{th:BV2}, and conclude that there exist arbitrarily large $n$ and $(\eta,\sigma)\in(0,1)\times\R$
such that
\begin{align*}
|\eta-\l_0|,|\tau_0-\s|\le&\exp(-n^{100}),\\ 
h(\eta,\s)\le&\min\{\log\eta^{-1},\log 3\}-\e,\\
h(\gamma)\ge&\min\{\log\eta^{-1},\log 3\}-M^{-1}
\end{align*}
for all $\g\in\Gamma$ with $(\eta,\s)\in\gamma$.
An affirmative answer to Question \ref{q:BV1} now yields $M(\eta)\le M$.

We also apply the results of Hochman \cite{Hoc-self-similar}.
Since $(\l_0,\tau_0)$ violates Conjecture \ref{cn:EO}, we can conclude that for all $C_0>1$
and for all sufficiently large $n'$
(depending on $C_0$, $\l_0$ and $\tau_0$), we have
\[
H(A_{\l_0,\tau_0}^{(n')};C_0^{-n'}) \le (\min\{\log\l_0^{-1},\log 3\}-\e)n'.
\]
We will apply this theorem with a carefully chosen value of $n'$ so that $|\l_0-\eta|$
is exponentially small in $n'$.
This implies, as we discussed in Section \ref{sc:ideas-th3}, that there is a large family of pairs of points
in the support of $A_{\l_0,\tau_0}^{(n')}$ that are at distance less than $C_0^{-n'}$.
For each such pair of points, there corresponds a polynomial $0\neq Q\in\cQ^{(n')}$
such that $|Q(\l_0,1,\tau_0)|\le C_0^{-n'}$.
We write $\wt\cQ^{(n')}$ for the collection of polynomials that satisfy this property.

We pick an arbitrary $\wt Q\in\wt\cQ^{(n')}$.
We can write it in the form
\[
\wt Q(X,Y_1,Y_2)=\wt P_1(X)Y_1+\wt P_2(X)Y_2
\]
for some polynomials $\wt P_1$,
$\wt P_2$ of degree at most $n'$ with coefficients $\pm1,0$.
We write $\wt R=-\wt P_1/\wt P_2$ and write $\wt\gamma$ for the associated non-degenerate curve.
It can be proved that $\wt\gamma$ passes in an arbitrarily small neighbourhood of $(\l_0,\tau_0)$ if $n$ is sufficiently large
and the value of $n'$ is set appropriately.
If we could also show that $Q(X,1,\wt R(X))=0$ for all $Q\in\wt\cQ^{(n')}$, then we could conclude
\[
h(\wt\gamma)\le \frac{H(A_{\wt\gamma}^{(n')})}{n'}\le \frac{H(A_{\l_0,\tau_0}^{(n')};C_0^{-n'})}{n'}
\le \min\{\log\l_0^{-1},\log 3\}-\e
\]
using the definition of $\wt\cQ^{(n')}$.
This would contradict Proposition \ref{pr:no-bad-curves-3maps} completing the proof of Theorem \ref{th:EO}.

Let $Q\in\wt\cQ^{(n')}$ and write $Q(X,Y_1,Y_2)=P_1(X)Y_1+P_2(X)Y_2$. 
The equation $Q(X,1,\wt R(X))=0$ can now be rewritten as $P_1(X)\wt P_2(X)-P_2(X)\wt P_1(X)=0$.
Using the definition of $\wt\cQ^{(n')}$, we can write
\[
|P_1(\l_0)+P_2(\l_0)\tau_0|,|\wt P_1(\l_0)+\wt P_2(\l_0)\tau_0|\le C_0^{-n'}
\]
from which we conclude
\[
|P_1(\l_0)\wt P_2(\l_0)-\wt P_1(\l_0)P_2(\l_0)| = O(C_0^{-n'}),
\]
where the implied constant depends only on $\l_0$.

Now we invoke the following lemma that was suggested to us by Dimitrov \cite{Dim-personal}.
(We give a more precise version in Lemma \ref{lm:Dimitrov-precise}.)

\begin{lem}\label{lm:Dimitrov}
For every $k\in\Z_{\ge2}$, there is a constant $C$ such that the following holds.
Let $\l\neq\eta\in[0,1]$.
Assume that $\eta$ is algebraic of degree at most n and let $n'\ge Cn\log (n+1)$ be an integer.
Let $P\in\Z[X]$ be a polynomial of degree at most $n'$ with coefficients bounded by $n^k$ in absolute value.
Assume
\[
(2M(\eta))^{n'/k}|P(\l)|^{1/k}\le |\l-\eta|\le (2M(\eta))^{-n'}.
\]
Then $\eta$ is a zero of $P$ of order at least $k$.
\end{lem}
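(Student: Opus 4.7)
The plan is to argue by contradiction: supposing $\eta$ is a zero of $P$ of order strictly less than $k$, let $s\in\{0,\ldots,k-1\}$ be the smallest index with $B_s:=P^{(s)}(\eta)/s!\ne 0$. I would then bound $|B_s|$ both from above, using the Taylor expansion of $P$ at $\eta$ together with the assumption on $|P(\lambda)|$, and from below by a Liouville-type inequality applied to the nonzero algebraic number $B_s\in\Z[\eta]$. The hypothesis $n'\ge Cn\log(n+1)$, with $C=C(k)$ chosen large, plays the role of ensuring that the exponential factor $2^{n'}$ from the upper bound dominates the polynomial-in-$(n,n')$ losses appearing on both sides.

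For the upper bound, write $P(\lambda)=B_s(\lambda-\eta)^s+\rho$, with $\rho$ estimated via the integral form of Taylor's theorem and $\delta:=|\lambda-\eta|$. Since $\deg P\le n'$ and $\|P\|_\infty\le n^k$, a direct bound gives $\sup_{[0,1]}|P^{(s+1)}|\le (nn')^{k+2}$ and hence $|\rho|\le (nn')^{k+2}\delta^{s+1}$. Combining this with $|P(\lambda)|\le \delta^k(2M(\eta))^{-n'}$ (the left inequality of the lemma raised to the $k$th power), dividing by $\delta^s$, and using $\delta\le(2M(\eta))^{-n'}\le 1$ together with $k-s\ge 1$, one obtains
\[
|B_s|\le 2(nn')^{k+2}(2M(\eta))^{-n'}.
\]

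For the lower bound, writing $P(X)=\sum_{i=0}^{n'}a_i X^i$, one has $B_s=R_s(\eta)$ with $R_s(X)=\sum_{i\ge s}\binom{i}{s}a_i X^{i-s}\in\Z[X]$ and $\|R_s\|_1\le n^k\binom{n'+1}{s+1}\le (nn')^{k+1}$. Since $B_s\ne 0$, no Galois conjugate $R_s(\eta_\sigma)$ can vanish (else the minimal polynomial of $\eta$ would divide $R_s$, forcing $R_s(\eta)=0$). Letting $a$ denote the leading coefficient of this minimal polynomial, the resultant identity shows that $a^{\deg R_s}\prod_\sigma R_s(\eta_\sigma)$ is a nonzero rational integer and hence has absolute value at least $1$. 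Bounding the factors with $\sigma\ne\mathrm{id}$ by $\|R_s\|_1\max(1,|\eta_\sigma|)^{\deg R_s}$ and using $\prod_\sigma\max(1,|\eta_\sigma|)=M(\eta)/|a|$ together with $\max(1,|\eta|)=1$ (since $\eta\in[0,1]$), one obtains
\[
|B_s|\ge \|R_s\|_1^{-(n-1)}M(\eta)^{-n'}\ge (nn')^{-(k+1)n}M(\eta)^{-n'}.
\]

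Combining the two bounds and cancelling $M(\eta)^{-n'}$ reduces the problem to $2^{n'-1}\le (nn')^{(k+1)n+k+2}$. Taking logarithms and using $\log(nn')=O(\log n)$ (which follows from $n'\ge Cn\log(n+1)$), this inequality is incompatible with the hypothesis on $n'$ once $C$ is chosen sufficiently large in terms of $k$, giving the desired contradiction. The only real obstacle is the bookkeeping of these polynomial factors; the precise form of the hypothesis on $n'$ is calibrated to absorb them into the exponential gain $2^{n'}$ coming from the factor of $2$ in $(2M(\eta))^{n'/k}$ on the left of the assumption.
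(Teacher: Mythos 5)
Your argument follows essentially the same route as the paper's proof of Lemma~\ref{lm:Dimitrov-precise}: Taylor expansion around $\eta$ gives an upper bound on $|B_s|$ in terms of $|P(\lambda)|$ and $|\lambda-\eta|$, a Liouville-type estimate gives a lower bound in terms of $M(\eta)^{-n'}$, and the hypothesis on $\alpha=2$ (here the factor $2$ in $(2M(\eta))^{n'/k}$) forces a contradiction. The cosmetic differences are: you use the integral form of the remainder rather than the Lagrange form followed by the mean value theorem, and you prove the Liouville bound from scratch via the resultant rather than invoking the height estimate packaged in Lemma~\ref{lem:lb on val of poly}. Both choices are perfectly valid and give the same quantitative content.

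There is, however, one genuine flaw in the last step. You claim ``$\log(nn')=O(\log n)$, which follows from $n'\ge Cn\log(n+1)$''; this is false, since the hypothesis is only a \emph{lower} bound on $n'$ --- nothing in the lemma prevents $n'$ from being, say, $2^n$, in which case $\log n'$ is not $O(\log n)$. The conclusion you want is nonetheless correct, but the justification must run differently. After cancelling $M(\eta)^{-n'}$ and taking logarithms, the inequality to refute is
\[
(n'-1)\log 2\le \bigl((k+1)n+k+2\bigr)\log(nn').
\]
Since $n\le n'$ once $C\ge 1/\log 2$, the right-hand side is at most $2\bigl((k+1)n+k+2\bigr)\log n'$, so it suffices to verify that the function $\psi(n')=(n'-1)\log 2-2\bigl((k+1)n+k+2\bigr)\log n'$ is positive on $[Cn\log(n+1),\infty)$. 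One checks directly that $\psi'(n')\ge0$ there once $C\gtrsim (k+2)/(\log 2)^2$, and that $\psi$ is positive at the left endpoint for $C=C(k)$ large enough (this is exactly the comparison that forces $n'\le O_k(n\log(n+1))$; equivalently, the relation $n'\lesssim_k n\log n'$ implies $n'\lesssim_k n\log(n+1)$ by the standard Lambert-$W$ type argument). With this replacement the proof is complete; it is precisely the calculation encapsulated in the hypothesis on $\alpha$ in Lemma~\ref{lm:Dimitrov-precise}.
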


We apply this for the polynomial $P=P_1\wt P_2-\wt P_1 P_2$ with $\l=\l_0$.
If we choose the value of $n'$ appropriately and $C_0$ is large enough, we can conclude
that $\eta$ is a root of $P_1\wt P_2-\wt P_1 P_2$ of multiplicity $k$ for an arbitrarily large
but fixed (independently of $n'$) $k$.
If we knew that this always implies $P_1\wt P_2-\wt P_1 P_2=0$, we could conclude
$Q(X,1,\wt R(X))=0$ for all $Q\in\cQ^{(n')}$, and we could complete the proof of Theorem \ref{th:EO}
as we discussed above.

However, as we noted above, we are not able to rule out the possibility that a non-trivial polynomial
of the above form could have roots of arbitrarily high multiplicity.
To overcome this problem, we introduce another fractal object.

Let $\xi_{0},\xi_{1},\ldots$ be a sequence of random variables taking the values $\{1,2,3\}$
with equal probabilities.
Recall the definition of $T_1,T_2,T_3$ from Section \ref{sc:intro}.
Given $R\in\cR\cap\Q[[X]]$ and $n\ge 0$, we set
\[
A_{R}^{(n)}=\sum_{j=0}^{n-1}T_{\xi_j}(1,R(X)) X^j
\]
as before, but now we think about it as a random meromorphic function on the complex unit disk.
Furthermore, given $\l\in(0,1)$ that is not a pole of $R$ and $K\in\Z_{\ge 1}$, we define the $\R^K$-valued
random vector
\[
B_{R,\l,K}^{(n)}=(A_{R}^{(n)}(\l),\frac{d}{dX} A_{R}^{(n)}(\l),\ldots,\frac{d^{K-1}}{dX^{K-1}} A_{R}^{(n)}(\l)).
\]
We note that these random vectors can also be obtained by a random walk of $n$ steps using
a self-affine IFS.
(See Section \ref{sc:lb on h(R,lam,M)} for details.)
Therefore, the entropy rate
\[
h(R,\l,K)=\lim_{n\to\infty}\frac{H(B_{R,\l,K}^{(n)})}{n}=\inf_{n\ge1} \frac{H(B_{R,\l,K}^{(n)})}{n}
\]
exists by subadditivity.

In this notation, the above discussion up to the application of Lemma \ref{lm:Dimitrov} leads to the bound
\[
h(\wt R,\eta,K)\le\frac{H(B_{\wt R,\eta,K}^{(n')})}{n'}\le  \min\{\log\l_0^{-1},\log 3\}-\e.
\]
Here $K$ can be taken arbitrarily large, but have to be fixed at the beginning of the argument,
in particular, it cannot depend on $n'$.

To complete the argument, we need the following result related to Proposition \ref{pr:no-bad-curves-3maps},
which is the second main new contribution of the paper.

\begin{prp}\label{pr:no-bad-bouquet-3maps}
Let $(\l_0,\tau_0)\in(0,1)\times\R$, such that the IFS \eqref{eq:3maps} contains no exact overlaps.
Then for every $h_0<\min\{\log\l_0^{-1},\log 3\}$, there is $K\in\Z_{>0}$ such that
$h(R,\l,K)\ge h_0$ for all $R\in\cR\cap\Q[[X]]$ and $\l\in(0,1)$ such that $|R(\l)-\tau_0|,|\l-\l_0|\le K^{-1}$.
\end{prp}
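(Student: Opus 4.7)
\medskip\noindent\textbf{Proof plan.}
I plan to argue by contradiction along the same lines as Proposition \ref{pr:no-bad-curves-3maps}, combining a compactness argument in $\cR\cap\Q[[X]]$ with a semi-continuity property of the refined invariant $h(R,\l,K)$ and a passage to the limit $K\to\infty$ at the end. Suppose the conclusion fails for some $h_0<\min\{\log\l_0^{-1},\log 3\}$: then there exist sequences $K_j\to\infty$, $R_j\in\cR\cap\Q[[X]]$ and $\l_j\in(0,1)$ with $|\l_j-\l_0|,|R_j(\l_j)-\tau_0|\le K_j^{-1}$ and $h(R_j,\l_j,K_j)<h_0$. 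Since the $j$-th coefficient of any element of $\cR\cap\Q[[X]]$ is an integer of absolute value at most $2^j$, the ambient space is compact in the coefficient metric, and after extracting a subsequence we may assume $R_j\to R_\infty$ coefficientwise and $\l_j\to\l_0$. Using that $h(R,\l,K)$ is monotone non-decreasing in $K$ (immediate, since $B_{R,\l,K'}^{(n)}$ determines $B_{R,\l,K}^{(n)}$ whenever $K'\ge K$), I get $h(R_j,\l_j,K)<h_0$ for every fixed $K$ and all $j$ with $K_j\ge K$.

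The main case is when $\l_0$ is not a pole of $R_\infty$, so that $R_\infty(\l_0)=\tau_0$ and $R_\infty$ defines a non-degenerate curve $\gamma\in\Gamma$ through $(\l_0,\tau_0)$. Here the central step is to establish, for each fixed $K$, that $h(\cdot,\cdot,K)$ is lower semi-continuous at $(R_\infty,\l_0)$, which will yield $h(R_\infty,\l_0,K)\le h_0$ for every $K$. This lower semi-continuity does not come for free from the $\inf_n$ definition (that definition only gives upper semi-continuity), and is the main novelty of the argument: I plan to prove it by adapting the function-field version of Hochman's entropy-increase machinery used for Proposition \ref{pr:dim=h-3maps} to the self-affine IFS on $\R^K$ whose random walk generates $B_{R,\l,K}^{(n)}$, thereby identifying $h(R,\l,K)$ with a suitable dimension of a limiting self-affine measure that depends lower semi-continuously on the parameters.

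Once $h(R_\infty,\l_0,K)\le h_0$ for every $K$, the final step is to let $K\to\infty$. Since $h(R_\infty,\l_0,K)$ is non-decreasing in $K$ and bounded above by $h(\gamma)=\dim A_{R_\infty}$, the limit exists and is $\le h_0$. I aim to show that this limit equals $h(\gamma)$, which would give $h(\gamma)\le h_0$ while $(\l_0,\tau_0)\in\gamma$, directly contradicting Proposition \ref{pr:no-bad-curves-3maps}. The identification $\lim_K h(R_\infty,\l_0,K)=h(\gamma)$ reduces to bounding the conditional entropy $H(A_{R_\infty}^{(n)}\mid B_{R_\infty,\l_0,K}^{(n)})$ by $o(n)$ uniformly in $n$ once $K$ is sufficiently large: two realizations of $A_{R_\infty}^{(n)}$ whose first $K$ Taylor coefficients at $\l_0$ agree differ by an expression $P_1(X)+R_\infty(X)P_2(X)$ with $P_1,P_2$ of bounded degree and coefficients vanishing to order $\ge K$ at $\l_0$, and the quantitative transversality bounds discussed in Section \ref{sc:Ideas2} (in the spirit of Lemma \ref{lm:Dimitrov-precise}) should limit the number of such collisions.

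The residual case, where $\l_0$ is a pole of $R_\infty$ so that no non-degenerate limit curve contains $(\l_0,\tau_0)$, I handle exactly as in Proposition \ref{pr:no-bad-curves-3maps}: invoke the case of Conjecture \ref{cn:EO} for rational $\tau$ proved in Appendix \ref{sec:int trans case} to force $\l_0$ to be algebraic, then apply Hochman's theorem to the degenerate curve $\{\l_0\}\times\R$ to conclude. The principal obstacle is the combination of the lower semi-continuity of $h(R,\l,K)$ at fixed $K$ with the identification of its $K\to\infty$ limit as $h(\gamma_R)$; this is precisely where the extra flexibility provided by the multijet invariant $B_{R,\l,K}^{(n)}$ is needed to compensate for the possible failure of strict transversality of polynomials of the form \eqref{eq:P1234}.
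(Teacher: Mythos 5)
Your overall blueprint—argue by contradiction, extract a coefficientwise limit $R_\infty$ from $\cR\cap\Q[[X]]$ by compactness, split into the pole/no-pole dichotomy, and in the residual case fall back on Lemma~\ref{lm:degenerate}—matches the paper's proof of (the general version) Proposition~\ref{pr:no-bad-bouquet}. But the central step, which you split into (i) lower semicontinuity of $h(\cdot,\cdot,K)$ at fixed $K$ followed by (ii) the limit $\lim_K h(R_\infty,\l_0,K)=h(\gamma)$, is where your plan diverges from what the paper actually proves, and both halves are problematic. Claim (i) is not established anywhere in the paper, and the route you sketch cannot deliver it: Lemma~\ref{lm:sa-dim} gives only the one-sided inequality $\dim\nu_{R,\l,K}\le h(R,\l,K)/\log\l^{-1}$, so lower semicontinuity of $\dim\nu_{R,\l,K}$ (Lemma~\ref{lm:sa-semi-cont}) does not transfer to $h(R,\l,K)$ unless you first prove equality, i.e.\ a Hochman-type dimension formula for these specific self-affine systems on $\R^K$. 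Worse, for $K=1$ the claim is essentially the lower semicontinuity of $h(\l,\tau)$ in the parameters, which would be a very strong form of Conjecture~\ref{cn:EO}; there is no reason to expect it to become true for any other \emph{fixed} $K$, since a nearby algebraic $\l$ can always create fresh overlaps at the $K$-jet level. The paper sidesteps this by never fixing $K$: Proposition~\ref{pr:ent-bouquet} couples the two limits, producing a $K=K(R_0,\e)$ and then requiring $|R-R_0|\le 2^{-K}$, so the precision in $R$ scales with $K$. This coupled estimate is proved by a direct incremental entropy inequality (Lemma~\ref{lm:sim-conditional}), built from the relation $\sim_k$, the atom partitions of Lemma~\ref{lm:atom-partitions}, and the Jensen-formula multiplicity bound Lemma~\ref{lm:multiplicity}—not by semicontinuity of dimension.

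Your claim (ii) is also wrongly argued. You propose to show $H(A_{R_\infty}^{(n)}\mid B_{R_\infty,\l_0,K}^{(n)})=o(n)$, but the mechanism you invoke only shows that agreement of the first $K$ Taylor coefficients at $\l_0$ forces the two realizations of $A_{R_\infty}^{(n)}$ to agree on their first $N=N(K)$ power-series coefficients (by Lemma~\ref{lm:multiplicity}, not Lemma~\ref{lm:Dimitrov-precise}, which plays a role only in Section~\ref{sc:EO-proof}). That gives a bound $H(A_{R_\infty}^{(n)}\mid B^{(n)})\le H(A_{R_\infty}^{(n)})-H(A_{R_\infty};N)$, which is $\Theta(n)$, not $o(n)$. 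The paper's Lemma~\ref{lm:sim-conditional} instead bounds the \emph{per-block} conditional entropy increment of $B^{(n)}$ from below by $H(A_{R};N)$ and sums over blocks, which is the crucial idea your sketch is missing. Finally, the residual (pole) case is not ``exactly as'' in Proposition~\ref{pr:no-bad-curves-3maps}: the dichotomy in Lemma~\ref{lm:singularities} is replaced here by Proposition~\ref{pr:complex}, which needs Tur\'an's power-sum theorem to show that some derivative of $R_n$ blows up at $\l_n$, and then Proposition~\ref{pr:singular-bouquet} uses a diagonal rescaling of $\nu_{R_n,\l_n,K}$ to obtain a genuine self-similar limit measure—new machinery you would also need to supply.
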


This result is based on the following intuitive idea.
Let $R_l\in\cR\cap\Q[[X]]$, $(\l_l,\tau_l)\in(0,1)\times\R$ with $\tau_l=R_{l}(\l_{l})$, and $K_l\in\Z_{>0}$ for $l=1,2,\ldots$.
Suppose to the contrary that $h(R_l,\l_l,K_l)\le h_0$ for all $l$, $(\l_l,\tau_l)\to (\l_0,\tau_0)$ and $K_l\to \infty$.
For simplicity, we assume further that $R_l$ converges to some $R_0\in\cR$ in the $|\cdot|$ metric and $R_0(\l_0)=\tau_0$.
This is not always possible to arrange, and we comment further on this below. 
The random variables $B_{R_l,\l_l,K_l}$ converge in distribution
to a self-affine measure $\mu_{R_l,\l_l,K_l}$, whose dimension is bounded above by $h(R_l,\l_l,K_l)/\log \l_l^{-1}$.
Intuitively, it is reasonable to expect that
$\mu_{R_l,\l_l,K_l}$ converges to $\mu_{R_0}$ in some sense, where $\mu_{R_0}$ is the distribution of $A_{R_0}$.
Then we may expect that dimension is lower semi-continuous with respect to this convergence in a way that takes into
account the factor $1/\log\l_l^{-1}$.
This would yield
\[
\liminf h(R_l,\l_l,K_l)\ge \dim\mu_{R_0},
\]
and we could conclude by Proposition \ref{pr:dim=h-3maps}.
Indeed, writing $\gamma_0$ for the non-degenerate curve associated to $R_0$, we have
\[
\log 3=h(\l_0,\tau_0)\le h(\gamma_0)=\dim\mu_{R_0},
\]
since the IFS contains no exact overlaps.

It is not obvious how to define the convergence of the self-affine measures to $\mu_{R_0}$
in a rigorous way, or indeed if it can be done at all.
Nevertheless, this intuition motivates our proof.

Finally, we comment on the case when it is not possible to arrange that the $R_l$'s have a limit function
satisfying $R_0(\l_0)=\tau_0$.
We have to deal with a similar issue in the course of the proof of Theorem \ref{th:BV2}, as well, but
in the present situation, this is significantly more difficult to overcome.
We use an estimate from Tur\'an's theory of power sums \cite{Tur-PWsums} to show that a derivative of
$R_l$ of a certain order must blow up at $\l_l$, which allows us to rescale $\mu_{R_l,\l_l,K_l}$ so that it converges
to a self-similar measure.
We show that we can bound the dimension of this self-similar measure in terms of $h_0$ and that
$\mu_{\l_0,\tau}$ for all $\tau\in\R$ can be realized as a projection of it.
In the end, we are able to deduce that $\dim \mu_{\l_0,\tau}\le h_0/\log\l_0^{-1}$ for all $\tau\in\R$ and we can
conclude as in the proof of Theorem \ref{th:BV2}.

\subsection{The organization of the paper}

We introduce the general setting of the paper and reformulate our main results in Section \ref{sc:general-setting}.
We collect some results from Diophantine approximation in Section \ref{sc:aux res num}, which we will use at various
places in the paper.
These results are standard (with the exception of Lemma \ref{lm:Dimitrov-precise}, which we mentioned above in a simpler form
in Lemma \ref{lm:Dimitrov}).

In Sections \ref{sc:ssm-Cx} and \ref{sc:repel}, we develop the ideas discussed above that are needed for the proof of Theorem \ref{th:BV2}.
Section \ref{sc:ssm-Cx} is devoted to the study of an analogue of self-similar measures in a function field setting.
We adapt the methods of Hochman \cite{Hoc-self-similar} to this setting and prove (a more general version of) Proposition \ref{pr:dim=h-3maps}.
In Section \ref{sc:repel}, we prove (a more general version of) Proposition \ref{pr:no-bad-curves-3maps},
i.e. low entropy curves avoid small neighborhoods of parameter points without exact overlaps.
In Section \ref{sc:prf of apx thm}, we give the proofs of Theorem \ref{th:BV2-general} and Corollary \ref{cr:Mike-general}, which 
are restatements of Theorem \ref{th:BV2} and Corallary \ref{cr:Mike} in our general setting.

In Sections \ref{sc:lb on h(R,lam,M)} and \ref{lb ent ord M}, we develop the ideas discussed above that are needed for the proof of Theorem \ref{th:EO}.
In Section \ref{sc:lb on h(R,lam,M)}, we introduce the self-affine measures that we discussed briefly above, and
we start studying their entropy rates.
This will be completed in Section \ref{lb ent ord M}, where we concentrate on the singularities of the curve in the definition of
the self-affine measure and prove Proposition \ref{pr:no-bad-bouquet-3maps} in a more general form.
The proof of Theorem \ref{th:BV1} is given in Section \ref{sc:BV1}.
In Section \ref{sc:EO-proof}, we complete the proofs of Theorem \ref{th:EO-general} (a general version of Theorem \ref{th:EO})
and Theorem \ref{th:large-lambda}.

We prove a case of Conjecture \ref{cn:EO} for homogeneous IFS's with rational translations in Appendix \ref{sec:int trans case}.
This result is a mild generalization of the case of Bernoulli convolutions treated in \cite{Var-Bernoulli} and its references, and does not
require substantial new ideas.
The result is used as a black box, and the paper can be read without the appendix.
However, the appendix recalls many of the crucial elements of the arguments in the papers \cite{BV-transcendent}
and \cite{Var-Bernoulli}, which will be used in Sections \ref{sc:prf of apx thm} and \ref{sc:EO-proof}, so the reader may
find it beneficial to read (parts of) the appendix for an introduction to these arguments in a simpler setting.

\section{The general setup}\label{sc:general-setting}

In this section, we set out the general setup of the paper.
We fix some notation that we use throughout.
We will also restate the results of the paper in this general setting.

We fix integers $m\ge 2$, $a_1,\ldots,a_m$ and $b_1,\ldots,b_m$ with $(a_i,b_i)\neq(a_j,b_j)$
for all $1\le i<j\le m$.
For each $j=1,\ldots,m$, we introduce the linear form
$T_j(Y_1,Y_2)=a_jY_1+b_jY_2$.
Note that the case of three maps considered in the previous sections corresponds to the tuples $(0,0)$, $(1,0)$ and $(0,1)$.
For parameters $\l\in(0,1)$, $\tau\in\R$ we associate the IFS
\begin{equation}\label{eq:IFS}
\{f_j(x)=\l x+T_j(1,\tau):j=1,\ldots,m\}.
\end{equation}
We also fix a probability vector $p=(p_1,\ldots,p_m)$ with strictly positive coordinates.
We denote by $\mu_{\l,\tau}$ the self-similar measure associated to this IFS and probability vector.

Our motivation to consider this more general framework instead of just the IFS \eqref{eq:3maps} is twofold.
First, it contains other natural examples, such as the IFS
\[
x\mapsto \l x-1-\tau,\quad x\mapsto \l x+1-\tau, \quad x\mapsto \l x-1+\tau,\quad x\mapsto \l x+1+\tau.
\]
The self-similar measures associated to this IFS include the convolution of a Bernoulli convolution with a scaled copy,
and this complements the measures studied by Nazarov, Peres and Shmerkin \cite{NPS-convolutions}.
Second, this more general framework may be useful in a way similar to how we use the case of
rational translations
(to be discussed in Appendix \ref{sec:int trans case}) in this paper.

We write $\cP_{l}^{(n)}\subset\Z[X]$ for the set of polynomials of degree strictly less than $n$ with
integer coefficients bounded in absolute value by $l$.
We write $\cP_l\subset\Z[[X]]$ for the set of formal power series with integer coefficients bounded in absolute value by $l$.
We write $\cR_l$ for the set of ratios of two functions in $\cP_l$, we consider these functions both as formal Laurent series
with rational coefficients and meromorphic functions on the complex unit disk.
We note that we wrote $\cR$ for $\cR_1$ in the first two sections of the paper.

We write $\Omega=\{1,\ldots,m\}^{\Z_{\ge 0}}$.
We equip $\Omega$ with the Bernoulli measure $\b$ corresponding to the probability vector $p$, that is $\beta=p^{\Z_{\ge 0}}$.

We write $\cQ^{(n)}$ for the set of polynomials of the form
\[
Q(X,Y_1,Y_2)=\sum_{j=0}^{n-1} (T_{\o_j}(Y_1,Y_2)-T_{\o'_{j}}(Y_1,Y_2))X^j
\]
for $\o,\o'\in\Omega$.
It is immediate from the definitions that each $Q\in\cQ^{(n)}$ can be written in the form
\[
Q(X,Y_1,Y_2)=P_1(X)Y_1+P_2(X)Y_2
\]
for some $P_1,P_2\in\cP_L^{(n)}$, where
\begin{equation}\label{eq:def of L}
L:=\max\{a_i-a_j,b_i-b_j:i,j=1,\ldots,m\}.
\end{equation}
We observe that
\[
f_{\o_0}\circ\ldots\circ f_{\o_{n-1}}=f_{\o'_0}\circ\ldots\circ f_{\o'_{n-1}}
\]
for some $\o,\o'\in\Omega$, if and only if
\[
\sum_{j=0}^{n-1} (T_{\o_j}(1,\tau)-T_{\o'_{j}}(1,\tau))\l^j=0.
\]
Therefore, the IFS \eqref{eq:IFS} contains exact overlaps for some parameters $\l,\tau$
if and only if $Q(\l,1,\tau)=0$ for some $0\neq Q\in\cQ^{(n)}$.

For a set $U\subset(0,1)\times\R$ and $n\in\Z_{\ge 0}$, we consider the random function
$A_{U}^{(n)}:U\to\R$ defined by
\[
A_{U}^{(n)}(\l,\tau)=\sum_{k=0}^{n-1}T_{\xi_k}(1,\tau)\l^k,
\]
where $\xi_0,\xi_1,\ldots$ are i.i.d. random variables with $\P\{\xi_0=j\}=p_j$ for $1\le j\le m$.
The entropy rate is defined as
\[
h(U)=\lim_{n\to\infty}\frac{1}{n}H(A_{U}^{(n)})=\inf_{n\ge1}\frac{1}{n}H(A_{U}^{(n)}),
\]
where $H(\cdot)$ denotes Shannon entropy.
We note that the sequence $n\mapsto H(A_{U}^{(n)})$ is subadditive, hence the above limit exists and it equals the
infimum.

The case when $U=\{(\l,\tau)\}$ is a single point is of special interest, and we will write $A_{\l,\tau}^{(n)}$
and $h(\l,\tau)$ to simplify notation. We denote by $\mu_{\l,\tau}^{(n)}$ the distribution of $A_{\l,\tau}^{(n)}$.
It is immediate from the definition that
\[
h(\l,\tau)\le H(p):=\sum_{j=1}^m p_j\log p_j^{-1},
\]
and equality holds if and only if the IFS \eqref{eq:IFS} does not contain exact overlaps for $\l$ and $\tau$.

We denote by $\Gamma$ the set of curves $\gamma\subset (0,1)\times\R$
that are of one of the following two forms
\begin{enumerate}
\item $\gamma=\{(\l,\tau)\in(0,1)\times\R:\tau=R(\l)\}$ for some $R\in\cR_L$,
\item $\gamma=\{(\l_0,\tau):\tau\in\R\}$ for some fixed $\l_{0}\in(0,1)$. 
\end{enumerate} 
If $\gamma$ is of the first form, we call it non-degenerate, otherwise it is degenerate.

Now we restate our results from Section \ref{sc:intro} in our general framework.

\begin{thm} \label{th:BV2-general}
Let $(\lambda,\tau)\in(0,1)\times\R$ and let $\mu_{\l,\tau}$ be the self-similar measure associated
to the IFS \eqref{eq:IFS} and the probability vector $p$.
Suppose the IFS does not contain exact overlaps and
\[
\dim\mu_{\l,\tau}<\min\Big\{\frac{H(p)}{\log \l^{-1}},1\Big\}.
\]
Then for every $\e>0$ and $N\ge1$, there
exist $n\ge N$ and $(\eta,\sigma)\in (0,1)\times\R$ such
that,
\begin{enumerate}
\item $|\l-\eta|,|\tau-\s|\le \exp(-n^{1/\e})$;
\item $\frac{1}{n\log\eta^{-1}} H(A_{\eta,\s}^{(n)})\le \dim\mu_{\l,\tau}+\e$;
\item $h(\gamma)\ge \min\{\log\l^{-1},H(p)\}-\e$ for all $\g\in\Gamma$ with $(\eta,\s)\in\gamma$.
\end{enumerate}
In particular, $\eta$ is a root of a nonzero polynomial in $\cP_{2L^2n}^{(2n)}$ and $\s$ can be written in the form
$\sigma=P_{1}(\eta)/P_{2}(\eta)$ for some $P_{1},P_{2}\in\cP_{L}^{(n)}$ such that $P_{2}(\eta)\ne0$.
\end{thm}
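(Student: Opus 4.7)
The plan is to adapt the Breuillard--Varj\'u strategy from \cite{BV-transcendent} for Bernoulli convolutions, with Proposition \ref{pr:no-bad-curves-3maps} (in the general form proved in Section \ref{sc:repel}) serving as the essential new input to accommodate the geometric complexity of curves in $\Gamma$. The skeleton is: convert the dimension deficit into many approximate overlaps at a superpolynomially small scale, encode these overlaps as polynomials with very small value at $(\l,\tau)$, extract an algebraic point $(\eta,\sigma)$ of controlled complexity via a resultant construction, and then use a Liouville-type estimate to transfer smallness at $(\l,\tau)$ to exact vanishing at $(\eta,\sigma)$.

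Starting from $\dim\mu_{\l,\tau}<\min\{H(p)/\log\l^{-1},1\}$ together with exact dimensionality of self-similar measures, a quantitative version of Hochman's entropy-increase argument yields arbitrarily large $n$ satisfying
\[
H\bigl(A_{\l,\tau}^{(n)};\exp(-n^{1/\e})\bigr)\le(\dim\mu_{\l,\tau}+\e/2)\,n\log\l^{-1}.
\]
Pigeonholing realizations of $A_{\l,\tau}^{(n)}$ into intervals of length $\exp(-n^{1/\e})$ produces an exponentially large family $\wt\cQ^{(n)}\subset\cQ^{(n)}$ of polynomials of the form $Q_{\o,\o'}(X,Y_1,Y_2)=\sum_{j=0}^{n-1}(T_{\o_j}-T_{\o'_j})(Y_1,Y_2)X^j$ satisfying $|Q_{\o,\o'}(\l,1,\tau)|\le\exp(-n^{1/\e})$.

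The crucial step is a dichotomy for $\wt\cQ^{(n)}$: either (a) there exists a single curve $\gamma\in\Gamma$ on which every member vanishes identically, or (b) there exist $Q_1,Q_2\in\wt\cQ^{(n)}$ sharing no common curve in $\Gamma$. In case (a), identifying $\o,\o'$ whenever $Q_{\o,\o'}\in\wt\cQ^{(n)}$ yields $h(\gamma)\le\frac{1}{n}H(A_\gamma^{(n)})\le\min\{\log\l^{-1},H(p)\}-\e$, while the smallness of each $|Q(\l,1,\tau)|$ forces $\gamma$ to enter any prescribed neighborhood of $(\l,\tau)$ for $n$ large enough, contradicting Proposition \ref{pr:no-bad-curves-3maps}. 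Hence case (b) must hold. Writing $Q_i=P_{i,1}(X)Y_1+P_{i,2}(X)Y_2$ with $P_{i,j}\in\cP_L^{(n)}$, the resultant $R:=P_{1,1}P_{2,2}-P_{1,2}P_{2,1}\in\cP_{2L^2n}^{(2n)}$ is nonzero and satisfies $|R(\l)|\le C_{\l,\tau}\exp(-n^{1/\e})$. A standard root-localization argument produces a real root $\eta$ of $R$ with $|\eta-\l|\le\exp(-n^{1/\e'})$ for some $\e'$ slightly larger than $\e$, and $\sigma:=-P_{1,1}(\eta)/P_{1,2}(\eta)$ (swapping indices if needed) satisfies $|\sigma-\tau|\le\exp(-n^{1/\e'})$, establishing (1) and the final algebraicity statement.

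Conclusions (2) and (3) then follow from the algebraicity. For (2), a Liouville-type lower bound applied at the algebraic point $(\eta,\sigma)$ of degree at most $2n$ and controlled height shows that for any $Q\in\cQ^{(n)}$, either $Q(\eta,1,\sigma)=0$ or $|Q(\eta,1,\sigma)|\ge\exp(-Cn\log n)$. Continuity combined with the small perturbation from $(\l,\tau)$ to $(\eta,\sigma)$ gives $|Q_{\o,\o'}(\eta,1,\sigma)|\le\exp(-n^{1/\e''})$ for every $Q_{\o,\o'}\in\wt\cQ^{(n)}$, which for $\e$ sufficiently small falls well below the Liouville threshold; hence $Q_{\o,\o'}(\eta,1,\sigma)=0$ for every member of $\wt\cQ^{(n)}$, collapsing the support of $A_{\eta,\sigma}^{(n)}$ and yielding $\frac{1}{n\log\eta^{-1}}H(A_{\eta,\sigma}^{(n)})\le\dim\mu_{\l,\tau}+\e$. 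Conclusion (3) is Proposition \ref{pr:no-bad-curves-3maps} applied at $(\eta,\sigma)$, which lies arbitrarily close to the overlap-free point $(\l,\tau)$. The main obstacle is Proposition \ref{pr:no-bad-curves-3maps} itself, which embodies the paper's principal novelty; within the present deduction, the most delicate point is calibrating Hochman's entropy bound to the superpolynomial scale $\exp(-n^{1/\e})$ rather than the exponential scale originally used in \cite{Hoc-self-similar}.
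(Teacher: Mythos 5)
Your proposal correctly identifies the role of Proposition \ref{pr:no-bad-curves-3maps} (Corollary \ref{cr:no-bad-curves} in the general setting) as the key new ingredient, and the dichotomy leading to a resultant construction for $(\eta,\sigma)$ is essentially the content of Proposition \ref{prop:common zero}. However, there is a fundamental gap in the very first step, which makes the proposal unsound as written.

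You assert that ``a quantitative version of Hochman's entropy-increase argument yields arbitrarily large $n$ satisfying
\[
H\bigl(A_{\l,\tau}^{(n)};\exp(-n^{1/\e})\bigr)\le(\dim\mu_{\l,\tau}+\e/2)\,n\log\l^{-1}.
\]''
This does not follow from Hochman's results. What Hochman gives (Theorem \ref{th:Hoc}, i.e.\ \cite{Hoc-self-similar}*{Theorem 1.3}) is control of $H(\mu_{\l,\tau}^{(n)};c^n)$ for fixed $c<\l$, that is, at \emph{exponential} scales. As $r\to 0$ with $n$ fixed, $H(\mu_{\l,\tau}^{(n)};r)$ increases to $H(A_{\l,\tau}^{(n)})=nH(p)$ (there are no exact overlaps), which is strictly larger than $(\dim\mu_{\l,\tau}+\e/2)n\log\l^{-1}$ by hypothesis. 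The superexponential scale $\exp(-n^{1/\e})$ may lie well below the transition point where the entropy rises towards $nH(p)$, and for such $n$ the asserted bound fails. Controlling \emph{where} that transition occurs is exactly the difficult content of the theorem, not a consequence of Hochman's theorem plus exact dimensionality.

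The paper's proof instead runs by contradiction: one assumes that conclusion (1) fails for all large $n$ (and all candidate $(\eta,\sigma)$), and then inductively builds a sequence $n_0<n_1<\cdots$ together with parameters $K_j$ satisfying the hypotheses of Proposition \ref{prop:scale-and-convolve}. At each step, either the entropy is large at the relevant superpolynomial scale (contributing ``extra entropy'' to condition (3) there), or it is small, in which case Propositions \ref{prp:approx by alg ifs when ent is small} and \ref{prp:cond for large ent}, combined with the contrary assumption, force the entropy to saturate at a much larger index $n_{j+1}$, again contributing extra entropy. The conclusion of Proposition \ref{prop:scale-and-convolve} eventually contradicts the boundedness of the accumulated entropy. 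This scale-and-convolve machinery from \cite{BV-transcendent} is indispensable and has no analogue in your proposal. Without it, or something playing the same role, you have no way to produce even one value of $n$ for which the entropy at the superpolynomial scale is small, and the remainder of your argument has nothing to start from. Your treatment of items (2) and (3) given such an $n$ is sound, so the single missing piece is precisely this quantitative bootstrapping.
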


\begin{cor}\label{cr:Mike-general}
The set of parameters $(\l,\tau)\in(0,1)\times\R$ for which Conjecture \ref{cn:EO} fails for the IFS \eqref{eq:IFS} and the probability vector $p$
is of Hausdorff dimension $0$.
\end{cor}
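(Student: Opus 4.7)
The plan is to obtain Corollary \ref{cr:Mike-general} as a direct consequence of Theorem \ref{th:BV2-general} by a covering-and-counting argument, in the spirit of Hochman's deduction of a dimension bound for his exceptional set in \cite{Hoc-self-similar}. Let $E \subseteq (0,1) \times \R$ be the set of exceptional parameters, and fix a compact rectangle $K \subset (0,1) \times \R$; by countable stability of Hausdorff dimension it is enough to show $\dim_H(E \cap K) = 0$, which in turn follows once $\cH^s(E \cap K) = 0$ for every $s > 0$.

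Fix such $s > 0$ and fix once and for all some $\e \in (0,1)$, say $\e = 1/2$. For each $N \ge 1$ and each $(\l,\tau) \in E \cap K$, Theorem \ref{th:BV2-general} supplies an integer $n = n(\l,\tau) \ge N$ and a pair $(\eta,\s) \in (0,1) \times \R$ with $|\l - \eta|, |\tau - \s| \le \exp(-n^{1/\e})$, in which $\eta$ is a root of some nonzero $P \in \cP_{2L^2 n}^{(2n)}$ and $\s = P_1(\eta)/P_2(\eta)$ for some $P_1, P_2 \in \cP_L^{(n)}$ with $P_2(\eta) \neq 0$. Writing $r_n = \exp(-n^{1/\e})$ and letting $\mathcal{S}_n$ collect all pairs $(\eta,\s)$ arising in this way from some choice of $P, P_1, P_2$, one obtains the cover
\begin{equation*}
E \cap K \ \subseteq \ \bigcup_{n \ge N}\, \bigcup_{(\eta,\s) \in \mathcal{S}_n} [\eta - r_n, \eta + r_n] \times [\s - r_n, \s + r_n],
\end{equation*}
in which every box has diameter at most $2\sqrt{2}\,r_n$.

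The remaining step is a crude size estimate for $\mathcal{S}_n$: there are at most $(4L^2 n + 1)^{2n}$ polynomials in $\cP_{2L^2 n}^{(2n)}$, each with at most $2n$ real roots, and at most $(2L+1)^{2n}$ pairs $(P_1, P_2) \in (\cP_L^{(n)})^2$, so $|\mathcal{S}_n| \le \exp(O(n \log n))$. Hence the $s$-dimensional Hausdorff pre-measure of this cover is bounded by
\begin{equation*}
\sum_{n \ge N} |\mathcal{S}_n|\,(2\sqrt{2}\,r_n)^s \ \le \ \sum_{n \ge N} \exp\!\big(O(n \log n) - s\, n^{1/\e}\big),
\end{equation*}
and since $1/\e > 1$ forces $n^{1/\e}$ to dominate $n \log n$, this tail sum tends to $0$ as $N \to \infty$. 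Therefore $\cH^s(E \cap K) = 0$, and the corollary follows.

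The deduction has no genuine obstacle: all of the substantive work is absorbed into Theorem \ref{th:BV2-general}, which simultaneously supplies the super-polynomial approximation rate $\exp(-n^{1/\e})$ and the polynomial-algebraic control on the approximants $(\eta,\s)$. The counting step is effortless because the approximation radius $r_n^s$ dominates the $\exp(O(n \log n))$ admissible candidates at scale $n$ whenever $\e < 1$, and the freedom to decrease $\e$ further is not even needed.
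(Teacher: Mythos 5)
Your proof is correct and follows essentially the same covering-and-counting argument as the paper: Theorem \ref{th:BV2-general} supplies, for each exceptional $(\l,\tau)$ and each $N$, an approximant $(\eta,\s)$ at distance $\exp(-n^{1/\e})$ drawn from a set of cardinality $\exp(O(n\log n))$, and the tail sum of $s$-th powers of the radii vanishes as $N\to\infty$. The only cosmetic difference is the preliminary reduction to a compact rectangle $K$, which is harmless but unnecessary since the cover already applies to all of $E$ directly.
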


\begin{que}\label{q:BV1-general}
Is it true that for all $\e>0$, there is $M$ such that the following holds?
Let $(\l,\tau)\in(\e,1-\e)\times\R$ be such that $h(\l,\tau)\le \min\{H(p),\log \l^{-1}\}-\e$ and $h(\g)\ge \min\{H(p),\log \l^{-1}\}-M^{-1}$
for all $\g\in\Gamma$ with $(\l,\tau)\in\gamma$.
Then $M(\l)\le M$.
\end{que}

\begin{thm}\label{th:EO-general}
Suppose that the answer to Question \ref{q:BV1-general} is affirmative.
Then Conjecture \ref{cn:EO} holds for the IFS \eqref{eq:IFS} for any
$(\l,\tau)\in(0,1)\times\R$.
\end{thm}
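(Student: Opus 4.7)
The plan is to argue by contradiction, following the strategy outlined in Section \ref{sc:Ideas2}. Suppose Conjecture \ref{cn:EO} fails at some $(\l_0,\tau_0) \in (0,1)\times\R$: the IFS \eqref{eq:IFS} has no exact overlaps at $(\l_0,\tau_0)$ yet $\dim\mu_{\l_0,\tau_0}<\min\{H(p)/\log\l_0^{-1},1\}$. Fix $\e>0$ sufficiently small in terms of $(\l_0,\tau_0)$, let $M$ be the constant supplied by the affirmative answer to Question \ref{q:BV1-general} for a suitably smaller $\e'>0$, and let $K\in\Z_{>0}$ be the integer produced by Proposition \ref{pr:no-bad-bouquet-3maps} applied to $(\l_0,\tau_0)$ with $h_0=\min\{\log\l_0^{-1},H(p)\}-\e$. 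Apply Theorem \ref{th:BV2-general} with this $\e$ to obtain, for arbitrarily large $n$, a pair $(\eta,\s)\in(0,1)\times\R$ with $|\l_0-\eta|,|\tau_0-\s|\le\exp(-n^{1/\e})$, such that condition (2) together with $\dim\mu_{\l_0,\tau_0}<\min\{H(p)/\log\l_0^{-1},1\}$ forces $h(\eta,\s)\le \min\{\log\eta^{-1},H(p)\}-\e'$, and $h(\g)\ge \min\{\log\eta^{-1},H(p)\}-M^{-1}$ for every $\g\in\Gamma$ through $(\eta,\s)$. The hypothesis of Question \ref{q:BV1-general} is met, so $M(\eta)\le M$; in particular $\eta$ is algebraic of bounded degree and Mahler measure.

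Next I invoke Hochman's bound \cite{Hoc-self-similar}. Since $(\l_0,\tau_0)$ violates Conjecture \ref{cn:EO}, for every $C_0>1$ there exist arbitrarily large $n'$ with
\[
H(A_{\l_0,\tau_0}^{(n')}; C_0^{-n'}) \le \bigl(\min\{\log\l_0^{-1}, H(p)\} - \e\bigr)n'.
\]
I choose $C_0$ large in terms of $K$ and $M$, and choose $n'$ as a function of $n$ so that $|\l_0-\eta|\le(2M)^{-n'}$ while $n'\to\infty$; this is possible because $|\l_0-\eta|$ is super-exponentially small in $n$. The entropy bound supplies a rich family $\wt\cQ^{(n')}\subset\cQ^{(n')}$ of nonzero polynomials $Q$ with $|Q(\l_0,1,\tau_0)|\le C_0^{-n'}$, in the sense that the partition of $\Omega$ coarsening $\o\sim\o'$ whenever their difference polynomial lies in $\wt\cQ^{(n')}$ has entropy at most $H(A_{\l_0,\tau_0}^{(n')}; C_0^{-n'})$. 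Fix any $\wt Q=\wt P_1 Y_1+\wt P_2 Y_2\in\wt\cQ^{(n')}$ with $\wt P_i\in\cP_L^{(n')}$, and set $\wt R=-\wt P_1/\wt P_2\in\cR_L$; the associated non-degenerate curve $\wt\g$ passes within $O(C_0^{-n'})$ of $(\l_0,\tau_0)$.

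The heart of the argument is the conversion of the above into a bound on $h(\wt R,\eta,K)$. For any $Q=P_1 Y_1+P_2 Y_2\in\wt\cQ^{(n')}$, combining $|Q(\l_0,1,\tau_0)|\le C_0^{-n'}$ with $|\wt Q(\l_0,1,\tau_0)|\le C_0^{-n'}$ and eliminating $\tau_0$ yields
\[
|(P_1\wt P_2 - \wt P_1 P_2)(\l_0)| = O(C_0^{-n'}),
\]
with constant depending only on $\l_0$. The integer polynomial $P_1\wt P_2 - \wt P_1 P_2$ has degree $\le 2n'$ and coefficients bounded polynomially in $n'$. Lemma \ref{lm:Dimitrov-precise}, applied with $k=K$, $\l=\l_0$, the algebraic $\eta$ of bounded Mahler measure, $|\l_0-\eta|\le(2M)^{-n'}$, and $C_0$ large enough in terms of $M$ and $K$, forces $\eta$ to be a zero of $P_1\wt P_2-\wt P_1 P_2$ of order $\ge K$. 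Equivalently, for every pair $\o,\o'$ producing $Q$, the function $A_{\wt R}^{(n')}$ and its first $K-1$ derivatives at $X=\eta$ agree on $\o$ and $\o'$, so $B_{\wt R,\eta,K}^{(n')}$ identifies $\o$ and $\o'$. Consequently the partition of $\Omega$ generated by $B_{\wt R,\eta,K}^{(n')}$ is coarser than the $\wt\cQ^{(n')}$ partition, and
\[
\frac{H(B_{\wt R,\eta,K}^{(n')})}{n'}\le \frac{H(A_{\l_0,\tau_0}^{(n')}; C_0^{-n'})}{n'}\le \min\{\log\l_0^{-1}, H(p)\}-\e.
\]
Taking the infimum in $n'$ gives $h(\wt R,\eta,K)\le\min\{\log\l_0^{-1},H(p)\}-\e$, which contradicts Proposition \ref{pr:no-bad-bouquet-3maps} since $\wt\g$ passes within $K^{-1}$ of $(\l_0,\tau_0)$ once $n$ is large.

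The main obstacle is coordinating the nested hierarchy $\e\gg M^{-1}\gg K^{-1}$ and then $n\gg n'$ chosen so that Lemma \ref{lm:Dimitrov-precise} applies at level $K$: the key subtlety is that $P_1\wt P_2 - \wt P_1 P_2$ is only known to be small at $\l_0$, not at $\eta$, and Dimitrov's bound is precisely what converts smallness at $\l_0$ together with the proximity of $\l_0$ to an algebraic $\eta$ of bounded Mahler measure into a high-order vanishing at $\eta$. A second subtlety is the identification of the $\wt\cQ^{(n')}$ partition of $\Omega$ with a coarsening of the $C_0^{-n'}$-scale partition underlying $H(A_{\l_0,\tau_0}^{(n')}; C_0^{-n'})$; because the definition \eqref{avg ent scl r} averages over translates, this step parallels Hochman's entropy manipulations from \cite{Hoc-self-similar} and requires careful accounting of the contribution of near-coincidences.
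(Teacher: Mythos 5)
Your proposal tracks the paper's argument in Section~\ref{sc:EO-proof} faithfully in outline, but there is one nontrivial step that you omit and that cannot be skipped. You "fix any $\wt Q=\wt P_1 Y_1+\wt P_2 Y_2\in\wt\cQ^{(n')}$" and set $\wt R=-\wt P_1/\wt P_2$, but nothing guarantees that $\wt P_2(\eta)\neq 0$. This nonvanishing is used at two essential points: (i) it is needed for $\wt R$ to be regular at $\eta$, without which $B_{\wt R,\eta,K}^{(n')}$ and hence $h(\wt R,\eta,K)$ are not defined; (ii) your passage from "$\eta$ is a zero of $P_1\wt P_2-\wt P_1 P_2$ of order $\ge K$" to "$A_{\wt R}^{(n')}$ and its first $K-1$ derivatives at $\eta$ agree on $\o$ and $\o'$" is precisely a division by $\wt P_2$, since $Q(X,1,\wt R(X))=(P_1\wt P_2-\wt P_1 P_2)/\wt P_2$, and this fails if $\wt P_2(\eta)=0$. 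The paper establishes that some $Q\in\wt\cQ$ has $P_2(\eta)\neq 0$ by a genuine argument: if every $Q\in\wt\cQ$ had $P_2(\eta)=0$, then combining the smallness of $Q(\l_0,1,\tau_0)$ with a mean-value estimate and Lemma~\ref{lem:lb on val of poly} forces $P_1(\eta)=0$ as well, so all of $\wt\cQ$ would vanish identically on the degenerate curve $\{\eta\}\times\R$, giving $h(\{\eta\}\times\R)\le h_0$ and contradicting Corollary~\ref{cr:no-bad-curves}. That use of Corollary~\ref{cr:no-bad-curves} is a real ingredient, not a routine cleanup.

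Relatedly, to invoke Proposition~\ref{pr:no-bad-bouquet} (note: you should cite the general version, not Proposition~\ref{pr:no-bad-bouquet-3maps}, since the statement being proved is in the general framework) you need $|\wt R(\eta)-\tau_0|\le K^{-1}$, not merely that the curve $\wt\g$ passes within $O(C_0^{-n'})$ of $(\l_0,\tau_0)$ at some nearby point. Turning that proximity into a bound on $|\wt R(\eta)-\tau_0|$ requires a lower bound on $|\wt P_2(\eta)|$, which the paper supplies again via Lemma~\ref{lem:lb on val of poly} and the Mahler-measure control $M(\eta)\le M$. Your final contradiction therefore rests on two quantitative facts about the chosen $\wt Q$ that you have not verified. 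Apart from these omissions, the parameter hierarchy, the appeal to Question~\ref{q:BV1-general}, the use of Hochman's theorem, and the application of Lemma~\ref{lm:Dimitrov-precise} are set up correctly and match the paper.
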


We conclude this section with a summary of our main notation:

\begin{longtable}[c]{|c|>{\centering}p{7.1cm}|}
\hline
$T_j(Y_1,Y_2)=a_jY_1+b_jY_2$ & Linear forms, Sec. \ref{sc:general-setting}\tabularnewline
\hline
$\{f_j(x)=\l x+T_j(1,\tau)\}_{j=1}^m$ & IFS, Sec. \ref{sc:general-setting}\tabularnewline
\hline
$p=(p_1,\ldots,p_m)$ & Probability vector, Sec. \ref{sc:general-setting}\tabularnewline
\hline
$\mu_{\l,\tau}$, $\mu_{z,w}$ & Self-similar measures, Sec. \ref{sc:general-setting}, \ref{sc:repel}\tabularnewline
\hline
$\cP_{l}^{(n)}$,$\cQ^{(n)}$ & Sets of polynomials, Sec. \ref{sc:general-setting}\tabularnewline
\hline
$\cP_l$ & Set of power series, Sec. \ref{sc:general-setting}\tabularnewline
\hline
$\cR_l$ & Set of $P_1/P_2$ with $P_1,P_2\in\cP_l$, Sec. \ref{sc:general-setting}\tabularnewline
\hline
$\Omega:=\{1,\ldots,m\}^{\Z_{\ge 0}}$ & Symbolic space, Sec. \ref{sc:general-setting}\tabularnewline
\hline
$\beta:=p^{\Z_{\ge 0}}$ & Bernoulli measure, Sec. \ref{sc:general-setting}\tabularnewline
\hline
$L:=\max\{a_i-a_j,b_i-b_j\}_{i,j=1}^m$ & Global constant, Sec. \ref{sc:general-setting}\tabularnewline
\hline
$A_{U}^{(n)}$, $A_{\l,\tau}^{(n)}$ & Random elements, Sec. \ref{sc:general-setting}\tabularnewline
\hline
$\mu_{\l,\tau}^{(n)}$ & Distribution of $A_{\l,\tau}^{(n)}$, Sec. \ref{sc:general-setting}\tabularnewline
\hline
$h(U)$, $h(\l,\tau)$, $h(R)$, $h(R,\l,K)$ & Entropy rates, Sec. \ref{sc:general-setting}, \ref{sc:ssm-Cx}, \ref{sc:lb on h(R,lam,M)} \tabularnewline
\hline
$\Gamma$ & Set of curves, Sec. \ref{sc:general-setting}\tabularnewline
\hline
$M(P)$, $M(\eta)$ & Mahler measure, Sec. \ref{sc:aux res num}\tabularnewline
\hline
$\F((X))$ & Field of formal Laurent series, Sec. \ref{sc:ssm-Cx}\tabularnewline
\hline
$\F[[X]]$ & Ring of formal power series, Sec. \ref{sc:ssm-Cx}\tabularnewline
\hline
$|R|$ & Absolute value of $R\in\F((X))$, Sec. \ref{sc:ssm-Cx}\tabularnewline
\hline
$A_R$, $A_{R}^{(n)}$ & Random power series, Sec. \ref{sc:ssm-Cx}\tabularnewline
\hline
$\mu_R$ & Distribution of $A_R$, Sec. \ref{sc:ssm-Cx}\tabularnewline
\hline
$H(A;n)$, $H(A;l|n)$ & Entropies of random $A\in \F[[X]]$, Sec. \ref{sc:ssm-Cx}\tabularnewline
\hline
$\dim A_R, \dim\mu_R$ & Limit of $\frac{1}{n}H(A_R;n)$, Sec. \ref{sc:ssm-Cx}\tabularnewline
\hline
$s(\l):=\min\{1,H(p)/\log\lambda^{-1}\}$ & Natural upper bound, Sec. \ref{sc:prf of apx thm}\tabularnewline
\hline
$H(A;r)$, $H(\nu;r)$, $H(\nu;r_1|r_2)$ & Entropies at and between scales, Sec. \ref{sc:prf of apx thm}\tabularnewline
\hline
\end{longtable}

\section{Auxiliary results from number theory}\label{sc:aux res num}

In this section, we gather some results from Diophantine approximation, which will be needed later on.
We are mostly concerned by lower bounds on the distance between two algebraic numbers
and on the values of polynomials with small integer coefficients.
The importance of such estimates has been highlighted in Section \ref{sc:outline}.
All of the results in this section are well known and has been used in the dimension theory of fractal measures before,
with the exception of Lemma \ref{lm:Dimitrov-precise}, which we learned from Dimitrov \cite{Dim-personal}.

The results in this section will be used first in Section \ref{sc:prf of apx thm}, and the reader may safely skip them and
refer back later when needed.

Given a polynomial,
\[
P(x)=\sum_{k=0}^{n}a_kx^k=a_n(x-\l_1)\ldots(x-\l_n)\in\C[x],
\]
its Mahler measure is denoted $M(P)$, and defined by
\[
M(P)=|a_n|\prod_{j:|\l_j|>1}|\l_j|.
\]
The length of $P$ is denoted $\ell_1(P)$, and defined by
\[
\ell_1(P)=|a_0|+\ldots+|a_n|.
\]
It is immediate from the definition that Mahler measures of polynomials are multiplicative.
It is a standard fact that $M(P)\le \ell_1(P)$ for all $P\in\C[x]$, see \cite{BG-heights}*{Lemma 1.6.7}.

For an algebraic $\eta\in\C$, we write $\deg\eta$ for its degree
over $\Q$, and $M(\eta)$ for its Mahler measure, which is defined to be the Mahler measure of its minimal polynomial over $\Z$.

The following lemma is due to Mahler \cite{Mah-discriminant}.
We will use it to bound the distance between parameters with exact overlaps, and it is a key ingredient
in the proof of Theorem \ref{th:BV2-general}, as we discussed in Section \ref{sc:ideas-th3}.

\begin{lem}
\label{lem:dist between distinct roots}Let $n,l\ge1$ and let $\eta,\eta'\in\C$
be two distinct algebraic numbers, each of which is a root of a nonzero
polynomial in $\mathcal{P}_{l}^{(n)}$. Then,
\[
|\eta-\eta'|\ge2^{-n-1}n^{-5n}l^{-4n}.
\]
\end{lem}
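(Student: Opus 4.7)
The plan is to combine the two given polynomials into a single squarefree polynomial $Q\in\Z[X]$ having both $\eta$ and $\eta'$ as simple roots, and then to invoke Mahler's classical root separation bound applied to $Q$. This reduces the two–polynomial problem to a one–polynomial one, which is the natural setting of the estimate in \cite{Mah-discriminant}.

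First I would pass from $P$ and $P'$ to the integral minimal polynomials $M_\eta,M_{\eta'}\in\Z[X]$ of $\eta$ and $\eta'$ (taken primitive, with positive leading coefficient). Since $\eta$ is a root of some $P\in\cP_l^{(n)}$, Gauss's lemma gives $M_\eta\mid P$ inside $\Z[X]$, and analogously $M_{\eta'}\mid P'$. Combining the multiplicativity of the Mahler measure with the standard inequality $M(P)\le\ell_1(P)$, I get
\[
M(M_\eta),\ M(M_{\eta'})\le\ell_1(P)\le nl,\qquad \deg M_\eta,\ \deg M_{\eta'}\le n-1.
\]
I would then set
\[
Q:=\begin{cases} M_\eta & \text{if } M_\eta=M_{\eta'},\\[2pt] M_\eta\cdot M_{\eta'} & \text{if } M_\eta\ne M_{\eta'}.\end{cases}
\]
In either case $Q\in\Z[X]$ is squarefree (distinct irreducible rational polynomials share no complex root), has $\eta$ and $\eta'$ as simple roots, has degree $N:=\deg Q\le 2(n-1)$, and by multiplicativity satisfies $M(Q)\le (nl)^2=n^2l^2$.

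The main input is Mahler's separation inequality \cite{Mah-discriminant}: for a squarefree $Q\in\Z[X]$ of degree $N$ and any two distinct complex roots $\alpha,\beta$ of $Q$,
\[
|\alpha-\beta|\ge \sqrt{3}\,N^{-(N+2)/2}\,M(Q)^{-(N-1)},
\]
whose usual proof combines $|\mathrm{disc}(Q)|\ge 1$ with Hadamard's inequality applied to the Vandermonde matrix of the roots of $Q$. Substituting $N\le 2n-2$, $M(Q)\le n^2l^2$, and the elementary bound $(n-1)^{-n}\ge n^{-n}$, the right hand side becomes
\[
\sqrt{3}\,(2n-2)^{-n}(n^2l^2)^{-(2n-3)}\ge \sqrt{3}\cdot 2^{-n}\,n^{-(5n-6)}\,l^{-(4n-6)}\ge 2^{-n-1}\,n^{-5n}\,l^{-4n},
\]
which is the announced bound (the last inequality reduces to $2\sqrt{3}\,n^{6}l^{6}\ge 1$ and is trivial for $n,l\ge 1$). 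The case $n=1$ is vacuous because $\cP_l^{(1)}$ consists of nonzero constants, which have no roots.

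There is no serious obstacle in this plan: Mahler's separation bound and the height inequalities for divisors are both classical. The only place that needs any care is the bookkeeping of the numerical exponents through the final substitution, but the slack exponents $5n$ and $4n$ in the statement make that step routine.
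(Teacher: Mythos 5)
Your proof is correct and follows essentially the same route as the paper: pass to the minimal polynomials, form their product (or take the common one when they coincide) to get a squarefree polynomial having both $\eta$ and $\eta'$ as roots, bound its Mahler measure by $n^2l^2$ via $M(E)\le\ell_1(E)$ and multiplicativity, and apply Mahler's separation bound from \cite{Mah-discriminant}. The only differences are cosmetic: you track the slightly sharper degree bound $2n-2$ (where the paper is content with $2n$), you phrase the case split via equality of minimal polynomials rather than Galois conjugacy, and you make the $n=1$ vacuous case explicit.
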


\begin{proof}
Let $P\in\Z[X]$ be of degree at most $d\ge1$ and with distinct
roots. By \cite{Mah-discriminant}*{Theorem 2} it follows that the distance between
any two roots of $P$ is at least,
\[
\sqrt{3}d^{-(d+2)/2}M(P)^{-(d-1)}.
\]

If $\eta$ and $\eta'$ are Galois conjugates, take $P$ to be their
minimal polynomial over $\Z$. If they are not Galois conjugates,
take $P$ to be the product of their minimal polynomials. In either
case, $P$ has distinct roots, the degree of $P$ is at most $2n$,
and its Mahler measure is at most the product of the Mahler measures
of two polynomials in $\mathcal{P}_{l}^{(n)}$.

We have $M(E)\le\ell_{1}(E)\le nl$
for each $E\in\mathcal{P}_{l}^{(n)}$, which gives $M(P)\le n^{2}l^{2}$.
Thus,
\[
|\eta-\eta'|\ge\sqrt{3}(2n)^{-n-1}M(P)^{-2n+1}\ge2^{-n-1}n^{-5n}l^{-4n},
\]
which completes the proof of the lemma.
\end{proof}

The following lemma is a standard application of Jensen's formula to bound the number of roots a polynomial in $\mathcal{P}_{l}^{(n)}$ may have away from the unit circle. This will be used in the next lemma to show that a polynomial taking a small value at a point must have a nearby root.

\begin{lem}
\label{lem:bound on num of roots}There is a function $a:\Z_{>0}\to(0,1)$
such that $\lim_{k\rightarrow\infty} a(k)=1$ and the following
holds. Let $l,n\ge1$ and $0\ne P\in\mathcal{P}_{l}^{(n)}$ be given.
Then there are at most $k(1+\frac{\log l}{\log(k+1)})$ nonzero roots
of $P$ of absolute value less than $a(k)$.
\end{lem}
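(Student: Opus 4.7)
The plan is to apply Jensen's formula to $P$ after stripping off its zero at the origin. First, I would write $P(X)=X^{s}\tilde{P}(X)$ where $\tilde{P}(0)\neq 0$. Then $\tilde{P}\in\cP_{l}^{(n)}$, its roots are exactly the nonzero roots of $P$, and because $\tilde{P}(0)$ is a nonzero integer we have $\log|\tilde{P}(0)|\ge 0$. This lower bound on $|\tilde{P}(0)|$ is the crucial arithmetic input that replaces any direct estimate on the leading coefficient.

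Next, for any $r\in(a(k),1)$, Jensen's formula gives
\[
\sum_{|z_{j}|<r}\log\frac{r}{|z_{j}|}\;=\;\frac{1}{2\pi}\int_{0}^{2\pi}\log|\tilde{P}(re^{i\theta})|\,d\theta\;-\;\log|\tilde{P}(0)|\;\le\;\frac{1}{2\pi}\int_{0}^{2\pi}\log|\tilde{P}(re^{i\theta})|\,d\theta,
\]
where the sum ranges over the roots of $\tilde{P}$ (with multiplicity) inside the disc of radius $r$. The triangle inequality yields the crude but clean bound $|\tilde{P}(z)|\le l\sum_{j\ge 0}|z|^{j}=l/(1-|z|)$ for $|z|<1$, so the right-hand side is at most $\log l-\log(1-r)$. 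On the other hand, each root with $|z_{j}|<a(k)$ contributes at least $\log(r/a(k))$ on the left, so the number of such roots is at most
\[
\frac{\log l-\log(1-r)}{\log(r/a(k))}.
\]

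To match this with the target bound, I would set $a(k):=(k+1)^{-2/k}$ (clearly in $(0,1)$ and tending to $1$) and take $r:=\sqrt{a(k)}=(k+1)^{-1/k}$, so that $\log(r/a(k))=\tfrac{1}{2}\log(1/a(k))=\log(k+1)/k$. The $\log l$ part of the numerator then contributes exactly $k\log l/\log(k+1)$, which is precisely the $l$-dependent summand in the target. It remains to show that the ``$+k$'' part of the target absorbs $-k\log(1-(k+1)^{-1/k})/\log(k+1)$, i.e.\ that
\[
1-(k+1)^{-1/k}\;\ge\;(k+1)^{-1}\qquad\text{for all }k\ge 1.
\]
I would verify this by hand at $k=1,2$ (it holds with equality at $k=1$) and derive it for $k\ge 3$ from the elementary inequality $e^{-x}\le 1-x+x^{2}/2$ applied at $x=\log(k+1)/k$, reducing it to $(k+1)\log(k+1)(1-\log(k+1)/(2k))\ge k$, which is immediate.

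The proof is essentially a one-line application of Jensen once $\tilde{P}(0)$ has been pinned down to be a nonzero integer; the only real obstacle is the bookkeeping in choosing $a(k)$ and $r$ so that the two competing error terms, the $\log l$ contribution and the $-\log(1-r)$ contribution, fit into the two summands of the target bound $k(1+\log l/\log(k+1))$. The choice $r=\sqrt{a(k)}$ is a natural compromise — it balances ``$r$ stays away from $a(k)$'' (needed so that each interior root contributes substantially to the Jensen sum) against ``$r$ stays away from $1$'' (needed so that $1-r$ does not crush the numerator) — and the power $(k+1)^{-2/k}$ is then forced by the requirement that the $l$-dependent terms match exactly.
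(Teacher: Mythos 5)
Your proposal is correct and follows essentially the same route as the paper: strip off the zero at the origin so that the nonzero-integer input $|\tilde{P}(0)|\ge 1$ makes Jensen's formula usable, bound $|\tilde{P}|$ on the circle of radius $r$ by $l/(1-r)$, and compare with the per-root contribution $\log(r/a(k))$. The only difference is bookkeeping in the choice of $a(k)$ and $r$: the paper takes $a(k)=\frac{k}{k+1}(k+1)^{-1/k}$ and $r=\frac{k}{k+1}$, for which $1-r=\frac{1}{k+1}$ makes the target bound $k\bigl(1+\frac{\log l}{\log(k+1)}\bigr)$ drop out exactly, whereas your choice $a(k)=(k+1)^{-2/k}$, $r=(k+1)^{-1/k}$ gives the same $\log(r/a(k))=\frac{\log(k+1)}{k}$ but leaves the residual inequality $1-(k+1)^{-1/k}\ge\frac{1}{k+1}$ to verify (which you do, though note it can be seen in one line from $(1+1/k)^k\le e\le k+1$ for $k\ge2$, with equality at $k=1$).
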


\begin{proof}
For $k\ge1$ set $a(k)=\frac{k}{k+1}\cdot\frac{1}{(k+1)^{1/k}}$.
Let $l,n\ge1$ and $0\ne P\in\mathcal{P}_{l}^{(n)}$ be given. Without
loss of generality we may assume that $|P(0)|\ge1$, otherwise we
can divide $P$ by an appropriate power of $X$ and obtain a new polynomial
with this property. Let $k\ge1$ and let $z_{1},\ldots,z_{K}$ be the
roots of $P$ of absolute value less than $a(k)$. Write $r=k/(k+1)$,
then by Jensen's formula and since $\log|P(0)|\ge0$,
\[
\sum_{j=1}^{K}\log\frac{r}{|z_{j}|}\le\int_{0}^{1}\log|P(re^{2\pi it})|dt.
\]
For each $t\in[0,1]$,
\[
|P(re^{2\pi it})|\le l(1+r+r^{2}+\ldots)=\frac{l}{1-r}=l(k+1).
\]
Since $|z_{j}|\le a(k)$ for each $1\le j\le K$,
\[
\frac{K}{k}\cdot\log(k+1)=K\cdot\log\frac{r}{a(k)}\le\sum_{j=1}^{K}\log\frac{r}{|z_{j}|}.
\]
From all of this, we get
\[
\frac{K}{k}\cdot\log(k+1)\le\log(l(k+1)),
\]
which completes the proof of the lemma.
\end{proof}

The next lemma will be used to find a nearby parameter with exact overlaps when the points in the
$n$'th generation approximation of a self-similar measure are not well separated.

\begin{lem}
\label{lem:close root single poly}For every $\e\in(0,1)$ there exists
$c=c(\e)\in(0,1)$ such that the following holds. Let $n\ge1$,
$l\ge3$, $0\ne P\in\mathcal{P}_{l}^{(n)}$, $0<r<\e^n2^{-n}$ and $\lambda\in\C$
be given. Suppose that $\e\le|\lambda|\le1-\e$ and $|P(\lambda)|\le r$.
Then there exists $\eta\in\C$ such that $P(\eta)=0$ and,
\[
|\lambda-\eta|\le\left(2^{n}\e^{-n}r\right)^{c/\log l}.
\]
\end{lem}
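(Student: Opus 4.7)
My plan is to use Lemma \ref{lem:bound on num of roots} to confine the roots of $P$ that can lie close to $\lambda$ to a set of cardinality $O(\log l)$, and then apply the product formula for $|P(\lambda)|$ to extract a quantitative bound on the distance to the nearest root.

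First, I choose $k = k(\e) \in \Z_{>0}$ so that $a(k) \ge 1-\e/2$, which is possible because $a(k)\to 1$, set $\rho := \e/2$, and work with the disk $D := \{z \in \C : |z-\lambda|<\rho\}$. The hypotheses $|\lambda| \ge \e > \rho$ and $|\lambda|+\rho \le 1-\e/2 \le a(k)$ ensure $0 \notin D \subseteq \{|z|<a(k)\}$, so Lemma \ref{lem:bound on num of roots} bounds the number $K$ of roots of $P$ lying in $D$ (counted with multiplicity) by $k(1+\log l/\log(k+1))$, which is at most $C(\e)\log l$ for $l \ge 3$. Writing $P(X) = a_n\prod_{j=1}^d(X-\eta_j)$ with $d = \deg P \le n-1$, and letting $\eta$ be a root of $P$ closest to $\lambda$, the bounds $|a_n|\ge 1$ and $|P(\lambda)| \le r < (\e/2)^n$ force $\min_j|\lambda-\eta_j| \le r^{1/d}<\rho$, so $\eta \in D$ and $K \ge 1$.

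Next, I split the roots into three classes: those in $D$; those outside $D$ with $|\eta_j|\le 1$, for which $|\lambda-\eta_j|\ge\rho$; and those with $|\eta_j|>1$, for which $|\lambda-\eta_j|\ge \e|\eta_j|$ because $|\lambda|\le 1-\e$. Combining these estimates with $\rho \le \e$ and the elementary bound $M(P)\ge 1$ for nonzero integer polynomials, the product formula for $|P(\lambda)|$ gives
\[
r \ge |P(\lambda)| \ge \Bigl(\prod_{\eta_j\in D}|\lambda-\eta_j|\Bigr)\rho^{d-K},
\]
from which $\prod_{\eta_j\in D}|\lambda-\eta_j| \le r\rho^{-(d-K)} \le r\rho^{-n} = 2^n\e^{-n}r$. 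Since each factor on the left is at least $|\lambda-\eta|$, this yields $|\lambda-\eta|^K \le 2^n\e^{-n}r$, and the hypothesis $r<\e^n 2^{-n}$ makes the right-hand side strictly less than $1$. Setting $c := 1/C(\e)$ and using $1/K \ge c/\log l$ together with the monotonicity of $x\mapsto x^a$ on $(0,1)$, I conclude
\[
|\lambda-\eta| \le (2^n\e^{-n}r)^{1/K} \le (2^n\e^{-n}r)^{c/\log l}.
\]

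The essential technical point is the simultaneous constraint on $\rho$: it must be large enough (at least $r^{1/d}$) so that the product formula pins the closest root inside $D$, and small enough (at most $a(k)-(1-\e)$) so that $D$ lies in the region where Lemma \ref{lem:bound on num of roots} applies. The value $\rho=\e/2$ meets both requirements precisely because of the hypothesis $r<\e^n 2^{-n}$, and no further obstacle arises.
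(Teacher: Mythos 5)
Your proof is correct and follows essentially the same route as the paper's: invoke Lemma \ref{lem:bound on num of roots} to cap the number of relevant roots by $O_\e(\log l)$, then extract a nearby root from the factorization $|P(\lambda)|=|a_d|\prod_j|\lambda-\eta_j|$. The only cosmetic differences are that you confine the counted roots to a disk centred at $\lambda$ rather than to $\{|z|\le 1-\e/2\}$, and you invoke $M(P)\ge 1$ for the far roots where the paper uses the simpler observation that every root of modulus $>1-\e/2$ satisfies $|\lambda-\eta_j|\ge\e/2$, making the Mahler-measure step unnecessary.
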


\begin{proof}
Let $\e\in(0,1)$, let $C\ge1$ be large with respect to $\e$,
and let $n,l,P,r$ and $\l$ be as in the statement of the
lemma. By Lemma \ref{lem:bound on num of roots},
$P$ has at most $C\log l$ nonzero roots of modulus at most $1-\e/2$ provided $C$ is sufficiently large depending on $\e$,
which we assumed.
Denote all of these roots (with multiplicity) by $\eta_{1},\ldots,\eta_{m}$.
Then $m\le C\log l$ and,
\[
r\ge|P(\lambda)|\ge(\e/2)^{n-m}\cdot\prod_{j=1}^{m}|\eta_{j}-\lambda|.
\]
Thus, there exists some $1\le j\le m$ with
\[
|\eta_{j}-\lambda|\le\left(r\cdot(\e/2)^{-n}\right)^{1/m}\le\left(r\cdot(\e/2)^{-n}\right)^{1/(C\log l)}.
\]
This completes the proof of the lemma by taking $c=1/C$.
\end{proof}

The next two lemmata can be used to show that two parameters with exact overlaps found with the help of the previous
lemma must either coincide or be far apart.

\begin{lem}
\label{lem:lb on val of poly}Let $l,n\ge1$, let $\lambda\in\C$
be algebraic, and let $P\in\mathcal{P}_{l}^{(n)}$. Then $P(\lambda)=0$
or $|P(\lambda)|\ge(ln)^{-\deg\lambda}M(\lambda)^{-n}$.
\end{lem}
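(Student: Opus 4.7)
The plan is a standard Galois-conjugate argument combined with the product formula for the Mahler measure; this is a Liouville-type lower bound, and no genuinely hard step is involved.

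First, let $Q(X)=a_{d}\prod_{i=1}^{d}(X-\lambda_{i})\in\Z[X]$ be the minimal polynomial of $\lambda$ over $\Z$, where $d=\deg\lambda$ and $\lambda=\lambda_{1}$. Since $P\in\Q[X]$, any Galois automorphism $\sigma$ of the splitting field of $Q$ over $\Q$ satisfies $\sigma(P(\lambda_{i}))=P(\sigma(\lambda_{i}))$, and the Galois group acts transitively on $\{\lambda_{1},\ldots,\lambda_{d}\}$. Hence the values $P(\lambda_{i})$ are all Galois conjugates of one another, so either all of them vanish or none does. In the former case $P(\lambda)=0$ and the lemma is trivial, so I may assume we are in the latter.

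Next, I would invoke the standard resultant identity
\[
a_{d}^{\deg P}\prod_{i=1}^{d}P(\lambda_{i})=\pm\operatorname{Res}(Q,P)\in\Z.
\]
Under the standing assumption this integer is nonzero, so its absolute value is at least $1$. Since $\deg P\le n-1$, multiplying by an extra nonnegative power of $|a_{d}|$ gives
\[
|a_{d}|^{n-1}\prod_{i=1}^{d}|P(\lambda_{i})|\ge 1.
\]

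To finish, I bound $\prod_{i\neq 1}|P(\lambda_{i})|$ from above. The triangle inequality together with the coefficient bound $|c_{k}|\le l$ gives $|P(\lambda_{i})|\le\ell_{1}(P)\max(1,|\lambda_{i}|)^{n-1}\le ln\cdot\max(1,|\lambda_{i}|)^{n-1}$ for every $i$. Combining with the Mahler-measure identity $\prod_{i=1}^{d}\max(1,|\lambda_{i}|)=M(\lambda)/|a_{d}|$ and discarding the factor at $i=1$ (which is at least $1$) yields
\[
\prod_{i=2}^{d}|P(\lambda_{i})|\le (ln)^{d-1}\cdot\frac{M(\lambda)^{n-1}}{|a_{d}|^{n-1}}.
\]
Plugging this into the previous lower bound gives
\[
|P(\lambda)|\ge\frac{1}{(ln)^{d-1}M(\lambda)^{n-1}}\ge\frac{1}{(ln)^{d}M(\lambda)^{n}},
\]
where the final inequality uses $ln,M(\lambda)\ge 1$. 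This is exactly the bound asserted by the lemma. The only place mild care is needed is in being slightly wasteful with the powers at the very last step so as to package the estimate in the clean form $(ln)^{-\deg\lambda}M(\lambda)^{-n}$.
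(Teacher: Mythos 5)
Your proof is correct, and it takes a genuinely different technical route from the paper's, though both rest on the same Liouville-type principle. You run the classical conjugate-product argument: show that $a_d^{\deg P}\prod_i P(\lambda_i)=\pm\operatorname{Res}(Q,P)$ is a nonzero integer (the Galois-transitivity observation correctly reduces to the case where no conjugate vanishes), then bound each $|P(\lambda_i)|$ crudely by $\ell_1(P)\max(1,|\lambda_i|)^{n-1}$ and use $\prod_i\max(1,|\lambda_i|)=M(\lambda)/|a_d|$; the leading-coefficient powers cancel exactly, and relaxing $d-1\mapsto d$ and $n-1\mapsto n$ (legitimate since $ln,M(\lambda)\ge1$) gives the stated bound. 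The paper instead works through Masser's height formalism: it bounds $H(P(\lambda))$ via \cite{Masser-auxiliary}*{Proposition 14.7}, raises to the power $\deg P(\lambda)\le\deg\lambda$ to get a Mahler-measure bound for $P(\lambda)$, and concludes with the Liouville inequality \cite{Masser-auxiliary}*{Proposition 14.13}. Your version is self-contained and elementary (modulo the standard resultant identity) and in fact gives the marginally sharper exponents $(d-1,n-1)$; the paper's version is shorter given the imported machinery and avoids having to state the Galois and resultant facts explicitly. Either is entirely acceptable here.
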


\begin{proof}
This proof uses the notion of the height of an algebraic number, which is defined by
$H(\eta)=M(\eta)^{1/\deg \eta}$ for $\eta\in\overline\Q$.

Write $d$ for $\deg\lambda$, then $H(\lambda)=M(\lambda)^{1/d}$.
By \cite{Masser-auxiliary}*{Proposition 14.7},
\[
H(P(\lambda))\le\ell_{1}(P)H(\lambda)^{n}\le ln\cdot M(\lambda)^{n/d}.
\]
From $P(\lambda)\in\Q[\lambda]$ it follows that $\deg P(\lambda)\le d$,
hence
\[
M(P(\lambda))=H(P(\lambda))^{\deg P(\lambda)}\le(ln)^{d}\cdot M(\lambda)^{n}.
\]
The lemma now follows from \cite{Masser-auxiliary}*{Proposition 14.13}.
\end{proof}

\begin{lem}
\label{lem:lb on value of poly at root}Let $l,n\ge1$, let $\lambda\in\C$
be a root of some nonzero polynomial in $\mathcal{P}_{l}^{(n)}$, and
let $P\in\mathcal{P}_{l}^{(n)}$. Then $P(\lambda)=0$ or $|P(\lambda)|\ge(ln)^{-2n}$.
\end{lem}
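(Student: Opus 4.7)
The plan is to combine Lemma \ref{lem:lb on val of poly} with the elementary bounds on the degree and Mahler measure of $\lambda$ that are forced by the hypothesis. Concretely, I would first extract from the assumption ``$\lambda$ is a root of some nonzero $Q\in\mathcal{P}_l^{(n)}$'' the two estimates
\[
\deg\lambda\le\deg Q\le n-1<n,\qquad M(\lambda)\le M(Q)\le\ell_1(Q)\le ln.
\]
The first is immediate because the minimal polynomial of $\lambda$ over $\Z$ divides $Q$. The second uses multiplicativity of Mahler measure (which forces $M(\lambda)\mid M(Q)$ in the sense $M(\lambda)\le M(Q)$) together with the standard inequality $M(Q)\le\ell_1(Q)$ recalled at the beginning of the section, and the trivial observation that any polynomial in $\mathcal{P}_l^{(n)}$ has length bounded by $ln$.

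With these two bounds in hand, the conclusion is essentially a substitution. Applying Lemma \ref{lem:lb on val of poly} to our $P$, either $P(\lambda)=0$ (in which case we are done) or
\[
|P(\lambda)|\ge (ln)^{-\deg\lambda}\,M(\lambda)^{-n}\ge (ln)^{-n}\,(ln)^{-n}=(ln)^{-2n},
\]
where in the first factor we used $\deg\lambda<n$ (so $(ln)^{-\deg\lambda}\ge (ln)^{-n}$ since $ln\ge1$) and in the second we used $M(\lambda)\le ln$. This gives exactly the bound claimed in the lemma.

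There is essentially no obstacle here; the lemma is a clean corollary of Lemma \ref{lem:lb on val of poly}, and the only point requiring care is the invocation of multiplicativity of Mahler measure to deduce $M(\lambda)\le M(Q)$ from the fact that the minimal polynomial of $\lambda$ divides $Q$ in $\Z[X]$ (equivalently, in $\Q[X]$, after clearing denominators, noting that the minimal polynomial over $\Z$ is primitive). Once these preliminaries are noted, the inequality $(ln)^{-2n}$ drops out immediately by plugging into the previous lemma. No further machinery—transcendence, heights beyond what Lemma \ref{lem:lb on val of poly} already invokes, or Diophantine approximation—is needed.
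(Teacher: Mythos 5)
Your proof is correct and follows essentially the same route as the paper: both derive $\deg\lambda<n$ and $M(\lambda)\le\ell_1(E)\le ln$ from the witnessing polynomial $E\in\mathcal{P}_l^{(n)}$ with $E(\lambda)=0$, and then plug these bounds into Lemma~\ref{lem:lb on val of poly}. The only difference is cosmetic: you spell out the Gauss's-lemma/multiplicativity step (using that $M(R)\ge 1$ for nonzero $R\in\Z[X]$) more explicitly than the paper does.
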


\begin{proof}
Let $E\in\mathcal{P}_{l}^{(n)}\setminus\{0\}$ be with $E(\lambda)=0$,
then $M(\lambda)\le M(E)\le\ell_{1}(E)$. Since $E\in\mathcal{P}_{l}^{(n)}$
we have $\ell_{1}(E)\le ln$, hence $M(\lambda)\le ln$. Additionally,
from $E(\lambda)=0$ and $\deg(E)\le n$ we get $\deg\lambda\le n$.
The lemma now follows from Lemma \ref{lem:lb on val of poly}.
\end{proof}

We state and prove a more precise version of Lemma \ref{lm:Dimitrov}.
It provides an alternative approach to the proof of Conjecture \ref{cn:EO} for Bernoulli convolutions
given in \cite{Var-Bernoulli}, and allows for a relaxation of the transversality property.
This will be important in our proof of Theorems \ref{th:EO-general} and \ref{th:large-lambda} in Section \ref{sc:EO-proof}.
This lemma and its proof was suggested to us by Dimitrov \cite{Dim-personal}.

\begin{lem}\label{lm:Dimitrov-precise}
Let $\l,\eta\in[0,1]$ and $n,n',l,k\in\Z_{>0}$.
Let $0\ne P\in\cP_l^{(n')}$.
Let $\a$ be a number that satisfies
\[
\log \a>  \frac{(n(k+1)+(k+2))\log n'+(n+1)\log l+\log 2}{n'}.
\]
Assume that $\eta \ne \l$ and that $\eta$ is algebraic of degree at most $n$.
Assume
\[
(\a M(\eta))^{n'/k}|P(\l)|^{1/k}\le |\l-\eta|\le (\a M(\eta))^{-n'}.
\]
Then $\eta$ is a zero of $P$ of order at least $k$.
\end{lem}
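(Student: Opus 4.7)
The plan is to argue by contradiction. Suppose $\eta$ is a zero of $P$ of multiplicity exactly $j$ with $0\le j\le k-1$, where $j=0$ means $P(\eta)\ne0$. Since $\eta\ne\lambda$, I would Taylor expand $P$ around $\eta$:
\[
P(\lambda)=\sum_{i=j}^{n'-1}Q_i(\eta)(\lambda-\eta)^i,\qquad Q_i(X):=\frac{P^{(i)}(X)}{i!}=\sum_{s\ge i}a_s\binom{s}{i}X^{s-i}\in\Z[X].
\]
Each $Q_i$ has degree strictly less than $n'$ and coefficients bounded by $l\binom{n'-1}{i}\le l(n')^i$, so $Q_i\in\cP_{l(n')^i}^{(n')}$; and since $\eta\in[0,1]$, the hockey-stick identity gives $|Q_i(\eta)|\le\ell_1(Q_i)\le l\binom{n'}{i+1}\le l(n')^{i+1}$.

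By our assumption $Q_j(\eta)\ne 0$, so Lemma \ref{lem:lb on val of poly} applied to $Q_j$ yields the lower bound
\[
|Q_j(\eta)|\ge\bigl(l(n')^{j+1}\bigr)^{-\deg\eta}M(\eta)^{-n'}\ge l^{-n}(n')^{-nk}M(\eta)^{-n'},
\]
using $\deg\eta\le n$ and $j+1\le k$. For the matching upper bound, I would rewrite the hypothesis on $|\lambda-\eta|$ as $|P(\lambda)|\le(\alpha M(\eta))^{-n'}|\lambda-\eta|^k$ (raising to the $k$-th power), isolate the leading Taylor term, and estimate the tail via a geometric argument. Writing $\delta=|\lambda-\eta|$ and using $n'\delta\le n'(\alpha M(\eta))^{-n'}<1/2$ (which follows from the hypothesis on $\log\alpha$), the tail collapses to essentially its first term:
\[
\Big|\sum_{i>j}Q_i(\eta)(\lambda-\eta)^i\Big|\le l\sum_{i=j+1}^{n'-1}\binom{n'}{i+1}\delta^i\le 2l(n')^{j+2}\delta^{j+1}.
\]
Dividing the Taylor identity by $\delta^j$, using $k-j\ge1$ and $\delta\le(\alpha M(\eta))^{-n'}$, gives
\[
|Q_j(\eta)|\le(\alpha M(\eta))^{-n'}\delta^{k-j}+2l(n')^{j+2}\delta\le 3l(n')^{k+1}(\alpha M(\eta))^{-n'}.
\]

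Comparing the two bounds on $|Q_j(\eta)|$ and cancelling $M(\eta)^{-n'}$ yields $\alpha^{n'}\le 3l^{n+1}(n')^{nk+k+1}$; taking logarithms and using $nk+k+1\le n(k+1)+k+2$, together with $\log 3<(n+1)\log n'+\log 2$, contradicts the hypothesis on $\log\alpha$. The forced conclusion is $j\ge k$, as desired. The main obstacle, and the point where one must be careful, is the tail estimate: the crude bound $\ell_1(Q_i)\le l\cdot 2^{n'}$ would inject an $n'\log 2$ term into the final inequality and completely swamp the hypothesis on $\alpha$. The proof only works by exploiting the hockey-stick bound $\ell_1(Q_i)\le l\binom{n'}{i+1}$ in combination with the fact that $n'\delta$ is exponentially small, so that the geometric tail is controlled by its first term of size $\sim l(n')^{j+2}\delta^{j+1}$ rather than by the full sum.
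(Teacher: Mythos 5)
Your proof is correct and takes essentially the same approach as the paper: both Taylor-expand $P$ about $\eta$, isolate the $j$-th (resp.\ $o$-th) derivative coefficient, bound it from below via Lemma~\ref{lem:lb on val of poly}, and compare with an upper bound coming from the smallness of $|P(\l)|$ and $|\l-\eta|$. The only difference is cosmetic: the paper controls the remainder with the Lagrange form plus a mean value theorem step, whereas you keep the full expansion and sum the geometric tail (using $n'\delta<1/2$); and your final numerical comparison $\log 3<(n+1)\log n'+\log 2$ fails when $n'=1$, but that case is vacuous since the hypotheses are then unsatisfiable.
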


When we use this lemma, we will take $\a=2$.
Then the assumption on $\a$ clearly holds provided $n'\ge Cn\log n$ for a constant $C$ depending on $k$, and $l<\exp(n'/2n)$.
In particular, the version stated in Lemma \ref{lm:Dimitrov} follows.

The case of $P(\l)=0$ in this lemma gives a refinement of Lemma \ref{lem:dist between distinct roots} when the degrees of the
two algebraic numbers are significantly different and there is additional information available about the Mahler measures.
This is related to a result of Mignotte \cite{Mig-larger-degree}.

\begin{proof}
Denote by $o$ the order of vanishing of $P$ at $\eta$.
That is, $\frac{d^j}{dX^j} P(\eta)=0$ for $0\le j<o$ and $\frac{d^o}{dX^o} P(\eta)\ne0$.
A priori, we allow the possibility of $o=0$, and we
suppose to the contrary that $o\le k$.

By Taylor's theorem with Lagrange remainder term, we have
\[
P(\l)=\frac{d^o}{o!dX^o} P(\xi)(\l-\eta)^o
\]
for some $\xi$ in the closed interval between $\eta$ and $\l$.
If $o=0$, this holds with $\xi=\l$.

We write $\wt P=d^o/(o!dX^o) P$.
We clearly have $\wt P\in\cP_{(n')^ol}^{(n')}$, and
\[
\Big|\frac{d}{dX}\wt P(X)\Big|\le (n')^{o+2}l
\]
for all $X\in[0,1]$.
By the mean value theorem, we have
\[
|\wt P(\eta)|\le (n')^{o+2}l |\xi -\eta| + |\wt P(\xi)|,
\]
hence
\[
|\wt P(\eta)|\le (n')^{o+2}l |\l-\eta| + \frac{|P(\l)|}{|\l-\eta|^o}.
\]

By the definition of $o$ and $\wt P$, we have $\wt P(\eta)\neq 0$.
Therefore, Lemma \ref{lem:lb on val of poly} and $o\le k$ gives
\[
((n')^kln')^{-n}M(\eta)^{-n'}\le  (n')^{k+2}l |\l-\eta| + \frac{|P(\l)|}{|\l-\eta|^k}.
\]
By the definition of $\a$, we can write
\[
2(\a M(\eta))^{-n'} < |\l-\eta|+  \frac{|P(\l)|}{|\l-\eta|^k},
\]
which contradicts the assumptions on $|\l-\eta|$.
\end{proof}

\section{Self-similar measures over function fields}\label{sc:ssm-Cx}

As we explained in Section \ref{sc:outline}, one of the main contributions of this paper and the main difficulty
in the proof of Theorem \ref{th:BV2-general} is proving that the entropy rate $h(\gamma)$ of a curve $\gamma\in\Gamma$
passing near a point $(\l,\tau)\in(0,1)\times\R$ without exact overlaps cannot be much smaller than
$\min\{H(p),\log \l^{-1}\}$.

We will achieve this goal in Corollary \ref{cr:no-bad-curves} in the next section.
In this one, we lay the groundwork by studying the entropy rate of non-degenerate curves.
We prove that the entropy rate is lower semi-continuous with respect to the
coefficient-wise convergence of the power series defining the curves.
To this end, we study self-similar measures defined over function fields.
We introduce a notion of dimension in analogy with the notion of entropy dimension of measures on $\R$, which is easily seen
to be lower semi-continuous.
We will show that this dimension equals the entropy rate of the corresponding non-degenerate curve.
This result can be seen as a function field analogue of (a stronger form of) Conjecture \ref{cn:EO}.

We begin by explaining the setup in detail.
As in Section \ref{sc:outline}, let $\F$ be a countable field of characteristic $0$. For the purposes of this paper, we could take $\F=\Q$, but we consider a more general setting in this section for the sake of possible future applications, and because it makes no difference in the proofs. Denote by $\F((X))$ the field of formal Laurent series over $\F$, and by $\F[[X]]$ the ring of formal power series over $\F$. We endow $\F((X))$ with the non-Archimedean absolute value $|R|=2^{-n}$, where $n$ is the index of the first non-zero coefficient of $0\ne R\in \F((X))$. For $R=0$ we set $|R|=0$. Note that $\F[[X]]$ is the closed unit ball of $\F((X))$ with respect to this absolute value.
We stress that the topology induced by $|\cdot|$ is not locally compact.

Let $\xi_0,\xi_1,\ldots$ be i.i.d. random variables with $\P\{\xi_0=j\}=p_j$ for $1\le j\le m$. For $R\in \F[[X]]$ and $n\ge1$ we set,
\[
A_{R}=\sum_{j=0}^{\infty} T_{\xi_j}(1,R(X))X^j\qquad\text{and}\qquad A_{R}^{(n)}=\sum_{j=0}^{n-1} T_{\xi_j}(1,R(X))X^j.
\] 
We denote the distribution of $A_R$ by $\mu_R$. By subadditivity, the limit
\begin{equation}\label{def of h(R)}
h(R)=\lim_{n\to \infty}\frac{1}{n} H(A_{R}^{(n)})
\end{equation}
always exists. Throughout this section, $\{\wt A_{R}^{(n)}\}_{n\ge1}$ denotes an independent copy of the process $\{A_{R}^{(n)}\}_{n\ge1}$.

Let $A=\a_0+\a_1X+\a_2X^2+\ldots$ be an $\F[[X]]$-valued random element, so $\a_0,\a_1,\ldots$
are $\F$-valued random variables.
For $l\ge n\ge0$ we write
\[
H(A;n)=H\Big(\sum_{j=0}^{n-1}\a_j X^j\Big),
\]
where $H(A;0)$ is defined to be $0$, and
\[
H(A;l|n)=H(A;l)-H(A;n).
\]
We define
\[
\dim A = \lim_{n\to \infty} \frac{H(A;n)}{n},
\]
provided the limit exists. If $\mu$ is the law of $A$, we write $\dim\mu$ for $\dim A$.

Note that for $R\in\F[[X]]$ and $l\ge n\ge1$,
\[
H(A_R;n)=H(A_R^{(l)};n),
\]
which implies $\frac{1}{n}H(A_R;n)\le H(p)$. As the following lemma shows, it turns out that $\dim\mu_R$ always exists.

\begin{lem}
For each $R\in\F[[X]]$ and $n\ge1$,
\[
\frac{1}{n}H(A_R;n) \le \frac{1}{n+1}H(A_R;n+1).
\]
In particular $\dim\mu_R$ exists, and the map $R \to \dim\mu_R$ is lower semicontinuous with respect to the metric induced by $|\cdot|$.
\end{lem}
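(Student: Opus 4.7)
The plan is to exploit the recursive identity
\[
A_R \stackrel{d}{=} T_{\xi_0}(1,R) + X\cdot A_R',
\]
where $A_R'$ is an independent copy of $A_R$ and is independent of $\xi_0$. Writing $A_R=\sum \alpha_k X^k$ and $A_R'=\sum \alpha_k' X^k$, this identity becomes the coordinatewise relation $\alpha_0=T_{\xi_0}(1,r_0)$ and $\alpha_k=b_{\xi_0}r_k+\alpha_{k-1}'$ for $k\ge 1$. The key observation is that given $\xi_0$, the vector $(\alpha_0,\ldots,\alpha_n)$ and the vector $(\alpha_0',\ldots,\alpha_{n-1}')$ determine each other by deterministic affine maps (the coefficients $r_k$ of $R$ are fixed). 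Consequently
\[
H(\xi_0,A_R;n+1)=H(\xi_0)+H(A_R';n)=H(p)+H(A_R;n),
\]
and similarly $H(\xi_0,A_R;n)=H(p)+H(A_R;n-1)$.

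I will then prove convexity of $n\mapsto H(A_R;n)$. Using that conditioning reduces entropy,
\[
H(A_R;n+1)-H(A_R;n)=H(\alpha_n\mid \alpha_0,\ldots,\alpha_{n-1})\ge H(\alpha_n\mid \alpha_0,\ldots,\alpha_{n-1},\xi_0),
\]
and the right-hand side equals $H(\xi_0,A_R;n+1)-H(\xi_0,A_R;n)=H(A_R;n)-H(A_R;n-1)$ by the previous identities. So the increments $H(A_R;n+1)-H(A_R;n)$ are non-decreasing. Combined with $H(A_R;0)=0$, a standard averaging argument (the average of a non-decreasing sequence is bounded above by its last term) yields $H(A_R;n)/n\le H(A_R;n+1)-H(A_R;n)\le H(A_R;n+1)-H(A_R;n)$, and hence $H(A_R;n)/n\le H(A_R;n+1)/(n+1)$, which is the stated monotonicity.

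Existence of $\dim\mu_R$ then follows at once: the sequence $H(A_R;n)/n$ is non-decreasing and bounded above by $H(p)$ (since $A_R^{(n)}$ is a function of $\xi_0,\ldots,\xi_{n-1}$), so its limit is $\sup_n H(A_R;n)/n$. For lower semicontinuity, I will observe that the coefficient $\alpha_k$ depends only on $\xi_0,\ldots,\xi_k$ and on $r_0,\ldots,r_k$. Therefore, if $R,R'\in\F[[X]]$ satisfy $|R-R'|\le 2^{-n}$, i.e.\ their first $n$ coefficients agree, then $[A_R]_n$ and $[A_{R'}]_n$ have the same distribution, so $R\mapsto H(A_R;n)$ is locally constant, hence continuous, on $\F[[X]]$ in the $|\cdot|$ topology. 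Then $\dim\mu_R=\sup_n H(A_R;n)/n$ is a supremum of continuous functions, hence lower semicontinuous.

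The only nontrivial step is identifying the correct $\sigma$-algebra equality $\sigma(\alpha_0,\ldots,\alpha_n,\xi_0)=\sigma(\xi_0,\alpha_0',\ldots,\alpha_{n-1}')$ and drawing from it the convexity of $H(A_R;n)$; once that is in place the rest is standard.
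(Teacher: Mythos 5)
Your proof is correct and follows essentially the same route as the paper: both derive the monotonicity of the increments $H(A_R;n+1)-H(A_R;n)$ from the decomposition $A_R\stackrel{d}{=}T_{\xi_0}(1,R)+X\,\wt A_R$ and then average. The only difference is cosmetic: the paper invokes concavity of conditional entropy in one line, while you unfold the same inequality via the chain rule (adjoining $\xi_0$, using the affine bijection between $(\alpha_0,\ldots,\alpha_n)$ and $(\alpha_0',\ldots,\alpha_{n-1}')$ given $\xi_0$, and then dropping $\xi_0$ by ``conditioning decreases entropy''); your local-constancy observation for $R\mapsto H(A_R;n)$ is a slight strengthening of the paper's continuity claim.
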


\begin{proof}
By the concavity of conditional entropy it follows that for $k\ge1$,
\begin{align*}
H(A_R;k+1| k) = & H(\wt A_{R}^{(1)}+XA_R;k+1| k) \\
\ge & H(XA_R;k+1| k) = H(A_R;k| k-1).
\end{align*}
Iterating this inequality, we conclude that
\[
H(A_R;n+1|n)\ge H(A_R;k+1|k)
\]
for all $k=0,\ldots,n-1$.
Thus,
\begin{align*}
\frac{1}{n}H(A_R;n)=& \frac{1}{n}\sum_{k=0}^{n-1}H(A_R;k+1|k)\\
\le & \frac{1}{n+1}\sum_{k=0}^{n}H(A_R;k+1| k)= \frac{1}{n+1}H(A_R;n+1).
\end{align*}
The lemma now follows since for each $n\ge1$ the map $R \to H(A_R;n)$ is continuous with respect to $|\cdot|$.
\end{proof}

The main purpose of this section is to prove the following result.

\begin{prp}\label{pr:dim=h}
For all $R\in\F[[X]]$, we have
\[
\dim \mu_R=h(R).
\]
\end{prp}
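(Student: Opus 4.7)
The inequality $\dim\mu_R \le h(R)$ is immediate. Since $A_R - A_R^{(n)} = \sum_{j \ge n} T_{\xi_j}(1,R)X^j$ only contributes to coefficients of index $\ge n$, we have $A_R \equiv A_R^{(n)} \pmod{X^n}$, so
\[
H(A_R;n) = H\bigl(A_R^{(n)} \bmod X^n\bigr) \le H\bigl(A_R^{(n)}\bigr),
\]
and dividing by $n$ and letting $n\to\infty$ yields the bound. The content of the proposition is the reverse inequality $h(R) \le \dim\mu_R$, which we propose to prove by adapting the entropy-increase method of Hochman \cite{Hoc-self-similar} to our ultrametric setting. (Note that the conditional entropy $H(A_R^{(n)}\mid A_R^{(n)}\bmod X^n)$ need not be $o(n)$ a priori, since the tail of $A_R^{(n)}$ at indices $\ge n$ is a specific $R$-dependent linear function of $(\xi_0,\ldots,\xi_{n-1})$ and can carry macroscopic entropy.)

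The starting point is the self-similar recursion $A_R \stackrel{d}{=} T_{\xi_0}(1,R) + X\cdot A_R'$, with $A_R'$ an independent copy of $A_R$. Iterating gives the function-field ``IFS'' identity
\[
\mu_R = \sum_{\omega\in\{1,\ldots,m\}^n} p_\omega\,\delta_{A_R^{(n)}(\omega)} * \bigl(X^n\cdot\mu_R\bigr),
\]
in which multiplication by $X^n$ is a non-Archimedean contraction by the factor $2^{-n}$. Running Hochman's machinery on this structure should yield an entropy-growth dichotomy: for every $\e>0$ and every $q\ge 1$, there is $\d>0$ such that if $H(A_R^{(n)})/n > h(R)-\d$ for large $n$, then either the scale-$n$ entropy increments of $\mu_R$ come within $\e$ of $h(R)$ (so $\dim\mu_R \ge h(R)-\e$), or at a positive-density set of scales $N$ in $[n,qn]$ the measure $\mu_R$ is concentrated, in the sense that many distinct pairs $\omega\neq\omega'\in\{1,\ldots,m\}^n$ satisfy $A_R^{(n)}(\omega)\equiv A_R^{(n)}(\omega')\pmod{X^N}$.

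The non-Archimedean inverse theorem is then invoked to convert this concentration into a quantitative drop in the Garsia entropy itself. Expanding the difference as
\[
A_R^{(n)}(\omega)-A_R^{(n)}(\omega') = P_1(X) + R(X)P_2(X)
\]
with $P_1,P_2\in\F[X]$ of degree $<n$ whose coefficients are determined by $\omega,\omega'$, the vanishing of the first $N$ coefficients of $P_1+RP_2$ is a linear constraint on the first $N$ coefficients of $R$. If such congruences accumulate over sufficiently many scales $N$, they force the identity $P_1+RP_2\equiv 0$ in $\F[[X]]$, i.e.\ an exact coincidence $A_R^{(n)}(\omega)=A_R^{(n)}(\omega')$. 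This produces a definite deficit in $H(A_R^{(n)})/n$ that is incompatible with the limit $H(A_R^{(n)})/n\to h(R)$. Thus the dichotomy must fall into the first alternative, giving $\dim\mu_R\ge h(R)-\e$ for every $\e>0$, hence $\dim\mu_R\ge h(R)$.

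The principal obstacle is the non-Archimedean version of the inverse entropy theorem. On the one hand, the ultrametric structure of $(\F[[X]],|\cdot|)$ is a genuine simplification: there is no distinction between approximate and exact overlaps at a fixed scale, so the delicate error-amplification analysis that dominates Hochman's real-variable argument is unnecessary. On the other hand, the ambient space is not locally compact, the ``atoms'' are themselves power series whose $R$-dependent tails interact non-trivially with $P_1,P_2$, and one must set up the multi-scale entropy decomposition from scratch in the function-field language, carefully tracking how congruences modulo $X^N$ accumulated across many scales translate quantitatively into algebraic identities in $\F[[X]]$ and, consequently, into a bound on $H(A_R^{(n)})$.
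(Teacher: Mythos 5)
Your proof of the easy direction $\dim\mu_R \le h(R)$ is correct. The hard direction, however, is left as a sketch: you explicitly flag the ``non-Archimedean inverse entropy theorem'' as the principal unaddressed obstacle, so the proposal cannot be considered a proof. More importantly, you misdiagnose what is actually needed. No Hochman-style inverse theorem is required in this setting. After conditioning on the first $n-1$ coefficients, the $n$-th coefficient of a power series in $\F[[X]]$ is a random element of the countable torsion-free abelian group $\F$, which has no nontrivial compact subgroups and hence no ``uniform'' measures; consequently the paper proves an \emph{unconditional} entropy-increase statement (Proposition~\ref{prop:ent increase in C}): if $H(\mu)<\e^{-1}$ and $H(\nu)>\e$ for probability measures on $\F$, then $H(\mu*\nu)>H(\mu)+\d$. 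This follows from classical Kaimanovich--Vershik-type estimates (Lemmata~\ref{lem:Kaimanovich-Vershik} and~\ref{lem:ent increase by conv with measure supported on two points}) with no structural dichotomy, and Theorem~\ref{thm:ent growth} is obtained from it by conditioning on $\cC_{n-1}$ and applying it fibrewise. Your observation that the ultrametric structure removes the approximate-versus-exact distinction points in the right direction, but the decisive simplification is that the coefficient group is discrete and torsion-free, which eliminates the saturation alternative altogether; you do not exploit this, and frame as an obstacle what the paper replaces with a soft group-theoretic fact.

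Your ``concentration forces an algebraic identity'' branch also does not match the paper's mechanism. The paper works at the single scale $4n$: an approximate coincidence at scale $4n$ gives $|P_{n,1}+R P_{n,2}|\le 2^{-4n}$ with $P_{n,1},P_{n,2}\in\cP_L^{(n)}$; dividing by $P_{n,2}$ and comparing consecutive $n$ makes the cross-product $P_{n,1}P_{n+1,2}-P_{n+1,1}P_{n,2}$, a polynomial of degree $<2n$ with norm $\le 2^{-3n}$, vanish. This bifurcates into the case $h(R)=H(p)$ (Lemma~\ref{lem:full ent}, producing infinitely many $n$ with full separation $H(A_R^{(n)};4n)=nH(p)$) and the case $h(R)<H(p)$ (Lemma~\ref{lem:full ent2}, where a genuine exact overlap of bounded degree upgrades every approximate overlap at scale $4n$ to an exact one, giving $H(A_R^{(n)};4n)\ge nh(R)$). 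The picture you describe, of a fixed pair $(P_1,P_2)$ whose congruences accumulate across many scales $N$ and force $P_1+RP_2\equiv 0$, is vague and does not correspond to the step that actually has to be made precise; the proposal therefore cannot be credited with proving $h(R)\le\dim\mu_R$.
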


Since $\dim\mu_R$ is lower semi-continuous in $R$ with respect to the metric $|\cdot|$, we have the following
immediate corollary.

\begin{cor}\label{cr:h-semi-cont}
The function $h(\cdot)$ is lower semi-continuous on $\F[[X]]$ with respect to the $|\cdot|$ metric.
That is to say, let $R,R_{1},R_{2},\ldots\in\F[[X]]$
be with $|R-R_{n}|\overset{n}{\rightarrow}0$.
Let $\e>0$, and suppose that $h(R_{n})\le H(p)-\e$ for all $n\ge1$.
Then $h(R)\le H(p)-\e$.
In particular, there exists $0\ne Q(X,Y_{1},Y_{2})\in\cup_{n\ge1}\mathcal{Q}^{(n)}$ such
that $Q(X,1,R(X))=0$.
\end{cor}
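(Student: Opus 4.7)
The approach is to chain together the two ingredients already available in this section: (i) the identity $\dim \mu_R = h(R)$ from Proposition \ref{pr:dim=h}, and (ii) the lower semi-continuity of $R \mapsto \dim \mu_R$ with respect to the metric $|\cdot|$, established in the lemma immediately preceding the corollary. Applied to the convergent sequence $R_n \to R$, these give
\[
h(R) = \dim \mu_R \le \liminf_{n \to \infty} \dim \mu_{R_n} = \liminf_{n \to \infty} h(R_n) \le H(p) - \e,
\]
which is the first assertion.

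For the ``in particular'' clause I would argue by contraposition: if no nonzero $Q \in \bigcup_{n \ge 1} \cQ^{(n)}$ satisfies $Q(X,1,R(X)) = 0$, then $h(R) = H(p)$, contradicting $h(R) \le H(p) - \e$. Since $A_R^{(n)}$ is a deterministic function of $(\xi_0, \ldots, \xi_{n-1})$, the data-processing inequality yields $H(A_R^{(n)}) \le H(\xi_0, \ldots, \xi_{n-1}) = nH(p)$, with equality iff the map $\omega \mapsto A_R^{(n)}\!\mid_\omega$ is injective on $\{1,\ldots,m\}^n$ (here one uses $p_j > 0$ for all $j$, so every $\omega$ has positive mass). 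A failure of injectivity at some level $n$ produces $\omega \ne \omega' \in \{1,\ldots,m\}^n$ satisfying
\[
\sum_{j=0}^{n-1} \bigl(T_{\omega_j}(1,R(X)) - T_{\omega'_j}(1,R(X))\bigr) X^j = 0,
\]
i.e.\ $Q_{\omega,\omega'}(X,1,R(X)) = 0$, where $Q_{\omega,\omega'} \in \cQ^{(n)}$ is the associated polynomial. This $Q_{\omega,\omega'}$ is nonzero as an element of $\Z[X,Y_1,Y_2]$: if it vanished identically, then $(a_{\omega_j}, b_{\omega_j}) = (a_{\omega'_j}, b_{\omega'_j})$ for every $j$, and the standing hypothesis that the pairs $(a_i,b_i)$ are distinct would force $\omega = \omega'$. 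Thus under our contrapositive assumption the map is injective for every $n$, so $H(A_R^{(n)}) = nH(p)$ for all $n$, and hence $h(R) = H(p)$, contradicting $h(R) \le H(p) - \e$.

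Since the substantive work is packaged into Proposition \ref{pr:dim=h} and the preceding monotonicity lemma, this corollary is essentially a transcription, and there is no genuine obstacle. The only care needed is the elementary observation that the entropy equality $H(A_R^{(n)}) = nH(p)$ is equivalent to the absence of a nontrivial algebraic relation $Q(X,1,R(X)) = 0$ with $Q \in \cQ^{(n)}$; this conversion crucially uses the hypothesis that the pairs $(a_i,b_i)$ are distinct, so that distinct index sequences yield distinct polynomials $Q_{\omega,\omega'}$.
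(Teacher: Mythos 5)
Your proof is correct and follows the same route the paper intends: combine Proposition \ref{pr:dim=h} (giving $h(R)=\dim\mu_R$) with the lower semi-continuity lemma immediately preceding, then pass to the $\liminf$. For the ``in particular'' clause the paper treats the equivalence as immediate; your contrapositive argument via injectivity of $\omega\mapsto A_R^{(n)}|_\omega$ and the data-processing inequality is exactly the right way to make it explicit, and your observation that distinctness of the pairs $(a_i,b_i)$ is what makes $Q_{\omega,\omega'}$ nonzero is the correct use of the standing hypothesis.
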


To prove Proposition \ref{pr:dim=h}, we adapt to our function field setting the techniques
introduced by Hochman in \cite{Hoc-self-similar}.
We outline the strategy.
We first note that
\begin{equation}\label{eq:ssimilarity}
A_{R}^{(4n)}=A_{R}^{(n)}+X^n \wt A_{R}^{(3n)}
\end{equation}
for each $n$, where recall that $\wt A_{R}^{(3n)}$ is independent of $A_{R}^{(n)}$ and has the same distribution as $A_{R}^{(3n)}$.
By definition, we have
\[
\dim \mu_R = \lim_{n\to \infty} \frac{H(A_{R}^{(n)};n)}{n}.
\]
This means that for any $\e>0$,
\begin{align*}
H(A_{R}^{(4n)};4n|n)=&H(A_{R}^{(4n)};4n)-H(A_{R}^{(n)};n)\le(1+\e)\cdot 3n\dim \mu_R,\\
H(X^n\wt A_{R}^{(3n)};4n|n)=&H(A_{R}^{(3n)};3n)\ge(1-\e)\cdot 3n\dim\mu_R
\end{align*}
provided $n$ is sufficiently large.

In the proof of Proposition \ref{pr:dim=h}, we suppose to the contrary that $\dim \mu_R<h(R)$.
We reach a contradiction exploiting \eqref{eq:ssimilarity} and the following theorem yielding that the entropy
of the sum of two independent random variables is significantly larger than that of either summands under suitable conditions.

\begin{thm}\label{thm:ent growth}
For every $\e>0$, there exists $\delta>0$
such that the following holds.
Let $A$ and $B$ be independent random
elements of $\F[[X]]$. Let $n\ge1$ and suppose that $H(A;n| n-1)<\e^{-1}$
and $H(B;n| n-1)>\e$. Then,
\[
H(A+B;n| n-1)>H(A;n| n-1)+\delta.
\]
\end{thm}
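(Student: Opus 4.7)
The plan is to reduce the theorem to a one-step pointwise entropy inequality on $\F$ and then establish that inequality by a contradiction argument exploiting that $(\F,+)$ is torsion-free.

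First, writing $A = \sum_j \a_j X^j$ and $B = \sum_j \b_j X^j$, the $\sigma$-algebra generated by the partial sums $\a_0+\b_0,\ldots,\a_{n-2}+\b_{n-2}$ is contained in the one generated by $(U,V) := (\a_{<n-1},\b_{<n-1})$, so monotonicity of entropy under extra conditioning gives
\[
H(A+B; n|n-1) \ge H(\a_{n-1}+\b_{n-1} \mid U,V) = \int H(X_u+Y_v)\, d\pi_U(u)\, d\pi_V(v),
\]
where $X_u, Y_v$ are the conditional laws of $\a_{n-1}, \b_{n-1}$, independent given $(U,V) = (u,v)$ by independence of $A$ and $B$. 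Similarly $H(A;n|n-1) = \int H(X_u)\, d\pi_U(u)$, so it suffices to produce $\d > 0$ depending only on $\e$ with
\[
\int \bigl[H(X_u+Y_v) - H(X_u)\bigr]\, d\pi_U(u)\, d\pi_V(v) \ge \d.
\]
The integrand is nonnegative (as $H(X+Y) \ge H(X)$ for independent summands), so I would isolate a product set $G_A \times G_B$ of controlled measure on which it is uniformly bounded below. Markov's inequality applied to $\int H(X_u)\, d\pi_U = H(A;n|n-1) < \e^{-1}$ gives $G_A := \{u : H(X_u) \le 2\e^{-1}\}$ with $\pi_U(G_A) \ge 1/2$. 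The analogous bound for a set $G_B$ on which $H(Y_v) \ge \e_0(\e)$ is subtler since $H(Y_v)$ has no a priori upper bound, so I would split the $v$-integration, using the forthcoming key inequality on $\{v : H(Y_v) \ge \e/2\}$ and combining the elementary bound $H(X_u+Y_v) \ge H(Y_v)$ with $\int_{\{H(Y_v) \ge \e/2\}} H(Y_v)\, d\pi_V \ge \e/2$ on the complement to recover the missing gain.

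Second, I would prove the following one-step inequality, which is the heart of the theorem: for every $K \ge 1$ and $\e_0 > 0$ there exists $\d_0 = \d_0(K,\e_0) > 0$ such that for independent $\F$-valued random variables $X, Y$ with $H(X) \le K$ and $H(Y) \ge \e_0$, one has $H(X+Y) \ge H(X) + \d_0$. The identity $H(X+Y) - H(X) = I(Y;X+Y)$ (valid when $X, Y$ are independent) shows that vanishing of the gap would force $Y \perp X+Y$; since the conditional law of $X+Y$ given $Y=y$ is just the distribution of $X$ translated by $y$, this would make the law of $X$ invariant under translation by every $y$ in the support of $Y$. Because $\F$ has characteristic zero, the orbit of any point under iterated translation by a non-zero $y$ is infinite, so no non-trivial translation-invariant probability measure on $\F$ exists. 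For the quantitative version, I would argue by contradiction and compactness: given a sequence $(X_k, Y_k)$ with $H(X_k+Y_k) - H(X_k) \to 0$, use $H(X_k) \le K$ to restrict each $X_k$ to its $C_{K,\eta}$ most likely atoms (capturing all but mass $\eta$), project into the finitely generated, hence free abelian, subgroup of $\F$ these atoms span, and apply the corresponding inequality in $\Z^d$, which follows from standard entropy-based additive combinatorics.

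The main obstacle I anticipate is making the key one-step inequality effective with the correct dependence on parameters; the example of $X$ uniform on $\{0,1,\ldots,2^K-1\}$ combined with a small $\{0,1\}$-valued Bernoulli $Y$ shows that $\d_0$ must decay at least as fast as $\e_0 e^{-K}$, so the truncation step must be carried out carefully. A further subtlety is the asymmetry of the hypotheses — only $A$ has bounded conditional entropy, not $B$ — which forces the splitting in the reduction to be asymmetric, with the elementary bound $H(X+Y) \ge H(Y)$ playing a compensating role in the regime where $H(Y_v)$ is unboundedly large.
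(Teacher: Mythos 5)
Your first step --- lower-bounding $H(A+B;n\mid n-1)$ by the expected pointwise gain $\int\bigl[H(X_u+Y_v)-H(X_u)\bigr]\,d\pi_U\,d\pi_V$, and then splitting the $v$-integration according to whether $\{v:H(Y_v)\ge\e/2\}$ has appreciable mass (apply the pointwise lemma) or not (fall back on $H(X_u+Y_v)\ge H(Y_v)$) --- is essentially the paper's own reduction, which introduces the sets $F,E$ of $\cC_{n-1}$-cylinders and splits into the cases $\nu(E)>\eta$ and $\nu(E)\le\eta$. Your one-step lemma is also exactly the paper's Proposition~\ref{prop:ent increase in C}.

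The genuine gap is in your proof of that one-step lemma. The "corresponding inequality in $\Z^d$, which follows from standard entropy-based additive combinatorics" is not a concrete citable fact: with constants independent of $d$ it is precisely the statement you are trying to prove, merely restricted to a finitely generated subgroup, so the argument as written is circular. The paper's actual proof of Proposition~\ref{prop:ent increase in C} is the missing ingredient, and it is direct and effective: one decomposes $\nu$ according to whether it has two atoms of mass $>\eta$, a single atom in $(\eta,1-\rho)$ with all others $\le\eta$, or a single dominant atom of mass $\ge 1-\rho$. In the first case one extracts a two-point Bernoulli convex factor and applies the Kaimanovich--Vershik inequality $H(\mu*\nu^{*n})-H(\mu)\le n\bigl(H(\mu*\nu)-H(\mu)\bigr)$ together with the fact that $H(\nu^{*n})\to\infty$ for any non-degenerate two-point measure in a torsion-free group, producing a definite one-step gain of order $1/n$; in the other two cases elementary concavity estimates already give a gain, because the "residual" measure $\nu_1$ carries entropy exceeding $H(\mu)+1$. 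Your compactness route also leaves real subtleties unaddressed: $\F$ is not locally compact, so "compactness" must come from classifying collision structures of bounded-size supports and using compactness of the probability simplex, which you do not spell out; you need $Y$ to have bounded entropy before it can be truncated --- this does follow from $H(Y)\le H(X+Y)\le H(X)+o(1)\le K+1$, but you do not observe it; and the stability of entropy under a truncation that removes mass $\eta$ is not automatic (entropy is not continuous in total variation on a countable group) and needs a dedicated estimate.
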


This theorem is proved in Sections \ref{sc:entC} and \ref{sc:entCx}.
In Section \ref{sc:exp-sep}, we verify the conditions in Theorem \ref{thm:ent growth} in the setting of \eqref{eq:ssimilarity}
using the hypothesis $\dim \mu_R<h(R)$.
Finally, we complete the proof of Proposition \ref{pr:dim=h} in Section \ref{sc:proof-dim=h}.

\subsection{Entropy growth in torsion free Abelian groups}\label{sc:entC}

In this section, we study entropy growth of measures on $\F$ under convolution.
Our purpose is to prove the following result, which is the main ingredient
in our proof of Theorem \ref{thm:ent growth}.

\begin{prp}\label{prop:ent increase in C}
For every $\e>0$, there exists
$\delta>0$ such that the following holds.
Let $\mu$ and $\nu$ be
probability measures on $\F$.
Suppose that $H(\mu)<\e^{-1}$ and $H(\nu)>\e$.
Then $H(\nu*\mu)>H(\mu)+\delta$.
\end{prp}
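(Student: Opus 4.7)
My plan is to rewrite the desired inequality as a lower bound on mutual information and exploit the torsion-freeness of $(\F,+)$. Let $A\sim\mu$ and $B\sim\nu$ be independent $\F$-valued random variables. Since addition in $\F$ is a bijection, translation preserves entropy, so $H(A+B\mid B)=H(A)$, giving the identity
\[
H(\nu*\mu)-H(\mu)\;=\;H(A+B)-H(A+B\mid B)\;=\;I(A+B;\,B).
\]
Hence the task is to find $\delta=\delta(\e)>0$ such that $I(A+B;B)\ge\delta$ whenever $H(\mu)<\e^{-1}$ and $H(\nu)>\e$.

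\textbf{Contradiction via approximate invariance.} Suppose the claim fails: fix $\e>0$ and choose sequences $(\mu_n,\nu_n)$ with $H(\mu_n)<\e^{-1}$, $H(\nu_n)>\e$, and $I(A_n+B_n;B_n)\to 0$. By Pinsker's inequality applied to the joint law of $(A_n+B_n,B_n)$ compared against the product of its marginals,
\[
\E_{b\sim\nu_n}\bigl\|\mu_n*\delta_b-\mu_n*\nu_n\bigr\|_{\mathrm{TV}}\;\longrightarrow\;0.
\]
The triangle inequality then yields $\E_{(b,b')\sim\nu_n\otimes\nu_n}\|\mu_n-\mu_n*\delta_{b-b'}\|_{\mathrm{TV}}\to 0$, so there is a set $S_n\subseteq\F$ with $(\nu_n*\check\nu_n)(S_n)\to 1$ on which $\mu_n$ is $\alpha_n$-translation-invariant, $\alpha_n\to 0$. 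Since $H(\nu_n)>\e$ forces $\nu_n$ to charge at least two distinct atoms, $\nu_n*\check\nu_n$ places positive mass on nonzero elements, and so for all $n$ large enough $S_n$ contains a nonzero $s_n\in\F$.

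\textbf{Spreading argument and main obstacle.} The crux is converting approximate invariance under a single nontrivial translation $s_n$ into an entropy blowup that contradicts $H(\mu_n)<\e^{-1}$. Here torsion-freeness of $(\F,+)$ is decisive: the multiples $s_n,2s_n,\ldots,Ns_n$ are $N$ distinct elements, and iterating the triangle inequality gives $\|\mu_n-\mu_n*\delta_{ks_n}\|_{\mathrm{TV}}\le k\alpha_n$ for $k\le N$. Taking $N=\lfloor\alpha_n^{-1/2}\rfloor$ produces $N$ translates of $\mu_n$ within pairwise TV distance $\alpha_n^{1/2}\to 0$; a standard concavity-of-entropy computation (applied to the uniform average of these translates) then forces $H(\mu_n)\gtrsim\log N\to\infty$, the desired contradiction. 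The main obstacle is making the last implication quantitative and uniform in $n$; if the elementary spreading estimate proves too lossy, I would instead invoke Tao's entropy Freiman theorem (from \emph{Sumset and inverse sumset theory for Shannon entropy}), which directly asserts that $H(A+B)\le H(A)+\delta$ with entropies bounded by $M$ forces $\mu,\nu$ to be concentrated on cosets of a finite subgroup of order bounded in terms of $M$. Since $(\F,+)$ is torsion-free, the only such subgroup is $\{0\}$, making this incompatible with $H(\nu_n)>\e$.
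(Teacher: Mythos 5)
Your route via mutual information, Pinsker's inequality, and approximate translation invariance is genuinely different from the paper's, which instead performs an explicit three-way case analysis on the atom profile of $\nu$ and, in the key case (two atoms of non-negligible mass), applies the Kaimanovich--Vershik inequality $H(\mu*\nu^{*n})-H(\mu)\le n\big(H(\mu*\nu)-H(\mu)\big)$ together with the observation that, in a torsion-free group, the maximal atom of $\nu^{*n}$ tends to $0$ when $\nu$ is a two-point measure. Your spreading step is sound: multiples $s,2s,\ldots,Ns$ of a nonzero $s$ are distinct by torsion-freeness, so $\|\mu-\mu*\delta_s\|_{\mathrm{TV}}\le\alpha$ forces $\max_x\mu\{x\}=O(\sqrt{\alpha})$ and hence $H(\mu)\ge\tfrac12\log\alpha^{-1}-O(1)$ — though what you call a ``concavity-of-entropy computation'' is really this max-atom estimate, not a use of concavity.

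The gap is at the step where you extract a nonzero $s_n$. Write $m_n=1-(\nu_n*\check\nu_n)\{0\}=1-\sum_x\nu_n\{x\}^2$. Pinsker and the triangle inequality give $\sum_{s\ne 0}(\nu_n*\check\nu_n)\{s\}\,\|\mu_n-\mu_n*\delta_s\|_{\mathrm{TV}}\le 2\sqrt{2I_n}$, so the best nonzero translate you can guarantee satisfies only $\|\mu_n-\mu_n*\delta_{s_n}\|_{\mathrm{TV}}\le 2\sqrt{2I_n}/m_n$. If $\max_x\nu_n\{x\}\to 1$ then $m_n\to 0$ and this bound is vacuous, yet $H(\nu_n)>\e$ is perfectly compatible with $\max_x\nu_n\{x\}\to 1$: put mass $1-\theta_n$ at one point and spread the remaining $\theta_n\to 0$ over a set whose entropy is $\gg\theta_n^{-1}$. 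This is precisely the ``dominant atom'' regime that the paper isolates as its third alternative ($\nu\{a\}\ge 1-\rho$), where it conditions on whether the dominant atom is hit and extracts the entropy gain $\ge\e/4$ from the large-entropy tail of $\nu$ without ever invoking translation invariance. Your proposal needs a parallel branch for this regime. The suggested fallback to Tao's entropy Freiman theorem does not obviously close it: you only have the one-sided hypothesis $H(A+B)\le H(A)+o(1)$, and the conclusion of the entropy Freiman theorem is approximate concentration on a coset progression, not a finite subgroup, so torsion-freeness alone does not produce the contradiction you are after.
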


This result depends only on the additive structure of $\F$, and it could be replaced by any countable Abelian torsion free group.

We begin by recalling the following useful fact that allows us to bound from below the growth
of entropy under convolution by a measure in terms of the growth under an iterated convolution.

\begin{lem}\label{lem:Kaimanovich-Vershik}
Let $\Gamma$ be a countable Abelian group.
Let $\mu$ and $\nu$ be probability
measures on $\Gamma$ with $H(\mu),H(\nu)<\infty$.
Then for every $n\ge1$,
\[
H(\mu*(\nu^{*n}))-H(\mu)\le n\cdot(H(\mu*\nu)-H(\mu)).
\]
\end{lem}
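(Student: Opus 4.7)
The plan is to prove that the sequence $a_k := H(\mu*\nu^{*k}) - H(\mu*\nu^{*(k-1)})$ is nonincreasing in $k \ge 1$. Once this is established, summing over $k = 1,\ldots,n$ telescopes to
\[
H(\mu*\nu^{*n}) - H(\mu) = \sum_{k=1}^{n} a_k \le n a_1 = n\bigl(H(\mu*\nu) - H(\mu)\bigr),
\]
which is the desired inequality.

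To realise the monotonicity, let $X \sim \mu$ and $Y_1, Y_2, \ldots$ be i.i.d.\ with law $\nu$, all mutually independent, and set $W_k := X + Y_1 + \cdots + Y_k$, so that $W_k$ has law $\mu*\nu^{*k}$. The inequality $a_{k+1} \le a_k$ becomes $H(W_{k+1}) - H(W_k) \le H(W_k) - H(W_{k-1})$ which, taking $A := W_{k-1}$, $Y := Y_k$, $Y' := Y_{k+1}$, reduces to the following key claim: for any independent triple $A, Y, Y'$ with $Y$ and $Y'$ identically distributed,
\[
H(A + Y + Y') - H(A + Y) \le H(A + Y) - H(A).
\]

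To prove this claim I would use the identity $H(U+V) - H(U) = H(V) - H(V \mid U+V)$, valid for independent $\Gamma$-valued $U, V$. This is a short consequence of the chain rule $H(U) + H(V) = H(U, U+V) = H(U+V) + H(U \mid U+V)$ combined with $H(U \mid U+V) = H(V \mid U+V)$, the latter holding because $U$ and $V$ determine each other given $U+V$. Applying the identity with $(U,V) = (A, Y)$ and then with $(U,V) = (A+Y, Y')$, and using $H(Y) = H(Y')$, the claim reduces to
\[
H(Y \mid A + Y) \le H(Y' \mid A + Y + Y').
\]

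This last inequality is a consequence of the data processing inequality. Since $Y$ is independent of $(A, Y')$, the sequence $Y' \to A + Y' \to A + Y' + Y$ forms a Markov chain, hence $I(Y'; A + Y' + Y) \le I(Y'; A + Y')$, i.e.\ $H(Y' \mid A + Y + Y') \ge H(Y' \mid A + Y')$. The right-hand side equals $H(Y \mid A + Y)$ because $(A, Y)$ and $(A, Y')$ share the same joint law. The main obstacle is essentially bookkeeping: isolating the correct entropy identity and exhibiting the appropriate Markov chain for the data processing step. There is no substantive analytic difficulty, and the argument uses only the abelian group structure of $\Gamma$, so it applies uniformly in our setting.
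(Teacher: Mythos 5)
Your proof is correct. The paper does not reproduce a proof of this lemma---it defers to \cite{Hoc-self-similar}*{Lemma 4.7} and Kaimanovich--Vershik---and your argument (monotonicity of the increments $H(\mu*\nu^{*k})-H(\mu*\nu^{*(k-1)})$ via the identity $H(U+V)-H(U)=H(V)-H(V\mid U+V)$ together with data processing along the Markov chain $Y'\to A+Y'\to A+Y'+Y$) is precisely the standard argument underlying that reference.
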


This lemma goes back to Kaimanovich and Vershik \cite{kaimanovich1983random} in some form.
See \cite{Hoc-self-similar}*{Lemma 4.7} for a proof in this formulation.

In the proof of Proposition \ref{prop:ent increase in C}, we decompose $\nu$ as a convex combination
of a Bernoulli measure and another measure, and exploit the next lemma,
which estimates entropy growth under convolution by a Bernoulli measure.

\begin{lem} \label{lem:ent increase by conv with measure supported on two points}
For every $M>1$, there exists $\eta>0$ such that the following holds.
Let $\mu$ be a probability measure on $\F$ with $H(\mu)<M$.
Let $a,b\in\F$ be with $a\ne b$, and write $\nu=\frac{\delta_{a}+\delta_{b}}{2}$.
Then $H(\nu*\mu)>H(\mu)+\eta$.
\end{lem}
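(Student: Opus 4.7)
The plan is to deduce the lemma from the Kaimanovich--Vershik inequality (Lemma \ref{lem:Kaimanovich-Vershik}) together with a lower bound on $H(\nu^{*n})$ that grows without bound in $n$. After translating $\mu$, which affects neither $H(\mu)$ nor $H(\nu*\mu)$, I may assume $a=0$ and write $c=b\neq 0$, so that $\nu=\tfrac{1}{2}\delta_0+\tfrac{1}{2}\delta_c$ and $\nu^{*n}$ is the law of $cS_n$ with $S_n\sim\mathrm{Binomial}(n,\tfrac{1}{2})$.

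The crucial observation is that, because $\F$ has characteristic $0$, the elements $0,c,2c,\ldots,nc\in\F$ are pairwise distinct, so $\nu^{*n}$ puts mass $\binom{n}{k}2^{-n}$ on $kc$ and therefore $H(\nu^{*n})=H(\mathrm{Binomial}(n,\tfrac{1}{2}))$. Standard estimates (Stirling) yield an absolute constant $C_0$ with
\[
H(\nu^{*n})\ \ge\ \tfrac{1}{2}\log n-C_0\qquad\text{for all }n\ge 1,
\]
so $H(\nu^{*n})\to\infty$. Given $M>1$, I fix $n=n(M)$ large enough that $H(\nu^{*n})\ge 2M$; explicitly, $n=\lceil e^{4M+2C_0}\rceil$ suffices.

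To conclude, I apply Lemma \ref{lem:Kaimanovich-Vershik} to $\mu$ and $\nu$, obtaining $H(\nu^{*n}*\mu)-H(\mu)\le n\bigl(H(\nu*\mu)-H(\mu)\bigr)$. Since convolution with an independent random variable only increases entropy, $H(\nu^{*n}*\mu)\ge H(\nu^{*n})\ge 2M>H(\mu)+M$, where the last strict inequality uses the hypothesis $H(\mu)<M$. Rearranging yields $H(\nu*\mu)-H(\mu)\ge M/n$, so the lemma holds with $\eta:=M/n>0$, a quantity depending only on $M$.

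The only input not already available in the excerpt is the entropy lower bound for the binomial distribution, which is entirely routine. Conceptually, the one place where the hypotheses are genuinely used is the appeal to characteristic zero, which ensures that the multiples $kc$ are distinct and hence that $H(\nu^{*n})$ grows with $n$; this is precisely why the argument relies on torsion-freeness of the ambient additive group.
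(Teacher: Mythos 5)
Your proof is correct and follows essentially the same route as the paper's: both arguments observe that $H(\nu^{*n})$ grows without bound in $n$ (the paper phrases this as the maximal point mass of $\nu^{*n}$ tending to $0$; you compute the binomial entropy directly), then apply Lemma \ref{lem:Kaimanovich-Vershik} together with the monotonicity of entropy under convolution to extract the gain $\eta$ depending only on $M$.
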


\begin{proof}
Let $M>1$ and let $n\ge1$ be large with respect to $M$.
Let $\mu,a,b$
and $\nu$ be as in the statement of the lemma.

The maximal $\nu^{*n}$-measure of a point tends to $0$ as $n\to\infty$ uniformly in $a,b$. 
Hence for sufficiently large $n$ (in a manner not depending
on $a,b$), we have $H(\nu^{*n})>M+1$.
By this and Lemma \ref{lem:Kaimanovich-Vershik},
\[
H(\mu)+1<H(\nu^{*n})\le H(\mu*(\nu^{*n}))\le H(\mu)+n\cdot(H(\mu*\nu)-H(\mu)).
\]
This gives $H(\mu*\nu)>H(\mu)+\frac{1}{n}$, which proves the lemma
with $\eta=\frac{1}{n}$.
\end{proof}

\begin{proof}[Proof of Proposition \ref{prop:ent increase in C}]
Let $0<\e<1$, let $0<\rho<\frac{1}{2}$ be small with respect
to $\e$, and let $0<\eta<\frac{1}{2}$ be small with respect
to $\rho$.
Let $\mu$ and $\nu$ be probability measures on $\F$
with $H(\mu)<\e^{-1}$ and $H(\nu)>\e$.
We shall consider three cases,
\begin{enumerate}
\item there exist $a,b\in\F$ with $a\ne b$ and $\nu\{a\},\nu\{b\}>\eta$;
\item there exists $a\in\F$ such that $\nu\{a\}<1-\rho$ and $\nu\{b\}\le\eta$
for all $b\in\F\setminus\{a\}$;
\item there exists $a\in\F$ with $\nu\{a\}\ge1-\rho$.
\end{enumerate}
Clearly these cases cover all possibilities.

Suppose that the first alternative holds. Write
\[
\nu_{1}=\frac{\delta_{a}+\delta_{b}}{2}\quad\text{ and }\quad\nu_{2}=\frac{\nu-\eta\delta_{a}-\eta\delta_{b}}{1-2\eta}.
\]
Then $\nu_{1}$ and $\nu_{2}$ are probability measures on $\F$
and,
\[
\nu=2\eta\nu_{1}+(1-2\eta)\nu_{2}.
\]
By concavity, $H(\mu)<\e^{-1}$ and Lemma \ref{lem:ent increase by conv with measure supported on two points},
\begin{align*}
H(\nu*\mu)\ge&2\eta\cdot H(\nu_{1}*\mu)+(1-2\eta)H(\nu_{2}*\mu)\\
>&2\eta(H(\mu)+\eta)+(1-2\eta)H(\mu)\\
=&H(\mu)+2\eta^{2},
\end{align*}
which proves the proposition in this case.

Suppose now that the second alternative holds.
Write
\[
\nu_{1}=\frac{\nu-\nu\{a\}\delta_{a}}{1-\nu\{a\}},
\]
then $\nu_{1}$ is a probability measure,
\[
\nu_{1}\{b\}\le\frac{\eta}{1-\nu\{a\}}<\frac{\eta}{\rho}\text{ for all }b\in\F,
\]
and $\nu=\nu\{a\}\delta_{a}+(1-\nu\{a\})\nu_{1}$. From this and since
we may assume $\log\frac{\rho}{\eta}>\e^{-1}+1>H(\mu)+1$,
\begin{align*}
H(\nu*\mu)  \ge & \nu\{a\}H(\delta_{a}*\mu)+(1-\nu\{a\})H(\nu_{1}*\mu)\\
  \ge & \nu\{a\}H(\mu)+(1-\nu\{a\})H(\nu_{1})\\
  > & \nu\{a\}H(\mu)+(1-\nu\{a\})\log\frac{\rho}{\eta}\\
  > & \nu\{a\}H(\mu)+(1-\nu\{a\})(H(\mu)+1)\\
  > & H(\mu)+\rho,
\end{align*}
which proves the proposition also in this case.

Lastly, suppose that the third alternative holds.
Write
\begin{align*}
\nu_{1}=&\frac{\nu-\nu\{a\}\delta_{a}}{1-\nu\{a\}},\\
\mathcal{C}=&\{\{b\}:b\in\F\},\\
\mathcal{E}=&\{\{a\},\F\setminus\{a\}\}.
\end{align*}
Since $\rho$ is small with respect to $\e$, we may assume that
$H(\nu;\mathcal{E})<\e/2$, and so
\[
\e<H(\nu)=H(\nu;\mathcal{E})+H(\nu;\mathcal{C}|\mathcal{E})<\e/2+(1-\nu\{a\})H(\nu_{1}).
\]
We can also assume $\rho/\e<\e/4$, and
so
\[
(1-\nu\{a\})H(\mu)\le\frac{\rho}{\e}<\frac{\e}{4}.
\]
Hence,
\begin{align*}
H(\nu*\mu)  \ge & \nu\{a\}H(\delta_{a}*\mu)+(1-\nu\{a\})H(\nu_{1}*\mu)\\
  \ge & \nu\{a\}H(\mu)+(1-\nu\{a\})H(\nu_{1})\\
  > & \nu\{a\}H(\mu)+\e/2\\
  > & \nu\{a\}H(\mu)+(1-\nu\{a\})H(\mu)+\e/4\\
  = & H(\mu)+\e/4,
\end{align*}
which completes the proof of the proposition.
\end{proof}

\subsection{Entropy growth in function fields}\label{sc:entCx}

The purpose of this section is to prove Theorem \ref{thm:ent growth}, which we recall now.

\begin{thm*}
For every $\e>0$, there exists $\delta>0$
such that the following holds. Let $A$ and $B$ be independent random
elements of $\F[[X]]$.
Let $n\ge1$ and suppose that $H(A;n| n-1)<\e^{-1}$
and $H(B;n| n-1)>\e$. Then,
\[
H(A+B;n| n-1)>H(A;n| n-1)+\delta.
\]
\end{thm*}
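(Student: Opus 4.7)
The plan is to reduce to Proposition \ref{prop:ent increase in C} by conditioning on the first $n-1$ coefficients of $A$ and $B$. Write $a'=(\alpha_{0},\ldots,\alpha_{n-2})$ for the low-order coefficients of $A$ and $b'$ analogously for $B$, and let $\mu_{a'}$ and $\nu_{b'}$ denote the conditional distributions of the $(n-1)$st coefficients of $A$ and $B$, viewed as probability measures on $\F$. The chain rule yields $H(A;n|n-1)=\E_{a'}[H(\mu_{a'})]$ and $H(B;n|n-1)=\E_{b'}[H(\nu_{b'})]$. Since the first $n-1$ coefficients of $A+B$ are a function of $(a',b')$, monotonicity of conditional entropy gives
\[
H(A+B;n|n-1)\ge \E_{a',b'}[H(\mu_{a'}*\nu_{b'})].
\]
Writing $\phi(a')=H(\mu_{a'})$, $\psi(b')=H(\nu_{b'})$ and $g(a',b')=H(\mu_{a'}*\nu_{b'})$, and noting that $\phi$ and $\psi$ are independent since $A$ and $B$ are, it suffices to exhibit a constant $\delta=\delta(\e)>0$ with $\E[g-\phi]\ge\delta$.

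The main obstacle is uniformity. Proposition \ref{prop:ent increase in C} requires the simultaneous pointwise bounds $H(\mu)<\e^{-1}$ and $H(\nu)>\e$, whereas we control only the averages $\E[\phi]<\e^{-1}$ and $\E[\psi]>\e$. Markov's inequality immediately supplies a set of large probability on which $\phi$ is bounded above, but a lower bound on the average $\E[\psi]$ does not translate into any uniform lower bound on $\P(\psi>\e')$ for a fixed threshold $\e'>0$. The argument therefore splits into two cases based on the tail of $\psi$.

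In the first case, suppose $\P(\psi(b')>\e/4)\ge\e^2/16$. By Markov, $\P(\phi(a')<4/\e)\ge 3/4$, and by independence the event $\{\phi<4/\e,\ \psi>\e/4\}$ has probability at least $3\e^2/64$. Proposition \ref{prop:ent increase in C} applied with parameter $\e/4$ then produces a constant $\delta_{0}=\delta_{0}(\e/4)>0$ such that $g-\phi>\delta_{0}$ on this event, whence $\E[g-\phi]\ge 3\e^{2}\delta_{0}/64$.

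In the complementary case $\P(\psi>\e/4)<\e^{2}/16$, the tail of $\psi$ must be heavy. The pointwise bound $H(\mu*\nu)\ge H(\nu)$, obtained by conditioning on the variable with law $\mu$, gives $g-\phi\ge(\psi-\phi)^{+}$, and in particular $g-\phi\ge\psi/2$ whenever $\psi\ge 4/\e$ and $\phi\le 2/\e$. Markov gives $\P(\phi\le 2/\e)\ge 1/2$, so independence yields $\E[g-\phi]\ge\tfrac{1}{4}\E[\psi\,\one_{\{\psi\ge 4/\e\}}]$. The case hypothesis bounds $\E[\psi\,\one_{\{\e/4<\psi<4/\e\}}]$ by $(4/\e)(\e^{2}/16)=\e/4$, while $\E[\psi\,\one_{\{\psi\le\e/4\}}]\le\e/4$ trivially, so $\E[\psi]>\e$ forces $\E[\psi\,\one_{\{\psi\ge 4/\e\}}]>\e/2$, and hence $\E[g-\phi]>\e/8$. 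Taking $\delta=\min(3\e^{2}\delta_{0}/64,\ \e/8)$ covers both cases.
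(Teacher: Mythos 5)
Your proof is correct and follows essentially the same strategy as the paper's: condition on the first $n-1$ coefficients to reduce to Proposition~\ref{prop:ent increase in C} applied to the fibre measures $\mu_{a'},\nu_{b'}$, split according to whether $\P(\psi>\text{threshold})$ is non-negligible, apply the proposition on a set of positive probability in the first case, and use the crude bound $g\ge\max(\phi,\psi)$ in the second. The only differences are cosmetic (explicit thresholds $\e/4$, $2/\e$, $4/\e$ in place of the paper's auxiliary parameter $\eta$, and a slightly different bookkeeping in the second case), so the argument is sound as written.
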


\begin{proof}
For $k\ge1$ and $a_{0},\ldots,a_{k-1}\in\F$, write
\[
C(a_{0},\ldots,a_{k-1})=\Big\{\sum_{l=0}^{\infty}b_{l}x^{l}\in\F[[X]]:b_{l}=a_{l}\text{ for all }0\le l<k\Big\}.
\]
Set
\[
\mathcal{C}_{k}=\{C(a_{0},\ldots,a_{k-1}):a_{0},\ldots,a_{k-1}\in\F\}.
\]
Then $\mathcal{C}_{k}$ is a partition of $\F[[X]]$.
Also, write $\mathcal{C}_{0}$ for the trivial partition $\{\F[[X]]\}$.
The collection $\cup_{k\ge0}\mathcal{C}_{k}$ forms a basis for the
topology on $\F[[X]]$ induced by the metric $|\cdot|$.
Using this notation, we can write
\[
H(\mu;n|k)=H(\mu;\cC_n|\cC_k)
\]
for a probability measure $\mu$ and $n\ge k\in\Z_{\ge 0}$,
where the notation on the right hand side is the entropy of the measure with respect to the partition $\cC_n$
conditioned on the partition $\cC_k$.

Given a probability measure $\mu$ and $C\in\cC_{k}$ with $\mu(C)>0$, we write $\mu_C$ for
the restriction of $\mu$ to $C$ normalized to be a probability measure.
Given $P\in\F[[X]]$, let $\mathcal{C}_{k}(P)$ be the unique
$C\in\mathcal{C}_{k}$ with $P\in C$.
Using these notation, we can write
\[
H(\mu;\cC_n|\cC_k)=\int H(\mu_{\cC_{k}(P)};\cC_n)d\mu(P).
\]

Let $\e>0$, let $\eta>0$ be small with respect to $\e$,
and let $\delta>0$ be small with respect to $\eta$.
Let $\mu$ and $\nu$ be Borel probability measures on $\F[[X]]$.
(Here we consider the Borel $\s$-algebra generated by the topology induced by the $|\cdot|$ metric.)
Using the above notation, we can express the statement of the theorem as follows.
Let $n\ge1$ and suppose that $H(\mu;\mathcal{C}_{n}|\mathcal{C}_{n-1})<\e^{-1}$
and $H(\nu;\mathcal{C}_{n}|\mathcal{C}_{n-1})>\e$.
We need to show that
\[
H(\mu*\nu;\mathcal{C}_{n}|\mathcal{C}_{n-1})>H(\mu;\mathcal{C}_{n}|\mathcal{C}_{n-1})+\delta.
\]

Write
\[
E=\{P\in\F[[X]]:\nu(\mathcal{C}_{n-1}(P))>0\text{ and }H(\nu_{\mathcal{C}_{n-1}(P)};\mathcal{C}_{n})>\eta\}.
\]

Suppose first that $\nu(E)>\eta$.
Write
\[
F=\{P\in\F[[X]]:\mu(\mathcal{C}_{n-1}(P))>0\text{ and }H(\mu_{\mathcal{C}_{n-1}(P)};\mathcal{C}_{n})\le\eta^{-2}\}.
\]
We have
\[
\e^{-1}>\int H(\mu_{\mathcal{C}_{n-1}(P)};\mathcal{C}_{n})\:d\mu(P)\ge\eta^{-2}\cdot\mu(F^{c}),
\]
hence $\mu(F^{c})<\frac{\eta^{2}}{\e}$.

For $P\in F$ and $Q\in E$
\begin{equation}
H(\mu_{\mathcal{C}_{n-1}(P)}*\nu_{\mathcal{C}_{n-1}(Q)};\mathcal{C}_{n})>H(\mu_{\mathcal{C}_{n-1}(P)};\mathcal{C}_{n})+\delta,\label{eq:ent gain}
\end{equation}
which follows by applying Proposition \ref{prop:ent increase in C} to the pushforwards of $\mu_{\mathcal{C}_{n-1}(P)}$ and $\nu_{\mathcal{C}_{n-1}(Q)}$ via the map sending $\sum_{l=0}^{\infty}a_{l}x^{l}\in\F[[X]]$ to $a_n$. Also, by the concavity of entropy it follows that for every $P,Q\in\F[[X]]$
with $\mu(\mathcal{C}_{n-1}(P)),\nu(\mathcal{C}_{n-1}(Q))>0$,
\begin{equation}
H(\mu_{\mathcal{C}_{n-1}(P)}*\nu_{\mathcal{C}_{n-1}(Q)};\mathcal{C}_{n})\ge H(\mu_{\mathcal{C}_{n-1}(P)};\mathcal{C}_{n}).\label{eq:no ent loss}
\end{equation}

We may assume $\eta/\e<1/2$.
Using this together with $\nu(E)>\eta$, we get
\[
\mu\times\nu(F\times E)\ge\nu(E)-\mu(F^{c})>\eta-\frac{\eta^{2}}{\e}>\eta/2.
\]
From this, \eqref{eq:ent gain}, \eqref{eq:no ent loss}, and by concavity, we have
\begin{align*}
H(\mu*\nu;\mathcal{C}_{n}|\mathcal{C}_{n-1})
= & H\left(\iint\mu_{\mathcal{C}_{n-1}(P)}*\nu_{\mathcal{C}_{n-1}(Q)}\:d\mu(P)\:d\nu(Q);\mathcal{C}_{n}|\mathcal{C}_{n-1}\right)\\
\ge & \iint H\left(\mu_{\mathcal{C}_{n-1}(P)}*\nu_{\mathcal{C}_{n-1}(Q)};\mathcal{C}_{n}\right)\:d\mu(P)\:d\nu(Q)\\
\ge & \int H\left(\mu_{\mathcal{C}_{n-1}(P)};\mathcal{C}_{n}\right)\:d\mu(P)+\delta\cdot\mu\times\nu(F\times E)\\
> & H(\mu;\mathcal{C}_{n}|\mathcal{C}_{n-1})+\frac{\delta\eta}{2},
\end{align*}
which proves the theorem in the case $\nu(E)>\eta$.

Suppose now that $\nu(E)\le\eta$. We have,
\[
\e<H(\nu;\mathcal{C}_{n}|\mathcal{C}_{n-1})\le\int_{E}H(\nu_{\mathcal{C}_{n-1}(P)};\mathcal{C}_{n})\:d\nu(P)+\eta.
\]
From this, by concavity, and by $H(\mu;\mathcal{C}_{n}|\mathcal{C}_{n-1})<\e^{-1}$,
\begin{align*}
H(\mu*\nu;\mathcal{C}_{n}|\mathcal{C}_{n-1})
\ge& \iint H(\mu_{\mathcal{C}_{n-1}(P)}*\nu_{\cC_{n-1}(Q)};\mathcal{C}_{n})\:d\mu(P)\:d\nu(Q)\\
\ge & \int_{E^{c}}\int H(\mu_{\mathcal{C}_{n-1}(P)};\mathcal{C}_{n})\:d\mu(P)\:d\nu(Q)\\
&+ \int_{E}\int H(\nu_{\mathcal{C}_{n-1}(Q)};\mathcal{C}_{n})\:d\mu(P)\:d\nu(Q)\\
> & (1-\eta)H(\mu;\mathcal{C}_{n}|\mathcal{C}_{n-1})+\e-\eta\\
\ge & H(\mu;\mathcal{C}_{n}|\mathcal{C}_{n-1})-\eta/\e+\e-\eta.
\end{align*}
Since we can assume that $\e-\eta/\e-\eta>\delta$,
this completes the proof of the theorem.
\end{proof}

\subsection{Exponential separation}\label{sc:exp-sep}

The purpose of this section is to study the separation between points in the
support of $A_{R}^{(n)}$ and deduce bounds for its entropy on suitable scales.
We prove two lemmata that will be used in the proof of Proposition \ref{pr:dim=h}
to verify the conditions in Theorem \ref{thm:ent growth}.

\begin{lem}
\label{lem:full ent}Let $R\in\F[[X]]$ and suppose that $Q(X,1,R(X))\ne0$
for all $0\ne Q\in\cup_{n\ge1}\mathcal{Q}^{(n)}$. Then for every $N\ge1$,
there exists $n\ge N$ with $H(A_{R}^{(n)};4n)=nH(p)$.
\end{lem}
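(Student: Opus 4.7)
Suppose for contradiction that there exists $N$ such that $H(A_{R}^{(n)};4n)<nH(p)$ for every $n\ge N$. Since $nH(p)$ is the Shannon entropy of the word $(\xi_{0},\ldots,\xi_{n-1})$ and $A_{R}^{(n)}\bmod X^{4n}$ is a function of this word, strict inequality means the map $\omega\mapsto A_{R}^{(n)}(\omega)\bmod X^{4n}$ is not injective on $\{1,\ldots,m\}^{n}$. Hence for every $n\ge N$ there exist distinct $\omega,\omega'\in\{1,\ldots,m\}^{n}$ with identical image, which produces a nonzero $Q_{n}\in\cQ^{(n)}$ with $|Q_{n}(X,1,R(X))|\le 2^{-4n}$. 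Writing $Q_{n}(X,Y_{1},Y_{2})=P_{1,n}(X)Y_{1}+P_{2,n}(X)Y_{2}$ with $P_{1,n},P_{2,n}\in\cP_{L}^{(n)}$, my goal is to show that some single $Q_{n}$ itself satisfies $Q_{n}(X,1,R(X))=0$, contradicting the hypothesis.

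First I check that $P_{1,n}P_{2,n}\neq 0$ for all large $n$. The case $P_{2,n}=0$ is impossible since then $P_{1,n}$ would be a nonzero polynomial of degree less than $n$ with $|P_{1,n}|\le 2^{-4n}$. If $P_{1,n}=0$ for infinitely many $n$, then $|P_{2,n}R|\le 2^{-4n}$ together with $\deg P_{2,n}<n$ forces $R$ to vanish to order at least $3n+1$ for these $n$, hence $R=0$; but then the corresponding $Q_{n}$ already satisfies $Q_{n}(X,1,R(X))=P_{2,n}R=0$, contradicting the hypothesis.

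The decisive step is to cross-multiply: consider
\[
P_{2,n+1}(X)\cdot Q_{n}(X,1,R(X))-P_{2,n}(X)\cdot Q_{n+1}(X,1,R(X))=P_{1,n}(X)P_{2,n+1}(X)-P_{1,n+1}(X)P_{2,n}(X).
\]
The left-hand side has valuation at least $4n$, while the right-hand side is a polynomial of degree at most $2n$. Since $4n>2n$, this polynomial vanishes identically, yielding $P_{1,n}/P_{2,n}=P_{1,n+1}/P_{2,n+1}$ in $\F(X)$ for every large $n$. Therefore there is a single rational function $S\in\F(X)$ with $-P_{1,n}/P_{2,n}=S$ for all large $n$. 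Combining $|P_{1,n}+P_{2,n}R|\le 2^{-4n}$ with $|P_{2,n}|\ge 2^{-(n-1)}$, the multiplicativity of $|\cdot|$ gives $|R-S|\le 2^{-3n-1}$; letting $n\to\infty$ shows $R=S$. Consequently for any large $n$, $Q_{n}(X,1,R(X))=P_{1,n}+P_{2,n}R=0$, contradicting the assumption that $Q(X,1,R(X))\neq 0$ for every nonzero $Q\in\cup_{n\ge 1}\cQ^{(n)}$. The key ingredient is the affine structure of $\cQ^{(n)}$ in the variables $Y_{1},Y_{2}$, which allows the cross-multiplication to produce a polynomial of bounded degree that vanishes to high order; no deeper obstacle arises.
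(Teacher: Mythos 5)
Your proof is correct and takes essentially the same route as the paper's: extract a nonzero $Q_n\in\cQ^{(n)}$ from the entropy drop, cross-multiply consecutive $Q_n$'s to obtain a polynomial of degree $<2n$ that has valuation $\ge 4n$ and hence vanishes, conclude that all the rational functions $-P_{1,n}/P_{2,n}$ stabilize to a single $S$ with $R=S$, and reach a contradiction. The preliminary case analysis you do on $P_{1,n}=0$ is unnecessary — the main argument carries through verbatim if $P_{1,n}=0$ (one simply gets $S=0$, $R=0$, and $Q_n(X,1,R(X))=0$) — but it is correct and harmless.
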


\begin{proof}
Suppose to the contrary that there exists $N\ge1$ with,
\[
H(A_{R}^{(n)};4n)<nH(p)\text{ for all }n\ge N.
\]
Since
\[
T_{i}(Y_{1},Y_{2})\ne T_{j}(Y_{1},Y_{2})\text{ for }1\le i<j\le m,
\]
it follows that for every $n\ge N$ there exists $0\ne Q_{n}\in\mathcal{Q}^{(n)}$
with $|Q_{n}(X,1,R(X))|\le2^{-4n}$.

For every $n\ge N$, there exist $P_{n,1},P_{n,2}\in\cP_{L}^{(n)}$ with
\[
Q_{n}(X,Y_{1},Y_{2})=P_{n,1}(X)Y_{1}+P_{n,2}(X)Y_{2}.
\]
From $|Q_{n}(X,1,R(X))|\le2^{-4n}$ and $Q_{n}\ne0$ it follows that
$P_{n,2}\ne0$. This together with $\deg(P_{n,2})<n$ implies $|P_{n,2}|\ge2^{-n}$. Thus
\[
|1/P_{n,2}(X)|=|P_{n,2}(X)|^{-1}\le2^n,
\]
and so
\[
\left|\frac{P_{n,1}(X)}{P_{n,2}(X)}+R(X)\right|=\left|\frac{Q_{n}(X,1,R(X))}{P_{n,2}(X)}\right|\le2^{-3n}.
\]

Therefore,
\begin{align*}
\left|\frac{P_{n,1}}{P_{n,2}}-\frac{P_{n+1,1}}{P_{n+1,2}}\right|
\le&\max\left\{ \left|\frac{P_{n,1}(X)}{P_{n,2}(X)}+R(X)\right|,\left|\frac{P_{n+1,1}(X)}{P_{n+1,2}(X)}+R(X)\right|\right\}\\ \le&2^{-3n},
\end{align*}
which gives,
\[
|P_{n,1}P_{n+1,2}-P_{n+1,1}P_{n,2}|\le2^{-3n}|P_{n,2}|\cdot|P_{n+1,2}|\le2^{-3n}.
\]
Note that,
\[
\deg(P_{n,1}P_{n+1,2}-P_{n+1,1}P_{n,2}) < 2n,
\]
hence
$P_{n,1}P_{n+1,2}=P_{n+1,1}P_{n,2}$.

For all $n\ge N$, we have
\[
\frac{P_{n,1}(X)}{P_{n,2}(X)}=\frac{P_{n+1,1}(X)}{P_{n+1,2}(X)}
\]
and
\[
\left|\frac{P_{n,1}(X)}{P_{n,2}(X)}+R(X)\right|\le2^{-3n},
\]
which clearly implies
\[
R(X)=-\frac{P_{N,1}(X)}{P_{N,2}(X)}.
\]
It follows that $Q_{N}(X,1,R(X))=0$, which contradicts our assumption
and completes the proof of the lemma.
\end{proof}

\begin{lem}\label{lem:full ent2}
Let $R\in\F[[X]]$ and suppose that $Q(X,1,R(X))=0$
for some $0\ne Q\in\mathcal{Q}^{(N)}$ for some $N\ge1$.
Then for every $n\ge N$, we have
$H(A_{R}^{(n)};4n)\ge n h(R)$.
\end{lem}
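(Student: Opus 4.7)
The plan is to establish that truncation modulo $X^{4n}$ is injective on the support of $A_R^{(n)}$, so that $H(A_R^{(n)};4n)=H(A_R^{(n)})$, after which the subadditivity of $n\mapsto H(A_R^{(n)})$ immediately yields $H(A_R^{(n)})\ge n h(R)$.

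First, I would use the hypothesis to identify $R$ as a rational function with small denominator. Writing the given $0\ne Q\in\cQ^{(N)}$ in the form $Q(X,1,Y_2)=P_{0,1}(X)+P_{0,2}(X)Y_2$ with $P_{0,1},P_{0,2}\in\cP_L^{(N)}$, the identity $P_{0,1}+P_{0,2}R=0$ forces $P_{0,2}\neq0$ (otherwise $P_{0,1}$ would also vanish, and then $Q=0$). Consequently $R=-P_{0,1}/P_{0,2}$ with both polynomials of degree strictly less than $N$.

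The central step is the claim that for every $n\ge N$ and every $Q\in\cQ^{(n)}$, the congruence $Q(X,1,R)\equiv 0\pmod{X^{4n}}$ already forces $Q(X,1,R)=0$ in $\F[[X]]$. Writing $Q(X,1,Y_2)=P_1(X)+P_2(X)Y_2$ with $P_1,P_2\in\cP_L^{(n)}$ and substituting $R=-P_{0,1}/P_{0,2}$, one obtains
\[
Q(X,1,R)=\frac{P_1 P_{0,2}-P_2 P_{0,1}}{P_{0,2}}.
\]
The numerator $S:=P_1 P_{0,2}-P_2 P_{0,1}$ is a polynomial of degree at most $(n-1)+(N-1)<2n$. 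If $X^{4n}$ divides $Q(X,1,R)$ in $\F[[X]]$, then $S$ has $X$-adic valuation at least $4n$; but any non-zero polynomial of degree less than $2n$ has valuation less than $2n$. Hence $S=0$ and therefore $Q(X,1,R)=0$.

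Finally, two realizations $(\xi_0,\ldots,\xi_{n-1})$ and $(\xi'_0,\ldots,\xi'_{n-1})$ give the same truncation $A_R^{(n)}\bmod X^{4n}$ if and only if the corresponding difference polynomial $Q\in\cQ^{(n)}$ satisfies $Q(X,1,R)\equiv0\pmod{X^{4n}}$; by the claim this is equivalent to $Q(X,1,R)=0$, which is exactly the condition that the two realizations produce the same $A_R^{(n)}$. Hence truncation is injective on the support, so $H(A_R^{(n)};4n)=H(A_R^{(n)})$, and the subadditivity of $n\mapsto H(A_R^{(n)})$ recorded in the definition of $h(R)$ gives $H(A_R^{(n)})\ge n h(R)$. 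The whole argument is essentially bookkeeping with polynomial degrees and $X$-adic valuations; the only point where anything must be checked is the inequality $\deg S<2n\le 4n$, which is precisely why the hypothesis $n\ge N$ is used.
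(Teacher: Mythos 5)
Your proof is correct and follows essentially the same route as the paper's: both reduce the claim to the observation that the cross-polynomial $P_1\wt P_2 - P_2\wt P_1$ (in your notation $S = P_1 P_{0,2} - P_2 P_{0,1}$) has degree strictly below $2n$, so it cannot have $X$-adic valuation on the order of $3n$ or $4n$ unless it vanishes, which forces $Q(X,1,R)=0$. The only cosmetic difference is that you clear the denominator $P_{0,2}$ at the start, which avoids the paper's intermediate step of dividing by $\wt P_2$ and invoking $|\wt P_2|\ge 2^{-n}$, but the core degree-versus-valuation argument is identical.
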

\begin{proof}
Fix $n\ge N$, and let $0 \ne \wt Q\in\cQ^{(n)}$ such that $|\wt Q(X,1,R(X))|\le 2^{-4n}$.
We set out to prove that $\wt Q(X,1,R(X))=0$ for all such $\wt Q$.
This implies
\[
H(A_{R}^{(n)};4n)=H(A_{R}^{(n)})\ge n h(R)
\]
completing the proof of the lemma.

We write
\begin{align*}
Q(X,Y_{1},Y_{2})=&P_{1}(X)Y_{1}+P_{2}(X)Y_{2}\\
\wt Q(X,Y_{1},Y_{2})=&\wt P_{1}(X)Y_{1}+\wt P_{2}(X)Y_{2},
\end{align*}
with some $P_{1},P_{2},\wt P_1,\wt P_2\in\cP_{L}^{(n)}$.
By $Q(X,1,R(X))=0$, we have
\[
\frac{P_1(X)}{P_2(X)}+R(X)=0.
\]
By $|\wt Q(X,1,R(X))|\le 2^{-4n}$ and $|\wt P_2|\ge 2^{-n}$, we have
\[
\left|\frac{\wt P_{1}(X)}{\wt P_{2}(X)}+R(X)\right|=\left|\frac{\wt Q(X,1,R(X))}{\wt P_{2}(X)}\right|\le2^{-3n}.
\]

Combining the last two formulae, we get
\[
\left|\frac{P_{1}}{P_{2}}-\frac{\wt P_{1}}{\wt P_{2}}\right|\le 2^{-3n}.
\]
As above, this yields
\[
|P_1\wt P_2-\wt P_1 P_2|\le 2^{-3n}.
\]
Since $\deg(P_1\wt P_2-\wt P_1 P_2)\le n+N$, we have
\[
P_1\wt P_2-\wt P_1 P_2=0.
\]
Hence,
\[
\frac{P_1}{P_2}=\frac{\wt P_1}{\wt P_2},
\]
and $\wt Q(X,1,R(X))=0$, as required.
\end{proof}

\subsection{Concluding the proof}\label{sc:proof-dim=h}

Before we complete the proof of Proposition \ref{pr:dim=h}, we need one last ingredient
that will be used to verify a condition of Theorem \ref{thm:ent growth}.

\begin{lem}
\label{lem:bounded ent}For every $R\in\F[[X]]$ and $n,k\ge1$,
we have
\[
H(A_{R}^{(n)};k| k-1)\le H(p).
\]
\end{lem}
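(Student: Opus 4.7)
The plan is to exploit the self-similar structure of $A_R^{(n)}$ by isolating its first summand. Pulling out the $j=0$ term and re-indexing $\xi_1,\xi_2,\ldots$ yields
\[
A_R^{(n)} \stackrel{d}{=} T_{\xi_0}(1,R) + X\,\tilde A_R^{(n-1)},
\]
where $\tilde A_R^{(n-1)}$ has the same distribution as $A_R^{(n-1)}$ and is independent of $\xi_0$. Writing $B_k^{(n)}$ for the image of $A_R^{(n)}$ in $\F[[X]]/X^k\F[[X]]$, so that $H(A_R^{(n)};k)=H(B_k^{(n)})$ and $H(A_R^{(n)};k\mid k-1)=H(B_k^{(n)})-H(B_{k-1}^{(n)})$, reduction modulo $X^k$ gives
\[
B_k^{(n)} \stackrel{d}{=} \mathcal{T}_k + X\,\tilde B_{k-1}^{(n-1)},
\]
with $\mathcal{T}_k := T_{\xi_0}(1,R)\bmod X^k$ a function of $\xi_0$ alone (so $H(\mathcal{T}_k)\le H(\xi_0)=H(p)$), and $\tilde B_{k-1}^{(n-1)}$ distributed like $B_{k-1}^{(n-1)}$ and independent of $\mathcal{T}_k$.

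Applying subadditivity of entropy for independent summands in the abelian group $\F[[X]]/X^k\F[[X]]$ yields
\[
H(B_k^{(n)}) \le H(\mathcal{T}_k) + H(\tilde B_{k-1}^{(n-1)}) \le H(p) + H(B_{k-1}^{(n-1)}).
\]
To close the argument, it remains to show the monotonicity $H(B_{k-1}^{(n-1)}) \le H(B_{k-1}^{(n)})$. For this I would use the \emph{other} natural decomposition $A_R^{(n)} = A_R^{(n-1)} + T_{\xi_{n-1}}(1,R)X^{n-1}$, whose summands are genuinely independent because $A_R^{(n-1)}$ depends only on $\xi_0,\ldots,\xi_{n-2}$. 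Reducing modulo $X^{k-1}$ gives $B_{k-1}^{(n)} = B_{k-1}^{(n-1)} + D$ with $D$ independent of $B_{k-1}^{(n-1)}$, and the elementary inequality $H(X+Y)\ge H(X)$ for independent random elements of an abelian group (immediate from $H(X+Y\mid Y)=H(X)$ and $H(X+Y)\ge H(X+Y\mid Y)$) then supplies the desired monotonicity.

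Combining these two observations gives $H(A_R^{(n)};k\mid k-1)=H(B_k^{(n)})-H(B_{k-1}^{(n)})\le H(p)$, which is exactly the lemma. The only subtle point is the monotonicity step: the naive attempt to bound $H(B_k^{(n)}\mid B_{k-1}^{(n)})$ by further conditioning on the $\xi_j$'s goes in the wrong direction, since conditioning on more decreases entropy. The trick is to compare two different values of $n$ at the same truncation level, rather than two different truncation levels at the same $n$. The edge cases $n=1$ and $k=1$ are handled by the conventions $A_R^{(0)}=0$ and $B_0^{(\cdot)}=0$, under which all inequalities above remain valid.
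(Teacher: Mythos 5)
Your proof is correct, and it is genuinely different from the one in the paper. The paper's argument bounds $H(A_R^{(n)};k\mid k-1)$ by the increments $H(A_R;l\mid l-1)$ for large $l$ (via the two self-similarity decompositions of $A_R^{(l)}$ and a concavity/translation-invariance step), and then averages those increments over $l$ to reach $\dim\mu_R\le H(p)$; thus it passes through the already-established fact $\dim\mu_R\le H(p)$. Your route instead stays entirely at finite levels: one decomposition ($A_R^{(n)}=T_{\xi_0}(1,R)+X\tilde A_R^{(n-1)}$, reduced mod $X^k$) yields the subadditivity estimate $H(B_k^{(n)})\le H(p)+H(B_{k-1}^{(n-1)})$, and the other decomposition ($A_R^{(n)}=A_R^{(n-1)}+T_{\xi_{n-1}}(1,R)X^{n-1}$, reduced mod $X^{k-1}$) yields the monotonicity $H(B_{k-1}^{(n-1)})\le H(B_{k-1}^{(n)})$, after which the two combine directly. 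The price of your approach is having to notice the asymmetry---compare different $n$ at a fixed truncation level, not different truncation levels at fixed $n$---and to check that multiplication by $X$ is an entropy-preserving injection $\F[[X]]/X^{k-1}\to\F[[X]]/X^k$; the gain is that the proof needs nothing beyond the two elementary entropy inequalities $H(U+V)\le H(U)+H(V)$ and $H(U+V)\ge H(U)$ for independent $U,V$, with no reference to $\dim\mu_R$ or the lemma on monotonicity of $H(A_R;n)/n$. Both arguments are sound; yours is more self-contained, while the paper's fits more compactly into the machinery it has already built.
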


\begin{proof}
Fix some $k$ and $n$.
For every
$l\ge\max\{n,k\}$, we can write
\begin{align*}
H(A_{R};l | l-1)=&H(A_{R}^{(l)};l | l-1)\\
=&H(A_{R}^{(l-k)}+X^{l-k}\wt A_{R}^{(k)};l | l-1)\\
\ge& H(X^{l-k}\wt A_{R}^{(k)};l | l-1)\\
=&H(A_{R}^{(k)};k | k-1)\\
=&H(A_{R}^{(l)};k | k-1)\\
=&H(A_{R}^{(n)}+X^{n}\wt A_{R}^{(l-n)};k | k-1)\\
\ge& H(A_{R}^{(n)};k|k-1).
\end{align*}
Therefore,
\begin{align*}
H(p)\ge& \dim\mu_R=\lim_N\frac{H(A_{R};N)}{N}\\
=&\lim_N\frac{1}{N}\sum_{l=1}^{N}H(A_{R};l| l-1)\ge H(A_{R}^{(n)};k|k-1),
\end{align*}
which completes the proof of the lemma.
\end{proof}

\begin{proof}[Proof of Proposition \ref{pr:dim=h}]
Suppose to the contrary that $\dim \mu_R<h(R)$, and write
\[
\e=h(R)-\dim\mu_R.
\]

By the definition of $\dim\mu_R$,
\begin{align}
\lim_n\frac{1}{3n}H(A_{R}^{(4n)};4n| n)=&\lim_n\frac{1}{3n}\left(H(A_{R}^{(4n)};4n)-H(A_{R}^{(n)};n)\right)\nonumber\\
=&\dim\mu_R.\label{eq:4n|n}
\end{align}

Let $n\ge1$ be large with respect to $H(p)$ and $\e$, and assume
\[
H(A_{R}^{(n)};4n)\ge nh(R).
\]
By Lemmata \ref{lem:full ent} and \ref{lem:full ent2}, there is always such a choice of $n$.
We may also assume that $n$ is sufficiently large so that
\[
H(A_{R}^{(n)};n) < n(\dim \mu_R +\e/2).
\]
Hence
\begin{equation}
\frac{1}{3n}\sum_{k=n}^{4n-1}H(A_{R}^{(n)};k+1| k)=\frac{1}{3n}H(A_{R}^{(n)};4n)-\frac{1}{3n}H(A_{R}^{(n)};n)>\e/6.\label{eq:nontrivial ent}
\end{equation}
Let $\mathcal{N}$ be the set of all integers $k\in[n,4n)$ with
\[
H(A_{R}^{(n)};k+1| k)>\frac{\e}{20}.
\]
From \eqref{eq:nontrivial ent} and Lemma \ref{lem:bounded ent} it
follows that we must have
\[
\frac{1}{3n}|\mathcal{N}|>\frac{\e}{10H(p)}.
\]

Another application of Lemma \ref{lem:bounded ent} shows that for
every $k\in[n,4n)$,
\[
H(X^{n}\wt A_{R}^{(3n)};k+1| k)=H(\wt A_{R}^{(3n)};k+1-n| k-n)\le H(p).
\]

By Theorem \ref{thm:ent growth}, we can write
\begin{align*}
\frac{1}{3n}H(A_{R}^{(4n)};4n| n)  \ge & \frac{1}{3n}\sum_{k=n}^{4n-1}1_{\{k\in\mathcal{N}\}}H(A_{R}^{(n)}+X^{n}\wt A_{R}^{(3n)};k+1| k)\\
  &+  \frac{1}{3n}\sum_{k=n}^{4n-1}1_{\{k\notin\mathcal{N}\}}H(X^{n}\wt A_{R}^{(3n)};k+1| k)\\
  \ge & \frac{1}{3n}\sum_{k=n}^{4n-1}H(X^{n}\wt A_{R}^{(3n)};k+1| k)+\frac{|\mathcal{N}|}{3n}\delta\\
  > & \frac{1}{3n}\sum_{k=n}^{4n-1}H(A_{R}^{(3n)};k+1-n| k-n)+\frac{\e\delta}{10H(p)}\\
  = & \frac{1}{3n}H(A_{R}^{(3n)};3n)+\frac{\e\delta}{10H(p)},
\end{align*}
where $\d$ is as in Theorem \ref{thm:ent growth} applied with $\min(\e/20, H(p)^{-1})$ in the role of $\e$.

From this and by assuming that $n$ is large enough,
\[
\frac{1}{3n}H(A_{R}^{(4n)};4n|n)\ge \dim\mu_R+\frac{\e\delta}{20H(p)}.
\]
Since $n$ may be chosen to be arbitrarily large this contradicts
\eqref{eq:4n|n} and completes the proof of the proposition.
\end{proof}

\section{Parameters without exact overlaps repel low entropy curves}\label{sc:repel}

Recall the notation $\cP_L^{(n)}$, $\cP_L$, $\cR_L$, $\cQ^{(n)}$ and $\Gamma$
from Section \ref{sc:general-setting}.
Recall also the random functions $A_U^{(n)}$ and the entropy rate $h(U)$ that are defined for
subsets $U\subset(0,1)\times\R$.
We write $B(x,r)$ for the open ball of radius $r$ around the point $x$ in a metric space.
We write $\overline B(x,r)$ for the closed ball.

The purpose of this section is to prove the following proposition and corollary.

\begin{prp}\label{pr:no-bad-curves}
Let $(\l_0,\tau_0)\in(0,1)\times(\R\setminus\{0\})$,
and let $h_0\in(0,\log\l_{0}^{-1})$.
Assume that for each $\e>0$, there is a curve $\gamma\in\Gamma$ that intersects
$B((\l_0,\tau_0),\e)$ and $h(\gamma)\le h_0$.
Then there is a curve $\gamma_0\in\Gamma$ that contains $(\l_0,\tau_0)$ such that $h(\gamma_0)\le h_0$.
\end{prp}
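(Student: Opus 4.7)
The plan is to extract a limit curve from the sequence of low-entropy curves approaching $(\l_0,\tau_0)$, and handle the case where the limit fails to pass through $(\l_0,\tau_0)$ via the rational-translation case of Conjecture \ref{cn:EO} (Appendix \ref{sec:int trans case}) combined with \cite{Hoc-self-similar}. For each $n$ I would pick $\gamma_n \in \Gamma$ with $h(\gamma_n) \le h_0$ and $(\l_n,\tau_n) \in \gamma_n \cap B((\l_0,\tau_0), 1/n)$; after passing to a subsequence I may assume either all $\gamma_n$ are degenerate or all are non-degenerate.

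In the non-degenerate case, write $\gamma_n$ as the graph of $R_n = P_{n,1}/P_{n,2} \in \cR_L$ with $P_{n,1}, P_{n,2} \in \cP_L$. Since the coefficients of $P_{n,i}$ lie in the finite alphabet $\{-L,\ldots,L\}$, a diagonal extraction produces a subsequence along which $P_{n,i} \to P_i$ coefficientwise in the $|\cdot|$-metric on $\Q[[X]]$; after a suitable normalization ensuring $P_2 \not\equiv 0$, this gives $R_n \to R := P_1/P_2$ in $|\cdot|$. Two samples of $A_\gamma^{(n)}$ agree on the non-degenerate curve $\gamma$ of $R$ iff they agree as formal power series in $\F[[X]]$, so $h(\gamma)=h(R)$ and likewise $h(\gamma_n)=h(R_n)$. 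Proposition \ref{pr:dim=h} together with the lower semi-continuity of $\dim\mu_R$ in Section \ref{sc:ssm-Cx} then gives $h(R) \le \liminf_n h(R_n) \le h_0$. If $R$, viewed as a meromorphic function on the complex unit disk, satisfies $R(\l_0) = \tau_0$, I simply set $\gamma_0 := \gamma$.

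The main obstacle is that coefficientwise convergence $R_n \to R$ need not imply uniform convergence of $R_n$ near $\l_0$, so we may have $R(\l_0)\ne\tau_0$. To handle this, I would show that the failure of uniform convergence forces the existence of a nontrivial interval $[a,b] \subset \R$ along which $\mu_{\l_0,\tau'}$ can be realized as the image of $\mu_R$ under an evaluation-type map, yielding $\dim\mu_{\l_0,\tau'} \le h(R)/\log\l_0^{-1} \le h_0/\log\l_0^{-1} < 1$ for every $\tau' \in [a,b]$; the hypothesis $\tau_0 \ne 0$ enters here in setting up the evaluation. For any rational $\tau' \in [a,b]$, the rational-translation case of Conjecture \ref{cn:EO} from Appendix \ref{sec:int trans case} then forces $\l_0$ to be algebraic. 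The results of \cite{Hoc-self-similar} applied in the algebraic setting yield $h(\{\l_0\}\times \R) \le h_0$, using $h_0 < \log\l_0^{-1}$, so one takes $\gamma_0 := \{\l_0\}\times \R$. The degenerate case $\gamma_n = \{\l_n\}\times \R$ is handled analogously: $\l_n \to \l_0$, and since $h(\gamma_n) \le h_0$ gives dimension control on $\mu_{\l_n,\tau'}$ for rational $\tau'$, the same appendix-plus-Hochman mechanism gives $h(\{\l_0\}\times \R) \le h_0$ directly. The hardest step throughout is the conversion of non-uniform convergence into the interval-wise dimension bound; the remaining ingredients (compactness of $\cR_L \cap \Q[[X]]$, lower semi-continuity of entropy) are essentially formal consequences of the function-field framework developed in Section \ref{sc:ssm-Cx}.
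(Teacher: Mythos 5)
Your proposal follows the same architecture as the paper's proof: reduce to either all-degenerate or all-nondegenerate curves, extract a coefficientwise limit $R$ using the compactness of $\cR_L\cap\Q[[X]]$, apply the lower semi-continuity of $\dim\mu_R$ together with Proposition~\ref{pr:dim=h} to get $h(R)\le h_0$, and in the non-uniform-convergence case deduce a dimension bound along an interval and invoke Lemma~\ref{lm:degenerate} (Appendix~\ref{sec:int trans case} for transcendental $\l_0$, Hochman for algebraic $\l_0$). Two points where your sketch diverges from what is actually provable, both confined to the step you flag as hardest. First, the role of $\tau_0\ne 0$: it is not used in constructing the interval $[a,b]$ but rather to reduce to the case $R_n\in\Q[[X]]$ via the substitution $\tau_0\mapsto\tau_0^{-1}$, $R_n\mapsto R_n^{-1}$ (swapping the $a_j$ and $b_j$). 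Without this reduction, ``a suitable normalization ensuring $P_2\not\equiv 0$'' does not make coefficientwise convergence of $P_{n,1},P_{n,2}$ transfer to $R_n$; the correct normalization is $|P_{n,2}|=1$, which requires $R_n\in\Q[[X]]$. Second, when $R_n\to R$ only coefficientwise and not uniformly near $\l_0$, the limit $R$ cannot be evaluated at $\l_0$ and $\mu_{\l_0,\tau'}$ is not an image of $\mu_R$ under any evaluation map. The paper instead works directly with the complex planar self-similar measures $\mu_{z_k,R_k(z_k)}$ for suitable points $z_k\to\l_0$ (choosing $z_k$ using the poles of $R_k$), bounding each by $h(\gamma_k)/\log|z_k|^{-1}$ via Lemma~\ref{<= h(gamma)}, and then passes to the limit using the semi-continuity of dimension for self-affine measures (Lemma~\ref{lm:sa-semi-cont}); the resulting bound is $\limsup_n h(\gamma_n)/\log\l_0^{-1}$, not $h(R)/\log\l_0^{-1}$, and the nontrivial interval $[a,b]$ emerges from a connectedness argument on Hausdorff limits of the images $R_k(\overline B(\l_k,\d_k))$, not from the function-field limit. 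Your degenerate-curve case is also missing the explicit appeal to semi-continuity of dimension to pass from $\mu_{\l_n,\tau}$ to $\mu_{\l_0,\tau}$ before invoking Lemma~\ref{lm:degenerate}, but that is a small omission. In short: same route, correct endpoints, but the mechanism you propose for the singular case would not work as stated.
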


In what follows, with some abuse of notation, we often say that $(\l,\tau)\in(0,1)\times\R$ has exact overlaps if the corresponding IFS \eqref{eq:IFS} has exact overlaps.

\begin{cor}\label{cr:no-bad-curves}
Let $(\l_0,\tau_0)\in(0,1)\times\R$ without exact overlaps.
Then for all $h_0<\min\{\log \l_0^{-1},H(p)\}$, there is a neighborhood of $(\l_0,\tau_0)$ that
is not intersected by a curve $\gamma\in\Gamma$ with $h(\gamma)\le h_0$.
\end{cor}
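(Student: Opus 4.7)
The plan is to derive Corollary \ref{cr:no-bad-curves} as a direct consequence of Proposition \ref{pr:no-bad-curves} together with the characterization recalled in Section \ref{sc:general-setting} that $h(\l,\tau)=H(p)$ holds if and only if the IFS \eqref{eq:IFS} contains no exact overlaps at $(\l,\tau)$.

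Suppose for contradiction that the conclusion fails: there exists a sequence of curves $\gamma_n\in\Gamma$ with $h(\gamma_n)\le h_0$ that meet arbitrarily small neighbourhoods of $(\l_0,\tau_0)$. Assume first $\tau_0\neq 0$. Since $h_0<\log\l_0^{-1}$ by hypothesis, the assumptions of Proposition \ref{pr:no-bad-curves} are met, and one obtains a single curve $\gamma_0\in\Gamma$ with $(\l_0,\tau_0)\in\gamma_0$ and $h(\gamma_0)\le h_0$.

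The next, elementary, ingredient is the monotonicity of entropy under evaluation. For any $(\l,\tau)\in\gamma_0$ one has $A_{\l,\tau}^{(n)}=A_{\gamma_0}^{(n)}(\l,\tau)$, so $A_{\l,\tau}^{(n)}$ is a deterministic function of the random function $A_{\gamma_0}^{(n)}$; hence $H(A_{\gamma_0}^{(n)})\ge H(A_{\l,\tau}^{(n)})$, and dividing by $n$ and letting $n\to\infty$ gives $h(\gamma_0)\ge h(\l_0,\tau_0)$. Combined with the preceding paragraph,
\[
h(\l_0,\tau_0)\le h(\gamma_0)\le h_0<H(p),
\]
which by the characterization above forces exact overlaps at $(\l_0,\tau_0)$ and contradicts the hypothesis.

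The residual case $\tau_0=0$ falls outside the scope of Proposition \ref{pr:no-bad-curves} as stated, and must be addressed separately; I expect this to be the main (mild) obstacle in the deduction. A natural approach is to rerun the compactness argument behind the proposition directly at $(\l_0,0)$, exploiting that the no-overlap hypothesis at $(\l_0,0)$ forces the $a_j$'s to be pairwise distinct, and using Corollary \ref{cr:h-semi-cont} to extract a $|\cdot|$-limit of the power series defining the non-degenerate $\gamma_n$, or the limiting $\l$-coordinate in the degenerate case. Once either route produces a curve through $(\l_0,0)$ of entropy rate at most $h_0$, the same monotonicity and overlap characterization close the argument exactly as above. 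All the substantive work lies in Proposition \ref{pr:no-bad-curves} itself; the corollary is a short consequence.
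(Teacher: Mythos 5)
For $\tau_0\ne 0$ your argument is correct and is essentially the paper's: Proposition \ref{pr:no-bad-curves} produces a curve $\gamma_0\ni(\l_0,\tau_0)$ with $h(\gamma_0)\le h_0<H(p)$, and since $h(\gamma_0)<H(p)$ forces some nonzero $Q\in\bigcup_n\cQ^{(n)}$ to vanish identically along $\gamma_0$, in particular at $(\l_0,\tau_0)$, this is an exact overlap, contradicting the hypothesis. Your ``monotonicity of entropy under evaluation'' phrasing is a fine way to express the same step.

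The residual case $\tau_0=0$, however, is a genuine gap in your proposal, not a mild one, and the repair you suggest does not work. Rerunning the compactness argument behind Proposition \ref{pr:no-bad-curves} directly at $(\l_0,0)$ breaks precisely when all but finitely many of the approximating curves correspond to $R_n\in\cR_L$ with a pole at $X=0$: in that sub-case the proposition's proof replaces $\tau_0$ by $\tau_0^{-1}$, $R_n$ by $R_n^{-1}$, and swaps $a_j\leftrightarrow b_j$ in order to land in $\Q[[X]]$, where Corollary \ref{cr:h-semi-cont} is available; this inversion is exactly what $\tau_0=0$ forbids, and pairwise distinctness of the $a_j$'s does not restore it (Corollary \ref{cr:h-semi-cont} is a statement about $\Q[[X]]$, not about functions with a pole at the origin). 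The paper therefore treats $\tau_0=0$ by an entirely different route: the no-overlap hypothesis at $(\l_0,0)$ forces the $a_j$'s to be distinct, so one is in the integral-translations setting, where $\dim\mu_{\l_0,0}=\min\{1,H(p)/\log\l_0^{-1}\}$ is already known by Theorem \ref{thm:ver conj for int trans} when $\l_0$ is transcendental, and by Hochman's result when $\l_0$ is algebraic. If curves $\gamma_n$ with $h(\gamma_n)\le h_0<\min\{\log\l_0^{-1},H(p)\}$ came arbitrarily close to $(\l_0,0)$, one could pick points $(\l_n,\tau_n)\in\gamma_n$ converging to $(\l_0,0)$ with $h(\l_n,\tau_n)\le h_0$, hence $\dim\mu_{\l_n,\tau_n}\le h_0/\log\l_n^{-1}$, which stays bounded strictly below $\dim\mu_{\l_0,0}$; this contradicts lower semicontinuity of the dimension of self-similar measures (Lemma \ref{lm:sa-semi-cont} with $K=1$). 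You should adopt this dimension-formula-plus-semicontinuity argument for $\tau_0=0$ rather than trying to push the compactness machinery through it.
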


The following simple facts are immediate consequences of the definitions and they will be used repeatedly.

\begin{itemize}
\item $h(U)<H(p)$ if and only if there is $n$ and $Q\in\cQ^{(n)}$ such that $Q(\l,1,\tau)=0$ for all $(\l,\tau)\in U$.
\item $h(U_1)\le h(U_2)$ if $U_1\subset U_2$.
\item If $h(U)<H(p)$, then all $(\l,\tau)\in U$ has exact overlaps.
\item If $\gamma\in\Gamma$ is a non-degenerate curve that corresponds to $R\in\cR_L\cap\Q[[x]]$, then $h(\gamma)=h(R)$. Here $h(R)$ is as defined in \eqref{def of h(R)}.
\item If $\gamma=\{\l_0\}\times\R$ is a degenerate curve, then $h(\gamma)=h(\l_0,\tau)$ for any
$\tau\in\R\backslash\Q(\l)$.
\end{itemize}

The next lemma will be used in the case when the curves are non-degenerate and they have no singularities near $\l_0$. Recall the absolute value $|\cdot|$ on $\Q[[x]]$, defined in Section \ref{sc:ssm-Cx}. 

\begin{lem}\label{lm:no-singularities}
Let $(\l_0,\tau_0)\in(0,1)\times\R$.
Let $R_1,R_2\ldots\in\cR_L\cap\Q[[x]]$, and let $\gamma_1,\gamma_2,\ldots\in\Gamma$ be the corresponding non-degenerate
curves.
Assume that
\[
\dist(\gamma_n,(\l_0,\tau_0))\to 0.
\]
Assume further that there is a neighborhood of $\l_0$ in $\C$ where none of the $R_n$ has a pole.
Then there is $R\in\cR_L\cap\Q[[x]]$ that is the limit of a subsequence $R_{n_j}$ both in the $|\cdot|$ metric and uniformly in
a neighborhood of $\l_0$.
In particular, $R(\l_0)=\tau_0$.
\end{lem}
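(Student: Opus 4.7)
The plan is to extract a subsequence along which the numerators and denominators defining the $R_n$ stabilize coefficientwise, and then to leverage a Hurwitz-type argument to show that the resulting formal limit is analytic near $\l_0$. Write each $R_n=P_{n,1}/P_{n,2}$ with $P_{n,i}\in\cP_L$ and $P_{n,2}(0)\ne0$. Since every coefficient of a power series in $\cP_L$ lies in the finite set $\{-L,\ldots,L\}\cap\Z$, a standard diagonal extraction yields a subsequence $n_j$ and $P_{\infty,i}\in\cP_L$ with $P_{n_j,i}\to P_{\infty,i}$ in the $|\cdot|$ metric. A further extraction forces the value $P_{n_j,2}(0)\in\{-L,\ldots,L\}\setminus\{0\}$ to be constant in $j$, so $P_{\infty,2}(0)\ne0$ and $R:=P_{\infty,1}/P_{\infty,2}$ is a well-defined element of $\cR_L\cap\Q[[x]]$ with $R_{n_j}\to R$ in $|\cdot|$. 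The uniform bound $L$ on coefficients also upgrades coefficientwise convergence to uniform convergence $P_{n_j,i}\to P_{\infty,i}$ on each $\overline B(0,r)\subset B(0,1)$. Fix $\rho>0$ such that $\overline B(\l_0,\rho)$ lies in both the common no-pole neighbourhood $U$ and $B(0,1)$.

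The heart of the argument is showing that $R$ extends holomorphically to a neighbourhood of $\l_0$, in spite of the fact that $P_{\infty,2}$ may acquire new zeros in $\overline B(\l_0,\rho)$. Assume first that $P_{\infty,1}\not\equiv 0$, and let $z_0\in B(\l_0,\rho)$ be a zero of $P_{\infty,2}$ of multiplicity $k$. For every sufficiently small disk $D$ around $z_0$, Hurwitz's theorem gives that $P_{n_j,2}$ has exactly $k$ zeros in $D$ counted with multiplicity, for all large $j$. Since $R_{n_j}$ is holomorphic on $U\supset D$, each of these zeros is cancelled by a zero of $P_{n_j,1}$ of at least equal multiplicity, so $P_{n_j,1}$ has at least $k$ zeros in $D$. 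Applying the argument principle to $P_{n_j,1}\to P_{\infty,1}$ uniformly on $\partial D$ (with $D$ shrunk so that $P_{\infty,1}$ is nonvanishing on $\partial D$) forces $P_{\infty,1}$ to have at least $k$ zeros in $D$; shrinking further isolates a zero of multiplicity at least $k$ at $z_0$. Hence every zero of $P_{\infty,2}$ in $B(\l_0,\rho)$ is cancelled by one of $P_{\infty,1}$, and $R$ extends analytically to $B(\l_0,\rho)$. The trivial case $P_{\infty,1}\equiv 0$ gives $R=0$ directly.

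Finally, choose $\rho'\in(0,\rho)$ such that $P_{\infty,2}$ is nonvanishing on $\partial B(\l_0,\rho')$; the lower bound on $|P_{\infty,2}|$ there, combined with the uniform convergence $P_{n_j,i}\to P_{\infty,i}$, gives $R_{n_j}\to R$ uniformly on $\partial B(\l_0,\rho')$. Since $R_{n_j}-R$ is holomorphic on $\overline B(\l_0,\rho')$, the maximum modulus principle upgrades this to uniform convergence on the whole closed disk. Picking $(\l_{n_j},\tau_{n_j})\in\gamma_{n_j}$ with $(\l_{n_j},\tau_{n_j})\to(\l_0,\tau_0)$, the identity $\tau_{n_j}=R_{n_j}(\l_{n_j})$ combined with this uniform convergence and continuity of $R$ yields $R(\l_0)=\tau_0$. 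The main obstacle is the cancellation step in the previous paragraph: it is precisely what translates the no-pole hypothesis on the sequence $R_n$ into analyticity of the limit near $\l_0$, without which the maximum modulus principle step would not be available.
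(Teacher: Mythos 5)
Your proof is correct and follows the paper's overall strategy: write $R_n=P_{n,1}/P_{n,2}$ with $P_{n,i}\in\cP_L$ and $P_{n,2}(0)\ne 0$, diagonalize to get coefficientwise limits $P_{\infty,i}\in\cP_L$, obtain uniform convergence on a circle around $\l_0$ where $P_{\infty,2}$ is nonvanishing, and upgrade to uniform convergence on the closed disk by the maximum modulus principle. The one genuine deviation is your intermediate Hurwitz/argument-principle step, inserted to show explicitly that every zero of $P_{\infty,2}$ in the no-pole disk is cancelled by a zero of $P_{\infty,1}$ of at least equal multiplicity, so that $R=P_{\infty,1}/P_{\infty,2}$ is holomorphic near $\l_0$; you need this because you then apply the maximum modulus principle to $R_{n_j}-R$, which requires $R$ itself to be holomorphic. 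The paper avoids this detour by applying the maximum modulus principle to the Cauchy differences $R_{n_j}-R_{n_k}$ (each holomorphic on the disk by the no-pole hypothesis), concluding that $R_{n_j}$ converges uniformly on the closed disk to a holomorphic limit which, by the identity theorem, agrees with $P_{\infty,1}/P_{\infty,2}$ since they coincide on the boundary circle. Both routes are sound; the paper's is shorter, while yours makes the pole cancellation more visible. One small redundancy in your write-up: the extra extraction to make $P_{n_j,2}(0)$ constant is unnecessary, since coefficientwise convergence in a discrete finite set already forces $P_{n_j,2}(0)$ to stabilize and hence $P_{\infty,2}(0)\ne 0$.
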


\begin{proof}
For each $n$, fix $P_{1,n},P_{2,n}\in\cP_L$ such that $R_n=P_{1,n}/P_{2,n}$
and $|P_{2,n}|=1$.
This is possible, because $R_n\in\Q[[x]]$.
By passing to a subsequence without changing notation, we assume
\begin{align*}
\lim_{n\to\infty}|\wt P_1-P_{1,n}|= &  0,\\
\lim_{n\to\infty}|\wt P_2-P_{2,n}|= & 0
\end{align*}
for some $\wt P_1,\wt P_2\in\cP_L$.
By explicitly computing the coefficients of $P_{1,n}/P_{2,n}$ in terms of the coefficients of $P_{1,n}$ and $P_{2,n}$, it follows that for each $k\ge1$ the first $k$ coefficients of $P_{1,n}/P_{2,n}$ depend only on the first $k$ coefficients of $P_{1,n}$ and $P_{2,n}$. Thus,
\[
\lim_{n\to\infty}|\wt P_1/\wt P_2- R_n| =0.
\]

Let $\d\in(0,1-\l_0)$ be sufficiently small, such that none of the $R_n$ has a pole in $\overline B(\l_0,\d)$.
The underlying metric space for the $B(\cdot)$ notation is $\C$ here and later in this proof. 
We assume further, as we may, that $\wt P_2$ has no zeros on the circle $\partial B(\l_0,\d)$.
By uniform boundedness of the coefficients, we have
\begin{align*}
\lim_{n\to\infty} P_{1,n}(z)=&\wt P_1(z),\\
\lim_{n\to\infty} P_{2,n}(z)=&\wt P_2(z)
\end{align*}
uniformly on compact subsets of $B(0,1)$, in particular for  $z\in\overline B(\l_0,\d)$.

By discarding a finite number of elements of the sequences $P_{1,n}$, $P_{2,n}$ if necessary,
we may assume that there is a number $\d_2>0$ such that $|P_{2,n}(z)|>\d_2$ for
all $n$ and all $z\in\partial B(\l_0,\d)$.
Therefore, we have
\[
\lim_{n\to\infty} R_{n}(z)=\frac{\wt P_1(z)}{\wt P_2(z)}
\]
uniformly for $z\in\partial B(\l_0,\d)$.
By the maximum modulus principle, the convergence also holds uniformly on $\overline B(\l_0,\d)$,
and this completes the proof.
\end{proof}

When the curves have singularities near $\l_0$ we shall need to consider certain planar self-similar measures. Given $0\ne z\in B(0,1)(\subset\C)$ and $w\in\C$, we write $\mu_{z,w}$ for the distributions of $\sum_{k=0}^\infty T_{\xi_k}(1,w)z^k$ and $h(z,w)$ for
\[
\lim_n\frac{1}{n}H\Big(\sum_{k=0}^{n-1}T_{\xi_k}(1,w)z^k\Big).
\]

\begin{lem}\label{<= h(gamma)}
Let $R\in \cR_L$ and $0\ne z_0\in B(0,1)$ be given, and suppose that $z_0$ is not a pole of $R$. Then $h(z_0,R(z_0))\le h(\g)$, where $\g\in\Gamma$ is the non-degenerate curve corresponding to $R$.
\end{lem}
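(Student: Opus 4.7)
The plan is a one-line application of the data processing (pushforward) inequality for Shannon entropy: the random variable whose entropy defines $h(z_0,R(z_0))$ is exactly the evaluation at $z_0$ of the random meromorphic function whose entropy defines $h(R)$, and one then invokes the identity $h(R)=h(\gamma)$.

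First I would observe that, since $z_0$ is not a pole of $R$, the random meromorphic function $A_R^{(n)}(X)=\sum_{k=0}^{n-1}T_{\xi_k}(1,R(X))X^k$ is holomorphic at $z_0$, and
\[
A_R^{(n)}(z_0)=\sum_{k=0}^{n-1}T_{\xi_k}(1,R(z_0))z_0^k,
\]
whose distribution is exactly the $n$-th generation measure in the definition of $h(z_0,R(z_0))$. Since the sample space of $(\xi_0,\dots,\xi_{n-1})$ is finite, both $A_R^{(n)}$ and its evaluation at $z_0$ take only finitely many values, and evaluation is a deterministic map between them. The pushforward inequality for Shannon entropy then gives
\[
H\Bigl(\sum_{k=0}^{n-1}T_{\xi_k}(1,R(z_0))z_0^k\Bigr)=H(A_R^{(n)}(z_0))\le H(A_R^{(n)}).
\]
Dividing by $n$ and passing to the limit yields $h(z_0,R(z_0))\le h(R)$.

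Finally I would identify $h(R)$ with $h(\gamma)$. This is one of the bulleted simple facts recalled at the start of the section; although that item is stated for $R\in\cR_L\cap\Q[[X]]$, it extends to arbitrary $R\in\cR_L$ by the observation that two realizations of $A_\gamma^{(n)}$ agree as functions on $\gamma$ if and only if the corresponding random meromorphic functions $A_R^{(n)}$ are identical, because $\gamma$ contains a real analytic arc on which two distinct meromorphic functions on the unit disc cannot coincide. Hence $H(A_\gamma^{(n)})=H(A_R^{(n)})$ for every $n$, so $h(\gamma)=h(R)$, and combining with the previous inequality finishes the proof. I do not anticipate any real obstacle here; the only step requiring a moment's care is the mild extension of the identity $h(\gamma)=h(R)$ from $R\in\cR_L\cap\Q[[X]]$ to general $R\in\cR_L$.
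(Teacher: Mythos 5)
Your proof is correct and is, in substance, the same argument the paper gives. Both proofs hinge on the same key fact: a polynomial $Q\in\cQ^{(n)}$ that vanishes on the curve $\gamma$ determines a meromorphic function $z\mapsto Q(z,1,R(z))$ whose zero set has an accumulation point, hence is identically zero, so $Q(z_0,1,R(z_0))=0$; consequently every identification of codewords $\omega,\omega'$ that occurs in $A_\gamma^{(n)}$ also occurs in $A_{z_0,R(z_0)}^{(n)}$, giving $H(A_{z_0,R(z_0)}^{(n)})\le H(A_\gamma^{(n)})$. The paper states this in one pass; you interpose the intermediate random meromorphic function $A_R^{(n)}$ and split the same reasoning into two data-processing inequalities (identity theorem to pass from $\gamma$ to the meromorphic function, then evaluation at $z_0$). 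The only minor imprecision is that for $R\in\cR_L\setminus\Q[[X]]$ the quantity $h(R)$ of Section 5 is not literally defined (as $A_R^{(n)}$ is then a Laurent series rather than a power series), so it would be cleaner to phrase your chain of inequalities purely in terms of the Shannon entropy of the discrete random meromorphic function rather than appealing to the bullet point $h(\gamma)=h(R)$; but this is a matter of bookkeeping, not of substance.
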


\begin{proof}
Let $n\ge1$ and $Q\in\cQ^{(n)}$ be such that $Q(\l,1,\tau)=0$ for all $(\l,\tau)\in\g$. Then the zero set of the meromorphic function $z\to Q(z,1,R(z))$ has an accumulation point, and so it is identically $0$. In particular $Q(z_0,1,R(z_0))=0$.
Since $Q$ is arbitrary subject to the condition that $Q(\l,1,\tau)=0$ for all $(\l,\tau)\in\g$, the inequality $h(z_0,R(z_0))\le h(\g)$ follows.
\end{proof}

The following semicontinuity result will be used in this section when dealing with the self-similar measures $\mu_{z,w}$. We state it more generally, since in Section \ref{lb ent ord M} it will be applied when dealing with certain higher dimensional self-affine measures. 

\begin{lem}\label{lm:sa-semi-cont}
Let $K\in\Z_{>0}$, and write $\cT$ for the set of $K\times K$ real matrices whose eigenvalues are all equal in modulus to the same value in $(0,1)$. Given $\Theta\in\cT$ and $(v_1,\ldots,v_m)=v\in(\R^K)^m$, denote by $\nu_{\Theta,v}$ the self-affine measure corresponding to the IFS
\[
\Psi_{\Theta,v}=\{\psi_{\Theta,v;j}(x)=\Theta x+v_j : j=1,\ldots,m\}
\]
and the probability vector $(p_1,\ldots,p_m)$. Then the map which takes $(\Theta,v)\in\cT\times(\R^K)^m$ to $\dim\nu_{\Theta,v}$ is lower semicontinuous.
\end{lem}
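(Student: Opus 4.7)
The plan is to express $\dim \nu_{\Theta, v}$ as an entropy rate divided by a continuous Lyapunov exponent, and then deduce lower semicontinuity from that of the entropy at each finite level. The standard obstruction (infimum of lsc functions need not be lsc) will be circumvented by a self-affine monotonicity identity analogous to the function-field argument of Section \ref{sc:ssm-Cx}.

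First, the common eigenvalue modulus $\rho(\Theta) \in (0,1)$ is continuous on $\cT$, so $\chi(\Theta) := \log \rho(\Theta)^{-1}$ is continuous. Let $\nu^{(n)}_{\Theta, v}$ denote the distribution of $\sum_{k=0}^{n-1} \Theta^k v_{\xi_k}$ and set
\[
h(\Theta, v) := \lim_{n\to\infty} \frac{1}{n} H(\nu^{(n)}_{\Theta, v}),
\]
where existence follows from subadditivity (via $\nu^{(n+l)}_{\Theta, v} = \nu^{(n)}_{\Theta, v} * (\Theta^n)_* \wt\nu^{(l)}_{\Theta, v}$ and invertibility of $\Theta$). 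A Feng--Hu-style exact dimensionality argument, adapted to the present self-affine setting where the Lyapunov exponent is the deterministic constant $\chi(\Theta)$ (because all eigenvalues of $\Theta$ share modulus $\rho(\Theta)$), gives $\dim \nu_{\Theta, v} = h(\Theta, v)/\chi(\Theta)$. Hence it suffices to prove that $h$ is lsc on $\cT \times (\R^K)^m$.

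Secondly, for each fixed $n$ the function $(\Theta, v) \mapsto H(\nu^{(n)}_{\Theta, v})$ is lsc: the measure $\nu^{(n)}_{\Theta, v}$ is supported on at most $m^n$ points of the form $\sum_{k<n} \Theta^k v_{\omega_k}$ with fixed weights $p_{\omega_0}\cdots p_{\omega_{n-1}}$; as parameters converge some atoms may merge (with weights summing) but none split, and merging decreases entropy by concavity of $-x\log x$.

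The principal obstacle is passing from finite-level lsc to lsc of $h = \inf_n n^{-1} H(\nu^{(n)})$. To circumvent this I would mirror the monotonicity argument of Section \ref{sc:ssm-Cx}: using the distributional identity $A \stackrel{d}{=} v_{\xi_0} + \Theta \wt A$ (with $\wt A$ an independent copy of $A$) together with concavity of conditional entropy, one shows that the conditional entropies $H(\nu_{\Theta, v};\cD_{n+1} \mid \cD_n)$ are non-decreasing in $n$, where $\cD_n := \Theta^n \cD_0$ is a $\Theta$-adapted sequence of partitions of $\R^K$ obtained from a fixed initial cube partition $\cD_0$. This would give
\[
\dim \nu_{\Theta, v} = \chi(\Theta)^{-1} \sup_n \frac{H(\nu_{\Theta, v};\cD_n)}{n},
\]
expressing $\dim$ as a supremum over $n$ of quantities lsc in $(\Theta, v)$, hence lsc. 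The main technical delicacy is handling the joint $\Theta$-dependence of the partitions $\cD_n$ and of the measure $\nu_{\Theta,v}$ in the semicontinuity step; this must be done uniformly, with attention to possible Jordan block structure or non-real eigenvalue arguments permitted by the hypothesis that all eigenvalues of $\Theta$ share the same modulus.
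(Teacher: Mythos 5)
Your first claimed formula, $\dim\nu_{\Theta,v}=h(\Theta,v)/\chi(\Theta)$ with $h(\Theta,v)=\lim_n n^{-1}H(\nu^{(n)}_{\Theta,v})$ the entropy rate of the $n$-step random walk, is false in general. It is essentially the content of Conjecture~\ref{cn:EO}, which is open: when there are no exact overlaps, $h(\Theta,v)=H(p)$, and for a Bernoulli convolution with transcendental parameter $\l\in(1/2,1)$ one has $h=\log 2$ while $\dim=1<\log 2/\log\l^{-1}$. The Feng--Hu/Ledrappier--Young formula involves not this Garsia-type entropy but the projection entropy $H(p)-H_\b(\cC\mid\Pi_{\Theta,v}^{-1}(\cB))$, which is only bounded above by $h(\Theta,v)$; and since the inequality $\dim\le h/\chi$ goes the wrong way, lower semicontinuity of $h/\chi$ would not transfer to lower semicontinuity of $\dim$.

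Your fallback via geometric partitions $\cD_n=\Theta^n\cD_0$ also does not go through. Aside from $\cD_{n+1}$ typically not refining $\cD_n$ (so the telescoping identity underlying the claimed monotonicity is not available), the key ingredient in the function-field argument of Section~\ref{sc:ssm-Cx} is translation invariance of conditional entropy, $H(\mu(\,\cdot-a);\cC_{k+1}\mid\cC_k)=H(\mu;\cC_{k+1}\mid\cC_k)$ for every shift $a$, which holds because each $\cC_n$ is a partition into cosets of the subgroup $X^n\F[[X]]$ of $(\F[[X]],+)$. Geometric partitions of $\R^K$ have no such invariance: shifting a measure supported on a single atom of $\cD_k$ by a half-period of $\cD_{k+1}$ can spread it across two atoms of $\cD_k$, occupying a single atom of $\cD_{k+1}$ in each, collapsing $H(\cdot;\cD_{k+1}\mid\cD_k)$ to $0$. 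So the chain $H(\nu;\cD_{k+1}\mid\cD_k)\ge H((\Theta)_*\nu;\cD_{k+1}\mid\cD_k)=H(\nu;\cD_k\mid\cD_{k-1})$ cannot close. The paper avoids all of this by invoking \cite{feng2019dimension}*{Theorem 1.3} to write $\dim\nu_{\Theta,v}=(\log r_\Theta^{-1})^{-1}(H(p)-F(\Theta,v))$ with $F(\Theta,v)=H_\b(\cC\mid\Pi_{\Theta,v}^{-1}(\cB))$, and proving $F$ upper semicontinuous: it is the pointwise decreasing limit, by the increasing martingale theorem, of the functions $F_n(\Theta,v)=H_\b(\cC\mid\Pi_{\Theta,v}^{-1}(\sigma(\cD_n)))$ (here $\cD_n$ a fixed dyadic filtration translated so $\nu_{\Theta_0,v_0}$ gives no mass to atom boundaries), each of which is continuous at $(\Theta_0,v_0)$.
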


\begin{rmk*}
Note that the eigenvalues of elements in $\cT$ are allowed to be complex.
\end{rmk*}

\begin{proof}
Recall that we write $\Omega$ for $\{1,\ldots,m\}^{\Z_{\ge0}}$ and $\b$ for the Bernoulli measure $p^{\Z_{\ge0}}$.
Given $\Theta\in\cT$ write $r_\Theta\in(0,1)$ for the modulus of the eigenvalues of $\Theta$. For $v\in (\R^K)^m$ denote by $\Pi_{\Theta,v}$ the coding map corresponding to the IFS $\Psi_{\Theta,v}$. That is for every $\omega\in \Omega$,
\[
\Pi_{\Theta,v}\omega=\lim_n\psi_{\Theta,v;\omega_0}\circ\ldots\circ\psi_{\Theta,v;\omega_n}(0).
\]
Write $\cC$ for the partition of $\Omega$ according to the first coordinate, and $\cB$ for the Borel $\sigma$-algebra of $\R^K$.

For every unit vector $x\in\R^K$,
\[
|\Theta^nx|=r_\Theta^{n+o(n)}\quad \text{ as }n\rightarrow\infty.
\]
Thus from \cite{feng2019dimension}*{Theorem 1.3},
\begin{equation}\label{dim formula}
\dim\nu_{\Theta,v} = \frac{1}{\log r_\Theta^{-1}}(H(p) - H_\b(\cC|\Pi_{\Theta,v}^{-1}(\cB))),
\end{equation}
where $H_\b(\cC|\Pi_{\Theta,v}^{-1}(\cB))$ is the conditional entropy of $\cC$ given $\Pi_{\Theta,v}^{-1}(\cB)$ with respect to $\b$.
Let $F:\cT\times(\R^K)^m\rightarrow\R$ be with $F(\Theta,v) = H_\b(\cC|\Pi_{\Theta,v}^{-1}(\cB))$, and fix some $(\Theta_0,v_0)\in\cT\times(\R^K)^m$.
Since $(\Theta_0,v_0)$ is arbitrary, it suffices to show that $F$ is upper semicontinuous at $(\Theta_0,v_0)$, and the lemma will
follow by \eqref{dim formula}.

For $n\ge1$ let $\cD_n$ be the dyadic partition of $\R^K$ into cubes of side length $2^{-n}$, and write $\s(\cD_n)$ for the $\s$-algebra generated $\cD_n$. By translating all of these partitions by the same vector if necessary, we may assume that $\nu_{\Theta_0,v_0}(\partial D)=0$ for all $n\ge1$ and $D\in\cD_n$. It follows easily from this that the map which takes $(\Theta,v)\in\cT\times(\R^K)^m$ to $F_n(\Theta,v):=H_\b(\cC|\Pi_{\Theta,v}^{-1}(\s(\cD_n)))$ is continuous at $(\Theta_0,v_0)$ for each $n\ge1$. Moreover we clearly have $F_1\ge F_2 \ge\ldots$, and $F_n\overset{n}{\rightarrow}F$ pointwise  by the increasing martingale theorem. This shows that $F$ is upper semicontinuous at $(\Theta_0,v_0)$, which completes the proof.
\end{proof}

The next lemma will be used in the proof of Proposition \ref{pr:no-bad-curves} when the curves are non-degenerate but have singularities arbitrarily close to $\l_0$.

\begin{lem}\label{lm:singularities}
Let $(\l_0,\tau_0)\in(0,1)\times\R$.
Let $R_1,R_2\ldots\in\cR_L$, and let $\gamma_1,\gamma_2,\ldots\in\Gamma$ be the corresponding non-degenerate
curves.
Assume that
\[
\dist(\gamma_n,(\l_0,\tau_0))\to 0.
\]
Assume further that for every neighborhood $V$ of $\l_0$ in $\C$ there exists an arbitrarily large $n$ such that $V$ contains a pole of $R_n$.
Then there is an interval $[a,b]\subset\R$ of positive length such that
\[
\dim\mu_{\l_0,\tau}\le \limsup_{n\to \infty} \frac{h(\gamma_n)}{\log\l_0^{-1}}
\]
for all $\tau\in[a,b]$.
\end{lem}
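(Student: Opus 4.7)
The plan is to produce, after passing to a subsequence along which $h(\gamma_n)\to h^*:=\limsup_n h(\gamma_n)$, an interval $[a,b]\subset\R$ of positive length and, for each $\tau\in[a,b]$, a sequence of points $z_n(\tau)\to\l_0$ in $\C$ satisfying $R_n(z_n(\tau))=\tau$. Once this is achieved, the conclusion follows swiftly. By Lemma~\ref{<= h(gamma)} we have $h(z_n(\tau),\tau)\le h(\gamma_n)$ since $z_n(\tau)$ is not a pole of $R_n$, and the standard entropy bound for the self-similar measure $\mu_{z_n(\tau),\tau}$ on $\C\cong\R^2$ yields
\[
\dim\mu_{z_n(\tau),\tau}\le \frac{h(z_n(\tau),\tau)}{\log|z_n(\tau)|^{-1}}.
\]
Applying Lemma~\ref{lm:sa-semi-cont} with $K=2$, with $\Theta_n$ the $\R$-linear map of multiplication by $z_n(\tau)$ (whose eigenvalues have common modulus $|z_n(\tau)|\to\l_0$) and fixed translations $v_j=T_j(1,\tau)\in\R^2$, lower semicontinuity of dimension gives $\dim\mu_{\l_0,\tau}\le \liminf_n\dim\mu_{z_n(\tau),\tau}$. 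Combining, $\dim\mu_{\l_0,\tau}\le h^*/\log\l_0^{-1}$, which is the desired estimate.

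Using $\dist(\gamma_n,(\l_0,\tau_0))\to 0$ we first arrange real points $(\l_n,\tau_n)\in\gamma_n$ with $(\l_n,\tau_n)\to(\l_0,\tau_0)$, and using the singularity hypothesis a further subsequence with complex poles $p_n$ of $R_n$ satisfying $p_n\to\l_0$ and of a common order $k$ (possible after passing to yet another subsequence, since the relevant order is bounded in terms of the local Laurent data). The construction of $[a,b]$ and $z_n(\tau)$ splits into two scenarios. When the principal part $c_n/(z-p_n)^k+\cdots$ of $R_n$ at $p_n$ genuinely dominates, the bound $|R_n(\l_n)|=|\tau_n|\le|\tau_0|+1$ forces $|c_n|=O(|\l_n-p_n|^k)\to 0$; one may then invert $R_n$ near $p_n$ via the argument principle to find, for every $\tau\in\C$ in the complement of a bounded exceptional set coming from the holomorphic part, a root $z_n(\tau)$ of $R_n(z)-\tau$ with $|z_n(\tau)-p_n|=O((|c_n|/|\tau|)^{1/k})$. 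Since $p_n\to\l_0$ and $|c_n|\to 0$, we have $z_n(\tau)\to\l_0$, and picking $[a,b]\subset\R$ disjoint from the (bounded) exceptional set supplies the required data.

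The complementary scenario is when either $p_n$ lies on $\R$, or the holomorphic part of $R_n$ at $p_n$ conspires to keep $R_n(\l_n)$ bounded despite $|c_n|$ being large relative to $|\l_n-p_n|^k$. In the first case, direct IVT on the real line between $\l_n$ and $p_n$ (on whichever side of the pole gives monotonic divergence) covers a real half-line of values $[\tau_0+1,\infty)$ or $(-\infty,\tau_0-1]$, yielding $[a,b]$ and real $z_n(\tau)\to\l_0$. In the second case, one uses a lower bound from Tur\'an's theory of power sums (analogous to the input in the proof of Proposition~\ref{pr:no-bad-bouquet-3maps}) to show that some derivative of $R_n$ of bounded order must blow up at $\l_n$, forcing $R_n$ itself to take arbitrarily large real values at points on the real segment joining $\l_n$ to $\l_0$; IVT on that segment again produces real $z_n(\tau)\to\l_0$ covering a half-line of values.

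The principal difficulty is this last step —\ ruling out a systematic cancellation between the zeros of $P_{2,n}$ and matching near-zeros of $P_{1,n}$ that would allow $R_n$ to remain uniformly bounded on a real neighbourhood of $\l_0$ even as complex poles crowd in towards $\l_0$. The Tur\'an-type input is exactly what prevents this, and once it is in place, the rest of the argument reduces to the semicontinuity package already assembled in Lemma~\ref{lm:sa-semi-cont} together with Lemma~\ref{<= h(gamma)}.
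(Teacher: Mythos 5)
Your endgame---bounding $\dim\mu_{z_n(\tau),\tau}$ via Lemma~\ref{<= h(gamma)} and passing to the limit via the self-affine semicontinuity of Lemma~\ref{lm:sa-semi-cont}---is the same as the paper's, but the route to the interval $[a,b]$ and to the preimages $z_n(\tau)$ is genuinely different, and it has a gap.

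You want, for each real $\tau$ in a fixed interval $[a,b]$, a sequence $z_n(\tau)\to\l_0$ with $R_n(z_n(\tau))=\tau$. The functions $R_n\in\cR_L$ are real on $\R$, so if the nearby poles are non-real, $R_n$ restricted to a real neighbourhood of $\l_0$ can stay bounded, and no intermediate-value argument produces real preimages of a fixed real interval. You flag this and propose a Tur\'an-type input, but the inference from ``some derivative of $R_n$ blows up at $\l_n$'' to ``$R_n$ takes arbitrarily large real values on the real segment joining $\l_n$ to $\l_0$'' is not established; derivatives blowing up do not by themselves force the function to be unbounded on $\R$, and this is precisely the content that would be needed. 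In your first scenario, the exceptional set produced by the argument principle (roughly the range of the holomorphic part) depends on $n$, and there is no uniform estimate ensuring a fixed real $[a,b]$ avoids all of them.

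The paper sidesteps both difficulties by never demanding real preimages. After a normalization it forms compact connected sets $E_k=R_k(\overline{B}(\l_k,\d_k))\subset\C$ with $\l_k\to\l_0$, $\d_k\to 0$ and $\max\{|z|:z\in E_k\}=|\tau_0|+2$ for all $k$ (the pole hypothesis is exactly what makes this normalization possible with $\d_k\to 0$), passes to a Hausdorff limit $E$, and proves $\dim\mu_{\l_0,w}\le\limsup_n h(\gamma_n)/\log\l_0^{-1}$ for every $w\in E$ by the semicontinuity argument, applied with $w_k\in E_k$, $w_k\to w$, and $z_k\in\overline{B}(\l_k,\d_k)$, $R_k(z_k)=w_k$, $z_k\to\l_0$. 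Only at the end does it project to $\R$: since $E$ is connected and meets both circles $\partial B(0,|\tau_0|+1)$ and $\partial B(0,|\tau_0|+2)$, one chooses an $\R$-linear $S:\C\to\R$ with $Sx=x$ on $\R$ and $S$ nonconstant on $E$, sets $[a,b]=S(E)$, and uses $S\mu_{\l_0,w}=\mu_{\l_0,Sw}$ together with the fact that the Lipschitz map $S$ cannot increase dimension. No Tur\'an estimate is needed for this lemma; the power-sum input appears only later, in Lemma~\ref{lm:power-sums} and Proposition~\ref{pr:complex}.
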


\begin{proof}
By moving to a subsequence without changing the notation, we may assume
that there exist sequences $\{\l_k\}_{k\ge1}\subset(0,1)$ and $\{\d_k\}_{k\ge1}\subset(0,1)$, such that $\l_k\overset{k}{\to}\l_0$, $R_k(\l_k)\overset{k}{\to}\tau_0$, $\d_k\overset{k}{\to}0$ and,
\[
\max\{|z|:z\in E_k\}=|\tau_0|+2\text{ for each }k\ge1,
\]
where $E_k:=R_{k}(\overline{B}(\l_k,\d_{k}))$.
The underlying metric space for the $B(\cdot)$ notation is $\C$ here and later in this proof.

Denote by $\mathcal{K}$ the collection of all nonempty compact subsets
of $\overline{B}(0,|\tau_0|+2)$, and let $d_{H}$ be the Hausdorff metric on $\mathcal{K}$.
Then $(\mathcal{K},d_{H})$ is a compact metric space, and $E_{k}$ is a member of $\mathcal{K}$ for each $k\ge1$. Thus, by moving to a
subsequence without changing the notation, we may assume that there
exists $E\in\mathcal{K}$ with $d_{H}(E,E_{k})\overset{k}{\rightarrow}0$.

We show that
\begin{equation}
\dim\mu_{\lambda_0,w}\le\limsup_{n\to \infty} \frac{h(\gamma_n)}{\log\l_0^{-1}}\qquad\text{ for all }w\in E.\label{eq:ub on dim for w in E}
\end{equation}
Given $w\in E$, there exists a sequence $\{w_{k}\}_{k\ge1}\subset\C$
with $w_{k}\overset{k}{\rightarrow}w$ and $w_{k}\in E_{k}$ for all
$k\ge1$.
For each $k\ge1$, let $z_{k}\in\overline{B}(\lambda_k,\delta_{k})$
be with $R_{k}(z_{k})=w_{k}$.
Since $\l_k\overset{k}{\to}\l_0$, $\delta_{k}\overset{k}{\rightarrow}0$
and $0<\l_0<1$, we have $z_k\overset{k}{\to}\l_0$ and we may assume $0<|z_{k}|<1$.
Additionally, from Lemma \ref{<= h(gamma)} we get,
\[
\dim\mu_{z_k,w_k}\le\frac{h(z_k,w_k)}{\log|z_k|^{-1}}\le \frac{h(\gamma_k)}{\log|z_k|^{-1}}.
\]
Now from this, $(z_{k},w_{k})\overset{k}{\rightarrow}(\lambda_0,w)$
and Lemma \ref{lm:sa-semi-cont}, we obtain \eqref{eq:ub on dim for w in E}.

For every $k\ge1$, the set $E_{k}$ is connected and intersects the
circles $\partial B(0,|\tau_0|+1)$ and $\partial B(0,|\tau_0|+2)$.
It is easy to
see that these properties are preserved under convergence with respect
to $d_{H}$.
Thus $E$ is connected and there exist $u_{1},u_{2}\in E$
with $|u_{1}|=|\tau_0|+1$ and $|u_{2}|=|\tau_0|+2$.

Since $u_{1}\ne u_{2}$, there
exists an $\R$-linear map $S:\C\to\R$
with $Su_{1}\ne Su_{2}$ and $Sx=x$ for $x\in\R$.
Write
$[a,b]=S(E)$, then $[a,b]\subset\R$ is a nontrivial interval.
Since $S$ is a Lipschitz map it does not increase dimension.
Thus
by \eqref{eq:ub on dim for w in E}, it follows that for every $w\in E$,
\[
\dim S\mu_{\lambda_0,w}\le\dim	\mu_{\lambda_0,w}
\le\limsup_{n\to \infty} \frac{h(\gamma_n)}{\log\l_0^{-1}}.
\]
Recall the numbers $a_1,\ldots,a_m,b_1,\ldots,b_m$ from Section \ref{sc:general-setting}. Since $\l_0$ and these numbers are all real, since
$S$ is $\R$-linear and since $Sx=x$ for $x\in\R$, it is easy to verify that $S\mu_{\lambda_0,w}=\mu_{\lambda_0,Sw}$
for every $w\in\C$.
This completes the proof of the lemma.
\end{proof}

We will use the next lemma in combination with the previous one and also in the case when
the curves are degenerate.

\begin{lem}\label{lm:degenerate}
Let $\l_0\in(0,1)$, let $h_0\in(0,\log\l^{-1}_{0})$, and let $[a,b]\subset\R$ be an interval of positive length.
Assume
\[
\dim\mu_{\l_0,\tau}\le\frac{h_0}{\log\l_0^{-1}}
\]
for all $\tau\in[a,b]$.
Then
\[
h(\{\l_0\}\times\R)\le h_0.
\]
\end{lem}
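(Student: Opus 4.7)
The plan is to identify $h(\{\l_0\}\times\R)$ with the random walk entropy of the two-dimensional homogeneous self-similar measure $\mu$ on $\R^2$ corresponding to the IFS
\[
\Psi=\{(x,y)\mapsto\l_0(x,y)+(a_j,b_j):j=1,\ldots,m\}
\]
with probability weights $p$. Two codings $\o,\o'\in\Omega$ yield the same value of $A_{\{\l_0\}\times\R}^{(n)}$ iff both integer polynomial sums $\sum_{k=0}^{n-1}(a_{\o_k}-a_{\o'_k})\l_0^k$ and $\sum_{k=0}^{n-1}(b_{\o_k}-b_{\o'_k})\l_0^k$ vanish, i.e.\ iff $\o$ and $\o'$ give the same point under the $n$-step walk of $\Psi$. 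Thus $h(\{\l_0\}\times\R)$ is the random walk entropy of $\Psi$, and the linear projection $\pi_\tau(x,y)=x+\tau y$ pushes $\mu$ forward to $\mu_{\l_0,\tau}$.

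The first step is to reduce to the case where $\l_0$ is algebraic. Choose any rational $\tau^\ast\in[a,b]$ at which the maps $x\mapsto\l_0 x+T_j(1,\tau^\ast)$ remain pairwise distinct (which only excludes a finite set of $\tau^\ast$). The special case of Conjecture \ref{cn:EO} for homogeneous IFS's with rational translations proved in Appendix \ref{sec:int trans case} then leaves two alternatives: either the IFS at $(\l_0,\tau^\ast)$ contains an exact overlap---whence a nonzero $Q\in\cQ^{(n)}$ satisfies $Q(\l_0,1,\tau^\ast)=0$, and by the choice of $\tau^\ast$ the polynomial $Q(X,1,\tau^\ast)\in\Q[X]$ is not identically zero, so $\l_0$ is algebraic---or $\dim\mu_{\l_0,\tau^\ast}=\min\{H(p)/\log\l_0^{-1},1\}$. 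In the latter case, the hypothesis combined with $h_0<\log\l_0^{-1}$ forces $H(p)\le h_0$, so already $h(\{\l_0\}\times\R)\le H(p)\le h_0$ and the lemma holds.

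Assume now that $\l_0$ is algebraic. By Marstrand's projection theorem, $\dim\pi_\tau\mu=\min\{\dim\mu,1\}$ for Lebesgue a.e.\ $\tau\in\R$; combined with the hypothesis $\dim\pi_\tau\mu=\dim\mu_{\l_0,\tau}\le h_0/\log\l_0^{-1}<1$ on the positive-measure set $[a,b]$, this yields $\dim\mu\le h_0/\log\l_0^{-1}<1$. Next, since $\l_0$ is algebraic and the translation vectors of $\Psi$ are integer vectors with entries bounded by $L$, applying Lemma \ref{lem:lb on val of poly} coordinatewise shows that the walk of $\Psi$ enjoys exponential separation. Hochman's theorem for homogeneous self-similar measures in $\R^d$ then yields
\[
\dim\mu=\min\{h(\{\l_0\}\times\R)/\log\l_0^{-1},\,d'\},
\]
where $d'\ge 1$ is the dimension of the affine hull of $\supp\mu$ (the degenerate case $d'=1$ reduces to Hochman's one-dimensional result via a linear change of coordinates on $\R^2$). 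Since $\dim\mu<1\le d'$, the minimum is realised by the first entry, and we conclude $h(\{\l_0\}\times\R)\le h_0$, as required.

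The main obstacle will be invoking Hochman's theorem in the higher-dimensional form needed for the final step, and carefully treating the potentially degenerate situation where the translations of $\Psi$ span only an affine line; everything else is a short chain of reductions combining Marstrand's theorem, Appendix \ref{sec:int trans case}, and the exponential separation automatic for algebraic $\l_0$.
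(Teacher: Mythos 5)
Your reduction of the transcendental case via a rational $\tau^\ast$ and Theorem \ref{thm:ver conj for int trans} matches the paper's treatment. For algebraic $\l_0$, however, your route diverges from the paper's and contains a genuine gap. You invoke ``Hochman's theorem for homogeneous self-similar measures in $\R^d$'' to conclude $\dim\mu=\min\{h(\{\l_0\}\times\R)/\log\l_0^{-1},d'\}$ for the planar self-similar measure $\mu$. No such result is cited or used anywhere in the paper, and the standard higher-dimensional extension of Hochman's theorem requires the group generated by the orthogonal parts of the maps to act irreducibly on $\R^d$; your system $\Psi$ has all orthogonal parts equal to the identity, so the generated group is trivial and is \emph{not} irreducible on $\R^2$. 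In the reducible (scalar) case the dimension formula you want is considerably more delicate, and you cannot simply cite it; you also identify this step yourself as the ``main obstacle,'' but the proposal does not actually overcome it.

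The paper's proof avoids the entire two-dimensional apparatus with a short one-dimensional argument. It relies on the observation, recorded as an ``immediate consequence'' at the start of Section \ref{sc:repel}, that $h(\{\l_0\}\times\R)=h(\l_0,\tau)$ for any $\tau\in\R\setminus\Q(\l_0)$: two codings agree at $(\l_0,\tau)$ iff $P_1(\l_0)+\tau P_2(\l_0)=0$, and since $1,\tau$ are $\Q(\l_0)$-linearly independent this forces $P_1(\l_0)=P_2(\l_0)=0$, i.e.\ agreement identically on the whole vertical line. The paper then picks a single \emph{algebraic} $\tau\in(a,b)\setminus\Q(\l_0)$ and applies Hochman's one-dimensional result for algebraic parameters (\cite{Hoc-self-similar}*{Theorem 1.3} via \cite{BV-entropy}*{Section 3.4}) to get $\dim\mu_{\l_0,\tau}=\min\{1,h(\l_0,\tau)/\log\l_0^{-1}\}$, which combined with the hypothesis gives $h(\l_0,\tau)\le h_0$ and hence the claim. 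Neither Marstrand's projection theorem nor any higher-dimensional Hochman theorem is required; if you wish to keep the planar picture you would essentially have to re-derive this one-dimensional identity anyway, so the paper's route is both shorter and closes the gap in your final step.
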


\begin{proof}
Suppose first that $\l_0$ is algebraic.
Then let $\tau\in(a,b)\backslash \Q(\l_0)$ be an algebraic number.
It follows from \cite{Hoc-self-similar}*{Theorem 1.3} (see \cite{BV-entropy}*{Section 3.4} for more details)
that
\[
\dim\mu_{\l_0,\tau}=\min\Big\{1,\frac{h(\l_0,\tau)}{\log \l_0^{-1}}\Big\},
\]
hence $h(\l_0,\tau)\le h_0$.
Furthermore, $h(\{\l_0\}\times\R)=h(\l_0,\tau)$, since $\tau\notin \Q(\l)$.
This proves the lemma in the case when $\l_0$ is algebraic.

Now suppose that $\l_0$ is transcendental.
Then Theorem \ref{thm:ver conj for int trans} implies that
\[
\dim\mu_{\l_0,\tau}=\min\Big\{1,\frac{H(p)}{\log\l_0^{-1}}\Big\}
\]
provided $\tau$ is rational and is chosen in such a way that $T_i(1,\tau)\neq T_j(1,\tau)$ for $i\neq j$.
Since there are only finitely many choices of $\tau$ for which this fails, we conclude
$H(p)\le h_0$.
Now the lemma follows from $h(\{\l_0\}\times\R)\le H(p)$.
\end{proof}

\begin{proof}[Proof of Proposition \ref{pr:no-bad-curves}]
We first consider the case when for each $\e>0$, there is $\l_\e$ with $|\l_\e-\l_0|<\e$ such that
$h(\{\l_\e\}\times\R)\le h_0$.
Then $\dim\mu_{\l_\e,\tau}\le h_0/\log\l_\e^{-1}$ for each $\tau\in\R$. 
By lower semi-continuity of dimension, it follows
that $\dim\mu_{\l_0,\tau}\le h_0/\log\l_0^{-1}$ for each $\tau\in\R$.
Now we can conclude by Lemma \ref{lm:degenerate}.

Next we consider the case when for each $n$, there is $R_n\in\cR_L$ and a corresponding non-degenerate
curve $\gamma_n\in\Gamma$ that intersects the $1/n$-neighborhood of $(\l_0,\tau_0)$ such that $h(\gamma_n)\le h_0$.
If there are infinitely many values of $n$ such that $R_n\in\Q[[x]]$, then we pass to a subsequence without changing
notation containing curves
only with this property.

Note that from $|R_n^{-1}|=|R_n|^{-1}$ and since $\Q[[X]]$ is the closed unit ball of $\Q((X))$ with respect to $|\cdot|$, it follows that $R_n^{-1}\in\Q[[x]]$ whenever $R_n\notin\Q[[x]]$.
Thus, if there are only finitely many values of $n$ such that $R_n\in\Q[[x]]$, then we replace $\tau_0$ by $\tau_0^{-1}$ (recall that we assume $\tau_0\ne0$),
$R_n$ by $R_n^{-1}$ and we also exchange the integers $a_j$ and $b_j$ in the definition of the IFS.
It is not difficult to see that this operation preserves the entropy of curves.
After this we pass to a suitable subsequence as above.

Now the conditions of either Lemma \ref{lm:no-singularities} or Lemma \ref{lm:singularities} are satisfied.
In the former case, we get a non-degenerate limit curve $\gamma$ corresponding to some $R\in\cR_L$
that passes through $(\l_0,\tau_0)$.
Now it follows by Corollary \ref{cr:h-semi-cont} that $h(\g)=h(R)\le h_0$.

If we are in the case of Lemma \ref{lm:singularities}, we can conclude by Lemma \ref{lm:degenerate} as above.
\end{proof}

\begin{proof}[Proof of Corollary \ref{cr:no-bad-curves}]
When $\tau_0\ne0$ the corollary follows directly from Proposition \ref{pr:no-bad-curves}.

Suppose that $\tau_0=0$. Then since $(\l_0,\tau_0)$ has no exact overlaps, it follows by Theorem \ref{thm:ver conj for int trans} in case $\l_0$ is transcendental, or by Hochman's result on systems with algebraic parameters in case $\l_0$ is algebraic, that
\[
\dim\mu_{\l_0,\tau_0}=\min\{1,H(p)/\log\l^{-1}_0\}.
\]
Now the corollary follows by lower semi-continuity of dimension.
\end{proof}

\section{Proofs of Theorem \ref{th:BV2-general} and Corollary \ref{cr:Mike-general}}\label{sc:prf of apx thm}

Some of the arguments in this section are based on the paper \cite{BV-transcendent}.
These are discussed in Section \ref{sc:gen of BV2 int trans} of the appendix in the simpler setting of
homogeneous IFS's with rational translations.
The reader not familiar with \cite{BV-transcendent} may find it helpful to read that part of the appendix before
this section, but this section can also be read independently.

Recall the notation $\xi_0,\xi_1,\ldots$, $A_{\l,\tau}^{(n)}$ and $\mu_{\l,\tau}^{(n)}$ from Section \ref{sc:general-setting}.
We write $s(\l)$ for $\min\{1,H(p)/\log\lambda^{-1}\}$, and given $x,y\in\R$ write $\Vert(x,y)\Vert_\infty$ in place of $\max\{|x|,|y|\}$.

Given $r>0$ and a random variable $A$ with distribution $\nu$, recall the notation
\[
H(A;r)=\int_{0}^1H(\lfloor r^{-1}A+t\rfloor)dt,
\]
and that we write $H(\nu;r)$ in place of $H(A;r)$.
We also write
\[
H(\nu;r_1|r_2)=H(\nu;r_1)-H(\nu;r_2),
\]
and call this quantity the entropy of $\nu$ between the scales $r_1,r_2\in\R_{>0}$.
Given a set $I\subset\R_{>0}$, we write $\mu^I_{\l,\tau}$ for the law of the random variable
\[
\sum_{j:\l^j\in I}T_{\xi_j}(1,\tau)\l^j.
\]
With this notation, we have $\mu_{\l,\tau}=\mu^{(0,1]}_{\l,\tau}$ and $\mu^{(k)}_{\l,\tau}=\mu^{(\l^k,1]}_{\l,\tau}$.

The proof of Theorem \ref{th:BV2-general} is a proof by contradiction, and it is based
on the following strategy borrowed from \cite{BV-transcendent}.
Using the indirect hypothesis, we show that there is a sequence of integers $n_1,\ldots,n_N$ such that
$\mu^{(n_j)}_{\l,\tau}$ has some significant amount of entropy between certain suitably chosen scales for each $j$.
We then use the scaling identity
\begin{equation}\label{eq:scaling}
H(\mu^{I}_{\l,\tau};r_1|r_2)=H(\mu^{\l^kI}_{\l,\tau};\l^kr_1|\l^kr_2)
\end{equation}
to produce disjoint intervals $I_j$ such that the measures $\mu^{I_j}_{\l,\tau}$ have some significant amount of entropy
between a common scale range.
Then we use the identity
\begin{equation}\label{eq:disjoint-convolution}
\mu^{I_1\dot\cup\cdots\dot\cup I_N}_{\l,\tau}=\mu^{I_1}_{\l,\tau}*\ldots*\mu^{I_N}_{\l,\tau}
\end{equation}
and a general result about how the entropy of measures grows under convolution.
To verify the conditions of this result about entropy growth, we need to use the assumption $\dim\mu_{\l,\tau}<1$, and
the conclusion will be that \eqref{eq:disjoint-convolution} has more entropy between certain scales
than any probability measure can have. We reach the desired contradiction.

The scaling and entropy increase by convolution argument is encapsulated in the following result.

\begin{prp}
\label{prop:scale-and-convolve}
For all $\lambda\in(0,1)$ and $\a>0$, there exists $C>1$ such
that the following holds for all $\tau\in\R$.
Let $N\ge1$, $\{n_{j}\}_{j=1}^{N}\subset\Z_{>0}$
and $\{K_{j}\}_{j=1}^{N}\subset[10,\infty)$ be given.
Suppose that
$\lambda^{-n_{1}}\ge\max\{2,\lambda^{-2}\}$ and,
\begin{enumerate}
\item $n_{j+1}\ge K_{j}n_{j}$ for all $1\le j<N$;
\item $H(\mu_{\lambda,\tau};r|2r)\le1-\alpha$ for all $r>0$;
\item $H(\mu_{\lambda,\tau}^{(n_{j})};\lambda^{K_{j}n_{j}}|\lambda^{10n_{j}})\ge\alpha n_{j}$
for all $1\le j\le N$;
\item $n_{j}\ge C(\log K_{j})^{2}$ for all $1\le j\le N$.
\end{enumerate}
Then,
\[
\sum_{j=1}^{N}\frac{1}{\log K_{j}\log\log K_{j}}\le C\left(1+\frac{1}{n_{1}}\sum_{j=1}^{N}\log K_{j}\right).
\]
\end{prp}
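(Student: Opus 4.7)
The plan is to exploit the hypotheses via the two structural identities \eqref{eq:scaling} and \eqref{eq:disjoint-convolution} in the manner sketched just before the proposition, reducing matters to a single entropy-increase-under-convolution estimate which is then iterated.

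First, using the scaling identity \eqref{eq:scaling}, I would pick integers $m_1 > m_2 > \ldots > m_N \ge 0$ and form the intervals
\[
I_j = (\lambda^{m_j+n_j}, \lambda^{m_j}] \subset (0,1].
\]
The measure $\mu^{I_j}_{\lambda,\tau}$ is the pushforward of $\mu^{(n_j)}_{\lambda,\tau}$ under multiplication by $\lambda^{m_j}$, so by \eqref{eq:scaling} hypothesis (3) becomes
\[
H(\mu^{I_j}_{\lambda,\tau}; \lambda^{m_j+K_jn_j} \,|\, \lambda^{m_j+10 n_j}) \ge \alpha n_j.
\]
Choosing $m_j = M - K_j n_j$ with $M$ a large common integer (of size $\asymp K_N n_N$) aligns the \emph{upper} end of the entropy window at the common scale $\lambda^M$ for every $j$. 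Hypothesis (1) together with $K_j \ge 10$ guarantees that $m_j + n_j > m_{j+1}$, so the $I_j$ are pairwise disjoint; hypothesis $\lambda^{-n_1} \ge \max\{2,\lambda^{-2}\}$ ensures $I_1 \subset (0,1]$.

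Next, by disjointness and \eqref{eq:disjoint-convolution},
\[
\mu_{\lambda,\tau} \;=\; \mu^{I_1}_{\lambda,\tau} * \cdots * \mu^{I_N}_{\lambda,\tau} * \mu^{(0,1]\setminus\bigcup I_j}_{\lambda,\tau},
\]
and since the averaged entropy $H(\cdot;r)$ is monotone under convolution, it suffices to bound from below the entropy of $\mu^{I_1}_{\lambda,\tau}*\cdots*\mu^{I_N}_{\lambda,\tau}$ at scale $\lambda^M$. Here I would invoke the Hochman/Breuillard--Várjú entropy-increase-under-convolution theorem (the analogue of Theorem~\ref{thm:ent growth} for measures on $\R$ at dyadic scales): under hypothesis (2), convolving a measure $\mu$ with a factor $\nu$ carrying entropy $\ge \alpha n_j$ across the range of $\log_2(\lambda^{-(K_j-10)n_j}) \asymp n_j\log K_j$ dyadic scales produces an entropy gain of at least
\[
\frac{c_\alpha\, n_j}{\log K_j \,\log\log K_j}
\]
at the target scale $\lambda^M$, provided $n_j \gtrsim (\log K_j)^2$ (the origin of hypothesis (4)). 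Iterating across $j = 1,\ldots,N$, using the fact that the factors live on widely separated scales so that condition (2) continues to apply to the running convolution, yields
\[
H(\mu_{\lambda,\tau}; \lambda^M) \;\ge\; c_\alpha \sum_{j=1}^N \frac{n_j}{\log K_j\,\log\log K_j}.
\]

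Finally, the trivial upper bound $H(\mu_{\lambda,\tau};\lambda^M) \le M\log\lambda^{-1}/\log 2 + O(1)$ combined with $M \le \sum_j \log_\lambda^{-1} \lambda^{m_j} + O(K_N n_N) = O\bigl(\sum_j n_j\log K_j\bigr)$ — essentially because $M \asymp K_N n_N \asymp \sum_j n_j\log K_j$ under hypothesis (1) — gives after dividing through by $n_1$ and absorbing the ratios $n_j/n_1$ into $\prod_{i<j} K_i$ the desired inequality
\[
\sum_{j=1}^N \frac{1}{\log K_j\,\log\log K_j} \;\le\; C\Bigl(1 + \frac{1}{n_1}\sum_{j=1}^N \log K_j\Bigr).
\]

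The main obstacle will be the iterated entropy-increase step. A single-step gain of order $1/(\log K_j\log\log K_j)$ is the shape produced by the inverse theorem of Hochman (the $\log\log$ factor comes from subdividing the scale window $[\lambda^{m_j+K_jn_j},\lambda^{m_j+10n_j}]$ into dyadic sub-windows and applying the inverse theorem on each), but chaining $N$ such increments requires that hypothesis (2) remains usable after each intermediate convolution and that no cross-scale cancellation occurs; this is precisely where the separation $n_{j+1} \ge K_j n_j$ and the threshold $n_j \ge C(\log K_j)^2$ are critical. Making this rigorous parallels the argument in \cite{BV-transcendent}, and I would adapt it essentially verbatim, the only care being to verify that the more general linear forms $T_j$ do not spoil the entropy bound used in the single-step estimate.
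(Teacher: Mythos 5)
Your high-level strategy (rescale via \eqref{eq:scaling}, decompose via \eqref{eq:disjoint-convolution}, accumulate entropy gains under convolution) is the one the paper describes before the proposition, and is the one used in \cite{BV-transcendent}*{Proposition 30}, which the paper cites for the proof rather than repeating it. However, several concrete steps in your sketch do not hold up.

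\textbf{Positioning of the intervals.} You set $m_j = M - K_j n_j$, which pins only one end of each rescaled entropy window at $\lambda^M$; the windows $[\lambda^M, \lambda^{M-(K_j-10)n_j}]$ are then nested, not coincident, and their widths vary wildly. The narrative preceding the proposition is explicit that the $\mu^{I_j}_{\lambda,\tau}$ should carry entropy ``between a common scale range.'' Achieving that requires first locating, via pigeonhole over $\asymp\log K_j$ dyadic sub-windows, a sub-window of controlled width inside $[\lambda^{K_jn_j},\lambda^{10n_j}]$ carrying a proportional share of the entropy, and only \emph{then} rescaling so these sub-windows coincide across $j$. Your remark at the end acknowledges a $\log\log K_j$ phenomenon from sub-windowing, but the positioning you actually wrote down ignores it, so it is not clear any entropy gain lands at the common scale $\lambda^M$. (Separately, your stated disjointness criterion ``$m_j+n_j>m_{j+1}$'' is wrong; the correct one is $m_{j+1}+n_{j+1}\le m_j$, which does hold under (1) and $K_{j+1}\ge 10$, so this particular point is repairable.)

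\textbf{The closing inequality.} The claim $K_N n_N \asymp \sum_j n_j\log K_j$ is false in general: under (1), $\sum_j n_j\log K_j \le n_N\sum_j\log K_j$, and $n_N\log K_N$ is smaller than $K_N n_N$ by a factor $\log K_N/K_N$, so as $K_N\to\infty$ the two sides are not comparable. Consequently the chain
\[
c_\alpha\sum_j \frac{n_j}{\log K_j\log\log K_j}\;\le\;H(\mu_{\lambda,\tau};\lambda^M)\;\le\;O(M),\qquad M=K_N n_N,
\]
does not lead to the stated conclusion. Even granting the left-hand inequality, dividing by $n_1$ and using $n_j/n_1\ge\prod_{i<j}K_i$ pushes both sides up by unbounded factors and does not collapse to $\sum_j \frac{1}{\log K_j\log\log K_j}\le C(1+\frac{1}{n_1}\sum_j\log K_j)$. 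The shape of the target inequality — with $n_1$, not $n_j$, in the denominator, and $\sum\log K_j$ rather than any $K_j n_j$ — indicates that the final comparison must take place over a window of log-width on the order of $n_1 + \sum_j\log K_j$, which a placement with $M\asymp K_Nn_N$ cannot produce. This is the main gap: your lower and upper bounds are for the wrong quantity.

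One further caveat: after a single convolution, the entropy-increase result gives a gain of order $\delta$ at each scale where the factor has entropy, not a gain of $c_\alpha n_j/(\log K_j\log\log K_j)$ concentrated ``at scale $\lambda^M$.'' The latter figure is the total over a window; treating it as a gain at a single scale would immediately contradict the trivial bound $H(\mu;r|2r)\le\log 2+O(1)$. The sub-windowing and the correct window placement are precisely what reconcile these, and they are the parts that must be supplied. Your observation that the general linear forms $T_j$ play no role is correct and matches the paper's remark that only \eqref{eq:scaling}, \eqref{eq:disjoint-convolution} and general facts about probability measures enter the argument.
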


This result is proved in \cite{BV-transcendent}*{Proposition 30} in the setting of Bernoulli convolutions based on the ideas
presented above.
The proof uses only the properties \eqref{eq:scaling} and \eqref{eq:disjoint-convolution} of the measures $\mu_{\l,\tau}^{(n)}$
along with general facts that are valid for all probability measures.
For this reason, we do not repeat the proof here.

To prove Theorem \ref{th:BV2-general}, we will show that under an indirect hypothesis, the parameters
can be chosen in Proposition \ref{prop:scale-and-convolve} in such a way that the hypotheses
of the proposition hold, and the conclusion leads to a contradiction.

We begin with condition $(2)$ of the proposition.
This will be satisfied using the assumption $\dim\mu_{\l,\tau}<1$ and the following result.

\begin{lem}
\label{lem:bd of ent of single dig2}
Suppose that $\dim\mu_{\lambda,\tau}<1$.
Then there exists
$\alpha>0$ (depending on $\l$, $\tau$, the parameters in the IFS \eqref{eq:IFS} and $p_1,\ldots,p_m$) such that,
\[
H(\mu_{\lambda,\tau};r|2r)<1-\alpha\text{ for all }r>0.
\]
\end{lem}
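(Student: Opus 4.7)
The plan is to argue by contradiction. Assume the conclusion fails, so there is a sequence of scales $r_n > 0$ with $H(\mu_{\lambda,\tau}; r_n|2r_n) \to 1$. Since $\mu_{\lambda,\tau}$ is compactly supported, $H(\mu_{\lambda,\tau};r|2r)$ tends to $0$ as $r \to \infty$, so the $r_n$ either accumulate at some $r_* \in (0,\infty)$ or tend to $0$. The aim is to show that high conditional entropy at one scale $r_n$ must propagate to a positive-density set of dyadic scales, forcing $\dim\mu_{\lambda,\tau} \ge 1 - o(1)$ as $n \to \infty$ and contradicting the hypothesis $\dim\mu_{\lambda,\tau} < 1$.

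The central tool is the self-similarity, written as the convolution decomposition $\mu_{\lambda,\tau} = \nu * \sigma_\lambda \mu_{\lambda,\tau}$, where $\nu = \sum_{j=1}^m p_j \delta_{T_j(1,\tau)}$ is the $m$-atomic step distribution and $\sigma_\lambda$ denotes the pushforward under $x \mapsto \lambda x$. Standard entropy estimates for sums of independent random variables yield
\[
H(\mu_{\lambda,\tau}; r/\lambda) \le H(\mu_{\lambda,\tau}; r) \le H(\mu_{\lambda,\tau}; r/\lambda) + H(p),
\]
so that $\Delta(r) := H(\mu_{\lambda,\tau}; r) - H(\mu_{\lambda,\tau}; r/\lambda) \in [0, H(p)]$, and subtracting the analogous inequalities at scale $2r$ gives the scaling identity
\[
H(\mu_{\lambda,\tau}; r|2r) = H(\mu_{\lambda,\tau}; r/\lambda|2r/\lambda) + \Delta(r) - \Delta(2r).
\]
The critical step is to refine this to show that when $H(\mu_{\lambda,\tau};r|2r)$ is near its maximum value, $\Delta(r)$ and $\Delta(2r)$ must be close, so that near-maximality at scale $r$ propagates to near-maximality at the scales $\lambda^{-k} r$ for $k = 1, 2, \ldots$, and then, by continuity of the averaged entropy $H(\mu_{\lambda,\tau}; \cdot)$, to short intervals of scales around each of these.

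Once propagation is secured, the conclusion follows from the exact-dimensionality of $\mu_{\lambda,\tau}$ (Feng--Hu, \cite{FH-dimension}) in the form $\dim\mu_{\lambda,\tau} = \lim_{N\to\infty} H(\mu_{\lambda,\tau}; 2^{-N})/(N\log 2)$, combined with the telescoping $H(\mu_{\lambda,\tau}; 2^{-N}) = \sum_{k=1}^N H(\mu_{\lambda,\tau}; 2^{-k}|2^{-k+1})$: having the increments at least $1-\alpha$ on a subset of dyadic scales of lower density at least $1-o_\alpha(1)$ yields $\dim\mu_{\lambda,\tau} \ge 1 - O(\alpha)$, and letting $\alpha \to 0$ contradicts $\dim\mu_{\lambda,\tau} < 1$. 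The main obstacle is the refined propagation step: the bare inequality $|H(\mu_{\lambda,\tau};r|2r) - H(\mu_{\lambda,\tau};r/\lambda|2r/\lambda)| \le H(p)$ is too crude when $H(p)$ is of order one, and a quantitative estimate tying the entropy defect $1 - H(\mu_{\lambda,\tau}; r|2r)$ to the size of $|\Delta(r) - \Delta(2r)|$, likely via a Fano- or Shannon-inequality-type refinement applied within each dyadic cell of the length-$2r$ partition, is where the real work will lie.
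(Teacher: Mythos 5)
The paper does not prove this lemma itself: it is imported from \cite{BV-transcendent}*{Lemma 13}, with the remark that the argument there uses only the scaling property \eqref{eq:scaling} and the convolution splitting \eqref{eq:disjoint-convolution}, so it transfers unchanged to the present setting. Your sketch must therefore be judged on its own terms, and it has gaps more serious than the one you acknowledge.

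First, the displayed ``scaling identity''
\[
H(\mu_{\lambda,\tau}; r|2r) = H(\mu_{\lambda,\tau}; r/\lambda|2r/\lambda) + \Delta(r) - \Delta(2r)
\]
is an algebraic tautology: once you expand $\Delta(s)=H(\mu_{\lambda,\tau};s)-H(\mu_{\lambda,\tau};s/\lambda)$, the right-hand side telescopes to $H(\mu_{\lambda,\tau};r)-H(\mu_{\lambda,\tau};2r)$ for \emph{any} measure and \emph{any} $\lambda$. It carries no information, and in particular the convolution decomposition $\mu_{\lambda,\tau}=\nu*\sigma_\lambda\mu_{\lambda,\tau}$ that you write in the preceding sentence never actually enters. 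Consequently the ``critical step''---that $H(\mu_{\lambda,\tau};r|2r)\approx 1$ forces $\Delta(r)\approx\Delta(2r)$---rests on nothing, and I see no reason to believe it. Second, even if perfect propagation along the geometric progression $\lambda^{-k}r_n$ were available, the final density count fails. The Lipschitz bound of Lemma \ref{diff of ent} turns each propagated scale into an interval of logarithmic length $O(\alpha)$, sitting inside gaps of logarithmic length $\log\lambda^{-1}$; the resulting density of high-entropy dyadic scales is $O(\alpha/\log\lambda^{-1})$, which tends to $0$, not to $1$, as $\alpha\to 0$. The density $1-o_\alpha(1)$ you assert goes in the opposite direction, and positive density alone is not enough: from $\dim\mu_{\lambda,\tau}\cdot\log 2=\lim_N\frac{1}{N}\sum_{k\le N}H(\mu_{\lambda,\tau};2^{-k}|2^{-k+1})$ one needs high-entropy scales of density exceeding $\dim\mu_{\lambda,\tau}$ to force a contradiction. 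A correct argument must show that \emph{every} sufficiently small scale carries near-maximal conditional entropy, not just a sparse geometric sequence with thin neighbourhoods; this is precisely where the iterated decomposition of $\mu_{\lambda,\tau}$ as $\mu_{\lambda,\tau}^{(n)}$ convolved with a scaled copy of itself has to be exploited quantitatively, which your outline never does. I recommend working through the proof of Lemma 13 in \cite{BV-transcendent}.
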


This is proved in  \cite{BV-transcendent}*{Lemma 13} in the setting of Bernoulli convolutions.
Again, the proof depends only on properties \eqref{eq:scaling} and \eqref{eq:disjoint-convolution}
and we do not repeat it. 
The dependence of $\a$ on $\tau$, the parameters in \eqref{eq:IFS} and $p_1,\ldots,p_m$ is only through
the difference $1-\dim\mu_{\l,\tau}$, otherwise these parameters play no role in the proof.

\subsection{A first approximation for parameters with small entropy}\label{sc:common-root-Q}

We move on to consider the conditions other than $(2)$ in Proposition \ref{prop:scale-and-convolve}.
We begin by studying the situation when $H(\mu_{\l,\tau}^{(n)};r)$ is significantly smaller than
$ns(\l)\log\l^{-1}$ for some $n$ and a suitable scale $r$.
In the analogous situation for Bernoulli convolutions, there exists an algebraic approximation of $\lambda$
with lots of exact overlaps.
As we discussed in Section \ref{sc:outline}, the geometry is more complicated in the setting of the IFS \eqref{eq:IFS}.
We describe it in detail in the next result.

Let $n\in\Z_{>0}$.
Write $\cX^{(n)}$ for the set of parameters $(\l,\tau)\in(0,1)\times \R$ such that 
there are
\[
Q=P_1(X)Y_1+P_2(X)Y_2,\;\wt Q=\wt P_1(X)Y_1+\wt P_2(X)Y_2\in\cQ^{(n)}
\]
with $Q(\l,1,\tau)=\wt Q(\l,1,\tau)=0$, $P_2(\l)\neq 0$ and $P_1\wt P_2-P_2\wt P_1\neq 0$.
In other words, $\cX^{(n)}$ is the set of parameters $(\l,\tau)$ with exact overlaps such that the family of polynomials in $\cQ^{(n)}$
that vanish on $(\l,\tau)$ does not vanish along a common curve containing $(\l,\tau)$.
Indeed, since $P_2(\l)\neq 0$, $Q$ does not vanish on the degenerate curve $\{\l\}\times \R$, and 
since $P_1\wt P_2-P_2\wt P_1\neq 0$, $\wt Q$ does not vanish along the non-degenerate curve determined by $Q$.

We note two immediate consequences of the definition of $\cX^{(n)}$.
If $(\l,\tau)\in\cX^{(n)}$, then $\l$ is a root of the nonzero polynomial $P_1\wt P_2-P_2\wt P_1\in\cP_{2L^{2}n}^{(2n)}$
appearing in the definition, where $L$ is as defined in  \eqref{eq:def of L}.
Furthermore, we have $\tau=-P_1(\l)/P_2(\l)$.
In particular, both $\l$ and $\tau$ are algebraic of degree at most $2n$, and we have control over their heights.

The purpose of this section is to prove the following result.

\begin{prp}
\label{prp:approx by alg ifs when ent is small}
For every $\e>0$
there exists $C=C(L,\e)>1$ such that for every $\lambda,\tau\in\R$
with $\e\le\lambda\le1-\e$ and $|\tau|\le\e^{-1}$,
there exists $N=N(L,\e,C,\lambda,\tau)\ge1$ such that the following
holds.
Let $n\ge N$ and $0<r\le n^{-Cn\log n}$, and suppose that
\[
H(\mu_{\lambda,\tau}^{(n)};r)< n H(p).
\]

Then at least one of the following two alternatives hold.
\begin{enumerate}
\item There is $(\eta,\s)\in\cX^{(n)}$ such that $|\lambda-\eta|,|\tau-\sigma|\le r^{1/(C\log n)}$
and $H(\mu_{\eta,\sigma}^{(n)})\le H(\mu_{\lambda,\tau}^{(n)};r)$.
\item There is $\gamma\in\Gamma$ such that $\dist(\gamma,(\l,\tau))\le r^{1/(C\log n)}$
and $H(A_\gamma^{(n)})\le H(\mu_{\lambda,\tau}^{(n)};r)$.
\end{enumerate}
\end{prp}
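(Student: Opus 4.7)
The plan is to convert the entropy deficit $H(\mu_{\lambda,\tau}^{(n)};r) < nH(p)$ into structural information about polynomials that almost vanish at $(\lambda,\tau)$. Each pair $\omega,\omega' \in \{1,\ldots,m\}^n$ contributing to a collision at scale $r$ in the support of $A_{\lambda,\tau}^{(n)}$ yields a polynomial
\[
Q_{\omega,\omega'}(X,Y_1,Y_2) \;=\; \sum_{k=0}^{n-1}(T_{\omega_k}-T_{\omega'_k})(Y_1,Y_2)\,X^k \;\in\; \mathcal Q^{(n)}
\]
with $|Q_{\omega,\omega'}(\lambda,1,\tau)|$ small. I would set $\wt{\mathcal Q} := \{Q \in \mathcal Q^{(n)} : |Q(\lambda,1,\tau)| \le r'\}$ for an intermediate scale $r' = r \cdot n^{O(1)}$ and dichotomize on the geometry of the common zero locus of $\wt{\mathcal Q}$ near $(\lambda,\tau)$.

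In the first case, the common zero locus contains a curve $\gamma \in \Gamma$. For any $Q = P_1 Y_1 + P_2 Y_2 \in \wt{\mathcal Q}$ defining $\gamma$ via $\tau = -P_1(\lambda)/P_2(\lambda)$, Lemma \ref{lem:close root single poly} applied to the univariate polynomial $P_1(X) + \tau P_2(X)$ controls $\dist(\gamma,(\lambda,\tau)) \le r^{1/(C\log n)}$; the degenerate case $P_2 \equiv 0$ is handled by taking $\gamma = \{\lambda_0\} \times \mathbb R$ directly from a root of $P_1$. The entropy bound $H(A_\gamma^{(n)}) \le H(\mu_{\lambda,\tau}^{(n)};r)$ then follows because the $r$-cluster partition of the support of $A_{\lambda,\tau}^{(n)}$ refines the $A_\gamma^{(n)}$-fiber partition: any two words joined by a chain of $r$-scale collisions at $(\lambda,\tau)$ are joined by a chain of polynomials in $\wt{\mathcal Q}$, and each of those vanishes on $\gamma$.

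In the second case, there exist $Q, \wt Q \in \wt{\mathcal Q}$ with $P_1 \wt P_2 - P_2 \wt P_1 \ne 0$ (and $P_2(\lambda) \ne 0$). This polynomial lies in $\mathcal P_{2L^2n}^{(2n)}$, and the smallness of $Q(\lambda,1,\tau)$ and $\wt Q(\lambda,1,\tau)$ forces $|(P_1\wt P_2 - P_2\wt P_1)(\lambda)| \le r' \cdot n^{O(1)}$. Lemma \ref{lem:close root single poly} then produces $\eta$ with $|\lambda - \eta| \le r^{1/(C\log n)}$, and setting $\sigma = -P_1(\eta)/P_2(\eta)$, routine estimates yield $|\tau - \sigma| \le r^{1/(C\log n)}$ together with $(\eta,\sigma) \in \mathcal X^{(n)}$.

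The main obstacle, and the step that forces the super-exponentially small tolerance $r \le n^{-Cn\log n}$, is the entropy bound $H(\mu_{\eta,\sigma}^{(n)}) \le H(\mu_{\lambda,\tau}^{(n)};r)$. By the same refinement argument as in case (1), it suffices to show that \emph{every} $Q' \in \wt{\mathcal Q}$ vanishes identically at $(\eta,1,\sigma)$. For this I plan to write $Q'(\eta,1,\sigma) = \wt P(\eta)/P_2(\eta)$ with $\wt P \in \mathcal P_{n^{O(1)}}^{(2n)}$, bound $|\wt P(\eta)|$ by $n^{O(1)}\cdot(r' + |\lambda - \eta| + |\tau - \sigma|) = O(r^{1/(C\log n)})$ via a Taylor expansion around $(\lambda,\tau)$, and then invoke Lemma \ref{lem:lb on value of poly at root} — which gives $|\wt P(\eta)| \ge n^{-O(n)}$ whenever $\wt P(\eta) \ne 0$, since $\eta$ is a root of a polynomial in $\mathcal P_{2L^2n}^{(2n)}$ — to force $\wt P(\eta) = 0$ and hence $Q'(\eta,1,\sigma) = 0$. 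The hypothesis $r \le n^{-Cn\log n}$ is precisely what opens the gap between the Liouville lower bound and the transcendence upper bound.
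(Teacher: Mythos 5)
Your high-level strategy matches the paper's: convert the entropy deficit into a family $\wt{\mathcal Q}\subset\mathcal Q^{(n)}$ of polynomials taking values $\le r$ at $(\lambda,1,\tau)$, dichotomize on whether the collection is "rank one" near $(\lambda,\tau)$ (curve case) or not (isolated-point case), and in the latter case use the super-exponentially small scale $r\le n^{-Cn\log n}$ together with Lemma \ref{lem:lb on value of poly at root} to upgrade approximate vanishing to exact vanishing. The paper packages the geometric analysis into Proposition \ref{prop:common zero} and then derives Proposition \ref{prp:approx by alg ifs when ent is small} quickly; you do both at once, and your "chain of $r$-collisions" argument for the entropy inequality is a slight detour (the paper works directly with a single translate $t$ of the interval partition, no chain needed), but it is not wrong.

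However, your case analysis has genuine gaps. First, the degenerate alternative is not the sub-case $P_2\equiv 0$: it must cover the situation where $|P_{i,2}(\lambda)|$ is merely \emph{small} (say $<r^{1/2}$) for every $Q_i\in\wt{\mathcal Q}$. In that regime the non-degenerate curve $\tau=-P_{i,1}(\lambda)/P_{i,2}(\lambda)$ need not pass anywhere near $(\lambda,\tau)$, and the argument must instead produce a nearby common root $\eta$ of \emph{all} the $P_{i,j}$, yielding the degenerate curve $\{\eta\}\times\R$. This is precisely the trichotomy in Proposition \ref{prop:common zero}, and your two-way split misses this middle branch. Second, even when you find a non-degenerate pair $Q,\wt Q$ with $P_1\wt P_2-P_2\wt P_1\ne 0$ and a nearby root $\eta$ of that determinant, you still need $P_2(\eta)\ne 0$ to set $\sigma=-P_1(\eta)/P_2(\eta)$; if all $P_{i,j}$ happen to vanish at $\eta$, you must fall back to the degenerate alternative, and you do not address this. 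Third, Lemma \ref{lem:close root single poly} applies to polynomials in $\mathcal P_l^{(n)}$, i.e.\ with bounded \emph{integer} coefficients, so it cannot be invoked on $P_1(X)+\tau P_2(X)$ when $\tau$ is irrational; the correct route in the non-degenerate curve case is the direct bound $|\tau-R(\lambda)|=|Q(\lambda,1,\tau)|/|P_2(\lambda)|\le r^{1/2}$ valid once $|P_2(\lambda)|\ge r^{1/2}$, which is exactly what forces the sub-dichotomy described above.
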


When we apply this result, we will assume that $(\l,\tau)$ has no exact overlaps and
\[
\frac{1}{n}H(\mu_{\l,\tau}^{(n)};r)<\min\{\log\l^{-1},H(p)\}-\e,
\]
hence the second alternative will be impossible by Corollary \ref{cr:no-bad-curves}.

For the proof of this proposition we first need a good understanding of the geometry of the set
where a collection of polynomials in $\cQ^{(n)}$ attain small values simultaneously.
We study this in the next proposition.

\begin{prp}
\label{prop:common zero}
For every $\e>0$ there
exists $C=C(L,\e)>1$, such that for all $n\ge N(L,\e,C)\ge1$
the following holds.
Let $q\ge1$,
\[
\{P_{i,1}(X)Y_{1}+P_{i,2}(X)Y_{2}=Q_{i}(X,Y_{1},Y_{2})\}_{i=1}^{q}=\mathcal{A}\subset\mathcal{Q}^{(n)}\setminus\{0\},
\]
$(\lambda,\tau)\in(\e,1-\e)\times(-\e^{-1},\e^{-1})$, and $0<r<n^{-Cn\log n}$ be given.
Suppose
\[
|Q_{i}(\lambda,1,\tau)|\le r\qquad\text{ for each }1\le i\le q.
\]

Then
at least one of the following three alternatives holds.
\begin{enumerate}
\item There exists $(\eta,\s)\in\cX^{(n)}$ such that,
\begin{itemize}
\item $|\lambda-\eta|,|\tau-\sigma|\le r^{1/(C\log n)}$ and
\item $Q_{i}(\eta,1,\sigma)=0$ for each $1\le i\le q$.
\end{itemize}
\item There exists $\eta\in(0,1)$ such that,
\begin{itemize}
\item $|\lambda-\eta|\le r^{1/(C\log n)}$ and
\item $P_{i,j}(\eta)=0$ for all $1\le i\le q$ and $j=1,2$.
\end{itemize}
\item We have,
\begin{itemize}
\item $P_{i,1}(X)P_{j,2}(X)=P_{i,2}(X)P_{j,1}(X)$ for each $1\le i,j\le q$;
\item $|P_{i,2}(\lambda)|\ge r^{1/2}$ for some $1\le i\le q$.
\end{itemize}
\end{enumerate}
\end{prp}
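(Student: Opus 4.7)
The strategy is to analyze the ``cross-discriminants''
\[
S_{ij}(X) := P_{i,1}(X)P_{j,2}(X) - P_{j,1}(X)P_{i,2}(X), \qquad 1 \le i,j \le q,
\]
each of which lies in $\cP_{2nL^{2}}^{(2n)}$. Substituting $P_{k,1}(\lambda) = -\tau P_{k,2}(\lambda) + O(r)$ (which follows from $|Q_{k}(\lambda,1,\tau)| \le r$) into $S_{ij}(\lambda)$ and using the crude bounds $|P_{k,l}(\lambda)| \le nL$ and $|\tau| \le \e^{-1}$ yields the key estimate $|S_{ij}(\lambda)| \le 2nLr$ for every pair $(i,j)$. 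The proof then bifurcates according to whether any $S_{ij}$ is a nonzero polynomial.

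In \emph{Case A} some $S_{i_{0}j_{0}}$ is a nonzero polynomial. Since $|S_{i_{0}j_{0}}(\lambda)|$ is extremely small relative to its coefficient size and degree (using $r \le n^{-Cn\log n}$), Lemma \ref{lem:close root single poly} produces a root $\eta$ with $|\lambda - \eta| \le r^{1/(C''\log n)}$; by exploiting the real-coefficient structure of $S_{i_{0}j_{0}}$ together with a careful selection of the witness pair $(i_{0},j_{0})$, I would argue $\eta$ may be taken real and in $(0,1)$. Setting $\sigma := -P_{i_{0},1}(\eta)/P_{i_{0},2}(\eta)$, a mean-value estimate bounds $|\sigma - \tau|$ by a comparable power of $r$. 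To obtain $(\eta,\sigma) \in \cX^{(n)}$ with $Q_{k}(\eta,1,\sigma) = 0$ for every $k$, I would establish that each $S_{i_{0}k}$ vanishes at $\eta$: for indices $k$ with $S_{i_{0}k} \equiv 0$ this is trivial, whereas for the remaining indices the bound $|S_{i_{0}k}(\eta)| \le |S_{i_{0}k}(\lambda)| + \mathrm{Lip}\cdot|\eta - \lambda|$ combined with the lower bound of Lemma \ref{lem:lb on value of poly at root} at the algebraic $\eta$ forces $S_{i_{0}k}(\eta) = 0$, provided $C$ is chosen large enough that $r^{1/(C''\log n)} \ll (4n^{2}L^{2})^{-4n}$. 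The nonvanishing of $S_{i_{0}j_{0}}$ is itself what witnesses the $\cX^{(n)}$-condition via the pair $(Q_{i_{0}},Q_{j_{0}})$.

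In \emph{Case B} all $S_{ij}$ vanish identically, so the ratios $P_{i,1}/P_{i,2}$ all equal a single rational function $R \in \cR_{L}$, and the $Q_{i}$'s jointly vanish along one common non-degenerate curve in $\Gamma$. If some $|P_{i,2}(\lambda)| \ge r^{1/2}$ we are directly in alternative 3. Otherwise all $|P_{i,2}(\lambda)| < r^{1/2}$, and combined with $|\tau| \le \e^{-1}$ this forces all $|P_{i,1}(\lambda)|$ to be small as well. Writing $R$ in lowest terms and factoring each $P_{i,j}$ as $C_{i}$ times the appropriate numerator or denominator of $R$, every $C_{i}(\lambda)$ must then be small; iterating Lemma \ref{lem:close root single poly} together with the Jensen-type root count of Lemma \ref{lem:bound on num of roots} produces a common root $\eta$ of all the $C_{i}$'s (hence of all $P_{i,j}$'s) within the required distance of $\lambda$, giving alternative 2.

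The two most technically delicate points I foresee are (i) pinning down a real $\eta \in (0,1)$ in Case A rather than merely a complex root supplied by Lemma \ref{lem:close root single poly}, and (ii) in the subcase of Case B where all $|P_{i,2}(\lambda)|$ are small, promoting pointwise smallness of several polynomials at $\lambda$ into a single honest simultaneous algebraic zero nearby, for which the separation estimates of Section \ref{sc:aux res num} will have to be combined rather carefully.
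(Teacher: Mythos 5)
Your decomposition into Case A (some cross-discriminant $S_{ij}\not\equiv 0$, i.e.\ the matrix $(P_{i,j})$ has rank $2$ over $\Q(X)$) and Case B (rank $1$), with the same auxiliary Lemmata \ref{lem:close root single poly}, \ref{lem:dist between distinct roots}, \ref{lem:lb on value of poly at root}, is essentially the paper's strategy: the paper's $E(X)=\det\left((P_{i,j})_{i,j=1}^2\right)$ is your $S_{12}$, and the paper's ``every $P\in\cP_{2L^2n}^{(2n)}$ that is small at $\lambda$ vanishes at $\eta$'' step is exactly your use of Lemma \ref{lem:lb on value of poly at root}. However, there is a genuine structural gap in Case A. After locating a root $\eta$ of $S_{i_0j_0}$ near $\lambda$ you set $\sigma:=-P_{i_0,1}(\eta)/P_{i_0,2}(\eta)$, but nothing guarantees $P_{i_0,2}(\eta)\neq 0$, and in fact the rank condition $d=2$ over $\Q(X)$ is perfectly compatible with \emph{all} $P_{i,j}(\eta)=0$ (e.g. $P_{1,1}=P_{2,2}=X-\eta$, $P_{1,2}=P_{2,1}=0$). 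That scenario must be siphoned off to the second alternative, and the paper does exactly this (``If $P_{i,j}(\eta)=0$ for all $i$ and $j$, then the second alternative holds, so we assume this is not the case''), followed by an explicit contradiction argument, again via Lemma \ref{lem:lb on value of poly at root}, showing $|P_{1,2}(\eta)|>n^{-C^{1/2}n/8}$ before dividing by it. Your sketch never acknowledges that Case A can produce alternative $(2)$ rather than $(1)$, nor that $\sigma$ may be ill-defined.

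A secondary issue is that your Case B subcase (all $|P_{i,2}(\lambda)|<r^{1/2}$) is handled in a needlessly delicate way that introduces its own difficulties. You propose factoring $P_{i,j}=C_i\cdot(\text{num or den of }R)$ and arguing that each $C_i(\lambda)$ is small, but if $P_2(\lambda)$ (say) is itself already tiny, the smallness of $P_{i,2}(\lambda)=C_i(\lambda)P_2(\lambda)$ gives no information on $C_i(\lambda)$; ruling this out requires a quantitative separation/resultant argument for the coprime factors $P_1,P_2$, as well as a bound (Gelfond-type) on the coefficients of the divisors $C_i$ before the lemmata of Section \ref{sc:aux res num} can be applied to them. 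The paper sidesteps all of this: it takes $E$ to be $P_{1,2}$ itself (or $P_{1,1}$ when $P_{1,2}=0$), finds a root $\eta$ of $E$, and then observes that since $|P_{i,1}(\lambda)|,|P_{i,2}(\lambda)|=O(r^{1/2})$ for every $i$, all these polynomials vanish at $\eta$ by the same Lemma \ref{lem:lb on value of poly at root} argument used in Case A. No factorization is needed. I would recommend replacing your Case B argument with this direct one, and repairing Case A by explicitly handling the ``all $P_{i,j}(\eta)=0$'' fallback and proving a lower bound on $|P_{i_0,2}(\eta)|$.
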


In the first alternative, all the polynomials vanish at a common point in $\cX^{(n)}$ near $(\l,\tau)$.
In the second, all of them vanish along a degenerate curve that passes near $(\l,\tau)$.
In the third, it will be shown that they all vanish along a non-degenerate curve that passes near $(\l,\tau)$.

\begin{proof}
Let $\e>0$, let $C>1$ be large with respect to
$L$ and $\e$, and let $n\ge1$ be large with respect to $C$.
Let $q$, $\mathcal{A}$, $\lambda$, $\tau$ and $r$ be as in the
statement of the proposition.

Let $\Q(X)$ be the field of
rational functions over $\Q$. Set $I=\{1,\ldots,q\}\times\{1,2\}$,
and denote by $d$ the rank of the matrix,
\[
(P_{i,j}(X))_{(i,j)\in I}\in\mathrm{Mat}_{q,2}(\Q(X)).
\]
Since $Q_{i}\ne0$ for each $1\le i\le q$, we have $d=1\text{ or }2$.

We consider the following three not mutually exclusive cases.
\begin{itemize}
\item[Case 1.]
$d=2$,
\item[Case 2.]
$|P_{i,2}(\l)|< r^{1/2}$ for all $1\le i\le q$,
\item[Case 3.]
$d=1$ and $|P_{i,2}(\l)|\ge r^{1/2}$ for some $1\le i\le q$.
\end{itemize}
In Case 3, the third alternative of the conclusion is immediate.
In Case 2, we show that the second alternative holds.
In Case 1, we show that at least one of the first two alternatives hold.

We write $\widetilde{\cP}$
for $\mathcal{P}_{2L^{2}n}^{(2n)}$ for the reminder of the proof.
It is easy to see that all minors of the matrix $(P_{i,j}(X))_{(i,j)\in I}$
belong to $\wt\cP$.

Our first objective is to show that in both Cases 1 and 2, there is a non-zero
$E\in\wt\cP$ such that $|E(\l)|\le r^{1/2}$.
In Case 1, we suppose without loss of generality that
\[
E(X):=\det\left((P_{i,j}(X))_{i,j=1}^{2}\right)\ne0.
\]
Write $M$ for $(P_{i,j}(\lambda))_{i,j=1}^{2}$ and $v$ for the
column vector $(1,\tau)^{\mathrm{T}}$. Since $\e<\lambda<1-\e$,
the largest singular value of $M$ is $O_{L,\e}(1)$. Since
$|v|\ge1$ and $|Mv|=O(r)$, the smallest singular value of $M$ is
$O(r)$.
It follows that
\begin{equation}\label{eq:det-bound}
|E(\l)|=|\det(M)|\le Cr\le r^{1/2}
\end{equation}
provided $C$ is sufficiently large in terms of $\e$ and $L$ and $r$ is sufficiently small in terms of $C$.
These follow from our assumptions, as $r<n^{-n}$.

In Case 2, we can simply take $E=P_{1,2}$ if $P_{1,2}\neq 0$.
If $P_{1,2}=0$, then $P_{1,1}\neq 0$ by $Q_1\neq 0$, and we take $E=P_{1,1}$.
Now we have
\[
|E(\l)|=|P_{1,1}(\l)|=|Q_1(\l,1,\tau)|\le r,
\]
as required.

Recall that $E\in\wt\cP$.
By Lemma \ref{lem:close root single poly} and by assuming that $C$
and $n$ are large enough with respect to $L$ and $\e$, it
follows that there exists $\eta\in\C$ such that $|\lambda-\eta|\le r^{C^{-1/2}/\log n}$
and $E(\eta)=0$.
We show that $\eta\in(0,1)$.
Since $\lambda\in\R$, we also have $|\lambda-\overline{\eta}|\le r^{C^{-1/2}/\log n}$,
and so 
\[
|\eta-\overline{\eta}|\le2r^{C^{-1/2}/\log n}\le2n^{-C^{1/2}n}.
\]
Clearly $E(\overline{\eta})=0$, hence by Lemma \ref{lem:dist between distinct roots}
and by assuming that $C$ is large enough, it follows that $\eta=\overline{\eta}$
or equivalently that $\eta\in\R$. Since $\e\le\lambda\le1-\e$,
we may assume that $\eta\in(0,1)$.

Next we show that for all $P\in\wt\cP$ with $|P(\l)|<Cr^{1/2}$, we have $P(\eta)=0$.
For every $P\in\widetilde{\cP}$ and $\xi\in\R$ with $|\xi|\le1-\e/2$
we have $|P'(\xi)|=O_{L,\e}(n^{2})$. From this, from $|\lambda-\eta|\le r^{C^{-1/2}/\log n}$,
and by assuming that $C$ and $n$ are large enough, it follows that
\begin{equation}
|P(\eta)-P(\lambda)|=O_{L,\e}(n^{2}\cdot r^{C^{-1/2}/\log n})\le r^{1/(2C^{1/2}\log n)}\le n^{-C^{1/2}n/2}.\label{eq:dist between P at two points}
\end{equation}
From this, from $|P(\l)|<Cr^{1/2}$, and by assuming
that $n$ is large enough,
\[
|P(\eta)|\le|P(\lambda)|+|P(\eta)-P(\lambda)|\le2n^{-C^{1/2}n/2}.
\]
Hence from Lemma \ref{lem:lb on value of poly at root}, and by assuming
that $C$ and $n$ are large enough, we get $P(\eta)=0$.

At this point, the argument separates in the two remaining cases.
In Case 2, we have $|P_{i,2}(\l)|<r^{1/2}$ by assumption for all $i$, and we get
\[
|P_{i,1}(\l)|\le |Q_i(\l,1,\tau)|+|\tau P_{i,2}(\l)|\le r+\e^{-1}r^{1/2}\le Cr^{1/2},
\]
provided $C$ is sufficiently large depending on $\e$, as we assumed.
Now $P_{i,j}(\eta)=0$ follows for all $i$ and $j$, and we see that
the second alternative in the conclusion holds.

Now we assume that we are in Case 1.
If $P_{i,j}(\eta)=0$ for all $i$ and $j$, then the second alternative holds, so we assume this is not the case.
Without loss of generality, we assume $(P_{1,1}(\eta),P_{1,2}(\eta))\neq0$.
We show that $|P_{1,2}(\eta)|>n^{-C^{1/2}n/8}$.
Suppose to the contrary that this is not the case.
Then by Lemma \ref{lem:lb on value of poly at root}, and by assuming $C$ is sufficiently large, we get $P_{1,2}(\eta)=0$.
By \eqref{eq:dist between P at two points}, we have
\begin{align*}
|P_{1,1}(\eta)|\le & |P_{1,1}(\eta)-P_{1,1}(\l)|+|Q_1(\l,1,\tau)|+\e^{-1}|P_{1,2}(\eta)-P_{1,2}(\l)| \\
\le & C n^{-C^{1/2}n/2}.
\end{align*}
Using Lemma \ref{lem:lb on value of poly at root} again, we get $P_{1,1}(\eta)=0$, a contradiction.

We set $\sigma=-P_{1,1}(\eta)/P_{1,2}(\eta)$.
Then $Q_{1}(\eta,1,\sigma)=0$, and we show that $Q_i(\eta,1,\sigma)=0$ for
all $1\le i\le q$.
Since $d=2$, this will also imply $(\eta,\s)\in\cX^{(n)}$.
By the argument leading up to \eqref{eq:det-bound}, we have
\[
|P_{1,1}(\l)P_{i,2}(\l)-P_{1,2}(\l)P_{i,1}(\l)|\le r^{1/2}.
\]
Since this polynomial is in $\wt\cP$, we have
\[
P_{1,1}(\eta)P_{i,2}(\eta)-P_{1,2}(\eta)P_{i,1}(\eta)=0.
\]
Thus,
\[
Q_{i}(\eta,1,\sigma)= P_{i,1}(\eta)-\frac{P_{1,1}(\eta)}{P_{1,2}(\eta)}P_{i,2}(\eta) =0.
\]

In order to show that the first alternative of the proposition holds
and to complete the proof, it remains to estimate $|\tau-\sigma|$.
Since $|P_{1,2}(\eta)|>n^{-C^{1/2}n/8}$, we have
$|P_{1,2}(\l)|\ge n^{-C^{1/2}n/8}/2 \ge r^{1/2}$ by \eqref{eq:dist between P at two points}. Hence,
\[
\left|\tau+\frac{P_{1,1}(\lambda)}{P_{1,2}(\lambda)}\right|=\left|\frac{Q_{1}(\lambda,1,\tau)}{P_{1,2}(\lambda)}\right|\le r^{1/2}.
\]
Additionally by \eqref{eq:dist between P at two points},
\begin{align*}
\left|\frac{P_{1,1}(\eta)}{P_{1,2}(\eta)}-\frac{P_{1,1}(\lambda)}{P_{1,2}(\lambda)}\right|  \le & \frac{O_{L,\e}\left(|P_{1,1}(\eta)-P_{1,1}(\lambda)|+|P_{1,2}(\eta)-P_{1,2}(\lambda)|\right)}{|P_{1,2}(\eta)P_{1,2}(\lambda)|}\\
  = & O_{L,\e}(n^{C^{1/2}n/4}\cdot r^{1/(2C^{1/2}\log n)}).
\end{align*}
Hence,
\begin{align*}
|\tau-\sigma|  \le & \left|\tau+\frac{P_{1,1}(\lambda)}{P_{1,2}(\lambda)}\right|+\left|\frac{P_{1,1}(\eta)}{P_{1,2}(\eta)}-\frac{P_{1,1}(\lambda)}{P_{1,2}(\lambda)}\right|\\
  = & O_{L,\e}(n^{C^{1/2}n/4}\cdot r^{1/(2C^{1/2}\log n)}).
\end{align*}
Thus by recalling that $r<n^{-Cn\log n}$, and by assuming that $C$ and $n$ are large enough, we get $|\tau-\sigma|\le r^{1/(C\log n)}$.
This completes the proof of the proposition. 
\end{proof}

\begin{proof}[Proof of Proposition \ref{prp:approx by alg ifs when ent is small}]
Let $\e>0$ and let $C>1$ be large with respect to $L$ and $\e$.
Let $(\lambda,\tau)\in(\e,1-\e)\times(-\e^{-1},\e^{-1})$.
Let $n\ge1$ be large with respect to $L,\e,C,\l$ and $\tau$, and let $0<r<n^{-Cn\log n}$ be such that
$H(\mu_{\lambda,\tau}^{(n)};r)<n H(p)$.

Let $0\le t\le1$ be with,
\begin{equation}
H\left(\left\lfloor r^{-1}A_{\l,\tau}^{(n)}+t\right\rfloor \right)\le H(\mu_{\lambda,\tau}^{(n)};r)< nH(p).\label{eq:small ent of translation}
\end{equation}
Denote by $\cA$ the collection of all nonzero polynomials $Q(X,Y_{1},Y_{2})\in\mathcal{Q}^{(n)}$
with $|Q(\lambda,1,\tau)|\le r$. From \eqref{eq:small ent of translation}
and
\[
T_{i}(Y_{1},Y_{2})\ne T_{j}(Y_{1},Y_{2})\text{ for }1\le i<j\le m,
\]
it follows that $\mathcal{A}\ne\varnothing$.

Let 
\[
\{Q_{i}(X,Y_{1},Y_{2})=P_{i,1}(X)Y_{1}+P_{i,2}(X)Y_{2}\}_{i=1}^{q}
\]
be an enumeration of $\mathcal{A}$. By assuming that $C$ is
large enough with respect to $L$ and $\e$, and that $n$ is
large enough with respect to $L$, $\e$ and $C$, it follows that at
least one of the three alternatives in Proposition $\ref{prop:common zero}$ holds for the collection $\mathcal{A}$.

In case the first alternative holds, we have $Q_i(\eta,1,\s)=0$ for all $Q_i\in \cA$ and hence the definition of $\cA$ yields
\[
H(\mu_{\eta,\s}^{(n)})\le H\left(\left\lfloor r^{-1}A_{\l,\tau}^{(n)}+t\right\rfloor \right)\le H(\mu_{\lambda,\tau}^{(n)};r).
\]
Thus we see that the first alternative of Proposition \ref{prp:approx by alg ifs when ent is small}
holds.

Now we consider the case, when the second alternative of Proposition $\ref{prop:common zero}$ holds.
Then we set $\gamma=\{\eta\}\times\R$.
We have $\dist(\gamma,(\l,\tau))\le r^{1/(C\log n)}$.
Furthermore, we have $Q_i(\eta,1,\s)=0$ for all $\s\in\R$ and $Q_i\in\cA$.
As above, this and the definition of $\cA$ yield
\[
H(A_\gamma^{(n)})\le H\left(\left\lfloor r^{-1}A_{\l,\tau}^{(n)}+t\right\rfloor \right)\le H(\mu_{\lambda,\tau}^{(n)};r).
\]
Thus we see that the second alternative of  Proposition \ref{prp:approx by alg ifs when ent is small}
holds.

Finally, we consider the third alternative of Proposition $\ref{prop:common zero}$.
In this case, we assume without loss of generality that $|P_{1,2}(\lambda)|\ge r^{1/2}$.
We set $\gamma$ to be the non-degenerate curve
\[
\{(\wt \l,\wt\tau)\in(0,1)\times \R: \wt \tau = -\frac{P_{1,1}}{P_{1,2}}(\wt\l)\}.
\]
We note that
\[
\Big|\frac{P_{1,1}}{P_{1,2}}(\l) +\tau\Big|=\frac{|Q_1(\l,1,\tau)|}{|P_{1,2}(\l)|}\le r^{1/2},
\]
hence $\dist(\gamma,(\l,\tau))\le r^{1/2}$.
Furthermore, it follows from $P_{1,1}P_{i,2}-P_{1,2}P_{i,1}=0$ that $Q_{i}(\cdot, 1,\cdot)$ vanishes
along $\gamma$ for all $Q_{i}\in \cA$.
Again, this and the definition of $\cA$ yields
\[
H(A_\gamma^{(n)})\le H\left(\left\lfloor r^{-1}A_{\l,\tau}^{(n)}+t\right\rfloor \right)\le H(\mu_{\lambda,\tau}^{(n)};r).
\]
Thus we see that the second alternative of  Proposition \ref{prp:approx by alg ifs when ent is small}
holds.
\end{proof}

\subsection{}

Next, we consider the situation when the approximating parameters $(\eta,\s)$ described in the
first alternative of Proposition \ref{prp:approx by alg ifs when ent is small} exist.
In this case, we show that we can find a larger value of $n$ for which $\mu_{\lambda,\tau}^{(n)}$
has significant entropy on a suitable scale.
This is achieved in the next result.
We will apply it choosing $n$ to be as large as possible subject to the constraint $|\lambda-\eta|,|\tau-\s|<n^{-Cn}$.
Therefore, how large this new $n$ will be, ultimately depends on the approximations $|\lambda-\eta|,|\tau-\s|$, which we
will control using the indirect hypothesis in our proof of Theorem \ref{th:BV2-general}.

The proof of this result uses in a crucial way a quantitative bound on the separation between points in $\cX^{(n)}$.
There is no analogous result corresponding to the second alternative  of Proposition \ref{prp:approx by alg ifs when ent is small},
because members of the family of potential approximating curves may intersect each other and there is no separation
between them.
This is why the results of Sections \ref{sc:ssm-Cx} and \ref{sc:repel} are indispensable.

\begin{prp}
\label{prp:cond for large ent}
For every $\e>0$ there exists
$C=C(L,\e)>1$ such that for every $\lambda,\tau\in\R$,
with $\e\le\lambda\le1-\e$, $|\tau|\le\e^{-1}$
and without exact overlaps,
there exists $N=N(L,\e,C,\lambda,\tau)\ge1$ such that the following
holds.
Let $n\ge N$ and suppose that there exists $(\eta,\sigma)\in\cX^{(n)}$ such that
\[
|\lambda-\eta|,|\tau-\sigma|\le n^{-Cn}.
\]
Then,
\[
\frac{1}{n\log\lambda^{-1}}H(\mu_{\lambda,\tau}^{(n)};r)>s(\l)-\e
\]
for every $0<r<\|(\lambda,\tau)-(\eta,\sigma)\|_{\infty}^{C\log n}$.
\end{prp}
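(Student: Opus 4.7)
The plan is to argue by contradiction. Assume $H(\mu_{\l,\tau}^{(n)};r)\le n\log\l^{-1}(s(\l)-\e)$ for some $r$ in the given range. Since $\e>0$, this bound is strictly less than $nH(p)$, so the hypothesis of Proposition \ref{prp:approx by alg ifs when ent is small} is satisfied, yielding, for some constant $C_1=C_1(L,\e)$, either (a) a point $(\eta',\s')\in\cX^{(n)}$ with
\[
\|(\l,\tau)-(\eta',\s')\|_\infty\le r^{1/(C_1\log n)}\quad\text{and}\quad H(\mu_{\eta',\s'}^{(n)})\le H(\mu_{\l,\tau}^{(n)};r),
\]
or (b) a curve $\gamma\in\Gamma$ at distance at most $r^{1/(C_1\log n)}$ from $(\l,\tau)$ with $h(\gamma)\le\log\l^{-1}(s(\l)-\e)$. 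I will show that neither alternative can hold, contradicting the proposition.

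Alternative (b) is incompatible with $(\l,\tau)$ having no exact overlaps. Since $\log\l^{-1}(s(\l)-\e)<\min\{\log\l^{-1},H(p)\}$, Corollary \ref{cr:no-bad-curves} produces $\delta=\delta(\l,\tau,\e)>0$ such that $B((\l,\tau),\delta)$ is not met by any curve in $\Gamma$ with that entropy bound. For $n$ sufficiently large (absorbed into $N$), we have $r^{1/(C_1\log n)}<\delta$, contradicting (b).

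To rule out (a), I use a ``distance squeeze.'' Choose $C$ large compared to $C_1$ and to the separation exponent $C_0=C_0(L)$ from Lemma \ref{lem:dist between distinct roots} (applied to polynomials in $\cP_{2L^2n}^{(2n)}$). From $r<\|(\l,\tau)-(\eta,\s)\|_\infty^{C\log n}$ and $\|(\l,\tau)-(\eta,\s)\|_\infty<1$ I get
\[
\|(\l,\tau)-(\eta',\s')\|_\infty\le r^{1/(C_1\log n)}<\|(\l,\tau)-(\eta,\s)\|_\infty^{C/C_1}<\|(\l,\tau)-(\eta,\s)\|_\infty,
\]
so $(\eta',\s')\ne(\eta,\s)$. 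On the other hand, the triangle inequality gives $|\eta-\eta'|\le 2n^{-Cn}$, while Lemma \ref{lem:dist between distinct roots} bounds the spacing of first coordinates of distinct points of $\cX^{(n)}$ below by $n^{-C_0n}$ (both $\eta,\eta'$ being roots of nonzero polynomials in $\cP_{2L^2n}^{(2n)}$). The analogous separation for second coordinates follows from Lemma \ref{lem:lb on value of poly at root} applied to the nonzero polynomial $P_1\wt P_2-P_2\wt P_1\in\cP_{2nL^2}^{(2n)}$ coming from the two distinct curves through a common first coordinate. For $C>C_0$ and $n$ large these estimates force $(\eta',\s')=(\eta,\s)$, contradicting the strict inequality above.

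The main obstacle is only bookkeeping the constants: the final $C$ must satisfy $C>C_1$ (for the distance squeeze), $C>C_0$ (for the separation), and $C^2$ must dominate the constant in Proposition \ref{prp:approx by alg ifs when ent is small}'s scale hypothesis, so that $r<\|(\l,\tau)-(\eta,\s)\|_\infty^{C\log n}\le n^{-C^2 n\log n}$ lies in that proposition's admissible range. All constants depend only on $L$ and $\e$, and the $(\l,\tau)$-dependence enters only through $\delta$ and is absorbed into the threshold $N$.
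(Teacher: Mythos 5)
Your proposal is correct and follows essentially the same route as the paper: indirect hypothesis, apply Proposition \ref{prp:approx by alg ifs when ent is small}, eliminate the curve alternative via Corollary \ref{cr:no-bad-curves}, observe the resulting $(\eta',\s')$ is strictly closer to $(\l,\tau)$ and hence distinct from $(\eta,\s)$, then force $\eta=\eta'$ by Lemma \ref{lem:dist between distinct roots} and $\s=\s'$ by applying Lemma \ref{lem:lb on value of poly at root} to $P_1\wt P_2 - P_2\wt P_1$. The only cosmetic difference is that the paper phrases the final contradiction as $A(\eta)=0$ versus $A(\eta)\neq0$, whereas you phrase it as $(\eta',\s')=(\eta,\s)$ versus $(\eta',\s')\neq(\eta,\s)$; these are the same statement.
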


\begin{proof}
Let $\e>0$ and let $\lambda,\tau\in\R$ be with $\e\le\lambda\le1-\e$,
$|\tau|\le\e^{-1}$
and without exact overlaps.
Let $C=C(L,\e)>1$ and $N=N(L,\e,C,\lambda,\tau)\ge1$
be as obtained in Proposition \ref{prp:approx by alg ifs when ent is small}.
We also assume that $N$ and $C$ are large with respect to $L$
and $\e$ in a manner to be described below.
Let $n\ge N$ and
suppose that there exists $(\eta,\sigma)\in\cX^{(n)}$ 
such that
$|\lambda-\eta|,|\tau-\sigma|\le n^{-Cn}$.

Suppose to the contrary that there exists $0<r<\Vert(\lambda,\tau)-(\eta,\sigma)\Vert_{\infty}^{C\log n}$
with
\[
\frac{1}{n\log\lambda^{-1}}H(\mu_{\lambda,\tau}^{(n)};r)\le s(\l)-\e.
\]
We apply  Proposition \ref{prp:approx by alg ifs when ent is small}.
As we have already remarked, the second alternative in the conclusion cannot hold
by Corollary \ref{cr:no-bad-curves}, provided $n$ is sufficiently large depending on $\l$, $\tau$ and $\e$,
which we assumed.
Therefore, we must have the first alternative, hence
there exists $(\wt\eta,\wt\sigma)\in\cX^{(n)}$
such that
\[
\Vert(\lambda,\tau)-(\wt\eta,\wt\sigma)\Vert_{\infty}\le r^{1/(C\log n)}<\Vert(\lambda,\tau)-(\eta,\sigma)\Vert_{\infty}.
\]
In particular $(\eta,\sigma)\ne(\wt\eta,\wt\sigma)$, and
\begin{equation}
\Vert(\eta,\sigma)-(\wt\eta,\wt\sigma)\Vert_{\infty}\le\Vert(\eta,\sigma)-(\lambda,\tau)\Vert_{\infty}+\Vert(\lambda,\tau)-(\wt\eta,\wt\sigma)\Vert_{\infty}<2n^{-Cn}.\label{eq:ub on dist of pairs}
\end{equation}

Note that $\eta$ and $\wt \eta$ are both roots of polynomials in $\cP_{2L^2n}^{(2n)}$.
Using $|\eta-\tilde{\eta}|<2n^{-Cn}$,
and that $C$ is large enough with respect to $L$,
Lemma \ref{lem:dist between distinct roots} gives $\eta=\wt{\eta}$.
This together with $(\eta,\sigma)\ne(\wt{\eta},\wt{\sigma})$ implies $\sigma\ne\wt{\sigma}$.

Since $(\eta,\sigma),(\eta,\wt\sigma)\in\cX^{(n)}$ (we just used $\eta=\wt \eta$),
it follows that there are $P_1,P_2,\wt P_1,\wt P_2\in\cP_L^{(n)}$
such that $P_2(\eta),\wt P_2(\eta)\neq 0$, $\s=P_1(\eta)/P_2(\eta)$ and $\wt\s=\wt P_1(\eta)/\wt P_2(\eta)$.
We set
\[
A(X):=P_{1}(X)\wt P_{2}(X)-P_{2}(X)\wt P_{1}(X).
\]
Then
we get
\[
A(\eta)=P_{2}(\eta)\wt P_{2} (\eta)(\sigma-\wt\sigma).
\]
In particular, $A(\eta)\ne0$.
Since $\e\le\lambda\le1-\e$
we may assume that $|\eta|\le1-\e/2$.
This together with $P_{2},\wt P_{2}\in\cP_{L}^{(n)}$
implies $|P_{2}(\eta)\wt P_{2}(\eta)|=O_{\e,L}(1)$. Thus
from $|\sigma-\wt\sigma|<2n^{-Cn}$ we get $|A(\eta)|=O_{\e,L}(n^{-Cn})$.
Since $A\in\cP_{2L^2n}^{(2n)}$ and $\eta$ is a root of a polynomial in $\cP_{2L^2n}^{(2n)}$,
it follows from Lemma \ref{lem:lb on value of poly at root} that $A(\eta)=0$, provided $C$ is sufficiently
large depending on $\e$ and $L$, which we assumed.
This contradiction completes the proof.
\end{proof}

\subsection{Proof of Theorem \ref{th:BV2-general}}\label{sc:BV2-general-proof}

For $(\lambda,\tau)\in(0,1)\times\R$, $\e>0$ and $n\ge1$ we write $F_{\l,\tau,\e}^{(n)}$
for the set of all pairs $(\eta,\sigma)\in\cX^{(n)}$
such that
\[
\frac{1}{n\log\eta^{-1}}H(\mu_{\eta,\sigma}^{(n)})\le\dim\mu_{\lambda,\tau}+\e.
\]

With this notation, we rephrase the statement of Theorem \ref{th:BV2-general} as follows.

\begin{thm*}
Let $(\lambda,\tau)\in(0,1)\times\R$ be with $\dim\mu_{\lambda,\tau}<s(\lambda)$,
and suppose that $(\lambda,\tau)$ has no exact overlaps.
Then for
every $\e>0$ and $N\ge1$, there exist $n\ge N$ and $(\eta,\sigma)\in F_{\l,\tau,\e}^{(n)}$
such that,
\[
|\lambda-\eta|,|\tau-\sigma|\le\exp(-n^{1/\e}).
\]
\end{thm*}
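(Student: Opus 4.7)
The plan is to argue by contradiction and reduce to Proposition \ref{prop:scale-and-convolve}, adapting the scheme developed for Bernoulli convolutions in \cite{BV-transcendent}. Assume the conclusion fails: fix $\e>0$ and $N_0\ge1$ such that for every $n\ge N_0$ and every $(\eta,\s)\in F^{(n)}_{\l,\tau,\e}$ we have $\Vert(\l,\tau)-(\eta,\s)\Vert_\infty > \exp(-n^{1/\e})$. Write $\e_1 = s(\l)-\dim\mu_{\l,\tau}>0$. Since $\dim\mu_{\l,\tau}<s(\l)\le 1$, Lemma \ref{lem:bd of ent of single dig2} yields $\a_0>0$ with $H(\mu_{\l,\tau};r|2r)\le 1-\a_0$ for all $r>0$, which supplies hypothesis (2) of Proposition \ref{prop:scale-and-convolve}. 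Fix a small $\a>0$ compared with $\min(\a_0,\e_1\log\l^{-1},\e)$, and let $C$ be large enough to serve in Propositions \ref{prp:approx by alg ifs when ent is small}, \ref{prp:cond for large ent}, and \ref{prop:scale-and-convolve}.

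The core is the following inductive construction: for every $M$, we build integers $n_1<\cdots<n_M$ and scales $K_1,\ldots,K_M>10$ satisfying hypotheses (1)--(4) of Proposition \ref{prop:scale-and-convolve} with the chosen $\a$, and with $\log K_j$ of polynomial size in $n_j$ (roughly $\log K_j\asymp n_j^{1/\e-1}\log n_j$). At step $j$, one chooses $n_{j+1}$ and a scale $r_j\le n_{j+1}^{-Cn_{j+1}\log n_{j+1}}$ so that $H(\mu_{\l,\tau}^{(n_{j+1})};r_j)/n_{j+1}$ stays strictly below $\min\{\log\l^{-1},H(p)\}-\a'$ for some $\a'>0$ depending only on $\e_1$; this is possible because $\dim\mu_{\l,\tau}<s(\l)$. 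Apply Proposition \ref{prp:approx by alg ifs when ent is small}. Its second alternative would provide a curve $\gamma\in\Gamma$ with $\dist(\gamma,(\l,\tau))\le r_j^{1/(C\log n_{j+1})}$ arbitrarily small and $h(\gamma)\le H(\mu_{\l,\tau}^{(n_{j+1})};r_j)/n_{j+1}<\min\{\log\l^{-1},H(p)\}-\a'$, contradicting Corollary \ref{cr:no-bad-curves} since $(\l,\tau)$ has no exact overlaps. Hence the first alternative applies, yielding $(\eta_j,\s_j)\in\cX^{(n_{j+1})}$ with $\Vert(\l,\tau)-(\eta_j,\s_j)\Vert_\infty\le r_j^{1/(C\log n_{j+1})}\le n_{j+1}^{-Cn_{j+1}}$ and $H(\mu^{(n_{j+1})}_{\eta_j,\s_j})\le H(\mu^{(n_{j+1})}_{\l,\tau};r_j)$; the same entropy bound places $(\eta_j,\s_j)$ in $F^{(n_{j+1})}_{\l,\tau,\e}$, upon a bookkeeping of scales paralleling \cite{BV-transcendent}.

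The indirect hypothesis now forces $\Vert(\l,\tau)-(\eta_j,\s_j)\Vert_\infty>\exp(-n_{j+1}^{1/\e})$, and Proposition \ref{prp:cond for large ent} yields
\[
H(\mu_{\l,\tau}^{(n_{j+1})};r) > (s(\l)-\e)\,n_{j+1}\log\l^{-1} \quad\text{for every } r>\Vert(\l,\tau)-(\eta_j,\s_j)\Vert_\infty^{C\log n_{j+1}}.
\]
We choose $K_{j+1}$ so that $\l^{K_{j+1}n_{j+1}}$ lies in this range while $\l^{10n_{j+1}}\gg r_j$; the difference of the two entropy bounds gives hypothesis (3), namely $H(\mu^{(n_{j+1})}_{\l,\tau};\l^{K_{j+1}n_{j+1}}|\l^{10n_{j+1}})\ge \a n_{j+1}$, and conditions (1) and (4) are automatic. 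Feeding the sequence into Proposition \ref{prop:scale-and-convolve} gives $\sum_{j=1}^M (\log K_j\log\log K_j)^{-1}\le C(1+n_1^{-1}\sum_{j=1}^M\log K_j)$. With $\log K_j$ polynomial in $n_j$ and the $n_j$ chosen to grow rapidly enough, the right-hand side remains bounded (by taking $n_1$ large), while the left-hand side grows linearly in $M$; choosing $M$ sufficiently large yields the desired contradiction. The main obstacle is the delicate scale bookkeeping in this induction, which mirrors \cite{BV-transcendent}; the genuinely new ingredient relative to that work is the elimination, via Corollary \ref{cr:no-bad-curves}, of the second alternative of Proposition \ref{prp:approx by alg ifs when ent is small}, a phenomenon absent from the Bernoulli convolution setting and whose treatment occupies the bulk of Sections \ref{sc:ssm-Cx} and \ref{sc:repel}.
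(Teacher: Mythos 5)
Your strategy is essentially the paper's: argue by contradiction, construct an increasing sequence $n_j$ together with scales $K_j$ by repeatedly applying Proposition~\ref{prp:approx by alg ifs when ent is small} (ruling out its second alternative with Corollary~\ref{cr:no-bad-curves}), feed the resulting entropy bounds into Proposition~\ref{prop:scale-and-convolve}, and bound the growth of the $n_j$ via the indirect hypothesis. You also correctly identify the genuinely new ingredient relative to the Bernoulli convolution case, namely excluding low-entropy curves near $(\l,\tau)$.

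However, your quantitative bookkeeping is wrong in a way that breaks the argument. You claim $\log K_j\asymp n_j^{1/\e-1}\log n_j$, that conditions (1) and (4) of Proposition~\ref{prop:scale-and-convolve} are ``automatic'', and that the left-hand side of its conclusion grows linearly in $M$. None of this holds: with your choice of $K_j$, condition (4), $n_j\ge C(\log K_j)^2$, fails outright when $\e<2/3$; and even ignoring that, $\sum_j(\log K_j\log\log K_j)^{-1}$ would be a \emph{convergent} series, not one growing with $M$, so no contradiction emerges. The paper's choice is $K_j=C(\log n_j)^2/\log\l^{-1}$, so that $\log K_j\asymp\log\log n_j$; with the doubly exponential growth $n_j\le n_0^{2^j\e^{-j}}$ forced by the indirect hypothesis, the left-hand side of the conclusion grows only like $\log\log N$, which is just barely enough once $C$, $N$ and $n_0$ are tuned carefully. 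The likely source of your error is a reversed inequality: Proposition~\ref{prp:cond for large ent} gives the entropy lower bound for scales $r<\Vert(\l,\tau)-(\eta,\s)\Vert_\infty^{C\log n}$, not $r>\dots$, and the correct reading forces $K_j\asymp(\log n_j)^2$ rather than polynomially large in $n_j$. A secondary issue is your assertion that one can always choose $n_{j+1}$ and $r_j\le n_{j+1}^{-Cn_{j+1}\log n_{j+1}}$ with small normalized entropy ``because $\dim\mu_{\l,\tau}<s(\l)$'': the dimension drop controls entropy at the geometric scale $\l^n$ but not at the super-exponential scale $n^{-Cn\log n}$ (indeed Proposition~\ref{prp:cond for large ent} shows the latter can be large), and the paper's two-case split at each induction step is essential and cannot be elided.
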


Note that we have omitted the claim that $h(\g)\ge\min\{\log \l^{-1},H(p)\}-\e$ for all $\gamma\in\Gamma$ with $(\eta,\s)\in\gamma$.
However, this follows immediately from the bound on the distance between $(\l,\tau)$ and $(\eta,\s)$
and Corollary \ref{cr:no-bad-curves}.
We have also omitted the claims related to the algebraicity of $\eta$ and $\s$, however these immediately follow
from $(\eta,\s)\in\cX^{(n)}$. 

We shall also need the following lemma which is stated in \cite{BV-transcendent}*{Lemma 12}.
\begin{lem}\label{diff of ent}
Let $\nu$ be a compactly supported Borel probability measure on $\R$. Then for any $r_2\ge r_1>0$ we have
\[
0\le H(\nu;r_1)-H(\nu;r_2)\le 2(\log r_2-\log r_1).
\]
\end{lem}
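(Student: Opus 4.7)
The plan is to derive a convenient integral representation of $H(\nu;r)$ and then prove the two bounds separately. Starting from $H(\nu;r)=\int_0^1 H(\lfloor r^{-1}A+t\rfloor)\,dt$ with $A\sim\nu$, I would expand the Shannon entropy as $-\sum_k\nu(I_{k,t})\log\nu(I_{k,t})$ with cells $I_{k,t}=[(k-t)r,(k+1-t)r)$, swap sum and integral via Tonelli, and substitute $u=(k-t)r$ on each summand. This yields the closed form
\[
H(\nu;r)=-\frac{1}{r}\int_\R\nu([u,u+r])\log\nu([u,u+r])\,du,
\]
together with the Fubini normalization $\int_\R\nu([u,u+r])\,du=r$. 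In particular $r\mapsto H(\nu;r)$ is continuous by dominated convergence on the compactly supported $\nu$.

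For the lower bound $H(\nu;r_1)\ge H(\nu;r_2)$ I would use subadditivity of $\phi(x)=-x\log x$, that is $\phi(a)+\phi(b)\ge\phi(a+b)$. When $n=r_2/r_1\in\Z_{>0}$, decompose $[u,u+r_2]$ into $n$ consecutive pieces of length $r_1$ and apply this subadditivity to the $\nu$-masses; integrating in $u$ and using translation invariance of Lebesgue measure gives
\[
n\int_\R\phi(\nu([u,u+r_1]))\,du\ge\int_\R\phi(\nu([u,u+r_2]))\,du,
\]
which after dividing by $r_2=nr_1$ is exactly the required monotonicity. Continuity from the first step extends this to all real ratios $r_2\ge r_1$.

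For the upper bound I would first establish the dyadic estimate $H(\nu;r)\le H(\nu;2r)+\log 2$ via a partition-refinement coupling: for each translation $t\in[0,1]$, the length-$r$ partition shifted by $tr$ refines both length-$2r$ partitions shifted by $tr/2$ and by $(t-1)r/2$ into pairs of cells, giving two inequalities of the form $H_\nu(\text{fine})\le H_\nu(\text{coarse})+\log 2$. Averaging them and integrating in $t$, using periodicity of the coarser entropy in its shift parameter, yields the dyadic estimate. Iterating and combining with monotonicity gives
\[
H(\nu;r_1)-H(\nu;r_2)\le(\lfloor\log_2(r_2/r_1)\rfloor+1)\log 2,
\]
which is at most $2\log(r_2/r_1)$ as soon as $r_2/r_1\ge\sqrt{2}$.

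The main obstacle is the residual range $r_2/r_1\in[1,\sqrt{2})$, where the dyadic estimate $\log 2$ exceeds $2\log(r_2/r_1)$. Here I would invoke a sharper coupling $B_t=\lfloor r_1^{-1}A+t\rfloor$, $C_t=\lfloor r_2^{-1}A+t\rfloor$ at the same translation $t$: a short computation shows that given $C_t$ the value $B_t$ ranges over an interval of length $\alpha=r_2/r_1\in[1,2)$ and hence takes at most two values, and a direct measure-theoretic count averaged in $t$ shows that the two-value case occurs only on a set of weighted measure at most $\alpha-1$, yielding $\int_0^1 H(B_t|C_t)\,dt\le(\alpha-1)\log 2$. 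Since $H(B_t)\le H(C_t)+H(B_t|C_t)$, integration in $t$ gives $H(\nu;r_1)-H(\nu;r_2)\le(\alpha-1)\log 2$, and the elementary inequality $(\alpha-1)\log 2\le 2\log\alpha$ on $[1,\sqrt{2}]$ (checked at endpoints and by concavity of $\log$) closes the proof.
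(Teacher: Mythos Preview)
The paper does not actually prove this lemma; it simply quotes it from \cite{BV-transcendent}*{Lemma 12}, so there is no argument in the present paper to compare yours against. That said, your outline contains a genuine gap in the lower bound. Via subadditivity of $-x\log x$ you correctly obtain $H(\nu;r)\ge H(\nu;nr)$ for every $r>0$ and every integer $n\ge 1$, but the claim that ``continuity extends this to all real ratios $r_2\ge r_1$'' is false as a logical step. A continuous function $g$ on $(0,\infty)$ can satisfy $g(r)\ge g(nr)$ for all integers $n\ge 1$ and all $r>0$ without being non-increasing: take
\[
g(r)=-\log r+\tfrac{\log 2}{2}\sin\!\Big(\frac{\pi\log r}{\log 2}\Big),
\]
for which $g(r)-g(2r)=\log 2\,(1+\sin\theta)\ge 0$ and $g(r)-g(nr)\ge\log n-\log 2>0$ for $n\ge 3$, yet $g'(1)=-1+\pi/2>0$. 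So monotonicity of $r\mapsto H(\nu;r)$ is not established by your argument.

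This gap propagates: your derivation of the iterated dyadic bound uses monotonicity to pass from scale $2^k r_1$ to $r_2$, so the upper bound is not secured either. Your fallback ``sharper coupling'' for $\alpha=r_2/r_1\in[1,2)$ also has a flawed intermediate claim: the assertion that the two-value event has $t$-weighted measure at most $\alpha-1$ is false already for a point mass. With $\nu=\delta_a$, $\alpha=3/2$ and $a/r_2=0.3$, a direct computation gives the two-value set of $t$ measure $0.7>0.5=\alpha-1$ (of course $H(B_t\mid C_t)\equiv 0$ for a point mass, so the \emph{final} inequality is fine there, but your route to it does not work). The integral representation and the dyadic estimate $H(\nu;r)\le H(\nu;2r)+\log 2$ are correct, so the overall strategy is reasonable; it is the bridge from integer ratios to arbitrary ratios, in both directions, that needs a different idea.
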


\begin{proof}[Proof of Theorem \ref{th:BV2-general}]
Let $(\lambda,\tau)\in(0,1)\times\R$ be with $\dim\mu_{\lambda,\tau}<s(\lambda)$
and without exact overlaps.
Suppose to the contrary that there exists
\begin{equation}
0<\e<\frac{1}{3}\min\{1,\log\l^{-1}\}(s(\lambda)-\dim\mu_{\lambda,\tau}),\label{eq:restriction on epsilon-1}
\end{equation}
such that,
\begin{equation}
\Vert(\lambda,\tau)-(\eta,\sigma)\Vert_{\infty}>\exp(-n^{1/\e})\label{eq:cont assumption-1}
\end{equation}
for all $n\ge\e^{-1}$ and $(\eta,\sigma)\in F_{\l,\tau,3\e}^{(n)}$.
Let $C>1$ be large with respect to $L$, $\lambda$, $\tau$ and
$\e$, and let $n_{0}\ge1$ be large with respect to $C$.

We
next define by induction a sequence $n_{0}<n_{1}<\ldots$ of positive integers.
Let $j\ge0$ and suppose that $n_{j}$ has been chosen. Write
\[
q=\left\lceil \frac{Cn_{j}(\log n_{j})^{2}}{\log\lambda^{-1}}\right\rceil ,
\]
and assume first that,
\[
H(\mu_{\lambda,\tau}^{(q)};q^{-Cq\log q})\ge q(\dim\mu_{\lambda,\tau}+2\e)\log\lambda^{-1}.
\]
In this case we set $n_{j+1}=q$. Note that from $\dim\mu_{\lambda,\tau}<1$
and \cite{Hoc-self-similar}*{Theorem 1.3},
\begin{equation}
\lim_n\frac{H(\mu_{\lambda,\tau}^{(n)};\lambda^{10n})}{n\log\lambda^{-1}}=\dim\mu_{\lambda,\tau}.\label{eq:follows from =00005BHo=00005D-1}
\end{equation}
Thus, by assuming that $n_{0}$ is large enough,
\begin{equation}
H(\mu_{\lambda,\tau}^{(n_{j+1})};n_{j+1}^{-Cn_{j+1}\log n_{j+1}}|\lambda^{10n_{j+1}})\ge\e n_{j+1}\log\lambda^{-1}.\label{eq:lb in first case-1}
\end{equation}

Next suppose that
\begin{equation}\label{eq:second-case}
H(\mu_{\lambda,\tau}^{(q)};q^{-Cq\log q})<q(\dim\mu_{\lambda,\tau}+2\e)\log\lambda^{-1}.
\end{equation}
By \eqref{eq:restriction on epsilon-1}, we have
\[
\dim\mu_{\lambda,\tau}+2\e<s(\lambda)-\e.
\]
We apply Proposition \ref{prp:approx by alg ifs when ent is small}.
The second alternative cannot hold by Corollary \ref{cr:no-bad-curves}.
Therefore, we must have the first alternative, which together with \eqref{eq:second-case}
imply that there exists $(\eta,\s)\in  F_{\l,\tau,3\e}^{(q)}$ with 
$|\lambda-\eta|,|\tau-\sigma|<q^{-C^{1/2}q}$ provided
$C$ and $n_{0}$ are large enough (with respect
to the specified parameters), which we assumed.
In this case, we take $n_{j+1}$ to be the largest integer $n$ with
$\Vert(\lambda,\tau)-(\eta,\sigma)\Vert_{\infty}<n^{-C^{1/2}n}$.
In particular we have $n_{j+1}\ge q$.

Since
\[
(n_{j+1}+1)^{-C^{1/2}(n_{j+1}+1)}\le\Vert(\lambda,\tau)-(\eta,\sigma)\Vert_{\infty}<n_{j+1}^{-C^{1/2}n_{j+1}},
\]
by Proposition \ref{prp:cond for large ent} and by assuming that $C$
and $n_{0}$ are large enough,
\[
H(\mu_{\lambda,\tau}^{(n_{j+1})};(n_{j+1}+1)^{-C(n_{j+1}+1)\log n_{j+1}})>n_{j+1}(s(\lambda)-\frac{\e}{2\log\lambda^{-1}})\log\lambda^{-1}.
\]
From this, Lemma \ref{diff of ent}, \eqref{eq:follows from =00005BHo=00005D-1}
and by assuming that $n_{0}$ is large enough,
\begin{align*}
H(\mu_{\lambda,\tau}^{(n_{j+1})};n_{j+1}^{-Cn_{j+1}\log n_{j+1}}|\lambda^{10n_{j+1}})  \ge & n_{j+1}(s(\lambda)-\frac{\e}{\log\lambda^{-1}})\log\lambda^{-1}\\
 & -  n_{j+1}(\dim\mu_{\lambda}+\frac{\e}{\log\lambda^{-1}})\log\lambda^{-1}.
\end{align*}
Thus from \eqref{eq:restriction on epsilon-1},
\begin{equation}\label{lb for second case-1}
H(\mu_{\lambda,\tau}^{(n_{j+1})};n_{j+1}^{-Cn_{j+1}\log n_{j+1}}|\lambda^{10n_{j+1}})\ge\e n_{j+1}.
\end{equation}

Fix a large integer $N=N(C,n_{0})\ge1$, to be determined later in
the proof. For $1\le j\le N$ set
\[
K_{j}=\frac{C(\log n_{j})^{2}}{\log\lambda^{-1}}.
\]
By \eqref{eq:lb in first case-1} and \eqref{lb for second case-1} it follows that in either case,
\[
H(\mu_{\lambda,\tau}^{(n_{j})};\lambda^{K_{j}n_{j}}|\lambda^{10n_{j}})\ge\e n_{j}\min\{1,\log\lambda^{-1}\}.
\]
We also have $\lambda^{-n_{1}}\ge\max\{2,\lambda^{-2}\}$, $n_{j}\ge C(\log K_{j})^{2}$
and
\[
n_{j+1}\ge\left\lceil \frac{Cn_{j}(\log n_{j})^{2}}{\log\lambda^{-1}}\right\rceil \ge K_{j}n_{j},
\]
if $n_{0}$ is assumed to be large enough with respect to $C$ and
$\lambda$. From all of this together with Lemma \ref{lem:bd of ent of single dig2}
it follows that the conditions of Proposition \ref{prop:scale-and-convolve}
are satisfied. Thus, by assuming that $C$ is large enough,
\begin{equation}
\sum_{j=1}^{N}\frac{1}{\log K_{j}\log\log K_{j}}\le C\left(1+\frac{1}{n_{1}}\sum_{j=1}^{N}\log K_{j}\right).\label{eq:by inequality involving =00007BK_j=00007D-1}
\end{equation}

Next we estimate how fast the sequence $\{n_{j}\}_{j\ge0}$ may grow.
Let $j\ge0$ and recall $q=\left\lceil \frac{Cn_{j}(\log n_{j})^{2}}{\log\lambda^{-1}}\right\rceil $.
Recall also that in the definition of $n_{j+1}$, if the first alternative
occurred then we took $n_{j+1}=q$. Suppose next that the second alternative
has occurred, and let $(\eta,\sigma)$ be as obtained during the definition
of $n_{j+1}$. Recall that we have selected $n_{j+1}$ so that $\Vert(\lambda,\tau)-(\eta,\sigma)\Vert_{\infty}<n_{j+1}^{-C^{1/2}n_{j+1}}$.
Additionally, from $(\eta,\sigma)\in F_{\l,\tau,3\e}^{(q)}$,
since we may assume $n_{0}\ge\e^{-1}$ and by \eqref{eq:cont assumption-1},
it follows that $\Vert(\lambda,\tau)-(\eta,\sigma)\Vert_{\infty}>\exp(-q^{\e^{-1}})$.
Thus, if $n_{0}$ is large enough,
\[
n_{j+1}<\log n_{j+1}^{C^{1/2}n_{j+1}}<-\log\Vert(\lambda,\tau)-(\eta,\sigma)\Vert_{\infty}<q^{\e^{-1}}\le n_{j}^{2\e^{-1}}.
\]
Hence by induction it follows that $n_{j}\le n_{0}^{2^{j}\e^{-j}}$
for all $j\ge0$.

Write $j_{0}=\left\lceil \log\log n_{0}\right\rceil $, then if $n_{0}$
is assumed to be large enough,
\[
\log K_{j}\le3\log\log n_{j}\le3\log\log(n_{0}^{2^{j}\e^{-j}})\le6\e^{-1}(j+j_{0}),
\]
and
\[
\log\log K_{j}\le\log(6\e^{-1}(j+j_{0}))\le2\log(j+j_{0}).
\]
It follows that,
\begin{align*}
\sum_{j=1}^{N}\frac{1}{\log K_{j}\log\log K_{j}}\ge&\sum_{j=1}^{N}\frac{\e}{12(j+j_{0})\log(j+j_{0})}\\
\ge&\frac{\e}{12}\int_{j_{0}+1}^{j_{0}+N+1}\frac{1}{x\log x}dx\\
\ge&\frac{\e}{12}(\log^{(2)}N-\log^{(2)}(j_{0}+1)),
\end{align*}
where $\log^{(2)}$ stands for the composition of the $\log$ function with itself.
Similarly, we write $\exp^{(2)}$ for the composition of $\exp$ with itself.
Set,
\[
N=\left\lceil \exp^{(2)}(\log^{(2)}(j_{0}+1)+C^{2})\right\rceil,
\]
then,
\begin{equation}
\sum_{j=1}^{N}\frac{1}{\log K_{j}\log\log K_{j}}\ge\frac{\e}{12}C^{2}.\label{eq:lb on log log term}
\end{equation}

On the other hand, for each $1\le j\le N$,
\[
\log K_{j}\le6\e^{-1}(j+j_{0})\le12\e^{-1}N.
\]
Additionally, if $n_{0}$ is assumed to be sufficiently large,
\[
N\le\exp^{(2)}(\log j_{0})\le2\log n_{0}.
\]
Hence, by assuming once more that $n_{0}$ is sufficiently large,
\[
\frac{1}{n_{1}}\sum_{j=1}^{N}\log K_{j}\le\frac{12N^{2}}{n_{1}\e}\le\frac{48(\log n_{0})^{2}}{n_{0}\e}\le1.
\]
If $C$ is assumed to be large enough with respect to $\e$
then this, together with \eqref{eq:by inequality involving =00007BK_j=00007D-1}
and \eqref{eq:lb on log log term}, yields the desired contradiction
and completes the proof of the theorem.
\end{proof}

\subsection{Proof of Corollary \ref{cr:Mike-general}}

We recall the statement of the Corollary.

\begin{cor*}
Let $E$ be the set of $(\l,\tau)\in(0,1)\times\R$ without exact overlaps and with $\dim\mu_{\l,\tau}<s(\l)$. Then $E$ is of Hausdorff dimension $0$.
\end{cor*}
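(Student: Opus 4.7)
The plan is to use Theorem \ref{th:BV2-general} as a black box to construct very efficient covers of $E$ by balls centered at algebraic points. Fix $s>0$; I aim to show $\mathcal{H}^{s}(E)=0$, which will give $\dim_{H}E=0$ since $s$ is arbitrary. Choose any $\varepsilon\in(0,1)$ (e.g.\ $\varepsilon=1/2$) once and for all. By Theorem \ref{th:BV2-general}, for every $(\lambda,\tau)\in E$ and every $N\ge 1$, there exist $n\ge N$ and $(\eta,\sigma)\in(0,1)\times\R$ such that $\|(\lambda,\tau)-(\eta,\sigma)\|_{\infty}\le \exp(-n^{1/\varepsilon})$, where $\eta$ is a root of some nonzero $P\in\mathcal{P}_{2L^{2}n}^{(2n)}$ and $\sigma=P_{1}(\eta)/P_{2}(\eta)$ for some $P_{1},P_{2}\in\mathcal{P}_{L}^{(n)}$.

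Next, I will bound the cardinality of the set $\mathcal{S}_{n}$ of candidate pairs $(\eta,\sigma)$ arising for a fixed $n$. The number of polynomials in $\mathcal{P}_{2L^{2}n}^{(2n)}$ is at most $(4L^{2}n+1)^{2n}$, and each has at most $2n$ real roots, giving at most $2n(4L^{2}n+1)^{2n}$ possible values of $\eta$. For each such $\eta$, the number of ratios $P_{1}(\eta)/P_{2}(\eta)$ with $P_{1},P_{2}\in\mathcal{P}_{L}^{(n)}$ is at most $(2L+1)^{2n}$. Hence there is a constant $C_{0}=C_{0}(L)$ such that $|\mathcal{S}_{n}|\le \exp(C_{0}n\log n)$ for all $n$ large enough.

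Combining the two steps, for every $N\ge 1$ the family
\[
\bigl\{\,\overline{B}\bigl((\eta,\sigma),\exp(-n^{1/\varepsilon})\bigr):n\ge N,\ (\eta,\sigma)\in\mathcal{S}_{n}\,\bigr\}
\]
is a cover of $E$ by balls of diameter tending to $0$ as $N\to\infty$. I will estimate the associated $s$-Hausdorff premeasure by
\[
\sum_{n\ge N}|\mathcal{S}_{n}|\cdot\bigl(2\exp(-n^{1/\varepsilon})\bigr)^{s}
\le 2^{s}\sum_{n\ge N}\exp\bigl(C_{0}n\log n-sn^{1/\varepsilon}\bigr).
\]
Since $\varepsilon<1$, we have $n^{1/\varepsilon}/(n\log n)\to\infty$, so the exponent tends to $-\infty$ as $n\to\infty$ and the sum converges; moreover it tends to $0$ as $N\to\infty$. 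This shows $\mathcal{H}^{s}(E)=0$, and since $s>0$ was arbitrary, $\dim_{H}E=0$.

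There is no real obstacle here: all the hard work is done inside Theorem \ref{th:BV2-general}, which was specifically engineered to produce super-exponentially good algebraic approximations from only polynomially many candidates. The only care needed is the verification that the explicit growth rates $\exp(C_{0}n\log n)$ of the counting bound and $\exp(n^{1/\varepsilon})$ of the inverse radius make the Hausdorff sum summable, which is immediate once one selects $\varepsilon<1$.
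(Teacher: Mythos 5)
Your proof is correct and is essentially the same as the paper's: apply Theorem \ref{th:BV2-general} to cover $E$ by arbitrarily small balls centred at the algebraic pairs $(\eta,\sigma)$, count the candidates for fixed $n$ as $\exp(O(n\log n))$, and observe that this is beaten by the ball radius $\exp(-n^{1/\e})$ (the paper fixes $\e=1/2$, you note any $\e<1$ works), so the $s$-Hausdorff sum tends to $0$. The only cosmetic difference is that the paper works directly with the $\delta$-dimensional Hausdorff content $\cH^\delta_\infty$ rather than the premeasure.
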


\begin{proof}
For $n\ge1$ denote by $F^{(n)}$ the set of $(\eta,\s)\in(0,1)\times\R$ such that $\eta$ is a root of some nonzero polynomial in $\cP_{2L^{2}n}^{(2n)}$ and $\sigma=P_{1}(\eta)/P_{2}(\eta)$ for some $P_{1},P_{2}\in\cP_{L}^{(n)}$ with $P_{2}(\eta)\ne0$. We have
\begin{align}
|F^{(n)}|  \le & 2n |\cP_{2L^{2}n}^{(2n)}| \cdot |\cP_{L}^{(n)}|^2 \label{bd on card of F^n}\\
\le&2n (4L^2n+1)^{2n}(2L+1)^{2n} \le (5L^2n)^{5n}. \nonumber
\end{align}

Fix $\d>0$, and let $N\ge 1$ be large. For $x\in\R^2$ and $r>0$, write $\overline B(x,r)\subset\R^2$ for the closed ball with centre $x$ and radius $r$. From Theorem \ref{th:BV2-general},
\begin{equation}\label{cover by balls}
E \subset \cup_{n\ge N}\cup_{x\in F^{(n)}}\overline B(x,\exp(-n^2)).
\end{equation}

Write $\cH^\d_\infty$ for the $\d$-dimensional Hausdorff content. That is for $A\subset\R^2$,
\[
\cH^\d_\infty(A)=\inf\big\{\sum_{n\ge1}\mathrm{diam}(A_n)^\d:A_1,A_2,\ldots\subset\R^2\text{ and }A\subset\cup_{n\ge1}A_n\big\}.
\]
By \eqref{cover by balls} and \eqref{bd on card of F^n},
\[
\cH^\d_\infty(E)\le\sum_{n\ge N}|F^{(n)}|2^\d\exp(-\d n^2)\le \sum_{n\ge N}(5L^2n)^{5n}2^\d\exp(-\d n^2).
\]
The last expression tends to $0$ as $N$ tends to infinity, which gives $\cH^\d_\infty(E)=0$. It follows that $E$ is of Hausdorff dimension at most $\d$, which completes the proof of the corollary.
\end{proof}

\section{Entropy rates of derivatives I}\label{sc:lb on h(R,lam,M)}

In this section, we introduce certain self-affine measures whose coordinates will be defined as the derivatives of the random function $A_R$.
We will give lower bounds on the entropy rates of these measures.
We discussed informally the need for introducing these objects in Section \ref{sc:Ideas2}.
These measures are parametrized by a function $R\in\cR_L\cap\Q[[X]]$, a number $\l\in(0,1)$ and an integer $K$.
Our aim is to show that given $(\l_0,\tau_0)\in(0,1)\times \R$, the entropy rate of the self-affine measures will
be bounded below by $\min\{\log \l_0^{-1},h(\l_0,\tau_0)\}-\e$ for an arbitrarily small $\e$, provided $\l$ is
sufficiently close to $\l_0$, $R(\l)$ is sufficiently close to $\tau_0$, and $K$ is sufficiently large.
This result will play a role in the proof of Theorem \ref{th:EO-general} similar to the role of Corollary \ref{cr:no-bad-curves}
in the proof of Theorem \ref{th:BV2-general}.

We will achieve our above stated aim in the next section.
In this one, we give some estimates, which will be useful
when $R$ has no singularities near $\l_0$.
The complementary situation will be addressed in the next section.

We begin by discussing the definition of the self-affine measures in question.
Recall that given $R\in\cR_L\cap\Q[[X]]$ and $n\ge0$ we set
\[
A_{R}^{(n)}=\sum_{k=0}^{n-1}T_{\xi_{k}}(1,R(X))X^k \qquad\text{ and }\qquad A_{R}=\sum_{k=0}^{\infty}T_{\xi_{k}}(1,R(X))X^k,
\]
where $\xi_k$ are independent random variables with $\P\{\xi_k=j\}=p_j$ for $1\le j\le m$.
Furthermore, given $\l\in(0,1)$ that is not a pole of $R$ and $K\in\Z_{\ge 1}$, we define the $\R^K$-valued
random vector
\[
B_{R,\l,K}^{(n)}=\Big(A_{R}^{(n)}(\l),\frac{d}{dX}A_{R}^{(n)}(\l),\ldots,\frac{d^{K-1}}{dX^{K-1}}A_{R}^{(n)}(\l)\Big).
\]
We write $\nu_{R,\l,K}^{(n)}$ for the law of this random vector.
We note that these measures converge weakly to a self-affine measure $\nu_{R,\l,K}$ as $n\to\infty$.

Indeed, to see this, we first write
\[
A_{R}^{(n)}=X\wt A_{R}^{(n-1)}+T_{\xi_{0}}(1,R(X)),
\]
where
\[
\wt A_{R}^{(n-1)} = \sum_{k=1}^{n-1}T_{\xi_{k}}(1,R(X))X^{k-1}.
\]
Note that $\wt A_{R}^{(n-1)}$ has the same distribution as $A_{R}^{(n-1)}$, and it is independent of $\xi_0$.
Taking derivatives of this identity, we get
\begin{align*}
\frac{d}{dX}A_{R}^{(n)}=&X\frac{d}{dX}\wt A_{R}^{(n-1)} + \wt A_{R}^{(n-1)} + T_{\xi_{0}}(0,\frac{d}{dX} R),\\
\frac{d^2}{dX^2}A_{R}^{(n)}=&X\frac{d^2}{dX^2}\wt A_{R}^{(n-1)} + 2\frac{d}{dX}\wt A_{R}^{(n-1)} + T_{\xi_{0}}(0,\frac{d^2}{dX^2}R),\\
\vdots&\\
\frac{d^{K-1}A_{R}^{(n)}}{dX^{K-1}}=&X\frac{d^{K-1}\wt A_{R}^{(n-1)}}{dX^{K-1}} + (K-1)\frac{d^{K-2}\wt A_{R}^{(n-1)}}{dX^{K-2}} + T_{\xi_{0}}(0,\frac{d^{K-1}R}{dX^{K-1}}).
\end{align*}
Using these identities we can write
\begin{equation}\label{rec iden B}
B_{R,\l,K}^{(n)}=\Theta(\l,K) \wt B_{R,\l,K}^{(n-1)}+v_{\xi_0}(R,\l,K),
\end{equation}
where
\[
\wt B_{R,\l,K}^{(n-1)} = (\wt A_{R}^{(n-1)}(\l),\frac{d}{dX} \wt A_{R}^{(n-1)}(\l),\ldots,\frac{d^{K-1}}{dX^{K-1}}\wt A_{R}^{(n-1)}(\l)),
\]
\[
\Theta(\l,K)
=\left(
\begin{array}{ccccc}
\l &0&0&\cdots&0\\
1&\l&0&\cdots&0\\
0&2&\l&\cdots&0\\
\vdots&\vdots&\vdots&\ddots&\vdots\\
0&0&\cdots&K-1&\l
\end{array}
\right),
\]
and
\[
v_j(R,\l,K)=(T_{j}(1,R(\l)),T_{j}(0,\frac{d}{dX}R(\l)),\cdots,T_{j}(0,\frac{d^{K-1}}{dX^{K-1}}R(\l)))^T.
\]
Note that $\wt B_{R,\l,K}^{(n-1)}$ has the same distribution as $B_{R,\l,K}^{(n-1)}$, and it is independent of $\xi_0$.
We see therefore that $\nu_{R,\l,K}$ is the self-affine measure associated to the IFS
\begin{equation}\label{SA IFS}
\{x\mapsto \Theta(\l,K)x+v_j(R,\l,K):j=1,\ldots,m\}
\end{equation}
and the probability vector $(p_1,\ldots,p_m)$.

We define
\[
h(R,\l,K)=\lim_{n\to\infty}\frac{H(B_{R,\l,K}^{(n)})}{n},
\]
where the limit exists by subadditivity, which can be seen by iterating \eqref{rec iden B}.

Given $R\in\Q[[X]]$ recall the notation $h(R)$ from Section \ref{sc:ssm-Cx}. The purpose of this section is to prove the following result.

\begin{prp}\label{pr:ent-bouquet}
Let $R_0\in\cR_L\cap\Q[[X]]$.
Then for all $\e>0$, there is a number $K=K(R_0,\e)>0$ such that the following holds.
Let $R\in\cR_L\cap\Q[[x]]$ such that $|R-R_0|\le 2^{-K}$, and let $\l\in(\e,1-\e)$ which is not a pole of $R$.
Then
\[
h(R,\l,K)\ge h(R_0)-\e.
\]
\end{prp}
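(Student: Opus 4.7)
The plan hinges on the identification $\R^K\cong \R[X]/(X-\l)^K$ given by the basis dual to $\{f\mapsto f^{(j)}(\l)\}_{j=0}^{K-1}$. Under this identification, multiplication by $X$ is represented by exactly the matrix $\Theta(\l,K)$, the translations $v_j(R,\l,K)$ correspond to the classes of $T_j(1,R(X))$ modulo $(X-\l)^K$, and the random vector $B_{R,\l,K}^{(n)}$ is precisely the class of $A_R^{(n)}$ in $\R[X]/(X-\l)^K$. In particular
\[
H(B_{R,\l,K}^{(n)}) = H\bigl(A_R^{(n)}\bmod (X-\l)^K\bigr),
\]
so that $h(R,\l,K)$ is the entropy rate of this quotient process.

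The first ingredient is Corollary \ref{cr:h-semi-cont}: choose $K_0=K_0(R_0,\e)$ so that $|R-R_0|\le 2^{-K_0}$ forces $h(R)\ge h(R_0)-\e/2$, and require $K\ge K_0$ throughout. Writing
\[
H(A_R^{(n)}) \;=\; H\bigl(A_R^{(n)}\bmod (X-\l)^K\bigr) \;+\; H\bigl(A_R^{(n)}\,\bigm|\,A_R^{(n)}\bmod (X-\l)^K\bigr),
\]
dividing by $n$ and passing to the limit, the proposition reduces to proving that the normalized conditional entropy on the right is at most $\e/2$ in the limit, uniformly in the admissible $R$ and $\l$.

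The conditional entropy is controlled by the log of a typical fiber size. Two realizations $\omega,\omega'\in\Omega$ give the same class modulo $(X-\l)^K$ iff the difference $Q=P_1Y_1+P_2Y_2\in\cQ^{(n)}$, with $P_1,P_2\in\cP_L^{(n)}$, satisfies the $K$ linear conditions $(P_1+P_2R)^{(j)}(\l)=0$ for $j=0,\ldots,K-1$. The goal is therefore to bound the number of such pairs $(P_1,P_2)$ by something which grows at rate at most $\e/2$ in $n$, uniformly over $R$ close to $R_0$ and $\l\in(\e,1-\e)$ avoiding poles of $R$.

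The hard part is precisely this uniform sub-exponential count. A naive geometry-of-numbers count of integer pairs in the codimension-$K$ kernel of the constraint map yields only a per-step loss of order $2\log L$, which does not improve as $K$ grows and is far too weak. I expect the argument to combine (i) the restriction to the typical support of $A_R^{(n)}$, whose size is $e^{nh(R)}$ rather than the full $m^{2n}$, saving a factor $H(p)-h(R)+o(1)$; (ii) the Diophantine inputs of Section \ref{sc:aux res num} (specifically Lemmata \ref{lem:lb on val of poly} and \ref{lem:lb on value of poly at root}), which rule out accidental high-order vanishing once the coefficients of $(P_1,P_2)$ have moderate size; and (iii) lower semicontinuity of the dimension of self-affine measures (Lemma \ref{lm:sa-semi-cont}) to transfer estimates from the reference parameter $R=R_0$ to nearby $R$. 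The main conceptual obstruction is that closeness of $R$ to $R_0$ in the $|\cdot|$ metric does \emph{not} imply closeness of the translations $v_j(R,\l,K)$, since these depend on the Taylor expansion of $R$ at $\l$ rather than at $0$. Handling this discrepancy --- together with making the estimate uniform as $\l$ approaches a pole of $R$ --- is precisely the role of the next section and of Turán's theory of power sums referenced in the outline.
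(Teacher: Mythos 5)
Your identification of $B_{R,\l,K}^{(n)}$ with the class of $A_R^{(n)}$ in $\R[X]/(X-\l)^K$ is correct, and the chain-rule reduction to bounding $H(A_R^{(n)}\mid B_{R,\l,K}^{(n)})$ is a legitimate starting point. But there is a genuine gap: you have not identified the single fact that makes the proposition go, namely that the archimedean constraint ``$P_1+P_2R$ vanishes to order $K-1$ at $\l$'' (with $P_1,P_2\in\cP_L^{(n)}$ and $\l\in(\e,1-\e)$ not a pole of $R$) forces the \emph{non-archimedean} smallness $|P_1+P_2R|\le 2^{-N}$, i.e.\ the first $N$ formal coefficients of $P_1+P_2R$ must vanish. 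This is Lemma~\ref{lm:multiplicity}, and it is an application of Jensen's formula to the integer-coefficient power series $P_1\wt P_2+P_2\wt P_1$: since the first nonzero coefficient is at least~$1$ and the coefficients grow at most linearly, a zero of multiplicity $K$ inside $\overline B(0,1-\e/2)$ is impossible once $K$ is large in terms of $N$, $\e$, $L$. The tools you list do not substitute for this. Lemmata~\ref{lem:lb on val of poly} and~\ref{lem:lb on value of poly at root} are Liouville-type bounds for \emph{algebraic} $\l$; here $\l$ is allowed to be transcendental, in which case those lemmata say nothing. Restricting to the typical support of $A_R^{(n)}$ controls the ambient entropy but not the fiber sizes of the map to $\R[X]/(X-\l)^K$, which is what you need. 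And Lemma~\ref{lm:sa-semi-cont} is about continuity of the dimension of a self-affine measure, not about entropy counting of this kind.

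Even granting Lemma~\ref{lm:multiplicity}, a single global application only shows that $B_{R,\l,K}^{(n)}$ determines the first $N$ formal coefficients of $A_R^{(n)}$, which is a bounded gain, not a rate. The paper therefore works block by block: conditioning on the cylinder $\s$-algebra $\cD_n$, it introduces a relation $\sim_{n+N}$ and a partition $\cF_D$ of each $D\in\cD_n$ (Lemma~\ref{lm:atom-partitions}), and shows via Lemma~\ref{lm:multiplicity} that each block of $N$ steps contributes at least $H(A_R;N)\ge N(h(R_0)-\e)$ to $H(B^{(\cdot)}_{R,\l,K})$ (Lemma~\ref{lm:sim-conditional}); summing over blocks gives the rate. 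Finally, the Tur\'an power-sum machinery you anticipate is not used here at all: it belongs to Propositions~\ref{pr:complex} and~\ref{pr:singular-bouquet}, where $\l$ may approach poles of $R$. In the present proposition the hypothesis $\l\in(\e,1-\e)$, not a pole of $R$, is exactly what Lemma~\ref{lm:multiplicity} needs, and the fact that $|R-R_0|\le 2^{-K}$ does not control the translations $v_j(R,\l,K)$ is immaterial, because the block argument uses only the formal coefficients of $R$ (through Corollary~\ref{cr:h-semi-cont} and the identity $H(\b_D;\cF_D)=H(A_R;N)$), never the numerical values of the derivatives at $\l$.
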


This result is based on the following intuitive idea.
The dimension of the self-affine measure $\nu_{R_n,\l_n,K_n}$ is bounded above by
\[
h(R_n,\l_n,K_n)/\log \l_n^{-1},
\]
(see Lemma \ref{lm:sa-dim}). 
If we let $R_n\to R$ in the $|\cdot|$ metric and $K_n\to\infty$, then it is reasonable to expect that
$\nu_{R_n,\l_n,K_n}$ converges to $\mu_R$ in some sense, where $\mu_R$ is as defined in Section \ref{sc:ssm-Cx}.
Then we may expect that dimension is lower semi-continuous with respect to this convergence in a way that takes into
account the factor $1/\log\l_n^{-1}$.
This would yield $\liminf h(R_n,\l_n,K_n)\ge \dim\mu_R$, and we could conclude by Proposition \ref{pr:dim=h}.
It is not obvious how to define this convergence in a rigorous way, or indeed if it can be done at all.
Nevertheless, this intuition motivates the proof of the proposition that follows.

The proof requires some preparation.
We fix some $R\in\cR_L\cap\Q[[x]]$, $\l\in(\e,1-\e)$, which is not a pole of $R$, and $K\in\Z_{>0}$.
Recall that $\Omega=\{1,\ldots,m\}^{\Z_{\ge0}}$.
For $n\in\Z_{\ge 0}$ we denote by $\cD_n$ the partition of $\O$ into cylinder sets of generation $n$. That is,
\[
\cD_n:=\{D(j_0,\ldots,j_{n-1}):j_0,\ldots, j_{n-1}\in\{1,\ldots,m\}\},
\]
where
\[
D(j_0,\ldots,j_{n-1})=\{(\o_0,\o_1,\ldots)\in\Omega:\o_l=j_l\text{ for all $0\le l<n$}\}.
\]
We also introduce the partition $\cE_n$ that is defined such that two elements
$(\o_0,\o_1,\ldots)$ and $(\o'_0,\o'_1,\ldots)$ are in the same atom if and only if
\[
\frac{d^a}{dX^a}\sum_{k=0}^{n-1} T_{\o_k}(1,R(X))X^k\Big|_{X=\l}
=\frac{d^a}{dX^a}\sum_{k=0}^{n-1} T_{\o'_k}(1,R(X))X^k\Big|_{X=\l}
\]
for all $0\le a< K$.
Recall that $\beta$ is the Bernoulli measure on $\Omega$ corresponding to $p$.
Unwinding the definitions, we see that
\[
H(B_{R,\l,K}^{(n)})=H(\beta;\cE_n).
\]

For each $k\in\Z_{\ge 0}$, we also introduce the relation $\sim_k$ on $\Omega$.
For
\[
(\o_0,\o_1,\ldots),(\o'_0,\o'_1,\ldots)\in\Omega
\]
we write
\[
(\o_0,\o_1,\ldots)\sim_k(\o'_0,\o'_1,\ldots)
\]
if and only if there is some $n\ge k$ and
\[
(\o_0,\ldots, \o_{k-1},\wt\o_{k},\wt \o_{k+1},\ldots),(\o'_0,\ldots, \o'_{k-1},\wt\o_{k}',\wt\o_{k+1}',\ldots)\in\Omega
\]
that are in the same atom of $\cE_n$.
Note that the tails $(\wt\o_{k},\wt \o_{k+1},\ldots)$ and $(\wt\o_{k}',\wt\o_{k+1}',\ldots)$ are not required to be equal.
This relation is symmetric, but we do not claim that it is transitive, and indeed it is most likely not.
Therefore, $\sim_k$ does not necessarily induce a partition of $\Omega$.
Nevertheless, for $\omega\in\Omega$, we write
\[
[\omega]_{\sim_n}=\{\omega'\in\Omega: \omega\sim_n\omega'\}.
\]
It is clear that
\[
H(\beta;\cE_n)\ge \int -\log\beta([\omega]_{\sim_n}) d\beta(\omega).
\]
Given a $\Q[[X]]$-valued random element $A$ and $n\ge1$, recall the notation $H(A;n)$ from Section \ref{sc:ssm-Cx}.

The next lemma is a key step towards the proof of Proposition \ref{pr:ent-bouquet}.

\begin{lem}\label{lm:sim-conditional}
Let $\e>0$, $R\in\cR_L\cap\Q[[X]]$, $\l\in(\e,1-\e)$ and $K,N\in\Z_{>0}$.
Assume $\l$ is not a pole of $R$, and that $K$ is sufficiently large in a manner depending only on $\e$, $N$ and $L$.
Let $n\ge 0$ and let $D$ be an atom of $\cD_n$.
Then
\[
\int_{D} -\log\frac{\beta([\omega]_{\sim_{n+N}})}{\beta([\omega]_{\sim_{n}})}d\beta(\omega)\ge\beta(D) H(A_{R};N).
\]
\end{lem}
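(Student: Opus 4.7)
The plan is to introduce the auxiliary map $\Psi\colon\{1,\ldots,m\}^{N}\to\Q[X]/(X^{N})$ defined by $\Psi(a):=\sum_{k=0}^{N-1}T_{a_{k}}(1,R(X))X^{k}\bmod X^{N}$, whose push-forward satisfies $H(\Psi_{*}p^{N})=H(A_{R};N)$ by definition. Set $\pi(\omega):=(\omega_{n},\ldots,\omega_{n+N-1})$. First I would iterate the recursion \eqref{rec iden B} to obtain the closed form $B^{(m)}_{R,\l,K}(\omega)=\sum_{k=0}^{m-1}\Theta(\l,K)^{k}v_{\omega_{k}}(R,\l,K)$ and exploit invertibility of $\Theta(\l,K)$ (as $\l\ne 0$) to cancel the common prefix contribution. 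This shows that $[\omega]_{\sim_{n}}$ is constant on $D$ (equal to some set $S_{n}\supseteq D$), and that for $\omega,\omega'\in D$ the relation $\omega\sim_{n+N}\omega'$ reduces to $\pi(\omega)\sim_{N}\pi(\omega')$. More generally, invoking a $\sim_{n}$-witness $Y_{1}+RY_{2}$ for a pair of atoms $D,D'\subseteq S_{n}$ reduces $\omega\sim_{n+N}\omega'$ (with $\omega\in D,\omega'\in D'$) to the existence of a valid tail $U_{1}+RU_{2}$ with
\[
\bigl(A_{R}^{(N)}(\pi(\omega))-A_{R}^{(N)}(\pi(\omega'))\bigr)-(Y_{1}+RY_{2})+X^{N}(U_{1}+RU_{2})\in(X-\l)^{K}\Q[[X]].
\]

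The crucial technical step, and the only place the hypothesis on $K$ enters, is the following rigidity claim: for $K$ large enough in terms of $\e,N,L$, any $W_{1}+RW_{2}\in(X-\l)^{K}\Q[[X]]$ with $W_{1},W_{2}\in\Z[X]$ having coefficients bounded by a constant depending only on $L$ must vanish identically. Writing $R=P/Q$ with $P,Q\in\cP_{L}$ and $Q(\l)\ne 0$ (legitimate because $\l$ is not a pole of $R$), this reduces to bounding the multiplicity of the root $\l\in(\e,1-\e)$ of the power series $H:=W_{1}Q+PW_{2}$, which lies in $\Z[[X]]$ with coefficients controlled in terms of $L$; one then controls the multiplicity at $\l$ uniformly via a Jensen-type / finite-section argument in the spirit of Lemma \ref{lem:bound on num of roots}. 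Applied to the identity above, this rigidity yields that the shift $y(D,D'):=(Y_{1}+RY_{2})\bmod X^{N}$ is uniquely determined by $(D,D')$ (independent of the choice of witness) and that $\omega\sim_{n+N}\omega'$ (for $\omega\in D,\omega'\in D'\subseteq S_{n}$) is equivalent to $\Psi(\pi(\omega'))=\Psi(\pi(\omega))+y(D,D')$.

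The lemma then follows by a short application of Gibbs' inequality. Fix $\omega_{0}\in D$ and define the $\Q[X]/(X^{N})$-valued random variable $\tilde T(\omega):=\Psi(\pi(\omega))-y(D,D_{\omega})$ on $S_{n}$, where $D_{\omega}\in\cD_{n}$ is the atom containing $\omega$. The characterisation above gives $[\omega_{0}]_{\sim_{n+N}}=\tilde T^{-1}(\Psi(\pi(\omega_{0})))\cap S_{n}$, hence
\[
\frac{\beta([\omega_{0}]_{\sim_{n+N}})}{\beta([\omega_{0}]_{\sim_{n}})}\le Q_{\tilde T}(\Psi(\pi(\omega_{0}))),
\]
where $Q_{\tilde T}$ denotes the law of $\tilde T$ under $\beta|_{S_{n}}/\beta(S_{n})$. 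Conditionally on each $D'\subseteq S_{n}$ the law of $\tilde T$ is a translate of $\Psi_{*}p^{N}$; moreover for $\omega_{0}$ drawn from $\beta|_{D}/\beta(D)$ the law of $\Psi(\pi(\omega_{0}))$ is exactly $P_{D}:=\Psi_{*}p^{N}$. Gibbs' inequality then gives
\[
\int_{D}-\log Q_{\tilde T}(\Psi(\pi(\omega_{0})))\,d\beta(\omega_{0})=\beta(D)\sum_{t}P_{D}(t)\bigl(-\log Q_{\tilde T}(t)\bigr)\ge\beta(D)H(P_{D})=\beta(D)H(A_{R};N),
\]
and combining with the displayed measure bound proves the lemma. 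The main obstacle is the rigidity claim: the degrees of $W_{1},W_{2}$ are not a priori controlled (they arise from $\sim_{n}$-witnesses of arbitrary depth), so the multiplicity bound for $H$ at the interior point $\l\in(\e,1-\e)$ must be made uniform in these degrees, which is delicate and relies essentially on $\l$ being bounded away from both $0$ and $1$.
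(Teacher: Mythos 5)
Your proof takes the same route as the paper's. The fibre partition of each $\cD_n$-atom by $\Psi\circ\pi$ is precisely the partition $\cF_D$ of the paper's Lemma \ref{lm:atom-partitions}, the reduction to a rigidity statement about power series vanishing to high order at $\l$ is the content of Lemma \ref{lm:multiplicity}, and your Gibbs' inequality step is what Lemma \ref{lm:min-entropy} encapsulates. (One small slip: you assert a two-sided characterisation of $\sim_{n+N}$ via $\Psi$ and the shifts $y(D,D')$, but only the forward implication holds and only the forward implication is used — it gives the containment $[\omega_0]_{\sim_{n+N}}\subset\tilde T^{-1}(\Psi(\pi(\omega_0)))\cap S_n$, which is all the measure estimate needs.)

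The genuine gap is in the rigidity claim, exactly where you yourself flag trouble. As you state it — that $W_1+RW_2\in(X-\l)^K\Q[[X]]$ with $W_1,W_2$ having coefficients bounded only in terms of $L$ forces $W_1+RW_2$ to \emph{vanish identically} — the claim is both stronger than what is needed and not what the Jensen argument can yield. The $W_1,W_2$ arising from $\sim_n$-witnesses of unbounded depth are in effect power series; writing $R=P/Q$ with $P,Q\in\cP_L$ and $H=W_1Q+W_2P\in\Z[[X]]$, the coefficients of $H$ grow linearly with the index, so the Jensen bound on the order of vanishing of $H$ at $\l\in(\e,1-\e)$ \emph{degrades} as the index of the first nonzero coefficient of $H$ grows — it cannot be made uniform, and for a fixed $K$ one cannot force $H=0$. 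This is why the hypothesis that $K$ may depend on $N$ must be used in the conclusion, which your ``vanish identically'' statement does not do. The correct and sufficient version — the paper's Lemma \ref{lm:multiplicity} — concludes only that $|W_1+RW_2|\le 2^{-N}$, i.e.\ the first $N$ coefficients vanish. That weaker conclusion is all you need: applied to the difference of two $\sim_n$-witnesses it shows $y(D,D')$ is well-defined modulo $X^N$, and applied to your displayed relation it shows $\omega_0\sim_{n+N}\omega'$ forces $\Psi(\pi(\omega_0))-\Psi(\pi(\omega'))=y(D,D')$, which is the containment your Gibbs step uses. With the rigidity claim corrected to the first-$N$-coefficients form, your argument is the paper's proof.
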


The proof of this lemma depends on a series of lemmata, which we give now.
Given a Borel subset $E\subset\O$ with $\b(E)>0$, we write $\b_E$ for the conditioning of $\b$ on $E$. That is, $\b_E(F)=\b(E\cap F)/\b(E)$ for every $F\subset\O$ Borel.

\begin{lem}\label{lm:atom-partitions}
Let $\e>0$, $R\in\cR_L\cap\Q[[X]]$, $\l\in(\e,1-\e)$ and $K,N\in\Z_{>0}$.
Assume $\l$ is not a pole of $R$, and that $K$ is sufficiently large in a manner depending only on $\e$, $N$ and $L$.
Let $n\ge 0$,
then each atom $D$ of $\cD_n$ has a finite partition $\cF_D$ that depends only on $n$, $R$ and $N$, and the following hold.
\begin{enumerate}
\item For each $D$, we have $H(\beta_{D};\cF_D)=H(A_{R};N)$.
\item For each pair $D_1$, $D_2$, there is bijection $\Phi:\cF_{D_1}\to\cF_{D_2}$ such that
$\omega\in D_1$, $\omega'\in D_2$ and $\omega\sim_{n+N}\omega'$ implies
$\Phi([\omega]_{\cF_{D_1}})=[\omega']_{\cF_{D_2}}$.
\end{enumerate}
\end{lem}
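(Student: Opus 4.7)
The plan is to take $\cF_D$ to be the partition of $D$ determined by the map $\omega\mapsto(\alpha_n(\omega),\ldots,\alpha_{n+N-1}(\omega))$, where $\alpha_j(\omega)$ denotes the $j$-th Taylor coefficient at $X=0$ of the random power series $A_R=\sum_{l\ge0}T_{\omega_l}(1,R(X))X^l$. This manifestly depends only on $n$, $R$, $N$. Unwinding the definition gives $\alpha_j=a_{\omega_j}+\sum_{l=0}^{j}b_{\omega_l}r_{j-l}$, where $(r_l)_{l\ge 0}$ are the Taylor coefficients of $R$. On $D$, where $\omega_0,\ldots,\omega_{n-1}$ are fixed, one has the decomposition $\alpha_{n+k}=c_k(D)+\tilde\alpha_k$ with $c_k(D)$ a constant determined by $D$ and $R$, and with $\tilde\alpha_k:=a_{\omega_{n+k}}+\sum_{l=0}^{k}b_{\omega_{n+l}}r_{k-l}$ distributed, under the iid law on $\omega_n,\omega_{n+1},\ldots$, exactly as the $k$-th Taylor coefficient of a fresh copy of $A_R$. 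Since translation does not affect entropy, property~(1) follows: $H(\b_D;\cF_D)=H(\tilde\alpha_0,\ldots,\tilde\alpha_{N-1})=H(A_R;N)$.

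For property~(2), I would first establish the following key claim: if $K$ is chosen sufficiently large depending only on $\e$, $N$, $L$, then $\omega\in D_1$, $\omega'\in D_2$, $\omega\sim_{n+N}\omega'$ implies $\alpha_{n+k}(\omega)=\alpha_{n+k}(\omega')$ for every $k=0,\ldots,N-1$. Granted this, $\Phi:\cF_{D_1}\to\cF_{D_2}$ is defined by matching any two atoms that carry the same $(\alpha_n,\ldots,\alpha_{n+N-1})$-label (by the claim every $\sim_{n+N}$-linked pair shares its label) and then extending arbitrarily to a bijection on the remaining atoms; such an extension exists because both partitions have the same cardinality, namely $|\cV|$, where $\cV$ is the image of the map $(\omega_n,\ldots,\omega_{n+N-1})\mapsto(\tilde\alpha_0,\ldots,\tilde\alpha_{N-1})$. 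The required compatibility is then built in.

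To prove the key claim, unwinding $\sim_{n+N}$ produces polynomial tails $F_1,F_2\in\Z[X]$ with integer coefficients in $[-L,L]$ such that $W(X,1,R(X)):=W_1(X)+W_2(X)R(X)$ has a zero of order at least $K$ at $\l$, where $W_1,W_2\in\Z[X]$ are the explicit polynomials encoding the $[0,n)$-coordinate differences (from the atoms $D_1,D_2$), the $[n,n+N)$-coordinate differences (from $\omega,\omega'$), and the tails $F_i$. Writing $R=P_R/Q_R$ with $P_R,Q_R\in\cP_L$ normalised so that $Q_R(0)\ne 0$ (possible since $R\in\Q[[X]]$) and using $Q_R(\l)\ne 0$, the condition is equivalent to $V:=W_1Q_R+W_2P_R\in\Z[[X]]$ having a zero of order at least $K$ at $\l$. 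A convolution estimate yields coefficient bounds on $V$, and Jensen's formula on the disk of radius $(1+\l)/2$ then bounds the order of vanishing at $\l$ of any nonzero such $V$ by an explicit quantity depending only on $\e$, $N$, $L$. For $K$ above this threshold, $V=0$ as a power series, so $W(X,1,R(X))=V/Q_R$ also vanishes identically; comparing Taylor coefficients at positions $n,\ldots,n+N-1$ with the explicit form of $W$ gives $\alpha_{n+k}(\omega)=\alpha_{n+k}(\omega')$ for $k=0,\ldots,N-1$, as desired.

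The main obstacle will be producing the Jensen-type bound on the order of vanishing at $\l$ uniformly in $n$. The naive sup-norm of $V$ on a disk of radius $(1+\l)/2$ grows with the degrees of $W_1,W_2$, and hence with the tail length $n'-n-N$, so a direct application does not yield a threshold depending only on $\e,N,L$. This must be handled by exploiting the specific tripartite structure of the $W_i$---in particular that each block (of length $n$, $N$, or the tail) contains only differences of linear forms drawn from the same finite alphabet---to bound $V$ uniformly on the relevant annulus, or equivalently to isolate an $(\e,N,L)$-dependent portion of the vanishing condition that is immune to the $n$-dependent growth.
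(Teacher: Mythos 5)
Your partition $\cF_D$ coincides with the paper's (the paper truncates only the partial sum over indices $j\ge n$, but on a fixed $\cD_n$-atom this differs from $(\alpha_n,\ldots,\alpha_{n+N-1})$ only by a constant, so the fibres agree), and your derivation of property (1) is correct.

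The serious problem is your key claim that $\omega\sim_{n+N}\omega'$ with $\omega\in D_1$, $\omega'\in D_2$ forces $\alpha_{n+k}(\omega)=\alpha_{n+k}(\omega')$, with $K$ depending only on $\e,N,L$. Your argument feeds the \emph{single} difference $G=\sum_j(T_{\omega_j}-T_{\omega'_j})(1,R(X))X^j$ (equivalently, $V=W_1Q_R+W_2P_R$) into a Jensen bound. But since $D_1\neq D_2$, the coefficients of $W_1,W_2$ at positions $<n$ are in general nonzero, so the first nonzero coefficient of $V$ sits at some index $m$ which can be anywhere in $[0,n+N)$. The multiplicity bound one extracts from Jensen's formula (as in the proof of Lemma \ref{lm:multiplicity}: coefficients of $X^{-m}V$ bounded by $2L^2(j+m+1)$, then rescale and apply Jensen) grows like $\log m$, hence like $\log n$. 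A fixed $K=K(\e,N,L)$ cannot force $m\ge n+N$ once $n$ is large. You correctly sense an $n$-dependence obstruction at the end, but you misattribute it to the tail length/degree of $W_1,W_2$ (that growth is harmless: it is polynomial and is absorbed by the geometric decay after the rescaling $X\mapsto(1-\e/2)X$). The genuine obstacle is the position of the first nonzero coefficient of $V$, and there is no way to tame it with a single difference.

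The paper sidesteps this entirely by proving a weaker, purely relational statement: if $\omega,\wt\omega\in D_1$ lie in the same $\cF_{D_1}$-atom, $\omega\sim_{n+N}\omega'$ and $\wt\omega\sim_{n+N}\wt\omega'$ with $\omega',\wt\omega'\in D_2$, then $\omega',\wt\omega'$ lie in the same $\cF_{D_2}$-atom. The crucial device is the \emph{double} difference $F=G(\omega,\omega')-G(\wt\omega,\wt\omega')$: since $\omega,\wt\omega$ agree on $[0,n)$ and $\omega',\wt\omega'$ agree on $[0,n)$, \emph{all coefficients of $F$ at positions $<n$ cancel}. So $X^{-n}F\in\cP_{2L}$ is a bona fide power series with coefficient bounds independent of $n$, and Lemma \ref{lm:multiplicity} applies with the relevant index of first nonvanishing already $<N$ (else there is nothing to prove), giving a threshold $K(\e,N,L)$ as required. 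The bijection $\Phi$ is then built from the $\sim_{n+N}$-links directly, not from absolute label matching. This double-difference cancellation is precisely the idea your proposal lacks, and without it the argument does not close.
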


In the next lemma, recall the absolute value $|\cdot|$ on $\Q[[X]]$ defined in Section \ref{sc:ssm-Cx}.

\begin{lem}\label{lm:multiplicity}
Let $K,N,l\in\Z_{>0}$ and $\e>0$.
Assume $K$ is sufficiently large in a manner depending only on $\e$, $N$ and $l$.
Let $\l\in(\e,1-\e)$, $R\in\cR_l\cap\Q[[X]]$ and $P_1,P_2\in\cP_l$, be such that
$\l$ is not a pole of $R$, and
\[
\frac{d^a}{dX^a}(P_1(X)+P_2(X)R(X))\Big|_{X=\l}=0
\]
for $0\le a<K-1$.
Then $|P_1+P_2R|\le 2^{-N}$.
\end{lem}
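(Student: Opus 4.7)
The plan is to argue by contradiction via Jensen's formula applied to a suitable analytic function on $B(0,1)$. Since $R\in\cR_l\cap\Q[[X]]$, we may choose $A,B\in\cP_l$ with $R=A/B$ and $B(0)\neq 0$ (possibly after cancelling a common power of $X$), so that $|B|=1$ in the $|\cdot|$-metric. Set $G:=(P_1+P_2R)B=P_1B+P_2A\in\Z[[X]]$; then $|P_1+P_2R|=|G|$, and it suffices to show $|G|\le 2^{-N}$. A direct convolution estimate gives $|G_j|\le 2(j+1)l^2$ for every coefficient, and hence $\|G\|_{\infty,r}\le 2l^2/(1-r)^2$ for any $r\in(0,1)$, where $\|\cdot\|_{\infty,r}$ denotes the supremum on $\{|z|=r\}$.

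Suppose for contradiction that $|G|>2^{-N}$, and let $m<N$ be the index of the first nonzero coefficient $g_m$ of $G$; since $g_m\in\Z\setminus\{0\}$, we have $|g_m|\ge 1$. The key geometric input is that $G$ has a zero of order at least $K-1$ at $\l$ as an analytic function on $B(0,1)$: by the Leibniz rule,
\[
G^{(a)}(\l)=\sum_{k=0}^a\binom{a}{k}(P_1+P_2R)^{(k)}(\l)\,B^{(a-k)}(\l)=0\qquad\text{for }0\le a\le K-2,
\]
using the hypothesis on $P_1+P_2R$. Factor $G(X)=X^m\wt G(X)$; since $\l\neq 0$, the function $\wt G$ is analytic on $B(0,1)$, satisfies $\wt G(0)=g_m$ with $|\wt G(0)|\ge 1$, and still vanishes to order at least $K-1$ at $\l$.

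Now choose $r:=1-\e/2$, so $\l\le 1-\e<r<1$, and apply Jensen's formula to $\wt G$ on the disk of radius $r$. Counting the multiplicity-$(K-1)$ zero at $\l$ and using $\log|\wt G(0)|\ge 0$ yields
\[
(K-1)\log\frac{r}{\l}\;\le\;\log\|\wt G\|_{\infty,r}\;\le\;\log\frac{2l^2}{(\e/2)^2}+m\log\frac{1}{1-\e/2}.
\]
Since $r/\l\ge(1-\e/2)/(1-\e)>1$ and $m<N$, the right-hand side is bounded by a quantity depending only on $N$, $l$, and $\e$, while the left-hand side grows linearly in $K$. Choosing $K$ large enough in terms of these parameters produces the desired contradiction. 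The main subtlety is the need to strip off the factor $X^m$ before invoking Jensen (in order to exploit $|g_m|\ge 1$), and the polynomial coefficient bound on $G$ is what ensures the resulting $r^{-m}$ loss is absorbed by the linear dependence of the Jensen bound on $K$.
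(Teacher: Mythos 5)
Your proof is correct and essentially the same as the paper's: clear the denominator by multiplying by $B$ (the paper's $\wt P_2$), strip off the leading power of $X$ so the constant term is a nonzero integer, and apply Jensen's formula on a circle of radius $1-\e/2$ to contradict the order-$(K-1)$ vanishing at $\l$. The only differences are cosmetic — you invoke Jensen's formula directly with explicit constants rather than routing through Lemma \ref{lem:bound on num of roots}, and you track the vanishing order as $K-1$ where the paper loosely writes $K$.
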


\begin{proof}
Let $K$ be large in a manner depending on $\e$, $N$ and $l$.
Suppose to the contrary that $|P_1+P_2R|= 2^{-n}$ for some $n<N$.
Let $R=\wt P_1/\wt P_2$ for some $\wt P_1,\wt P_2\in\cP_l$ with $|\wt P_2|=1$,
and consider
\[
F=f_0+f_1X+f_2X^2+\ldots=X^{-n}(P_1\wt P_2+P_2\wt P_1)\in\Z[[X]].
\]
A simple calculation yields
\[
|f_j|\le 2l^2(j+n+1)\le 2Nl^2(j+1)\text{ for }j\ge0.
\]
If $K$ is sufficiently large with respect to the specified parameters then by Jensen's formula, it follows that $F$ cannot have a zero of multiplicity $K$ at $\l$.
This can be seen, for example, by following the proof of Lemma \ref{lem:bound on num of roots} applied to
the function $F((1-\e/2)X)$ in the role of $P$, and noting that the proof requires the integrality of the coefficients only to show that the first
nonzero coefficient is at least $1$ in absolute value.
Then
\[
P_1+P_2R=X^nF/\wt P_2
\]
cannot have a zero of multiplicity $K$ at $\l$ either, a contradiction.
\end{proof}

\begin{proof}[Proof of Lemma \ref{lm:atom-partitions}]
For
\[
P=\a_0+\a_1X+\a_2X^2+\ldots\in\Q[[X]]
\]
and $0\le n\le k\le\infty$, we write
$P^{[n,k)}$ for the polynomial
\[
\sum_{n\le j<k}\a_jX^j.
\]

Let $D\in\cD_n$.
The atoms of the partition $\cF_D$ are defined as the fibres of the map
\[
\omega\mapsto \Big(\sum_{j=n}^{n+N-1} T_{\omega_j}(1,R(X))X^j\Big)^{[n,n+N)}.
\]
The first claim follows immediately from the definitions.

Let $D_1,D_2\in\cD_n$.
Let $\omega_1\in D_1$ and $\omega_2\in D_2$ be
such that $\omega_1\sim_{n+N}\omega_2$ if such elements exist.
Now we take $\omega_1'\in[\omega_1]_{\cF_{D_1}}$ and $\omega_2'\in D_2$ such that $\omega_1'\sim_{n+N}\omega_2'$.
We show below that $\omega_2'\in[\omega_2]_{\cF_{D_2}}$, and this implies that the bijection
satisfying the second claim can be defined.

Without loss of generality, we assume that $\omega_1$ and $\omega_2$ are in the same $\cE_M$ atom for some $M\ge n+N$
and $\omega_1'$ and $\omega_2'$ are in the same $\cE_{M'}$ atom for some $M'\ge n+N$.
Indeed, to achieve this, we only need to modify the $j$'th coordinates for $j\ge n+N$ in $\omega_1$, $\omega_2$, $\omega_1'$
and  $\omega_2'$, which does not affect $\sim_{n+N}$, $\cF_{D_1}$ and $\cF_{D_2}$.
Therefore, we have
\begin{align*}
\frac{d^a}{dX^a}\sum_{j=0}^{M-1}T_{\omega_{1,j}}(1,R(X))X^j\Big|_{X=\l}
=&\frac{d^a}{dX^a}\sum_{j=0}^{M-1}T_{\omega_{2,j}}(1,R(X))X^j\Big|_{X=\l}\\
\frac{d^a}{dX^a}\sum_{j=0}^{M'-1}T_{\omega'_{1,j}}(1,R(X))X^j\Big|_{X=\l}
=&\frac{d^a}{dX^a}\sum_{j=0}^{M'-1}T_{\omega'_{2,j}}(1,R(X))X^j\Big|_{X=\l}
\end{align*}
for $0\le a<K$.
We take the difference of these equations.
Observe that the first $n$ terms cancel on both sides, because each of the pairs $\omega_1,\omega_1'$
and $\omega_2,\omega_2'$ is contained in an atom of $\cD_n$.
We write
\[
F(X)={\sum_{j\ge n}}^* (T_{\omega_{1,j}}-T_{\omega'_{1,j}}-
T_{\omega_{2,j}}+T_{\omega'_{2,j}})(1,R(X))X^j,
\]
where $\sum^*$ means that the terms $T_{\omega_{\cdot,j}}$ are present only for $j<M$ and $T_{\omega'_{\cdot,j}}$
are present only for $j < M'$. We have
\[
\frac{d^a}{dX^a}F(X)\Big|_{X=\l}=0
\]
for $0\le a<K$.

We apply Lemma \ref{lm:multiplicity} to the function $X^{-n}F(X)$ and conclude that
$F^{[n,n+N)}=0$.
Since $\omega_1$ and $\omega_1'$ are in the same $\cF_{D_1}$ atom, we have
\[
\Big({\sum_{j\ge n}}^*T_{\omega_{1,j}}(1,R(X))X^j\Big)^{[n,n+N)}
=\Big({\sum_{j\ge n}}^*T_{\omega'_{1,j}}(1,R(X))X^j\Big)^{[n,n+N)}.
\] 
Therefore,
\[
\Big({\sum_{j\ge n}}^*T_{\omega_{2,j}}(1,R(X))X^j\Big)^{[n,n+N)}=
\Big({\sum_{j\ge n}}^*T_{\omega'_{2,j}}(1,R(X))X^j\Big)^{[n,n+N)},
\] 
which shows that $\omega_2$ and $\omega_2'$ are in the same $\cF_{D_2}$ atom, as required.
\end{proof}

\begin{lem}\label{lm:min-entropy}
Let $y_1,\ldots,y_n,z_1,\ldots,z_n\in\R_{> 0}$.
Set $Y=y_1+\ldots+y_n$ and $Z=z_1+\ldots+z_n$.
Then
\[
\sum_{j=1}^n y_j\log z_j^{-1}\ge \sum_{j=1}^{n} y_j\log (y_jZ/Y)^{-1}.
\]
\end{lem}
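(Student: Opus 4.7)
The plan is to recognize this as a form of Gibbs' inequality (equivalently, the log-sum inequality or non-negativity of Kullback--Leibler divergence), and prove it by a routine Jensen's inequality argument. First I would rewrite the claim in the equivalent form
\[
\sum_{j=1}^n y_j \log \frac{y_j Z}{Y z_j} \ge 0,
\]
obtained by subtracting the right-hand side from the left-hand side and combining the logarithms.

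Next I would normalize to probability vectors: set $p_j = y_j/Y$ and $q_j = z_j/Z$, so that $\sum_j p_j = \sum_j q_j = 1$. Dividing the displayed inequality by $Y>0$, it becomes
\[
\sum_{j=1}^n p_j \log \frac{p_j}{q_j} \ge 0,
\]
i.e.\ the KL-divergence $D(p\|q)$ is non-negative.

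Finally, I would invoke Jensen's inequality for the convex function $-\log$ (or equivalently the elementary bound $\log t \le t-1$): since $\sum_j q_j = 1$,
\[
-\sum_{j=1}^n p_j \log \frac{p_j}{q_j} = \sum_{j=1}^n p_j \log \frac{q_j}{p_j} \le \log\!\Big(\sum_{j=1}^n p_j \cdot \frac{q_j}{p_j}\Big) = \log 1 = 0,
\]
which gives the required inequality. There is no real obstacle: the entire argument is two lines once the inequality has been massaged into the standard Gibbs form, and it uses only positivity of the $y_j$'s and $z_j$'s and concavity of $\log$.
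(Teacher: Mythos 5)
Your proof is correct and uses essentially the same idea as the paper: normalize $y_j$ to a probability vector and apply Jensen's inequality to the logarithm. The only cosmetic difference is that you also normalize $z_j$ and phrase the result as non-negativity of the Kullback--Leibler divergence, whereas the paper works directly with $u_j = z_j/y_j$ and notes $\sum v_j u_j = Z/Y$; both reduce to the same single application of concavity of $\log$.
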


\begin{proof}
Set $v_j=y_j/Y$ and $u_j=z_j/y_j$.
Then $(v_1,\ldots,v_n)$ is a probability vector and $\sum v_ju_j=Z/Y$.
By Jensen's inequality, we can write
\[
\sum -\frac{y_j}{Y}\log(z_j/y_j)=\sum -v_j \log u_j\ge -\log(Z/Y).
\]
This in turn yields
\[
\sum y_j\log z_j^{-1}\ge -Y\log(Z/Y)+\sum y_j \log y_j^{-1}
=\sum y_j \log(y_jZ/Y)^{-1},
\]
as required.
\end{proof}

\begin{proof}[Proof of Lemma \ref{lm:sim-conditional}]
The relation $\sim_n$ is defined in terms of the first $n$ coordinates, hence $[\omega]_{\sim_n}$ is the same for all
$\omega \in D$ and it is the union of some $\cD_n$-atoms.
Let $\cF_{D'}$ for $D'\in\cD_n$ be as in Lemma \ref{lm:atom-partitions}.
For $\omega\in D$, we let $E_{D'}(\omega)$ be the atom of $\cF_{D'}$ that is paired with $[\omega]_{\cF_D}$
by the bijection in Lemma \ref{lm:atom-partitions}.
Furthermore, let $E(\omega)$ be the union of $E_{D'}(\omega)$ with $D'$ running over the atoms of $\cD_n$
contained in $[\omega]_{\sim_n}$.
Then
\[
[\omega]_{\sim_{n+N}}\subset E(\omega).
\]

By abuse of notation, for $F\in\cF_D$, we write $E(F)$ for $E(\omega)$ with an arbitrary $\omega\in F$
and we write $[D]_{\sim_n}$ for $[\omega]_{\sim_n}$ with an arbitrary $\omega\in D$.
We write
\begin{align*}
\int_{D}-\log\beta([\omega]_{\sim_{n+N}})d\beta(\omega)
\ge& \int_D-\log\beta(E(\omega))d\beta(\omega)\\
=&\sum_{F\in\cF_D} \beta(F)\log\beta(E(F))^{-1}.
\end{align*}
Using $\sum_{F\in\cF_D}\beta(F)=\beta(D)$, $\sum_{F\in\cF_D}\beta(E(F))=\beta([D]_{\sim_n})$
and Lemma \ref{lm:min-entropy}, we can write
\begin{align*}
\sum_{F\in\cF_D} \beta(F)\log\beta(E(F))^{-1}
\ge& \sum_{F\in\cF_D} \beta(F)\log(\beta(F)\beta([D]_{\sim_n})/\beta(D))^{-1}\\
=&\beta(D)\sum_{F\in\cF_D} \frac{\beta(F)}{\beta(D)}\log(\beta(F)/\beta(D))^{-1}\\
&-\beta(D)\log\beta([D]_{\sim_n})\\
=&\beta(D)H(\beta_D;\cF_D)-\int_D\log\beta([\omega]_{\sim_n})d\beta(\omega).
\end{align*}
Since $H(\beta_D;\cF_D)=H(A_R;N)$, this proves the lemma.
\end{proof}

\begin{proof}[Proof of Proposition \ref{pr:ent-bouquet}]
Recall the definition of $\dim\mu_{R_0}$ from Section \ref{sc:ssm-Cx}. We let $N$ be sufficiently large, depending on $\e$ and $R_0$, so that
\[
H(A_{R_0};N)\ge N(\dim\mu_{R_0}-\e)=N (h(R_0)-\e).
\]
Here we used Proposition \ref{pr:dim=h}.
We let $K$ be sufficiently large depending on $\e$, $N$ and $L$, as required by Lemma \ref{lm:sim-conditional}.
Applying that lemma for any $R\in\cR_L\cap\Q[[X]]$ with $|R-R_0|\le 2^{-N}$ and $n=kN$, we get
\[
\int_{\Omega}-\log\frac{\beta([\omega]_{\sim_{(k+1)N}})}{\beta([\omega]_{\sim_{kN}})}d\beta(\omega)\ge H(A_R;N)\ge N(h(R_0)-\e).
\]

Summing this inequality for $k=0,\ldots,M-1$, we get
\[
H(B_{R,\l,K}^{(MN)})=H(\beta;\cE_{MN})
\ge\int_{\Omega}-\log\beta([\omega]_{\sim_{MN}})d\beta(\omega)
\ge MN(h(R_0)-\e).
\]
Dividing both sides by $MN$ and taking the limit $M\to\infty$, we conclude the proof.
\end{proof}

\section{Entropy rates of derivatives II}\label{lb ent ord M}

In this section, we continue our study of the entropy rate of the self-affine measures $\nu_{R,\l,K}$, which we
introduced in the previous section.
We will prove the following result, which achieves the goal we set out there.

\begin{prp}\label{pr:no-bad-bouquet}
Let $(\l_0,\tau_0)\in(0,1)\times(\R\setminus \{0\})$.
Then for every $h_0<\min\{\log\l_0^{-1},h(\l_0,\tau_0)\}$, there is $K\in\Z_{>0}$ such that
$h(R,\l,K)\ge h_0$ for all $R\in\cR_L$ and $\l\in(0,1)$ such that $|R(\l)-\tau_0|,|\l-\l_0|\le K^{-1}$.
\end{prp}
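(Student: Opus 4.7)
The plan is to argue by contradiction, mirroring the structure of Proposition \ref{pr:no-bad-curves}. Supposing the conclusion fails, there exist $R_l\in\cR_L$, $\l_l\in(0,1)$ that is not a pole of $R_l$, and $K_l\to\infty$ with $\l_l\to\l_0$, $R_l(\l_l)\to\tau_0$, and $h(R_l,\l_l,K_l)\le h_0$. Since $B_{R,\l,K_1}^{(n)}$ is the projection of $B_{R,\l,K_2}^{(n)}$ onto the first $K_1$ coordinates when $K_1\le K_2$, the quantity $h(R,\l,K)$ is monotone non-decreasing in $K$, so it suffices to rule out the case $K_l=l$. Exactly as in Proposition \ref{pr:no-bad-curves}, after passing to a subsequence and possibly replacing $(\tau_0,R_l,a_j,b_j)$ by $(\tau_0^{-1},R_l^{-1},b_j,a_j)$ (legitimate because $\tau_0\ne0$, and because this transformation turns $B_{R_l,\l_l,K_l}$ into an invertible linear image of the analogous object for the swapped data), I may assume $R_l\in\cR_L\cap\Q[[X]]$.

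I will then split according to whether the $R_l$ have poles accumulating at $\l_0$. In the \emph{regular} case, a neighborhood $V$ of $\l_0$ in $\C$ contains no pole of $R_l$ along a subsequence. Lemma \ref{lm:no-singularities} produces a further subsequence along which $R_l\to R_0\in\cR_L\cap\Q[[X]]$ both in $|\cdot|$ and uniformly on a neighborhood of $\l_0$, with $R_0(\l_0)=\tau_0$. The corresponding non-degenerate curve $\g_0\in\Gamma$ passes through $(\l_0,\tau_0)$, so, by the bullet identities at the start of Section \ref{sc:repel},
\[
h(R_0)=h(\g_0)\ge h(\l_0,\tau_0)>h_0.
\]
Picking $\e>0$ with $h(R_0)-\e>h_0$, Proposition \ref{pr:ent-bouquet} supplies $K=K(R_0,\e)$; for $l$ large enough that $|R_l-R_0|\le 2^{-K}$, $\l_l\in(\e,1-\e)$ is not a pole of $R_l$, and $l\ge K$, monotonicity gives $h(R_l,\l_l,l)\ge h(R_l,\l_l,K)\ge h(R_0)-\e>h_0$, a contradiction.

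The \emph{singular} case, in which every neighborhood of $\l_0$ contains a pole of $R_l$ for arbitrarily large $l$, is the main obstacle and the genuine new content. Following the outline of Section \ref{sc:Ideas2}, I will: (i) apply an inequality from Tur\'an's theory of power sums to the partial-fraction expansion of $R_l$ near $\l_l$ to extract an integer $1\le k_l<K_l$, bounded along a subsequence, and a scale $M_l\to\infty$ with $M_l^{k_l}\asymp|R_l^{(k_l)}(\l_l)/k_l!|$, such that the lower derivatives satisfy the uniform control $|R_l^{(i)}(\l_l)/i!|=O(M_l^i)$ for $0\le i\le k_l$; (ii) set $K:=k_l+1$ and conjugate the self-affine IFS \eqref{SA IFS} by $D_l=\mathrm{diag}(1,M_l,\ldots,M_l^{K-1})$, noting that $D_l^{-1}\Theta(\l_l,K)D_l\to\l_0\cdot I_K$ while the rescaled translations $D_l^{-1}v_j(R_l,\l_l,K)$ stay bounded and, along a further subsequence, converge; (iii) conclude that $\widetilde\nu_l:=(D_l^{-1})_*\nu_{R_l,\l_l,K}$ converges weakly to a self-similar measure $\widetilde\nu$ on $\R^K$ with contraction ratio $\l_0$, whose dimension satisfies $\dim\widetilde\nu\le h_0/\log\l_0^{-1}$ because linear conjugation preserves the entropy rate and because of Lemma \ref{lm:sa-semi-cont}. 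The endgame, modelled on the final step of Lemma \ref{lm:singularities}, is to exploit the freedom to pre-compose $D_l$ with a further scalar rescaling in order to show that an affine image of $\widetilde\nu$ coincides with $\mu_{\l_0,\tau}$ for every $\tau$ in a non-trivial interval $[a,b]\subset\R$. This gives $\dim\mu_{\l_0,\tau}\le h_0/\log\l_0^{-1}$ on $[a,b]$, whereupon Lemma \ref{lm:degenerate} delivers $h(\{\l_0\}\times\R)\le h_0$, and hence $h(\l_0,\tau_0)\le h_0$, contradicting $h_0<h(\l_0,\tau_0)$.

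The hardest step will be (i): one needs Tur\'an's inequality strong enough to produce a \emph{bounded} order $k_l$ at which all smaller derivatives are simultaneously controlled by $M_l^{k_l}$, which is what keeps $K$ fixed and allows Lemma \ref{lm:sa-semi-cont} to be applied to take the weak limit. Without this boundedness the limit would live in an infinite-dimensional space, and the argument would collapse. The remaining analytic components, namely the compactness of $\cR_L\cap\Q[[X]]$ in $|\cdot|$, the convergence of the rescaled translations along a subsequence, and the realisation of $\mu_{\l_0,\tau}$ as an affine image of $\widetilde\nu$, should follow the same pattern as in Lemmata \ref{lm:no-singularities} and \ref{lm:singularities}, with the hypothesis $\tau_0\ne0$ used, as there, to rule out degeneration of the limit to a lower-dimensional object from which $\mu_{\l_0,\tau}$ cannot be recovered.
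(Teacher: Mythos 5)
Your skeleton matches the paper's, and the regular branch, the reduction to $R_l\in\Q[[X]]$, and the monotonicity of $h(R,\l,\cdot)$ in $K$ are all correct. But the dichotomy you split along --- do poles of $R_l$ accumulate at $\l_0$? --- is the wrong one, and it leaves a genuine gap in your singular branch. You assert in step~(i) that Tur\'an always produces a bounded $k_l$ and a scale $M_l\to\infty$ with $M_l^{k_l}\asymp|R_l^{(k_l)}(\l_l)/k_l!|$. That is false: poles of $R_l$ can approach $\l_0$ while every derivative $R_l^{(j)}(\l_l)$ remains bounded, because nearby zeros of $R_l$ cancel the poles. In that scenario there is no blowing-up $M_l$, steps (ii)--(iii) have nothing to rescale, and the construction of $\widetilde\nu$ never gets started. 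What the power-sum estimate (Lemma~\ref{lm:power-sums}, together with Jensen's formula to bound the number of nearby zeros and poles by $O_{L,\l_0}(1)$) actually gives is an \emph{alternative}: either a derivative of bounded order blows up along a subsequence, or the multisets of zeros and poles of $R_l$ near $\l_0$ have equal cardinality and their product-ratio tends to $1$, which forces $R_0(\l_0)=\tau_0$ for the $|\cdot|$-limit $R_0$ even though Lemma~\ref{lm:no-singularities} is inapplicable (there is no uniform convergence). That is precisely the content of the paper's Proposition~\ref{pr:complex}, and you need both of its branches: the first feeds Proposition~\ref{pr:ent-bouquet} exactly as in your regular case, the second feeds your rescaling. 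Note also that $\tau_0\neq0$ is used inside that argument to pass to logarithmic derivatives of $R_l$, not, as you suggest, to prevent degeneration of the limit IFS.

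Two smaller adjustments to your sketch. First, once $k_l$ is chosen as the \emph{smallest} blowup order along a subsequence, the lower derivatives are uniformly bounded, which is stronger than the $O(M_l^i)$ control you ask for and is what forces the intermediate rescaled translations to vanish in the limit; your weaker bound does not guarantee that. Second, the limit self-similar IFS on $\R^K$ then has translations $(T_j(1,\tau_0),0,\ldots,0,T_j(0,1))$, and the single linear map $\pi(x)=x_1+(\tau-\tau_0)x_K$ pushes the limit measure to $\mu_{\l_0,\tau}$ for \emph{every} $\tau\in\R$; so Lemma~\ref{lm:degenerate} applies directly and the interval detour through Lemma~\ref{lm:singularities} is unnecessary here. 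This is exactly Proposition~\ref{pr:singular-bouquet}.
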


The proof relies on the following two propositions, which will be proven in Sections \ref{proof of first prop} and \ref{proof of second prop} below.

\begin{prp}\label{pr:complex}
Let $(\l_0,\tau_0)\in(0,1)\times(\R\setminus\{0\})$.
Then there is some $K\in\Z_{>0}$, depending only on $\l_0$ and $L$, such that the following holds.
Let $R_1,R_2,\ldots\in\cR_L\cap\Q[[X]]$ and let $\l_1,\l_2,\ldots\in(0,1)$ be such that $\lim \l_n=\l_0$ and $\lim R_n(\l_n)=\tau_0$.
Then at least one of the following two statements holds.
\begin{itemize}
\item There is a subsequence of $R_n$ that converges to some $R_0\in\cR_L\cap\Q[[X]]$ in the $|\cdot|$
metric and $R_0(\l_0)=\tau_0$, or
\item we have $\limsup_{n\to\infty} |d^j/dX^jR_n(\l_n)|=\infty$ for some $j\le K$.
\end{itemize}
\end{prp}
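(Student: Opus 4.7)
The plan is to prove the contrapositive: I assume the second alternative fails, and derive the first. Since $R_n\in\cR_L\cap\Q[[X]]$, normalize $R_n=P_{1,n}/P_{2,n}$ with $P_{1,n},P_{2,n}\in\cP_L$ and $|P_{2,n}|=1$, i.e.\ $P_{2,n}(0)\neq 0$ (any common factor of $X$ can be cancelled without leaving $\cP_L$). Since $\cP_L$ is compact in $(\Q[[X]],|\cdot|)$, pass to a subsequence so that $P_{1,n}\to\wt P_1$ and $P_{2,n}\to\wt P_2$ with $|\wt P_2|=1$, and set $R_0:=\wt P_1/\wt P_2\in\cR_L\cap\Q[[X]]$; then $R_n\to R_0$ in $|\cdot|$. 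Uniform boundedness of coefficients upgrades coefficientwise convergence to uniform convergence of $P_{i,n}$ together with all derivatives on compact subsets of the open unit disk, so $P_{i,n}^{(a)}(\l_n)\to\wt P_i^{(a)}(\l_0)$ for every $a\ge0$.

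The first key step is to bound, uniformly over the possible limits, the multiplicity $k$ of $\l_0$ as a zero of $\wt P_2$. Using $|\wt P_2(0)|\ge 1$ (nonzero integer) and $|\wt P_2(z)|\le L/(1-|z|)$ on the open unit disk, Jensen's formula applied on $|z|\le R:=(1+\l_0)/2$ gives $k\log(R/\l_0)\le\log(L/(1-R))$, hence $k\le K(\l_0,L)$ for an explicit constant depending only on $\l_0$ and $L$. Fix this value of $K$ in the statement.

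Now suppose the second alternative fails, i.e.\ $|\tfrac{d^j}{dX^j}R_n(\l_n)|$ is bounded for every $j=0,1,\ldots,K$. After passing to a further subsequence we have $R_n^{(j)}(\l_n)\to r_j\in\R$ for each such $j$, with $r_0=\tau_0$. From the identity $P_{1,n}=R_n\cdot P_{2,n}$, Leibniz's rule at $\l_n$ followed by taking $n\to\infty$ yields
\[
\wt P_1^{(j)}(\l_0)=\sum_{l=0}^{j}\binom{j}{l}\,r_l\,\wt P_2^{(j-l)}(\l_0),\qquad j=0,1,\ldots,K.
\]

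Since $\wt P_2^{(i)}(\l_0)=0$ for $i<k$ and $\wt P_2^{(k)}(\l_0)\neq 0$, an induction on $j=0,\ldots,k-1$ forces $\wt P_1^{(j)}(\l_0)=0$ for each such $j$, so $\wt P_1$ vanishes at $\l_0$ to order at least $k$. The $j=k$ equation then collapses to $\wt P_1^{(k)}(\l_0)=r_0\,\wt P_2^{(k)}(\l_0)=\tau_0\,\wt P_2^{(k)}(\l_0)$; because $\tau_0\neq 0$, the order of vanishing of $\wt P_1$ at $\l_0$ is exactly $k$ and the L'H\^opital ratio equals $\tau_0$. Hence $R_0=\wt P_1/\wt P_2$ is analytic at $\l_0$ as a meromorphic function on the open unit disk and satisfies $R_0(\l_0)=\tau_0$, which is precisely the first alternative. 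The main potential difficulty is obtaining a bound on $k$ that is uniform over all possible subsequential limits $\wt P_2$ (whose underlying $P_{2,n}$ may have unbounded degree); this is exactly what the normalization $|\wt P_2|=1$ buys, ensuring that the Jensen estimate depends only on $\l_0$ and $L$.
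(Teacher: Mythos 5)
Your proof is correct and takes a genuinely different route from the paper's. After the same normalization ($R_n=P_{1,n}/P_{2,n}$ with $|P_{2,n}|=1$) and subsequence extraction, the paper factors out the clusters of zeros of $P_n^{(1)}$, $P_n^{(2)}$ accumulating at $\l_0$ and invokes Tur\'an's power-sum theorem (via Lemma \ref{lm:power-sums}) to control the ratio $\prod_{w\in W_n}(\l_n-w)/\prod_{u\in U_n}(\l_n-u)$; it also requires a separate case split when $P_0^{(1)}=0$. You instead work directly with the Leibniz identity $P_{1,n}^{(j)}(\l_n)=\sum_{l=0}^{j}\binom{j}{l}R_n^{(l)}(\l_n)P_{2,n}^{(j-l)}(\l_n)$, pass to the limit using the boundedness of $R_n^{(l)}(\l_n)$ for $l\le K$, and read off $\wt P_1^{(j)}(\l_0)$ from the resulting triangular system: for $j<k$ every $\wt P_2^{(j-l)}(\l_0)$ vanishes and so $\wt P_1^{(j)}(\l_0)=0$, while at $j=k$ only $l=0$ survives, giving $\wt P_1^{(k)}(\l_0)=\tau_0\wt P_2^{(k)}(\l_0)\ne0$. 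Both approaches rest on the same Jensen-formula bound, using $|\wt P_2(0)|\ge1$ (exactly what the normalization $|P_{2,n}|=1$ buys) to make $K$ depend only on $\l_0$ and $L$. Your argument is shorter and more elementary, renders Tur\'an's theorem and Lemma \ref{lm:power-sums} unnecessary for this proposition, and absorbs the degenerate case $\wt P_1=0$ automatically, since $\tau_0\ne0$ forces $\wt P_1^{(k)}(\l_0)\ne0$.
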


\begin{prp}\label{pr:singular-bouquet}
Let $(\l_0,\tau_0)\in(0,1)\times\R$.
Let $R_1,R_2,\ldots\in\cR_L\cap\Q[[X]]$ and let $\l_1,\l_2,\ldots\in(0,1)$ be such that
$\lim \l_n=\l_0$ and $\lim R_n(\l_n)=\tau_0$.
Assume that there is some $K\in\Z_{>0}$ such that
\[
|d^{K-1}/dX^{K-1}{R_n}(\l_n)|\to\infty.
\]
Then
\[
\dim\mu_{\l_0,\tau}\le\limsup_{n\to\infty}\frac{h(R_n,\l_n,K)}{\log\l_0^{-1}}
\]
for all $\tau\in\R$.
\end{prp}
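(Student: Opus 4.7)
The plan is to rescale $\nu_{R_n,\l_n,K}$ by a carefully chosen diagonal matrix $D_n$ so that, after passing to a subsequence, the rescaled measures converge (in the parameter space of Lemma \ref{lm:sa-semi-cont}) to the self-similar measure $\nu_\infty$ of a homogeneous IFS $\{x\mapsto\l_0x+u_j\}$ on $\R^K$, and then to exhibit every $\mu_{\l_0,\tau}$ as a linear projection of $\nu_\infty$.

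Set $r_n=\min_{1\le k\le K-1}|R_n^{(k)}(\l_n)|^{-1/k}$, which tends to $0$ by the hypothesis $|R_n^{(K-1)}(\l_n)|\to\infty$, and $D_n=\mathrm{diag}(1,r_n,r_n^2,\ldots,r_n^{K-1})$. Writing $\tilde\nu_n:=(D_n)_*\nu_{R_n,\l_n,K}$, a direct computation shows that $\tilde\nu_n$ is the self-affine measure for the conjugated IFS whose linear part $D_n\Theta(\l_n,K)D_n^{-1}$ has $\l_n$ on the diagonal and $jr_n$ on the $(j,j-1)$-entry (hence converges to $\l_0I$), and whose translations $D_nv_j(R_n,\l_n,K)$ have $0$-th coordinate $T_j(1,R_n(\l_n))\to T_j(1,\tau_0)$ and $k$-th coordinate $b_j r_n^kR_n^{(k)}(\l_n)$, which is bounded by our choice of $r_n$ and attains absolute value $|b_j|$ at the index $k^*$ realizing the minimum in the definition of $r_n$. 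Passing to a subsequence, one may assume $r_n^kR_n^{(k)}(\l_n)\to c_k$ for each $k$, with $|c_{k^*}|=1$. The conjugated IFSs thus converge in $\cT\times(\R^K)^m$ to the self-similar IFS $\{x\mapsto\l_0x+u_j\}$ where $u_j=(T_j(1,\tau_0),b_jc_1,\ldots,b_jc_{K-1})$; denote its self-similar measure by $\nu_\infty$. By Lemma \ref{lm:sa-semi-cont} combined with $\dim\tilde\nu_n=\dim\nu_{R_n,\l_n,K}$ (since $D_n$ is invertible) and the identity $\dim\nu_{R_n,\l_n,K}=h(R_n,\l_n,K)/\log\l_n^{-1}$ (which follows from the dimension formula in that lemma together with the standard relation $h(R_n,\l_n,K)=H(p)-H_\b(\cC|\Pi^{-1}(\cB))$ obtained by iterating \eqref{rec iden B}), one obtains $\dim\nu_\infty\le\limsup_nh(R_n,\l_n,K)/\log\l_0^{-1}$ upon refining the subsequence to realize the limsup.

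For any linear functional $\pi(x)=\sum_k\pi_kx_k$ on $\R^K$ with $\pi_0\ne0$, the identity $T_j(1,\tau)=a_j+b_j\tau$ yields $\pi(u_j)=\pi_0T_j(1,\tau')$ with $\tau'=\tau_0+\pi_0^{-1}\sum_{k\ge1}\pi_kc_k$. Since $\nu_\infty$ is self-similar with scalar contraction $\l_0$, the pushforward $\pi_*\nu_\infty$ is the image of $\mu_{\l_0,\tau'}$ under scaling by $\pi_0$. As $\pi$ varies and $c_{k^*}\ne0$, $\tau'$ ranges over all of $\R$; since projections and nontrivial scalings do not increase dimension, $\dim\mu_{\l_0,\tau}\le\dim\nu_\infty$ for every $\tau\in\R$, completing the proof.

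The main technical challenge is the calibration of the rescaling. The exponents $r_n^k$ must be chosen so that (i) the limiting matrix is scalar, rendering $\nu_\infty$ self-similar, so that every linear projection is a scaled copy of $\mu_{\l_0,\cdot}$; (ii) all translation vectors remain bounded, so that a genuine limit IFS exists; and (iii) at least one limit $c_{k^*}$ is nonzero, providing enough freedom among projection directions to realize every $\tau\in\R$. Taking $r_n$ as the minimum of $|R_n^{(k)}(\l_n)|^{-1/k}$ over $k$ secures all three requirements simultaneously.
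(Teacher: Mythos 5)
Your proof is correct and structurally parallel to the paper's, but with a genuinely different calibration of the rescaling. The paper sets $\Delta^{(n)}$ so that $\Delta^{(n)}_{1,1}=1$, $\Delta^{(n)}_{K,K}=d^{K-1}/dX^{K-1}R_n(\l_n)$, and ratios of consecutive diagonal entries blow up; this requires a preliminary reduction (stated as ``we assume as we may'') to the case where the derivatives of order below $K-1$ stay bounded, since otherwise one has no control over the intermediate coordinates of the rescaled translation vectors. Your single-parameter choice $D_n=\mathrm{diag}(1,r_n,\ldots,r_n^{K-1})$ with $r_n=\min_{1\le k\le K-1}|R_n^{(k)}(\l_n)|^{-1/k}$ sidesteps that reduction entirely: the normalization automatically bounds $r_n^kR_n^{(k)}(\l_n)$ by $1$ for every $k$, guarantees $r_n\to0$ via the hypothesis on the $(K-1)$st derivative, and pins down a coordinate $k^*$ (after passing to a subsequence along which the minimizer is constant) with $|c_{k^*}|=1$. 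Your limit translations $u_j=(T_j(1,\tau_0),b_jc_1,\ldots,b_jc_{K-1})$ are more complicated than the paper's $(T_j(1,\tau_0),0,\ldots,0,T_j(0,1))$, but the projection step adapts cleanly because the parameter $\tau'=\tau_0+\pi_0^{-1}\sum_{k\ge1}\pi_kc_k$ sweeps all of $\R$ as $\pi_{k^*}$ varies, precisely because $c_{k^*}\ne0$. What you lose is the tidy limit IFS; what you gain is a more mechanical argument that never needs to separate cases.

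One overclaim to flag: you assert the equality $\dim\nu_{R_n,\l_n,K}=h(R_n,\l_n,K)/\log\l_n^{-1}$ via the claimed identity $h(R_n,\l_n,K)=H(p)-H_\b(\cC|\Pi^{-1}(\cB))$, but that identity is not established in the paper and is false in general: $h$ is the entropy rate of the finite random walks $B_{R,\l,K}^{(n)}$, whereas $H(p)-H_\b(\cC|\Pi^{-1}(\cB))$ is the projection entropy of the limit measure, and one only has $H(p)-H_\b(\cC|\Pi^{-1}(\cB))\le h(R,\l,K)$ in general (exact overlaps at a fixed finite generation can be destroyed in the limit but not created, so the discretized entropy can only drop when you pass to the continuous scale). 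Fortunately, the inequality $\dim\nu_{R_n,\l_n,K}\le h(R_n,\l_n,K)/\log\l_n^{-1}$, which is exactly what Lemma \ref{lm:sa-dim} gives, is all your argument actually uses: lower semicontinuity of dimension (Lemma \ref{lm:sa-semi-cont}) then yields $\dim\nu_\infty\le\limsup_n h(R_n,\l_n,K)/\log\l_0^{-1}$ after refining the subsequence. So the proof survives; the equality claim should simply be replaced by the inequality and the parenthetical justification dropped.
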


\begin{proof}[Proof of Proposition \ref{pr:no-bad-bouquet}]
Suppose to the contrary that there are
$K_1,K_2,\ldots$, $R_1,R_2,\ldots{\in\cR_L}$ and $\l_1,\l_2,\ldots\in(0,1)$, such that
$\l_n\to\l_0$, $R_n(\l_n)\to\tau_0$, $K_n\to\infty$
and $h(R_n,\l_n,K_n)< h_0$ for all $n\ge1$.
As in the proof of Proposition \ref{pr:no-bad-curves}, since $\tau_0\ne0$ we may assume that $R_n\in\Q[[X]]$ for all $n\ge1$.

We apply Proposition \ref{pr:complex}.
It follows from Proposition \ref{pr:ent-bouquet} that the first alternative cannot hold, since in that case, we would have
\[
h(\l_0,\tau_0)\le h(R_0)\le h_0.
\]

We can, therefore, apply Proposition \ref{pr:singular-bouquet} and conclude that
\[
\dim\mu_{\l_0,\tau}\le\frac{h_0}{\log\l_0^{-1}}
\]
for all $\tau\in\R$. 
By Lemma \ref{lm:degenerate}, we now have $h(\{\l_0\}\times \R)\le h_0$, which contradicts $h_0<h(\l_0,\tau_0)$.
This completes the proof.
\end{proof}

\subsection{}\label{proof of first prop}

The purpose of this section is to prove Proposition \ref{pr:complex}, which relies on the following lemma.

\begin{lem}\label{lm:power-sums}
For every $\e>0$ and $M\in\Z_{>0}$, there is $\d>0$ such that the following holds.
Let $U,W\subset\C$ be finite multisets such that $|U|,|W|\le M$ and $|u|,|w|\ge 1$ for all $u\in U$ and $w\in W$.
Assume
\[
\Big|\sum_{u\in U} u^j-\sum_{w\in W} w^j\Big|\le\d
\]
for all $j=1,\ldots,2M$.
Then $|U|=|W|$ and
\[
\Big|1-\frac{\prod_{u\in U} u}{\prod_{w\in W}w}\Big|\le\e.
\]
\end{lem}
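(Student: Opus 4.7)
My plan is to combine an exact version ($\d=0$) with a compactness argument.

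For the exact version, I would consider the rational function
\[
F_U(z)=\sum_{u\in U}\frac{uz}{1-uz}=\sum_{j\ge 1}p_j^U z^j,
\]
which has poles at $\{1/u:u\in U\}$. The difference $F_U-F_W$ takes the form $N(z)/D(z)$ where $D(z)=\prod_{u\in U}(1-uz)\prod_{w\in W}(1-wz)$ satisfies $D(0)=1$ and $\deg D\le|U|+|W|\le 2M$, while $N$ is a polynomial of degree at most $|U|+|W|$. If $p_j^U=p_j^W$ for $j=1,\ldots,2M$, then the Taylor series of $F_U-F_W$ at the origin vanishes to order at least $2M+1$, so $N$ is a polynomial of degree $\le 2M$ with a zero of order $\ge 2M+1$ at $0$; hence $N\equiv 0$ and $F_U\equiv F_W$. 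Partial-fraction uniqueness then forces $U=W$ as multisets, so $|U|=|W|$ and $\prod U=\prod W$.

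For the stability statement I would argue by contradiction and compactness. Suppose there exist $\e>0$ and sequences $(U_n,W_n)$ with $|p_j^{U_n}-p_j^{W_n}|\to 0$ for $j\le 2M$, yet either $|U_n|\ne|W_n|$ or $|1-\prod U_n/\prod W_n|>\e$. After extracting a subsequence, $|U_n|$ and $|W_n|$ are constant. Let $R_n=\max\{|u|,|w|:u\in U_n,\,w\in W_n\}$. If $R_n$ stays bounded along a subsequence, then compactness in $\{z\in\C:|z|\ge 1\}$ extracts limits $U_n\to U_\infty$, $W_n\to W_\infty$; the exact version gives $U_\infty=W_\infty$, and since $|\prod W_\infty|\ge 1$, continuity forces $|U_n|=|W_n|$ eventually and $\prod U_n/\prod W_n\to 1$, contradicting the failure.

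The main obstacle is the case $R_n\to\infty$, which I plan to handle by rescaling. Setting $U_n'=U_n/R_n$ and $W_n'=W_n/R_n$ yields multisets in $\{z\in\C:|z|\le 1\}$ whose rescaled power-sum differences $|p_j^{U_n'}-p_j^{W_n'}|=|p_j^{U_n}-p_j^{W_n}|/R_n^j$ still tend to $0$. Passing to a convergent subsequence and noting that zero elements do not contribute to $p_j$ for $j\ge 1$, the exact version applied to the non-zero parts of the limits shows that the large-scale elements of $U_n$ and $W_n$ (those of modulus comparable to $R_n$) match up as multisets. For each matched pair $u_n^{(i)},w_n^{(i)}$ one has $u_n^{(i)}/w_n^{(i)}\to 1$, so the contribution of the large-scale elements to $\prod U_n/\prod W_n$ tends to $1$.

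To finish, I would induct on $|U|+|W|$, with the trivial base case $|U|+|W|\le 1$. The delicate step, which I expect to be the main technical obstacle, is to verify that after peeling off the matched large-scale pairs, the residual small-scale multisets satisfy a hypothesis of the same form at the next scale---that is, that the matching errors (which the compactness argument controls only as $o(R_n^j)$ a priori) can be absorbed so that the rescaled residual power-sum differences still tend to $0$. Once this quantitative step is settled, the inductive hypothesis closes the argument and contradicts the initial assumption.
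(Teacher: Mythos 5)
Your exact ($\d=0$) argument and the bounded ($R_n$ bounded) compactness case are both correct. The genuine gap is exactly the ``main technical obstacle'' you flag in the unbounded case, and I want to stress that it is not a routine absorption estimate but the essential content of the lemma. After rescaling by $R_n$ and passing to the limit, you learn that the rescaled large elements $u_n^{(i)}/R_n$ and $w_n^{(i)}/R_n$ converge to a common multiset, i.e.\ $u_n^{(i)}-w_n^{(i)}=o(R_n)$. But the contribution of the large pairs to the $j$-th power sum difference is then only $O\bigl(R_n^{\,j-1}\cdot o(R_n)\bigr)=o(R_n^j)$, which is wildly larger than the $o(1)$ you need in order for the residual small-scale power-sum differences $\sum a^j-\sum b^j=-\sum\bigl((u_n^{(i)})^j-(w_n^{(i)})^j\bigr)+O(\d_n)$ to tend to $0$. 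Equivalently: the $R_n$-rescaled compactness yields multiplicative closeness $u_n^{(i)}/w_n^{(i)}\to 1$ of the large elements, whereas the induction requires additive closeness $(u_n^{(i)})^j-(w_n^{(i)})^j\to 0$. Upgrading the former to the latter requires propagating information between scales in a way that a single rescaling does not provide; it is precisely this cross-scale control that is the heart of the matter.

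The paper bypasses the multi-scale bookkeeping entirely by using Tur\'an's second main theorem on power sums (\cite{Tur-PWsums}*{Theorem 10.2}): for each $w\in W$ it chooses a radius $\d_w|w|$ adapted to $w$ so that the annulus $B(w,2\d_w|w|)\setminus B(w,\d_w|w|)$ avoids $U\cup W$, and Tur\'an's lower bound on $\max_{1\le j\le 2M}|b_1z_1^j+\dots+b_nz_n^j|$ then forces the signed count $b_1+\dots+b_h$ of $U\cup W$-elements inside each such ball to vanish once $\d$ is small enough, giving a direct and explicitly quantitative pairing at every scale simultaneously. So the step you identify as the obstacle is exactly the step that needs a dedicated tool, and in the paper that tool is Tur\'an's estimate; as written, your compactness scheme does not substitute for it.
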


We use the following theorem of Tur\'an.

\begin{thm}[\cite{Tur-PWsums}*{Theorem 10.2}]\label{th:Turan}
Let $n\in\Z_{>0}$, $m\in\Z_{\ge 0}$ and $z_1,\ldots,z_n,b_1,\ldots,b_n\in\C$ be such that $z_1\ne0$ and
\[
|z_1-z_2|\le|z_1-z_3|\le\ldots\le|z_1-z_n|.
\]
Let
\[
0<\d_2<\d_1<\frac{n}{m+n+1}.
\]
Let $h$ be the largest index such that $|z_1-z_h|<|z_1|\d_2$.
Assume that $|z_1-z_{h+1}|>|z_1|\d_1$ or $h=n$.
Then there is an integer $j\in\{m+1,\ldots,m+n\}$ such that
\[
|b_1z_1^{j}+\ldots+b_nz_n^j|\ge 2 \Big(\frac{\d_1-\d_2}{12e}\Big)^n|b_1+\ldots+b_h||z_1|^j
\]
\end{thm}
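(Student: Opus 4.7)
The plan is to follow Tur\'an's classical duality argument for power sums, adapted to the cluster structure in the hypothesis. First I would normalize by dividing through by $z_1^j$: setting $w_k=z_k/z_1$ and $\sigma_j=s_j/z_1^j$, the hypothesis becomes $w_1=1$, $|w_k-1|\le\delta_2$ for $1\le k\le h$, and $|w_k-1|\ge\delta_1$ for $h<k\le n$, and the conclusion reduces to
\[
\max_{m+1\le j\le m+n}|\sigma_j|\;\ge\;2\Bigl(\frac{\delta_1-\delta_2}{12e}\Bigr)^n |b_1+\cdots+b_h|.
\]
The central identity is the power-sum/polynomial duality: for any $P(z)=\sum_{j=m+1}^{m+n}c_j z^j$ with Taylor support in $\{m+1,\dots,m+n\}$,
\[
\sum_{j=m+1}^{m+n}c_j\,\sigma_j \;=\; \sum_{k=1}^{n}b_k\,P(w_k),
\]
and hence
\[
\max_{m+1\le j\le m+n}|\sigma_j|\;\ge\;\frac{|\sum_{k=1}^n b_k P(w_k)|}{\sum_{j=m+1}^{m+n}|c_j|}.
\]

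I would then take $P$ to be the unique polynomial supported in these degrees (an $n$-dimensional space of polynomials) satisfying the $n$ interpolation conditions $P(w_k)=1$ for $1\le k\le h$ and $P(w_k)=0$ for $h<k\le n$; existence and uniqueness follow from non-vanishing of the shifted Vandermonde determinant on the pairwise distinct $w_k$ (coincident $w_k$ can be handled by a continuity/perturbation argument). With this choice $\sum_k b_k P(w_k)=b_1+\cdots+b_h$ exactly, and the entire problem reduces to producing the coefficient-norm bound $\sum_{j=m+1}^{m+n}|c_j|\le\tfrac12\bigl((12e)/(\delta_1-\delta_2)\bigr)^n$. Writing $P(z)=z^{m+1}\prod_{k=h+1}^{n}(z-w_k)\cdot R(z)$, where $R$ has degree $h-1$ and Lagrange-interpolates the values $w_k^{-m-1}/\prod_{l>h}(w_k-w_l)$ at the cluster points, makes the vanishing at the outliers structurally explicit.

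The hard part, and the main obstacle, is bounding $\sum|c_j|$, because the outlier $w_k$ may have arbitrarily large modulus and so a naive expansion of $\prod(z-w_k)$ is useless. The standard remedy is to read off the coefficients via a Cauchy-type representation $c_j=\frac{1}{2\pi i}\oint P(z)z^{-j-1}\,dz$ on a contour surrounding the cluster at distance $\tfrac12(\delta_1-\delta_2)$ from the point $1$, where each ratio $(z-w_k)/(1-w_k)$ stays within an absolute multiplicative constant of $1$. Summing $|c_j|$ over $j$ one then obtains $\sum|c_j|\lesssim n\cdot \max_{|z|=r}|P(z)|/r^{m+1}$, while the upper bound for $\max|P|$ on this contour is a product of $n$ factors each at most $12e/(\delta_1-\delta_2)$; the restriction $\delta_1<n/(m+n+1)$ is precisely what forces the ratio $r^{m+1}$ to dominate correctly against the growth of $|P|$ on the chosen circle. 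Carrying this contour optimization through cleanly---so that the three estimates combine to yield the explicit constant $12e$---is the technical crux of Tur\'an's argument in \cite{Tur-PWsums}.
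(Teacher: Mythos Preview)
The paper does not give its own proof of this statement. Theorem \ref{th:Turan} is quoted as \cite{Tur-PWsums}*{Theorem 10.2}, and the only argument the paper supplies is the short remark following the statement, which explains that Tur\'an's original version assumes $z_1=1$ and $z_1\neq z_2$, and that these extra hypotheses are harmless (the first is a normalization, the second can be removed by a perturbation). So there is no in-paper proof for your sketch to be compared against; the authors simply invoke the result as a black box and use it in the proof of Lemma \ref{lm:power-sums}.

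Your outline is a plausible account of the standard Tur\'an duality/interpolation scheme, and the normalization step you begin with is exactly the one the paper's remark isolates. If your aim is to supply a self-contained proof, the part that would need the most care is the coefficient bound $\sum_j|c_j|\le\tfrac12\bigl(12e/(\delta_1-\delta_2)\bigr)^n$: your Cauchy-contour heuristic is the right idea, but as written it does not yet pin down the choice of radius or show how the condition $\delta_1<n/(m+n+1)$ enters to control the factor $r^{-(m+1)}$, nor does it produce the specific constant $12e$ and the leading factor $2$. For that one really has to go through Tur\'an's computation in \cite{Tur-PWsums}; the paper does not reproduce it.
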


\begin{rmk}
This theorem is stated and proved in \cite{Tur-PWsums} under the additional assumptions that $z_1=1$ and $z_1\neq z_2$.
The first one is simply a choice of normalization, which does not restrict generality.
The second one is not necessary, for we could satisfy it by an arbitrarily small perturbation of $z_2$.

We also note that \cite{Tur-PWsums}*{Theorem 10.2} is slightly more general than how we stated it here in that
the condition $|z_1-z_{h+1}|>|z_1|\d_1$ can be relaxed if the conclusion is adjusted accordingly.
However, this is not required by our application, and we refer to \cite{Tur-PWsums} for the details.
\end{rmk}

\begin{proof}[Proof of Lemma \ref{lm:power-sums}]
Set $\e_0=\min\{1/3,\e/2^{M+2}\}$.
For each $w\in W$, let $\d_w\in \{\e_0/2^j:1\le j\le2M+1\}$ satisfy
\begin{equation}\label{choice of d_w}
(B(w,2\d_w|w|)\backslash B(w,\d_w|w|))\cap (W\cup U)=\varnothing.
\end{equation} 
Such a $\d_w$ exists since $|W\cup U|\le 2M$.
This choice ensures that whenever $\d_{w_1}|w_1|\ge\d_{w_2}|w_2|$, we have
either $w_2\in B(w_1,\d_{w_1}|w_1|)$ or
\[
B(w_1,\d_{w_1}|w_1|)\cap B(w_2,\d_{w_2}|w_2|)=\varnothing.
\]
So there is a subset $W'\subset W$ such that $B(w,\d_w|w|)$ are pairwise disjoint for $w\in W'$
and they cover $W$.
Indeed, one can select the elements of $W'$ by going through the elements of $W$ in decreasing order of $\d_w|w|$,
taking an element in $W'$ if it is not yet covered.

We apply Theorem \ref{th:Turan} for each $w\in W'$ with $m=0$ and the following data.
We take $z_1=w$ and take $z_2,\ldots,z_n$ to be an appropriate ordering of the rest of
$U\cup W$. We take $b_j=1$ for indices corresponding to elements of $U$ and $b_j=-1$
for indices corresponding to elements of $W$.
We take $\d_2=\d_w$ and $\d_1=3\d_w/2$. We have $\d_1<3\e_0/2\le1/2\le n/(n+1)$. Let $h$ be as in the statement of the theorem. If $h<n$ then $|w - z_{h+1}|\ge|w|\d_w$, and so by \eqref{choice of d_w}
\[
|w - z_{h+1}|\ge2|w|\d_w>|w|\d_1.
\]
Thus by the theorem, there exists $1\le j\le2M$ so that
\begin{align*}
|b_1+\ldots+b_{h}|\le & \frac{1}{2}\Big(\frac{\d_1-\d_2}{12e}\Big)^{-n}|b_1z_1^{j}+\ldots+b_nz_n^j| \\
\le & \frac{1}{2}\Big(\frac{\e_0/2^{2M+2}}{12e}\Big)^{-2M}\d<1,
\end{align*}
provided $\d$ is taken sufficiently small in the statement of the lemma.
Hence, we must have $b_1+\ldots+b_h=0$, that is $|B(w,\d_w|w|)\cap U|=|B(w,\d_w|w|)\cap W|$.

Since $B(w,\d_w|w|)$ are pairwise disjoint for $w\in W'$ and they cover $W$, we can conclude
$|W|\le |U|$ and the opposite inequality can be proved by reversing the roles of $U$ and $W$.
Moreover, we can define a bijection $\f:W\to U$, such that for every $w\in W$ there exists $w'\in W'$ with $w,\f(w)\in B(w',\d_{w'}|w'|)$. For such $w,w'$,
\[
|w'|\le|w|+|w-w'|\le|w|+|w'|/3,
\]
and so 
\[
|w - \f(w)|<2\d_{w'}|w'|<4\e_0|w|.
\]
It follows that,
\[
\Big|1-\frac{\prod_{u\in U} u}{\prod_{w\in W}w}\Big|=\Big|1-\prod_{w\in W}\frac{\f(w)}{w}\Big|<2^{M}\cdot 4\e_0\le\e,
\]
which completes the proof of the lemma.
\end{proof}

\begin{proof}[Proof of Proposition \ref{pr:complex}]
We write $R_n=P^{(1)}_n/P^{(2)}_n$ for some $P^{(1)}_n,P_n^{(2)}\in\cP_L$ with $|P_n^{(2)}|=1$.
By passing to a subsequence if necessary, but without changing notation, we assume
$\lim_n P^{(1)}_n=P_0^{(1)}$ and $\lim_n P^{(2)}_n=P_0^{(2)}$ for some $P^{(1)}_0,P^{(2)}_0\in\cP_L$.
This convergence holds both in the $|\cdot|$ metric and uniformly on $B(0,r)$ for all $r<1$. Write $R_0$ for $P_0^{(1)}/P_0^{(2)}\in\cR_L\cap\Q[[X]]$, then $\lim_n|R_0-R_n|=0$.

Suppose first that $P_0^{(1)}\ne0$. We handle the case $P_0^{(1)}=0$ later on. We choose $\kappa\in(0,1-\l_0)$ small enough so that all zeros of $P_0^{(1)}$ and $P_0^{(2)}$ in the disk $B(\l_0,\kappa)$,
if they exist, are equal to $\l_0$.
We write $U_n$ and $W_n$ for the multisets of the zeros of $P_n^{(1)}$ and $P_n^{(2)}$ in $B(\l_0,\kappa)$.
By perturbing the numbers $\l_n$ if necessary, but without changing the notation, we may assume $\l_n\notin U_n\cup W_n$.
By the choice of $\kappa$, all elements of $U_n$ and $W_n$ are in arbitrarily small neighborhoods of $\l_0$ if $n$
is sufficiently large.
Furthermore, writing $k_1,k_2\in\Z_{\ge 0}$ for the multiplicities of $\l_0$ as zeros of $P_0^{(1)}$ and $P_0^{(2)}$,
respectively, we have $|U_n|=k_1$, $|W_n|=k_2$ if $n$ is sufficiently large. Note that by Jensen's formula, (see Lemma \ref{lem:bound on num of roots}), we have $k_1,k_2=O_{L,\l_0}(1)$.

We write
\[
F_n^{(1)}(z)=\frac{P_n^{(1)}(z)}{\prod_{u\in U_n}(z-u)},\quad
F_n^{(2)}(z)=\frac{P_n^{(2)}(z)}{\prod_{w\in W_n}(z-w)}.
\]
We observe that the limits
\[
\lim_{n\to\infty} F_n^{(1)}(z)=\frac{P_0^{(1)}(z)}{(z-\l_0)^{k_1}},\quad
\lim_{n\to\infty} F_n^{(2)}(z)=\frac{P_0^{(2)}(z)}{(z-\l_0)^{k_2}}
\]
are uniform on the boundary of $B(\l_0,\kappa)$ and therefore on all of $B(\l_0,\kappa)$
by the maximum modulus principle.
Since $F_n^{(2)}(z)$ is uniformly bounded away from $0$ on $B(\l_0,\kappa)$, we have
\[
\lim_{n\to\infty}\frac{F_n^{(1)}(z)}{F_n^{(2)}(z)}=R_0(z)(z-\l_0)^{k_2-k_1}
\]
uniformly on $B(\l_0,\kappa)$.

Write $K=2\max\{k_1,k_2\}$ and note that $K=O_{L,\l_0}(1)$.
We assume that the second alternative in the conclusion of the proposition does not hold, that is,
\begin{equation}\label{eq:assumpt-derivative}
\limsup_{n\to\infty} |\frac{d^j}{dz^j}R_n(\l_n)|<\infty 
\end{equation}
for all $j\le K$.
We show below that this implies $k_1=k_2$ and
\begin{equation}\label{eq:claim2}
\lim_{n\to\infty}\frac{\prod_{w\in W_n}(\l_n-w)}{\prod_{u\in U_n}(\l_n-u)}=1.
\end{equation}
This implies
\begin{align*}
R_0(\l_0)
=\lim_{n\to\infty}\frac{F_n^{(1)}(\l_n)}{F_n^{(2)}(\l_n)}
=\lim_{n\to\infty}R_n(\l_n)\frac{\prod_{w\in W_n}(\l_n-w)}{\prod_{u\in U_n}(\l_n-u)}
=\tau_0, 
\end{align*}
hence the first alternative of the conclusion holds.
Subject to the above two claims, this proves the proposition in the case $P_0^{(1)}\ne0$.

We turn to the proofs of the claims. Since $\tau_0\ne0$, the numbers $|R_n(\l_n)|$ are bounded away from $0$ and $\infty$. This together with \eqref{eq:assumpt-derivative} imply
\[
\limsup_{n\to\infty} |\frac{d^j}{dz^j}\log R_n(\l_n)|<\infty
\]
for all $j\le K$.
Furthermore, since $F_n^{(1)}$ and $F_n^{(2)}$ are uniformly convergent on $B(\l_0,\kappa)$,
we have
\[
\limsup_{n\to\infty} \Big|\frac{d^j}{dz^j}\log\Big(\frac{\prod_{w\in W_n}(z-w)}{\prod_{u\in U_n}(z-u)}\Big)\Big|_{z=\l_n}\Big|<\infty,
\]
hence
\[
\limsup_{n\to\infty} \Big|\sum_{w\in W_n}(\l_n-w)^{-j}-\sum_{u\in U_n}(\l_n-u)^{-j}\Big|<\infty
\]
for all $j\le K$ (recall that $\l_n\notin U_n\cup W_n$).

We define the multisets
\[
\wt U_n = \{\kappa_n (\l_n-u)^{-1}:u\in U_n\},\quad
\wt W_n = \{\kappa_n (\l_n-w)^{-1}:w\in W_n\},
\]
where $\kappa_n$ is a sequence converging to $0$ slowly enough so that $|u|,|w|\ge 1$ for all $u\in \wt U_n$
and $w\in \wt W_n$.
Therefore,
\[
\lim_{n\to\infty} \Big(\sum_{w\in \wt W_n}w^{j}-\sum_{u\in \wt U_n}u^{j}\Big)=0
\]
for all $j\le K$.

We can apply Lemma \ref{lm:power-sums} with $\wt U_n$ and $\wt W_n$ in the role of $U$ and $W$
if $n$ is sufficiently large.
For such an $n$, the lemma gives $|\wt U_n|=|\wt W_n|$, which implies $k_1=k_2$.
Furthermore, the lemma also gives
\[
\lim_{n\to\infty}\frac{\prod_{u\in \wt U_n} u}{\prod_{w\in \wt W_n}w}=1,
\]
which in turn yields \eqref{eq:claim2}.

It remains to handle the case $P_0^{(1)}=0$. Assume this is so, and fix $q\in \{\pm1\}\setminus\{-\tau_0\}$. For $n\ge1$ set $\wt R_n=R_n+q$, $\wt P_n^{(1)}=P_n^{(1)}+qP_n^{(2)}$ and $\wt P_n^{(2)}=P_n^{(2)}$. Also set $\wt \tau_0=\tau_0+q$, $\wt P_0^{(1)}=qP_0^{(2)}$ and $\wt P_0^{(2)}=P_0^{(2)}$. Note that all of the assumptions of the proposition are satisfied for the modified input $(\l_0,\wt \tau_0)$, $\wt R_1,\wt R_2,\ldots$ and $\l_1,\l_2,\ldots$, with $2L$ in place of $L$. Additionally we have $\wt R_n=\wt P_n^{(1)}/\wt P_n^{(2)}$ for all $n\ge1$, $\lim_n \wt P^{(1)}_n=\wt P_0^{(1)}$, $\lim_n \wt P^{(2)}_n=\wt P_0^{(2)}$ and $\wt P_0^{(1)}\ne0$. Thus, by the argument for the case $P_0^{(1)}\ne0$, it follows that at least one of the statements in the proposition holds for the modified input. Clearly this remains true for the original input, which completes the proof.
\end{proof}

\subsection{}\label{proof of second prop}

The purpose of this section is to prove Proposition \ref{pr:singular-bouquet}. The following simple lemma will be needed. Recall the self-affine measures $\nu_{R,\l,K}$ from Section \ref{sc:lb on h(R,lam,M)}.

\begin{lem}\label{lm:sa-dim}
Let $\l\in(0,1)$, $R\in\cR_L$ and $K\in\Z_{>0}$.
Assume $\l$ is not a pole of $R$.
Then
\[
\dim\nu_{R,\l,K}\le\frac{h(R,\l,K)}{\log \l^{-1}}.
\]
\end{lem}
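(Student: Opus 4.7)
The plan is to exploit the self-affine structure of $\nu_{R,\l,K}$ together with the fact that all eigenvalues of the linear part $\Theta(\l,K)$ have modulus $\l$, to get a covering/entropy estimate on scales $\l^n$ and then conclude a dimension bound.

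First, iterating the recursion \eqref{rec iden B} that opens Section \ref{sc:lb on h(R,lam,M)}, I would write, for each $n\ge1$,
\[
\nu_{R,\l,K}=\sum_{\omega\in\{1,\ldots,m\}^n}p_{\omega_0}\cdots p_{\omega_{n-1}}\,\delta_{v_\omega}*(\Theta(\l,K)^n)_*\nu_{R,\l,K},
\]
where the vectors $v_\omega$ are precisely the values in the support of $B^{(n)}_{R,\l,K}$, and $p_{\omega_0}\cdots p_{\omega_{n-1}}$ is the $\b$-mass of the corresponding cylinder. Since $\Theta(\l,K)$ is lower triangular with $\l$ on the diagonal, all its eigenvalues equal $\l$; a Jordan form estimate (or direct computation using the $\l$-diagonal, subdiagonal structure) gives $\|\Theta(\l,K)^n\|\le Cn^{K-1}\l^n$ for a constant $C=C(K)$. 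Hence the support of $(\Theta(\l,K)^n)_*\nu_{R,\l,K}$ lies in a ball of diameter $\l^{n(1+o(1))}$.

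Next, let $\cD_n$ be the partition of $\R^K$ into cubes of side length $\l^n$. The above decomposition expresses $\nu_{R,\l,K}$ as a mixture, indexed by the distinct values of $B^{(n)}_{R,\l,K}$, of measures each supported in a set of diameter $\l^{n(1+o(1))}$ and hence meeting only $n^{O(K)}$ cubes of $\cD_n$. Grouping the mixture by these distinct values and using concavity of Shannon entropy, I would obtain
\[
H(\nu_{R,\l,K};\cD_n)\le H(B^{(n)}_{R,\l,K})+O(\log n).
\]

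Finally, from the standard entropy criterion which bounds upper box (hence Hausdorff, packing, and the exact dimension provided by Feng's formula used in Lemma \ref{lm:sa-semi-cont}) dimension of a compactly supported Borel probability measure on $\R^K$ in terms of entropy on dyadic-type partitions of diameter $\l^n$, I would conclude
\[
\dim\nu_{R,\l,K}\le\liminf_{n\to\infty}\frac{H(\nu_{R,\l,K};\cD_n)}{n\log\l^{-1}}\le\liminf_{n\to\infty}\frac{H(B^{(n)}_{R,\l,K})+O(\log n)}{n\log\l^{-1}}=\frac{h(R,\l,K)}{\log\l^{-1}}.
\]
The only mild obstacle is matching the notion of $\dim$ used here with the entropy-dimension inequality; since Feng's formula (invoked in Lemma \ref{lm:sa-semi-cont}) already identifies $\dim\nu_{R,\l,K}$ with the quantity controlled by the above entropy estimate, the inequality follows without extra work. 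The Jordan block factor $n^{K-1}$ is harmless because it is absorbed into the $o(n)$ in $\l^{n(1+o(1))}$ and into the $O(\log n)$ error.
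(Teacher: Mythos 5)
Your approach is essentially the paper's: both proofs rest on (i) exact dimensionality of $\nu_{R,\l,K}$ (the paper cites \cite{FH-dimension} and uses the conformality $|\Theta(\l,K)^n x|=\l^{n+o(n)}$; you appeal to Feng's formula via Lemma \ref{lm:sa-semi-cont}), which converts dimension into the entropy quotient on scales $\l^n$; (ii) replacing $\nu_{R,\l,K}$ by its $n$-step approximation $\nu_{R,\l,K}^{(n)}$ with only a lower-order error; and (iii) the trivial bound $H(\nu_{R,\l,K}^{(n)};\cC_{\l^n})\le H(\nu_{R,\l,K}^{(n)})=H(B_{R,\l,K}^{(n)})$. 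Your version simply spells out step (ii) explicitly via the iterated self-affine decomposition and the Jordan-block estimate $\|\Theta(\l,K)^n\|\le C n^{K-1}\l^n$, where the paper just asserts it ``follows easily'' from the conformality property.

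One small but real slip: you invoke ``concavity of Shannon entropy'' to get the upper bound
\[
H(\nu_{R,\l,K};\cD_n)\le H(B^{(n)}_{R,\l,K})+O(\log n),
\]
but concavity ($H(\sum_i p_i\mu_i)\ge\sum_i p_i H(\mu_i)$) goes the wrong way. The inequality you actually need is the chain-rule/mixture bound $H\bigl(\sum_i p_i\mu_i;\cP\bigr)\le H(\{p_i\})+\sum_i p_i H(\mu_i;\cP)$, applied with the index $i$ running over the distinct values of $B^{(n)}_{R,\l,K}$ (so $H(\{p_i\})=H(B^{(n)}_{R,\l,K})$) and with each $\mu_i$, being a translate of $(\Theta(\l,K)^n)_*\nu_{R,\l,K}$, meeting only $n^{O(K)}$ cells of $\cD_n$. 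With that replacement the argument is complete and matches the paper's.
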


\begin{proof}
Recall the affine IFS \eqref{SA IFS}, and note that for every unit vector $x\in\R^K$
\begin{equation}\label{conformality of IFS}
|\Theta(\l,K)^nx|=\l^{n+o(n)} \text{ as }n\rightarrow\infty.
\end{equation}
Given $r>0$, denote by $\cC_r$ the partition of $\R^K$ into $K$-cubes of side length $r$. 
By \eqref{conformality of IFS} and \cite{FH-dimension}*{Theorem 2.8} it follows that $\nu_{R,\l,K}$ is exact dimensional.
Thus,
\[
\dim\nu_{R,\l,K} = \lim_n\frac{H(\nu_{R,\l,K};\cC_{\l^n})}{n\log\l^{-1}}.
\]
It also follows easily from \eqref{conformality of IFS} that
\[
\lim_n\frac{H(\nu_{R,\l,K};\cC_{\l^n})}{n\log\l^{-1}} = \lim_n\frac{H(\nu_{R,\l,K}^{(n)};\cC_{\l^n})}{n\log\l^{-1}}.
\]
Additionally, we have
\[
\lim_n\frac{H(\nu_{R,\l,K}^{(n)};\cC_{\l^n})}{n\log\l^{-1}} \le \lim_n\frac{H(\nu_{R,\l,K}^{(n)})}{n\log\l^{-1}} = \frac{h(R,\l,K)}{\log \l^{-1}},
\]
which completes the proof of the lemma.
\end{proof}

\begin{proof}[Proof of Proposition \ref{pr:singular-bouquet}]
We assume as we may that
\[
\limsup_{n\to\infty}|d^j/dX^j R_n(\l_n)|<\infty
\]
for all $j<K-1$.
For each $n\in\Z_{>0}$, we let $\Delta^{(n)}\in \R^{K\times K}$ be a diagonal matrix such that
$\Delta^{(n)}_{1,1}=1$ and $\Delta_{K,K}^{(n)}=d^{K-1}/dX^{K-1}R_n(\l_n)$ for each $n$ and the
ratio of consecutive entries on the diagonal converge to $\infty$.

By Lemma \ref{lm:sa-dim}, we have
\[
\dim\nu_{R_n,\l_n,K}\le\frac{h(R_n,\l_n,K)}{\log \l_n^{-1}}.
\]
We note that
$(\Delta^{(n)})^{-1}\nu_{R_n,\l_n,K}$
is a self-affine measure associated to the probability vector $p$ and the IFS
\begin{equation}\label{eq:saIFS}
\{x\mapsto(\Delta^{(n)})^{-1}\Theta(\l_n,K)\Delta^{(n)}x+(\Delta^{(n)})^{-1}v_j(R_n,\l_n,K):j=1,\ldots,m\}.
\end{equation}
By the choice of $\Delta^{(n)}$ and the definitions of $\Theta(\l_n,K)$, $v_j(R_n,\l_n,K)$, we have
\begin{align*}
\lim_{n\to \infty} (\Delta^{(n)})^{-1}\Theta(\l_n,K)\Delta^{(n)}=&\l_0\cdot\Id,\\
\lim_{n\to\infty} (\Delta^{(n)})^{-1} v_j(R_n,\l_n,K)=&(T_j(1,\tau_0),0,0,\ldots,0,T_j(0,1)).
\end{align*}

Since $x\mapsto\Delta^{(n)}x$ is bi-Lipschitz,
\[
\dim(\Delta^{(n)})^{-1}\nu_{R_n,\l_n,K}=\dim\nu_{R_n,\l_n,K}.
\]
We write $\nu$ for the self-similar measure associated to $p$ and the IFS
\[
\{x\mapsto(\l_0 x+(T_j(1,\tau_0),0,\ldots,0,T_j(0,1)):j=1,\ldots,m\}.
\]
Applying Lemma \ref{lm:sa-semi-cont} to the IFSs \eqref{eq:saIFS}, we get
\[
\dim\nu\le\liminf_{n\to\infty}\frac{h(R_n,\l_n,K)}{\log \l_0^{-1}}.
\]

For $\tau\in\R$, we write $\pi:\R^K\to\R$ for the projection $\pi(x)=x_1+(\tau-\tau_0)x_K$.
We observe $\mu_{\l_0,\tau}=\pi\nu$, and hence $\dim\mu_{\l_0,\tau}\le\dim\nu$, which completes the proof.
\end{proof}

\section{A bound for the Mahler measure}\label{sc:BV1}

The purpose of this section is the proof of Theorem \ref{th:BV1}.
Throughout this section $T_1(Y_1,Y_2)$,  $T_2(Y_1,Y_2)$ and $T_3(Y_1,Y_2)$ are the linear forms $0$, $Y_1$ and $Y_2$ respectively.
The following result can be deduced from the arguments in \cite{BV-entropy}, see \cite{Var-Bernoulli}*{Theorem 9}.

\begin{thm}\label{th:entropy-Bernoulli}
Let $\wt \xi_0,\wt\xi_1,\ldots$ be a sequence of independent random variables taking the values
$1$ and $2$ with equal probabilities.
Then for any $h\in(0,\log 2)$ there is a number $C(h)$, such that
\[
\lim_{n\to\infty} \frac{H(\sum_{j=0}^{n-1}T_{\wt\xi_j}(1,\tau)\l^j)}{n}
=\inf_{n\ge1}\frac{H(\sum_{j=0}^{n-1}T_{\wt\xi_j}(1,\tau)\l^j)}{n}\ge h
\]
for any $(\l,\tau)\in(0,1)\times\R$ with $M(\l)>C(h)$.
\end{thm}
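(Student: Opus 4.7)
The plan is to reduce the statement to the principal entropy--Mahler-measure inequality of \cite{BV-entropy} (repackaged as \cite{Var-Bernoulli}*{Theorem 9}), after stripping away the irrelevant parameter $\tau$ and verifying the elementary subadditivity that yields $\lim = \inf$.

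First I would observe that $\tau$ plays no role. Since $\wt\xi_j \in \{1,2\}$ and the active linear forms are $T_1(Y_1,Y_2)=0$ and $T_2(Y_1,Y_2)=Y_1$, the random variable $\wt\a_j := T_{\wt\xi_j}(1,\tau)$ takes the values $0$ and $1$ each with probability $1/2$ regardless of $\tau$. Thus $\sum_{j=0}^{n-1}\wt\a_j\l^j$ is the $n$-step partial sum of a standard Bernoulli convolution with digit set $\{0,1\}$; the affine substitution $\wt\a_j \mapsto 2\wt\a_j - 1$ (together with an additive/multiplicative rescaling on $\R$) identifies it with the more familiar $\pm 1$ Bernoulli convolution partial sum, and Shannon entropy of a discrete random variable is invariant under affine bijections, so the two Garsia entropies coincide.

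Second, to obtain $\lim = \inf$ for $H_n := H(\sum_{j=0}^{n-1}\wt\a_j\l^j)$, I would invoke Fekete's lemma. Writing
\[
\sum_{j=0}^{n+m-1}\wt\a_j\l^j \;=\; \sum_{j=0}^{n-1}\wt\a_j\l^j \;+\; \l^{n}\sum_{j=0}^{m-1}\wt\a_{n+j}\l^j,
\]
the two summands on the right are independent; the second is distributed as $\l^n$ times an independent copy of the length-$m$ sum; and multiplication by the nonzero constant $\l^n$ preserves Shannon entropy on a discrete support. The standard inequality $H(A+B)\le H(A)+H(B)$ for independent discrete random variables then yields $H_{n+m}\le H_n+H_m$, and Fekete's lemma gives $\lim H_n/n = \inf H_n/n$.

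Finally, to obtain $\inf_n H_n/n \ge h$ whenever $M(\l) > C(h)$, I would invoke the main theorem of \cite{BV-entropy}: for every $h \in (0,\log 2)$ there is a constant $C(h)$ such that the Garsia entropy of the Bernoulli convolution with parameter $\l$ is at least $h$ as soon as $M(\l) > C(h)$. Combined with the reduction above, this is precisely the asserted lower bound. The only genuine difficulty lies in \cite{BV-entropy} itself --- converting near-collisions among the finite sums $\sum_{j=0}^{n-1}\wt\a_j\l^j$ into an upper bound on $M(\l)$ requires a delicate argument based on products over Galois conjugates combined with an averaging over scales --- but since the statement is presented here as a direct consequence of that paper (via \cite{Var-Bernoulli}*{Theorem 9}), my proposal treats it as a black box after the elementary reduction.
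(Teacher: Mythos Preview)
Your proposal is correct and mirrors the paper's treatment: the paper does not give a self-contained proof either but cites \cite{BV-entropy} and \cite{Var-Bernoulli}*{Theorem 9} as a black box, noting (as you do) that the $\{0,1\}$ digits arising from $T_1,T_2$ are affinely equivalent to the $\pm1$ Bernoulli convolution digits. The only small addition in the paper's remark that you do not make explicit is the transcendental case: since $M(\l)=\infty$ by convention and the $2^n$ partial sums are then pairwise distinct, the bound $H_n/n=\log 2\ge h$ is immediate there, so the cited result is only needed for algebraic $\l$.
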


\begin{rmk}
Two remarks are in order.
First, in the quoted references, the coefficients that are denoted by $T_{\wt\xi_j}(1,\tau)$ above,
take the values $\pm1$ instead of $0$ and $1$.
However, the two settings can be transformed into each other by appropriate scaling and translation,
which does not change entropy.
Second, in \cite{Var-Bernoulli}*{Theorem 9}, it is assumed that $\l$ is algebraic.
However, with our convention that $M(\l)=\infty$ for transcendental numbers, this is not needed.
Indeed, the random variable $\sum_{j=0}^{n-1}T_{\wt\xi_j}(1,\tau)\l^j$ takes $2^n$ distinct values
with equal probability if $\l$ is transcendental.
\end{rmk}

We use this to deduce the following.

\begin{lem}\label{lm:entropy-Bernoulli}
With the notation of Theorem \ref{th:entropy-Bernoulli}, let $h\in(0,\log 2)$ and let $(\l,\tau)\in(0,1)\times\R$ be with $M(\l)>C(h)$.
Let $A\subset\{0,\ldots,n-1\}$.
Then
\[
H\Big(\sum_{j\in A}T_{\wt\xi_j}(1,\tau)\l^j\Big)\ge hn-\log(2)(n- |A|).
\]
\end{lem}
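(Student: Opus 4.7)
The plan is to deduce the lemma from Theorem \ref{th:entropy-Bernoulli} by a one-line application of subadditivity of entropy under convolution of independent random variables.

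First, I would apply Theorem \ref{th:entropy-Bernoulli} directly. Since $M(\lambda)>C(h)$, the theorem gives us
\[
H\Big(\sum_{j=0}^{n-1}T_{\wt\xi_j}(1,\tau)\l^j\Big)\ge hn
\]
for every $n\ge1$ (using that the quantity equals its own infimum over $n$).

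Next I would split the full sum along the partition $\{0,\ldots,n-1\}=A\sqcup A^c$. Setting
\[
X=\sum_{j\in A}T_{\wt\xi_j}(1,\tau)\l^j\qquad\text{and}\qquad Y=\sum_{j\in A^c}T_{\wt\xi_j}(1,\tau)\l^j,
\]
the variables $X$ and $Y$ are independent because the $\wt\xi_j$ are independent and the two sums involve disjoint index sets. Hence $H(X+Y)\le H(X,Y)=H(X)+H(Y)$, which rearranges to
\[
H(X)\ge H(X+Y)-H(Y).
\]

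Finally, each term $T_{\wt\xi_j}(1,\tau)$ takes only the two values $0$ and $1$, so $Y$ is a deterministic function of the $n-|A|$ random variables $\{\wt\xi_j:j\in A^c\}$ and takes at most $2^{n-|A|}$ values. Therefore $H(Y)\le(n-|A|)\log 2$, and combining the two bounds yields
\[
H(X)\ge hn-(n-|A|)\log 2,
\]
which is precisely the desired inequality. There is no real obstacle; the argument is essentially entropy subadditivity combined with the trivial bound for the entropy of the discarded terms.
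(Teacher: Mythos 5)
Your argument is correct and is essentially the paper's proof in slightly different clothing: the paper conditions on the random variables $(\wt\xi_j)_{j\notin A}$ and uses translation invariance of entropy to identify $H(X)$ with a conditional entropy, whereas you bound $H(X+Y)\le H(X)+H(Y)$ by independence and rearrange; both reduce to the same elementary entropy inequalities combined with Theorem~\ref{th:entropy-Bernoulli}.
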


\begin{proof}
We observe that $H(\sum_{j\in A}T_{\wt\xi_j}(1,\tau)\l^j+t)$ is independent of the value of $t$.
Therefore,
\begin{align*}
H\Big(\sum_{j\in A}T_{\wt\xi_j}(1,\tau)\l^j\Big)
=&H\Big(\sum_{j=0}^{n-1}T_{\wt\xi_j}(1,\tau)\l^j\Big|\wt\xi_j:j\notin A\Big)\\
\ge& H\Big(\sum_{j=0}^{n-1}T_{\wt\xi_j}(1,\tau)\l^j\Big)-\log(2)(n- |A|).
\end{align*}
The claim now follows from Theorem \ref{th:entropy-Bernoulli}.
\end{proof}

\begin{proof}[Proof of Theorem \ref{th:BV1}]
Let $h=\log 2-\e$.
Let $M=C(h)$, where $C(h)$ is as in Lemma \ref{lm:entropy-Bernoulli}.
Let $(\l,\tau)\in(0,1)\times\R$, and suppose $M(\l)>M$.
We show that $h(\l,\tau)\ge h-(\log 2)/3=(2\log 2)/3-\e$, which proves the theorem.

Fix some $n\in\Z_{>0}$.
We write $\nu$ for the distribution of the random variable
\[
\sum_{j=0}^{n-1}T_{\xi_j}(1,\tau)\l^j,
\]
where $\xi_0,\xi_1,\ldots$ are i.i.d. random variables taking the values $1$, $2$ and $3$ with equal probabilities. For $A\subset\{0,\ldots,n-1\}$, we write $\nu_A$ for the distribution of the random variable
\[
\sum_{j\in A} T_{\wt \xi_j}(1,\tau)\l^j+\sum_{j\in\{0,\ldots,n-1\}\backslash A} T_3(1,\tau)\l^j.
\]

We note that
\[
H(\nu_A)=H(\sum_{j\in A}T_{\wt\xi_j}(1,\tau)\l^j)\ge hn-\log(2)(n- |A|),
\]
and
\[
\nu=\sum_{A\subset \{0,\ldots,n-1\}} (2/3)^{|A|}(1/3)^{n-|A|}\nu_A.
\]

By concavity of entropy, we get
\[
H(\nu)\ge \sum_{A\subset \{0,\ldots,n-1\}} (2/3)^{|A|}(1/3)^{n-|A|} (hn-\log(2)(n- |A|))
=(h-\frac{\log 2}{3})n,
\]
as required.
\end{proof}

\section{Proof of Theorems \ref{th:EO-general} and \ref{th:large-lambda}}\label{sc:EO-proof}

Some of the arguments in this section are based on the paper \cite{Var-Bernoulli}.
These are discussed in Section \ref{sc:EO int trans} of the appendix in the simpler setting of
homogeneous IFS's with rational translations.
The reader not familiar with \cite{Var-Bernoulli} may find it helpful to read that part of the appendix before
this section, but this section can also be read independently.

We give the proofs of Theorems \ref{th:EO-general} and \ref{th:large-lambda} simultaneously for they
are almost identical.

In the setting of Theorem \ref{th:EO-general}, we assume that $(\l_0,\tau_0)\in(0,1)\times\R$
is such that the IFS \eqref{eq:IFS} contains no exact overlaps,
and we also assume that the answer to Question \ref{q:BV1-general} is affirmative.
In the setting of Theorem \ref{th:large-lambda}, we assume that $(\l_0,\tau_0)\in(2^{-2/3},1)\times\R$
is such that the IFS \eqref{eq:3maps} contains no exact overlaps.

In both settings, we suppose to the contrary that
\begin{equation}\label{suppose dim <}
\dim \mu_{\l_0,\tau_0}<\min\{1,H(p)/\log\l_0^{-1}\},
\end{equation}
where $p=(1/3,1/3,1/3)$ in the setting of Theorem  \ref{th:large-lambda}.
Conjecture \ref{cn:EO} was proved in \cite{Rap-EO} in the case when $\l_0$ is algebraic.
Thus our assumption that there are no exact overlaps and  \eqref{suppose dim <}
imply that $\l_0$ is transcendental. Moreover, since there are no exact overlaps, by  \eqref{suppose dim <} and by Theorem \ref{th:integral-translations}, it follows that $\tau_0\ne0$.

The proof will use the parameters $K,M,N\in\Z_{>0}$ and $c,h_0\in\R_{>0}$.
We set $h_0$ and $M$ depending only on $\l_0$, $\tau_0$ and the IFS.
We set $K$ sufficiently large depending only on $\l_0$, $\tau_0$, $h_0$ and the IFS.
We set $c$ sufficiently small depending only on $\l_0$, $\tau_0$, $h_0$, $K$, $M$ and the IFS.
Finally, we set $N$ sufficiently large depending only on $\l_0$, $\tau_0$, $h_0$, $K$, $M$, $c$ and the IFS.

We first apply Theorem \ref{th:BV2-general}, which yields that there is $n\ge N$ and $(\eta,\s)\in(0,1)\times\R$
such that the conclusion of Theorem \ref{th:BV2-general} holds.
We have, in particular, that $|\l_0-\eta|<\exp(-n^2)$ and $\eta$ is an algebraic number of degree at most $2n$. Since $\l_0$ is transcendental we have $\l_0\ne\eta$.
Furthermore, in the setting of Theorem \ref{th:EO-general}, we use that the answer to Question \ref{q:BV1-general} is
affirmative, which we assumed, and we conclude that there is a number $M$ depending only on $\l_0$, $\tau_0$
and importantly not on $N$ or $n$ such that $M(\eta)\le M$.
In the setting of Theorem \ref{th:large-lambda}, we use that $\l_0>2^{-2/3}$, hence $h(\eta,\s)<\log\l_0^{-1}\le (2\log 2)/3-\e$,
where $\e>0$ can be set depending only on $\l_0$.
Now we can invoke Theorem \ref{th:BV1}, which yields that $M(\eta)\le M$ for some $M$ depending only on $\l_0$
and not on $N$ or $n$ also in this setting.
From this point on, the proofs of Theorems \ref{th:EO-general} and \ref{th:large-lambda} will be identical.

The next step will be the application of the following result of Hochman \cite{Hoc-self-similar}. Given $m\in\Z_{>0}$ recall the notation $\mu_{\l_0,\tau_0}^{(m)}$ from Section \ref{sc:general-setting}.

\begin{thm}\label{th:Hoc}
Let notation and assumptions be as above.
In particular, we assume $\dim\mu_{\l_0,\tau_0}<1$.
For $r>0$, let $\cD_r$ be a partition of $\R$ to intervals of length $r$.
Then for all $c<\l_0$, we have
\[
\lim_{m\to\infty} \frac{1}{m}H(\mu_{\l_0,\tau_0}^{(m)};\cD_{c^m})=\dim\mu_{\l_0,\tau_0}\log\l_0^{-1}.
\]
\end{thm}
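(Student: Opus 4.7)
The proof rests on three ingredients: exact dimensionality of $\mu:=\mu_{\l_0,\tau_0}$ (Feng--Hu \cite{FH-dimension}), the self-similarity $\mu = \mu^{(m)}*\mu_{[m]}$, where $\mu_{[m]}$ denotes the push-forward of $\mu$ under $x\mapsto \l_0^m x$, and Hochman's entropy-increase machinery from \cite{Hoc-self-similar}. Write $d:=\dim\mu$. Exact dimensionality gives $H(\mu;\cD_r)/\log r^{-1}\to d$ as $r\to 0$, and applying this at $r=c^m$ and at $r=(c/\l_0)^m$ yields
\[
H(\mu;\cD_{c^m})=dm\log c^{-1}+o(m),\qquad H(\mu_{[m]};\cD_{c^m})=dm\log(\l_0/c)+o(m).
\]

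For the lower bound, apply the standard subadditivity $H(\nu_1*\nu_2;\cD_r)\le H(\nu_1;\cD_r)+H(\nu_2;\cD_r)+O(1)$ to $\mu=\mu^{(m)}*\mu_{[m]}$ at $r=c^m$:
\[
dm\log c^{-1}+o(m) = H(\mu;\cD_{c^m}) \le H(\mu^{(m)};\cD_{c^m}) + dm\log(\l_0/c) + o(m),
\]
which rearranges to $H(\mu^{(m)};\cD_{c^m}) \ge dm\log\l_0^{-1} + o(m)$.

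For the upper bound, the behaviour at the natural scale $\l_0^m$ is easy: since $\mu_{[m]}$ is supported on an interval of length $O(\l_0^m)$, the entropies of $\mu$ and $\mu^{(m)}$ at scale $\l_0^m$ differ by only $O(1)$, so $H(\mu^{(m)};\cD_{\l_0^m}) = dm\log\l_0^{-1} + o(m)$. It therefore suffices to show that refining from $\cD_{\l_0^m}$ to $\cD_{c^m}$ contributes only $o(m)$ of additional entropy, i.e.\ $H(\mu^{(m)};\cD_{c^m}\mid \cD_{\l_0^m}) = o(m)$. The naive cell-counting upper bound $m\log(\l_0/c)$ is linear in $m$ and too weak.

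This is the main obstacle, and it is where I would bring in Hochman's inverse theorem for entropy under convolution on $\R$, the real-variable analogue of our Theorem \ref{thm:ent growth}. The plan is a multi-scale decomposition of $A^{(m)}$ along a geometric sequence of scales between $\l_0^m$ and $c^m$, combined with iterated self-convolution: if the conditional entropy $H(\mu^{(m)};\cD_{c^m}\mid \cD_{\l_0^m})$ exceeded $\e m$ infinitely often for some $\e>0$, then the entropy-increase theorem applied scale by scale along this decomposition would force $\dim\mu=1$, contradicting the standing hypothesis $\dim\mu<1$. The hard part is making this final step quantitative; modulo it, the upper and lower bounds combine to give the claimed limit.
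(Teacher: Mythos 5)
Your decomposition of the problem is correct: the lower bound via $\mu = \mu^{(m)}*\mu_{[m]}$, subadditivity, and exact dimensionality is fine, and you have correctly identified that the upper bound reduces to showing $H(\mu^{(m)};\cD_{c^m}\mid\cD_{\l_0^m})=o(m)$. However, the step you leave open at the end --- ``the hard part is making this final step quantitative'' --- is not a small finishing detail; it is the entire content of the result. What you are sketching there is precisely Hochman's main technical theorem \cite{Hoc-self-similar}*{Theorem 1.3}, and reproving it would require reproducing a large fraction of that paper (the inverse theorem for entropy of convolutions on $\R$, the multiscale decomposition, and the bootstrap using self-similarity). Your proposal therefore contains a genuine and acknowledged gap whose size you may be underestimating.

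For comparison, the paper does not prove this theorem at all: it quotes it directly from \cite{Hoc-self-similar}, and the accompanying remark only explains that the quoted formulation
\[
\lim_{m\to\infty}\frac{1}{m}H\bigl(\mu^{(m)}_{\l_0,\tau_0};\cD_{2^{-q\lfloor m\log\l_0^{-1}\rfloor}}\mid\cD_{2^{-\lfloor m\log\l_0^{-1}\rfloor}}\bigr)=0 \quad\text{for all }q>1
\]
is equivalent to the one stated here, using exact dimensionality of $\mu_{\l_0,\tau_0}$ to convert between the conditional-entropy form and the form involving $\dim\mu_{\l_0,\tau_0}\log\l_0^{-1}$. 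That translation is exactly the elementary part you carried out (your lower bound plus the observation at scale $\l_0^m$). So if your aim was to verify the theorem as stated given Hochman's result, you have essentially done that; if your aim was a self-contained proof, the gap is the theorem itself and cannot be closed by a short argument.
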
 

\begin{rmk}
We note that in \cite{Hoc-self-similar}, the notation $\cD_m$ stands for what we denote by $\cD_{2^{-m}}$.
The conclusion in \cite{Hoc-self-similar} using our notation is formulated as
\[
\lim_{m\to\infty} \frac{1}{m}H(\mu_{\l_0,\tau_0}^{(m)};
\cD_{2^{-q\lfloor m\log\l_0^{-1}\rfloor}}|\cD_{2^{-\lfloor m\log\l_0^{-1}\rfloor}})=0
\]
for an arbitrary $q>1$.
Since
\[
\lim_{r\to0}\frac{1}{\log r^{-1}} H(\mu;\cD_{r})=\dim\mu
\]
for all exact dimensional measures $\mu$, the two forms are clearly equivalent.
\end{rmk}

We fix a number $h_0$ satisfying
\[
\log\l_0^{-1}\dim\mu_{\l_0,\tau_0}<h_0<\min\{\log\l_0^{-1},H(p)\}.
\]
This is possible, thanks to our assumption \eqref{suppose dim <}, and can be done
depending only on $\l_0$, $\tau_0$ and the IFS.
We fix an integer $n'$ in such a way that $M^{-4n'}<|\l_0-\eta|<M^{-3n'}$.
Since $|\l_0-\eta|\le \exp(-n^2)$, this implies $n'> c_Mn^2$ for some constant $c_M$ depending only on $M$.
We apply Theorem \ref{th:Hoc} with the number $c$ we mentioned at the beginning of the proof, and since
$N$ is suitably large, the theorem yields
\[
H(\mu_{\l_0,\tau_0}^{(n')};\cD_{c^{n'}}) \le h_0 n'.
\]

We write $\wt\cD$ for the partition of $\Omega$ that is the pullback
of $\cD_{c^{n'}}$ under the map
\[
\omega\to\sum_{j=0}^{n'-1} T_{\omega_j}(1,\tau_0)\l_0^{j}.
\]
We therefore, have
\[
H(\b;\wt\cD)\le h_0 n'.
\]

We write $\wt \cQ\subset\Z[X,Y_1,Y_2]$ for the collection of polynomials
\begin{equation}\label{eq:omega-poly}
\sum_{j=0}^{n'-1} T_{\omega_j}(Y_1,Y_2)X^j-\sum_{j=0}^{n'-1} T_{\omega_j'}(Y_1,Y_2)X^j,
\end{equation}
where $\omega,\omega'$ ranges over pairs of elements of $\Omega$ that are in the same
$\wt \cD$ atom.
We note that by our assumption on the IFS, the polynomial \eqref{eq:omega-poly} is non-zero for any choice of $\omega,\omega'$ with distinct $n'$-prefix.
In particular, $\wt\cQ\neq\{0\}$, as $h_0<H(p)$.

We show that there is $Q=P_1(X)Y_1+P_2(X)Y_2\in\wt \cQ$ such that
$P_2(\eta)\neq 0$.
Indeed, suppose to the contrary that such a choice is not possible.
Let $Q=P_1(X)Y_1+P_2(X)Y_2\in\wt \cQ$ be arbitrary.
Then $|P_1(\l_0)|\le c^{n'}+|\tau_0P_2(\l_0)|$ by definition.
This, $P_2(\eta)=0$ and the mean value theorem yield
\[
|P_1(\eta)|\le c^{n'}+(1+|\tau_0|)L(n')^2|\eta-\l_0|.
\]
Assuming $c<M^{-3}$, as we may, and using $|\eta-\l_0|<M^{-3n'}$,
we have $|P_1(\eta)|<M^{-2n'}$ since $N$ is sufficiently large in terms of $L$ and $\tau_0$.
Since $n'>c_Mn^2$, this implies $|P_1(\eta)|<(Ln')^{-2n}M(\eta)^{-n'}$ using again that $N$ is sufficiently large in terms of $L$
and $M$.
By Lemma \ref{lem:lb on val of poly}, this implies $P_1(\eta)=0$.
We can now conclude that $Q(\eta,1,\tau)=0$ for all $\tau$, hence
$h(\{\eta\}\times\R)\le h_0$, which contradicts Corollary \ref{cr:no-bad-curves}
since $N$ is sufficiently large and hence $|\l_0-\eta|$
is sufficiently small in a manner depending only on $\l_0,\tau_0$ and the IFS.
 
We fix $Q=P_1(X)Y_1+ P_2(X)Y_2\in\wt\cQ$ such that
$P_2(\eta)\neq 0$, and set $R=-P_1/P_2$.
We estimate $|R(\eta)-\tau_0|$.
We can write
\begin{align*}
|R(\eta)-\tau_0|=&\frac{|P_1(\eta)+\tau_0P_2(\eta)|}{|P_2(\eta)|}\\
\le&\frac{c^{n'}+|P_1(\eta)-P_1(\l_0)|+|\tau_0||P_2(\eta)-P_2(\l_0)|}{|P_2(\eta)|}\\
\le&\frac{c^{n'}+(1+|\tau_0|)L(n')^2|\eta-\l_0|}{|P_2(\eta)|}.
\end{align*}
By Lemma \ref{lem:lb on val of poly} and $P_2(\eta)\neq 0$, we have $|P_2(\eta)|\ge (Ln')^{-2n} M^{-n'}$.
Using the assumptions $c<M^{-3}$ and $|\eta-\l_0|<M^{-3n'}$, we get
\[
|R(\eta)-\tau_0|\le (Ln')^{2n} (2+|\tau_0|)L(n')^2 M^{-2n'}.
\]
This means that $|R(\eta)-\tau_0|<K^{-1}$ since $N$ is sufficiently large in a manner depending only on $\l_0$,
$\tau_0$, $M$, $L$ and $K$.
Here $K$ is the parameter we mentioned at the beginning of the proof.

We show that $\eta$ is a root of order at least $K$ of
$P_1\wt P_2-P_2\wt P_1$ for any $\wt P_1,\wt P_2$ such that $\wt P_1(X)Y_1+\wt P_2 Y_2\in\wt\cQ$.
To this end, we first write
\begin{align*}
|P_1(\l_0)\wt P_2(\l_0)-P_2(\l_0)\wt P_1(\l_0)|
\le&|(P_1(\l_0)+\tau_0P_2(\l_0))\wt P_2(\l_0)|\\
&+|(\wt P_1(\l_0)+\tau_0\wt P_2(\l_0)) P_2(\l_0)|\\
\le& L n' c^{n'}.
\end{align*}
As $N$ is sufficiently large depending on $M$ and $L$, Lemma \ref{lm:Dimitrov-precise} is applicable for the polynomial
$P_1\wt P_2-P_2\wt P_1$ with $\a=2$.
Since $c$ is sufficiently small depending on $L$, $M$ and $K$, we can conclude that $\eta$ is indeed a root
of multiplicity at least $K$ of $P_1\wt P_2-P_2\wt P_1$.

Dividing by $P_2$, we see that $\eta$ is also a zero of $\wt P_1+R\wt P_2$ of order at least $K$.
By the definition of $\wt \cQ$, this means that
\[
\frac{d^a}{dX^a}\sum_{j=0}^{n'-1}T_{\o_j}(1,R(X))X^j\Big|_{X=\eta}
=\frac{d^a}{dX^a}\sum_{j=0}^{n'-1}T_{\o'_j}(1,R(X))X^j\Big|_{X=\eta}
\]
for $a=0,\ldots,K-1$ for all $\o,\o'\in\Omega$ that are in the same atom of $\wt\cD$.
From this, we conclude
\[
H(B_{R,\eta,K}^{(n')})\le H(\b;\wt\cD)\le h_0 n',
\]
hence $h(R,\eta,K)\le h_0$.
If we choose $K$ sufficiently large in a manner depending on $\l_0$, $\tau_0$, the IFS and $h_0$,
then we reach a contradiction with
Proposition \ref{pr:no-bad-bouquet} (recall that $\tau_0\ne 0$).
This completes the proofs of Theorems \ref{th:EO-general} and \ref{th:large-lambda}.

\appendix

\section{\label{sec:int trans case}The case of integral translations}

The purpose of this appendix is to prove Conjecture $\ref{cn:EO}$ for homogeneous systems with rational translations.
Rescaling the IFS, we may assume that the translations are integral.

Let $\l\in(0,1)$, let $a_1,\ldots,a_m\in\Z$ be distinct integers, and let $(p_1,\ldots,p_m)$ be a positive probability vector.
We write $\mu_\l$ for the self-similar measure associated to the IFS
\begin{equation}\label{eq:IFS-integral}
\{x\mapsto \l x+a_j:j=1,\ldots,m\}
\end{equation}
with probability weights $p_1,\ldots,p_m$.
Note that \eqref{eq:IFS-integral} is the special case of \eqref{eq:IFS} with $\tau=0$,
except that the non-degeneracy assumption that $a_1,\ldots,a_m$ are distinct is slightly stronger than what we assumed for
\eqref{eq:IFS}.

As before, let $\xi_0,\xi_1,\ldots$ be i.i.d. random variables with $\P\{\xi_0=j\}=p_j$ for $1\le j \le m$.
Then $\mu_\l$ is the law of the random variable
\[
\sum_{j=0}^{\infty}a_{\xi_j}\l^j.
\]
We denote by $\mu_\l^{(n)}$ the law of the truncated sum
\[
\sum_{j=0}^{n-1}a_{\xi_j}\l^j.
\]
We set,
\[
s(\lambda)=\min\Big\{1,\frac{H(p)}{\log\lambda^{-1}}\Big\},\qquad
h(\lambda)=\lim_{n\to\infty}\frac{1}{n}H(\mu_{\lambda}^{(n)}).
\]
Note that the IFS \eqref{eq:IFS-integral} contains exact overlaps if and only if $h_\l<H(p)$.

We formulate the main result of the appendix as follows.

\begin{thm}\label{th:integral-translations}
With the above notation, we have
\[
\dim\mu_\l=\min\Big\{1,\frac{h(\l)}{\log\l^{-1}}\Big\}.
\]
In particular, if the IFS \eqref{eq:IFS-integral} contains no exact overlaps, we have
$\dim\mu_\l=s(\l)$.
\end{thm}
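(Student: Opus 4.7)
The approach is to adapt the proof of Conjecture \ref{cn:EO} for Bernoulli convolutions from \cite{Var-Bernoulli} to the present setting. The upper bound $\dim\mu_\l \le \min\{1, h(\l)/\log\l^{-1}\}$ is standard (see \cite{Edg-integral}*{Corollary 5.2.3}), so I would argue the reverse inequality by contradiction, assuming $\dim\mu_\l < \min\{1, h(\l)/\log\l^{-1}\}$. The case when $\l$ is algebraic is handled directly by Hochman's theorem in \cite{Hoc-self-similar}, so I would assume $\l$ is transcendental. In that case any exact overlap for the IFS \eqref{eq:IFS-integral} would be a nontrivial polynomial equation with integer coefficients satisfied by $\l$, which is impossible; hence $h(\l) = H(p)$ and the contradiction hypothesis becomes $\dim\mu_\l < s(\l)$.

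The argument then proceeds in three main steps. First, I would establish a one-parameter analogue of Theorem \ref{th:BV2-general}: for every $\e > 0$ and $N \ge 1$ there exist an integer $n \ge N$ and an algebraic $\eta \in (0, 1)$ such that $|\l - \eta| \le \exp(-n^{1/\e})$, $\eta$ is a root of a nonzero polynomial in $\cP_L^{(n)}$ (with $L = \max_{i,j}|a_i - a_j|$), and $\tfrac{1}{n\log\eta^{-1}}H(\mu_\eta^{(n)}) \le \dim\mu_\l + \e$. This follows the skeleton of Section \ref{sc:prf of apx thm}, but is considerably simpler: the parameter space is one-dimensional, so the curves $\gamma \in \Gamma$ do not appear, the function-field machinery of Sections \ref{sc:ssm-Cx}-\ref{sc:repel} plays no role, and the analogue of Proposition \ref{prp:common zero} reduces essentially to a single application of Lemma \ref{lem:close root single poly}. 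Second, I would invoke the main theorem of \cite{BV-entropy}, generalized to \eqref{eq:IFS-integral}: for every $\e > 0$ there is a constant $M = M(\e, L)$ such that if $\eta \in (\e, 1-\e)$ is algebraic with $h(\eta) < H(p) - \e$, then $M(\eta) \le M$. Third, I would combine these with Hochman's Theorem \ref{th:Hoc} and Lemma \ref{lm:Dimitrov-precise} following the strategy of Section \ref{sc:EO-proof}, substantially simplified by the absence of a second parameter: using the Mahler bound, I select an integer $n'$ with $M^{-4n'} \le |\l - \eta| \le M^{-3n'}$, Hochman's theorem produces a large family $\wt\cQ \subset \cP_L^{(n')}$ of polynomials with $|P(\l)| \le c^{n'}$ for suitably small $c$, and Lemma \ref{lm:Dimitrov-precise} forces each such $P$ to be divisible by a prescribed high power $m_\eta^K$ of the minimal polynomial of $\eta$. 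The resulting rigidity of $\wt\cQ$ controls the entropy $H(\mu_\l^{(n')};\cD_{c^{n'}})$ from below in terms of $h(\eta)$, yielding $h(\eta) \le h_0 < H(p) - \e$; Mahler's separation (Lemma \ref{lem:dist between distinct roots}) then makes the approximating $\eta$'s eventually constant in $n$ (at rate $\exp(-n^{1/\e}) \ll 2^{-n-1}n^{-5n}L^{-4n}$ for $\e$ small), forcing $\l = \eta$ and contradicting transcendentality.

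The main technical obstacle is the second step: adapting the Fourier-analytic argument of \cite{BV-entropy} from the Bernoulli-convolution setting (polynomials in $\cP_1^{(n)}$) to polynomials in $\cP_L^{(n)}$. This generalization introduces no genuinely new phenomena — the Fourier bounds are stable under enlargement of the coefficient range, at the cost of factors depending only on $L$ — but it does require re-verifying the estimates of \cite{BV-entropy} with the enlarged coefficient set. As the introduction to this appendix indicates, the overall argument is a mild generalization of the Bernoulli convolution case and requires no substantially new ideas beyond those of \cite{BV-transcendent, Var-Bernoulli, BV-entropy}.
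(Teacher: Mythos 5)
Your first two steps are correct and match the paper: the one-parameter analogue of Theorem~\ref{th:BV2-general} is exactly the paper's Theorem~\ref{thm:gen of BV2 main thm}, and the Mahler-measure bound is the paper's Theorem~\ref{thm:large mahler --> large ent} (which the paper quotes as an immediate consequence of \cite{BV-entropy}*{Proposition 13}, so your worry about ``re-verifying the Fourier estimates'' is not an issue). The middle of your third step is also on track: with $M(\eta)\le M$ fixed and $n'$ chosen so that $M^{-4n'}\le|\l-\eta|\le M^{-3n'}$, Hochman's Theorem~\ref{th:Hoc} (in the paper the one-parameter variant is Theorem~\ref{thm:follows from =00005BHo=00005D}) supplies $0\ne P\in\cP_{L_0}^{(n')}$ with $|P(\l)|$ exponentially small, and Lemma~\ref{lm:Dimitrov-precise} then forces $\eta$ to be a root of $P$ of multiplicity at least some prescribed $K$.

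The gap is in how you close from there. Once $\eta\in(\e,1-\e)$ is a root of multiplicity $\ge K$ of $P\in\cP_{L_0}^{(n')}$, the contradiction is immediate from Jensen's formula: Lemma~\ref{lem:bound on num of roots} bounds, uniformly in $n'$, the multiplicity of any root of absolute value at most $1-\e/2$ of a nonzero polynomial with coefficients bounded by $L_0$, so choosing $K$ larger than that bound (which depends only on $L_0$ and $\e$) finishes the proof. This is the decisive step that makes the one-parameter case easy, and it is precisely the ``weak transversality'' mentioned around Lemma~\ref{lm:Dimitrov}; in the two-parameter case the relevant polynomial has the form $P_1\wt P_2-P_2\wt P_1$ and Jensen's formula does not apply, which is why all the extra machinery of Sections~\ref{sc:lb on h(R,lam,M)}--\ref{lb ent ord M} is needed. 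Your proposal never invokes Lemma~\ref{lem:bound on num of roots} at this point; instead it replaces the multiplicity contradiction with an entropy argument that is circular as stated (``controls the entropy $\ldots$ from below $\ldots$ yielding $h(\eta)\le h_0$'' — a claimed lower bound somehow producing an upper bound, and $h(\eta)\le h_0$ is not a contradiction but the input to the Mahler step), followed by a chain/stabilization argument via Lemma~\ref{lem:dist between distinct roots}. That chain argument cannot close the deal: for a fixed $\eta_n$ and $n'\gg n$, the separation threshold $2^{-n'-1}(n')^{-5n'}L^{-4n'}$ tends to zero while $|\l-\eta_n|$ is fixed, so nothing forces $\eta_n=\eta_{n'}$; the stabilization mechanism is already consumed \emph{inside} the proof of Theorem~\ref{thm:gen of BV2 main thm} and yields only the approximation $|\l-\eta|\le\exp(-n^{1/\e})$ for infinitely many $n$, never the identity $\l=\eta$. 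You must replace that ending by the Dimitrov-plus-Jensen multiplicity contradiction.
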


For algebraic $\l$, this is a result of Hochman \cite{Hoc-self-similar}.
For transcendental $\l$, the proof requires only some minor modification of the arguments in \cite{Var-Bernoulli} and its references,
which we will discuss.

\subsection{\label{sc:gen of BV2 int trans}Algebraic approximation of parameters with dimension drop}

We introduce some notation.
We write
\[
D=\{a_{i}-a_{j}:1\le i,j\le m\},\qquad\text{ and }\qquad L_{0}=\max(D).
\]
For $n\ge1$ and $\alpha>0$, we write
\[
E_{\alpha}^{(n)}=\{\eta\in(0,1):\dim\mu_{\eta}<\alpha\text{ and }P(\eta)=0\text{ for some }0\ne P\in\mathcal{P}_{L_0}^{(n)}\}.
\]
The purpose of this section is to prove the following theorem, which generalizes the main result of \cite{BV-transcendent}.

\begin{thm}
\label{thm:gen of BV2 main thm}
Let $\lambda\in(0,1)$ be with $\dim\mu_{\lambda}<s(\l)$.
Then for every $\e>0$ and $N\ge1$, there exists $n\ge N$ and
$\eta\in E_{\dim\mu_{\lambda}+\e}^{(n)}$ such that $|\lambda-\eta|\le\exp(-n^{1/\e})$.
\end{thm}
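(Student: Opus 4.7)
The plan is to follow the same strategy as the proof of Theorem \ref{th:BV2-general} in Section \ref{sc:BV2-general-proof}, but substantially simplified by the fact that there is no second parameter $\tau$: every polynomial encoding an exact overlap is a one-variable polynomial in $\cP_{L_0}^{(n)}$, so the dichotomy between degenerate and non-degenerate curves disappears, and with it the need for the machinery of Sections \ref{sc:ssm-Cx} and \ref{sc:repel}. In particular, for $\e > 0$ small, $\l \in (\e,1-\e)$, and $n$ large, we will be able to pass directly from an entropy deficit of $\mu_\l^{(n)}$ to a nearby algebraic $\eta \in E^{(n)}_\alpha$ with no auxiliary curve to rule out.

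First I would establish the analogue of Proposition \ref{prp:approx by alg ifs when ent is small}: if $\e \le \l \le 1-\e$ and $H(\mu_\l^{(n)};r) < nH(p)$ for some $r \le n^{-Cn\log n}$ (with $C$ large in terms of $L_0,\e$), then there exists $\eta \in (0,1)$ which is a root of a nonzero $P \in \cP_{L_0}^{(n)}$ such that $|\l-\eta| \le r^{1/(C\log n)}$ and $H(\mu_\eta^{(n)}) \le H(\mu_\l^{(n)};r)$. Indeed, the entropy bound forces two distinct words $\omega,\omega' \in \{1,\ldots,m\}^n$ with $\bigl|\sum_{j=0}^{n-1}(a_{\omega_j}-a_{\omega'_j})\l^j\bigr| \le r$, which produces the required $P$ with $|P(\l)| \le r$; Lemma \ref{lem:close root single poly} then provides $\eta$ close to $\l$. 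The key point is that for every other such pair $(\widetilde\omega,\widetilde\omega')$, the corresponding polynomial $\widetilde P \in \cP_{L_0}^{(n)}$ vanishes at $\eta$ (by Lemma \ref{lem:lb on value of poly at root} applied to $\widetilde P$ at $\eta$, noting $|\widetilde P(\eta)|$ is $n^{-Cn/2}$-small while $\eta$ is already a root of a polynomial in $\cP_{L_0}^{(n)}$), so that the inequality $H(\mu_\eta^{(n)}) \le H(\mu_\l^{(n)};r)$ follows.

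Next I would prove the analogue of Proposition \ref{prp:cond for large ent}: if such $\eta$ exists with $|\l-\eta| \le n^{-Cn}$, then for every $0 < r < |\l-\eta|^{C\log n}$ one has $\tfrac{1}{n\log\l^{-1}} H(\mu_\l^{(n)};r) > s(\l)-\e$. Arguing by contradiction, the previous step yields a second algebraic approximation $\widetilde\eta$ which is also a root of some polynomial in $\cP_{L_0}^{(n)}$ and satisfies $|\l-\widetilde\eta| \le r^{1/(C\log n)} < |\l-\eta|$; then $|\eta-\widetilde\eta| < 2n^{-Cn}$, but Lemma \ref{lem:dist between distinct roots} gives a separation of order $n^{-O(n)}L_0^{-O(n)}$ between any two such distinct roots, forcing $\eta = \widetilde\eta$, a contradiction.

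With these two results, together with Lemma \ref{lem:bd of ent of single dig2} (which applies verbatim using only $\dim\mu_\l < 1$) and Proposition \ref{prop:scale-and-convolve}, the proof proceeds exactly as in Section \ref{sc:BV2-general-proof}. Supposing toward a contradiction that the theorem fails for some $\e < \tfrac{1}{3}\min\{1,\log\l^{-1}\}(s(\l)-\dim\mu_\l)$, I would build inductively a sequence $n_0 < n_1 < \cdots$: setting $q = \lceil C n_j(\log n_j)^2/\log\l^{-1}\rceil$, either $H(\mu_\l^{(q)};q^{-Cq\log q}) \ge q(\dim\mu_\l + 2\e)\log\l^{-1}$, in which case we take $n_{j+1} = q$ and read off the required entropy from Hochman's formula $\lim \tfrac{1}{n}H(\mu_\l^{(n)};\l^{10n}) = \dim\mu_\l\log\l^{-1}$, or else the approximation lemma yields $\eta \in E_{\dim\mu_\l+3\e}^{(q)}$ with $|\l-\eta| < q^{-C^{1/2}q}$, and we take $n_{j+1}$ maximal with $|\l-\eta| < n_{j+1}^{-C^{1/2}n_{j+1}}$ to apply the second lemma. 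Setting $K_j = C(\log n_j)^2/\log\l^{-1}$, Proposition \ref{prop:scale-and-convolve} gives $\sum_{j=1}^N \tfrac{1}{\log K_j \log\log K_j} \le C(1 + \tfrac{1}{n_1}\sum \log K_j)$, while the failure hypothesis $|\l-\eta| > \exp(-q^{1/\e})$ forces $n_{j+1} \le n_j^{2\e^{-1}}$ and hence $n_j \le n_0^{2^j\e^{-j}}$; choosing $N$ doubly exponential in $j_0 = \lceil\log\log n_0\rceil$ contradicts the inequality. The main (mild) obstacle is the bookkeeping in the approximation step to ensure the parameter $\eta$ actually lies in $E^{(n)}_{\dim\mu_\l+\e}$, but since there is no curve alternative to rule out, this is strictly easier than in the setting of Theorem \ref{th:BV2-general}.
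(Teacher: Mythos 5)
Your proposal follows essentially the same route as the paper's own proof in Section \ref{sc:gen of BV2 int trans}: an approximation step converting an entropy deficit of $\mu_\l^{(n)}$ at scale $r$ into a nearby root $\eta$ of a polynomial in $\cP_{L_0}^{(n)}$ with $H(\mu_\eta^{(n)})$ controlled (Proposition \ref{prop:initial approx integer trans}), a separation step showing such $\eta$ forces full entropy at slightly smaller scales (Proposition \ref{prop:cond for large ent int trans}), Lemma \ref{lem:ub on dim} to pass from the $H(\mu_\eta^{(n)})$ bound to $\dim\mu_\eta$ so that $\eta$ actually lands in $E^{(n)}_{\dim\mu_\l+\e}$ (this is exactly the ``bookkeeping'' you flag), and then the inductive construction against Proposition \ref{prop:inequality involving =00007BK_j=00007D}, Lemma \ref{lem:bd of ent of single dig}, and Hochman's formula. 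The only deviations are cosmetic: you carry over the $\log n$-adjusted scales $n^{-Cn\log n}$, $r^{1/(C\log n)}$, $(\log n_j)^2$ from the two-parameter setting, where the paper uses the tighter $n^{-Cn}$, $r^{1/C}$, $\log n_j$; and in the approximation step you fix one root $\eta$ and kill the remaining polynomials at $\eta$ via Lemma \ref{lem:lb on value of poly at root} (mimicking Proposition \ref{prop:common zero}), whereas the paper applies Lemma \ref{lem:close root single poly} to every polynomial in $\cA$ and unifies the roots via Lemma \ref{lem:dist between distinct roots} — both work.
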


The proof of this result is adapted from \cite{BV-transcendent}.
Given $r>0$ and a random variable $A$ with distribution $\nu$, recall the notation $H(A;r)$ from \eqref{avg ent scl r} and that we write $H(\nu;r)$ in place of $H(A;r)$.
We also write
$H(\nu;r_1|r_2)=H(\nu;r_1)-H(\nu;r_2)$
for $r_1,r_2\in\R_{>0}$.

A significant proportion of the proof of Theorem \ref{thm:gen of BV2 main thm} is encapsulated in the
following result, which we quote from \cite{BV-transcendent}.

\begin{prp}
\label{prop:inequality involving =00007BK_j=00007D}
For all $\lambda\in(0,1)$ and $\a>0$, there exists $C>1$ such
that the following holds.
Let $N\ge1$, $\{n_{j}\}_{j=1}^{N}\subset\Z_{>0}$
and $\{K_{j}\}_{j=1}^{N}\subset[10,\infty)$ be given.
Suppose that
$\lambda^{-n_{1}}\ge\max\{2,\lambda^{-2}\}$ and,
\begin{enumerate}
\item $n_{j+1}\ge K_{j}n_{j}$ for all $1\le j<N$;
\item $H(\mu_{\lambda};r|2r)\le1-\alpha$ for all $r>0$;
\item $H(\mu_{\lambda}^{(n_{j})};\lambda^{K_{j}n_{j}}|\lambda^{10n_{j}})\ge\alpha n_{j}$
for all $1\le j\le N$;
\item $n_{j}\ge C(\log K_{j})^{2}$ for all $1\le j\le N$.
\end{enumerate}
Then,
\[
\sum_{j=1}^{N}\frac{1}{\log K_{j}\log\log K_{j}}\le C\left(1+\frac{1}{n_{1}}\sum_{j=1}^{N}\log K_{j}\right).
\]
\end{prp}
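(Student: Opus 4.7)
My plan is to mirror the proof strategy of \cite{BV-transcendent}*{Proposition 30}, of which this is essentially a restatement in the integral-translation setting. The two structural identities
\[
H(\mu_\l^{I}; r_1 | r_2) = H(\mu_\l^{\l^k I}; \l^k r_1 | \l^k r_2) \qquad \text{and} \qquad \mu_\l^{I_1 \sqcup \cdots \sqcup I_N} = \mu_\l^{I_1} * \cdots * \mu_\l^{I_N}
\]
(valid for any disjoint union of index sets $I_1,\ldots,I_N$ and any $k\in\Z$) lie at the heart of the argument, and together with a quantitative entropy-growth-under-convolution theorem provide the mechanism for converting the assumed entropy into the stated arithmetic estimate.

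First I would use the scaling identity to transfer the entropy estimates of condition (3) to a common scale range. Specifically, the assumption $H(\mu_\l^{(n_j)}; \l^{K_j n_j} | \l^{10 n_j}) \ge \a n_j$ becomes, after rescaling by an appropriate factor $\l^{-c_j}$, a lower bound on the entropy of a translated copy of $\mu_\l^{(n_j)}$ between two fixed scales independent of $j$. Condition (1), $n_{j+1}\ge K_j n_j$, ensures that the indices appearing in the rescaled measures can be chosen to be supported on disjoint intervals $I_1,\ldots,I_N$, so the convolution identity lets us write the rescaled $\mu_\l^{(n_N)}$ as a convolution of $N$ independent factors, each contributing a packet of entropy $\ge\a n_j$ at the common scale.

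Next I would invoke the entropy-increase-under-convolution theorem of Hochman type. Condition (2), $H(\mu_\l;r|2r)\le 1-\a$ at every scale, is precisely the uniform entropy-deficit hypothesis required to apply Hochman's inverse theorem (or its multi-scale refinement in \cite{BV-transcendent}). When applied to the convolution of the $N$ factors, this yields an entropy gain of order $n_j/(\log K_j\log\log K_j)$ for each index $j$: the $\log K_j$ factor reflects that the entropy $\a n_j$ is distributed over a scale range of logarithmic width $\sim n_j\log K_j$, and the $\log\log K_j$ factor is the characteristic logarithmic loss in the inverse theorem. Condition (4), $n_j\ge C(\log K_j)^2$, is used to justify the application of the inverse theorem at each step.

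Finally I would compare lower and upper bounds. The total entropy of $\mu_\l^{(n_N)}$ at its finest relevant scale is bounded above by $C(n_1+\sum_j\log K_j)$, a near-trivial estimate stemming from the support size after the rescalings, while the lower bound from the accumulated entropy gains is $\gtrsim n_1\sum_{j=1}^N 1/(\log K_j\log\log K_j)$. Dividing by $n_1$ gives the claimed inequality. The main obstacle I anticipate is the careful bookkeeping needed to run the multi-scale entropy-increase argument across all $N$ factors simultaneously without loss, i.e.\ to show that the individual gains truly accumulate rather than cannibalize one another; this is exactly what Hochman's inverse theorem delivers and what makes the proof in \cite{BV-transcendent} substantial rather than a one-scale exercise.
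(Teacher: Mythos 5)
Your proposal matches the paper's treatment exactly: the paper does not reprove this proposition but cites \cite{BV-transcendent}*{Proposition 30} and observes that its argument transfers verbatim, since it relies only on the scaling identity $H(\mu_\l^{I};r_1|r_2)=H(\mu_\l^{\l^kI};\l^kr_1|\l^kr_2)$, the disjoint-union convolution identity, and general facts about entropy of probability measures. Your outline (rescale condition (3) to a common scale range, decompose as a convolution over disjoint index blocks using condition (1), feed condition (2) into the entropy-increase-under-convolution machinery, and compare the accumulated gain against a trivial entropy upper bound) is precisely the strategy the paper sketches before deferring to \cite{BV-transcendent} for the details.
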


The main idea of the proof of this result is the following.
We rescale the inequalities in condition (3) in a suitable way.
The rescaled measures are all convolution factors of $\mu_\l$.
We use a result on how the entropy of measures grow under convolution (see \cite{BV-transcendent}*{Theorem 8} originally proved in \cite{Var-ac}) along with the
other conditions in the proposition to derive the inequality in the conclusion, whose two sides are
essentially lower and upper bounds for the entropy of $\mu_\l$ between some scales.

Proposition \ref{prop:inequality involving =00007BK_j=00007D} is proved in \cite{BV-transcendent}*{Proposition 30} in the setting of Bernoulli convolutions, that is, in the case $m=2$, $a_1=-1$, $a_2=1$ and $p_1=p_2=1/2$.
The proof uses only the scaling properties of the measures $\mu_\l^{(n)}$ and how they decompose as convolution products, which
hold also in the general case.
For this reason, we do not repeat the proof here.
The reader may find more details about this in Section \ref{sc:prf of apx thm}, and we refer to \cite{BV-transcendent} for the full details.

The proof of Theorem \ref{thm:gen of BV2 main thm} is a proof by contradiction, and it is based
on the following strategy.
We will show that under an indirect hypothesis, the parameters
can be chosen in Proposition \ref{prop:inequality involving =00007BK_j=00007D} in such a way that the hypotheses
of the proposition hold, and the conclusion leads to a contradiction.

We begin with condition $(2)$ of the proposition.
This will be satisfied using the assumption $\dim\mu_\l<1$ and the following result.

\begin{lem}
\label{lem:bd of ent of single dig}
Suppose that $\dim\mu_{\lambda}<1$.
Then there exists
$\alpha>0$ (depending on $\l$, $a_1,\ldots,a_m$ and $p_1,\ldots,p_m$) such that,
\[
H(\mu_{\lambda};r|2r)<1-\alpha\text{ for all }r>0.
\]
\end{lem}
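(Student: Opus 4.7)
The plan is to argue by contradiction along the lines of \cite{BV-transcendent}*{Lemma 13}, which treats the case of Bernoulli convolutions. Suppose the conclusion fails; then there is a sequence $r_k > 0$ with $H(\mu_\l; r_k | 2r_k) \to 1$. The key structural facts I would exploit are (a) the iterated self-similar decomposition $\mu_\l = \mu_\l^{(n)} * S_{\l^n} \mu_\l$, obtained from $\mu_\l = \sum_j p_j f_j \mu_\l$, and (b) the scaling identity $H(S_{\l^n} \mu_\l; r|2r) = H(\mu_\l; \l^{-n} r | 2 \l^{-n} r)$, both of which hold verbatim here because the IFS is homogeneous.

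First I would reduce to the case $r_k \to 0$. Since $\mu_\l$ is compactly supported, $H(\mu_\l; r|2r)$ vanishes for $r$ larger than the diameter of the support, and is continuous in $r$ on compact subintervals of $(0, \infty)$. Any limit point of $r_k$ in $(0, \infty)$ would give an explicit scale $r_\infty$ at which the forbidden value $1$ is attained; a short argument using the self-similar identity then propagates this near-flatness to arbitrarily small scales and yields $\dim \mu_\l = 1$, contradicting the hypothesis. In the remaining case $r_k \to 0$, I choose $n_k$ so that $s_k := \l^{-n_k} r_k$ lies in the fixed compact interval $[1, \l^{-1})$, and apply a standard convolution inequality on $\R$: if $\mu_1 * \mu_2$ has entropy close to the maximum between two nearby scales, then, up to a controlled defect, one of the factors must inherit almost the same entropy between the same scales. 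Applied to $\mu_\l^{(n_k)} * S_{\l^{n_k}} \mu_\l$, this transfers the near-maximum of $H(\mu_\l; r_k|2r_k)$ to $H(S_{\l^{n_k}} \mu_\l; r_k|2r_k)$, and the scaling identity converts this into near-maximal $H(\mu_\l; s_k|2s_k)$ at a fixed-size scale, reducing to the previous case.

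The main obstacle is the entropy-under-convolution inequality at a common pair of close scales. This is a general statement about probability measures on $\R$ whose proof uses only subadditivity of Shannon entropy under convolution, translation invariance of the averaged entropy $H(\cdot;r)$, and the coarse bound $H(\mu_\l^{(n)}; r|2r) = O(1)$ (so the discrete factor's contribution is absorbed into the defect by choosing $n_k$ appropriately). Because the entire argument of \cite{BV-transcendent}*{Lemma 13} depends only on the scaling identity, the convolution structure, and these general properties of Shannon entropy---none of which use the specific form of the Bernoulli convolution IFS---the same proof applies without modification in the present setting, and I do not repeat it. The dependence of $\alpha$ on $\l$ and the IFS data enters only through the gap $1 - \dim\mu_\l > 0$.
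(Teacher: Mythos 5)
Your proposal takes the same route as the paper: both defer to \cite{BV-transcendent}*{Lemma 13} and observe that its proof uses only the convolution decomposition $\mu_\l = \mu_\l^{(n)} * S_{\l^n}\mu_\l$, the scaling identity, and general entropy facts on $\R$ --- nothing specific to Bernoulli convolutions --- with $\alpha$ depending on the IFS data only through $1-\dim\mu_\l$. One small caution about your illustrative sketch: the phrase that near-maximal $H(\mu_1*\mu_2;r|2r)$ forces the \emph{second} factor to inherit near-maximal entropy is not a general convolution fact (the discrete factor $\mu_\l^{(n)}$ could a priori carry all the entropy), and the actual argument in \cite{BV-transcendent} instead bounds that factor's contribution at the chosen scale; but since you ultimately cite the lemma exactly as the paper does rather than rely on that step, the proposal is correct and matches the paper.
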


This is proved in  \cite{BV-transcendent}*{Lemma 13} in the setting of Bernoulli convolutions.
Again, the proof depends only on the above mentioned properties of $\mu_\l$, which hold generally. 
The dependence of $\a$ on the parameters $a_1,\ldots,a_m$ and $p_1,\ldots,p_m$ is only through
the difference $1-\dim\mu_\l$, otherwise these parameters play no role in the proof.

We now move on to consider the other conditions in Proposition \ref{prop:inequality involving =00007BK_j=00007D}.
These will be satisfied using the indirect hypothesis.
The first step is the following result which shows that the only way the entropy of $\mu_\l^{(n)}$ can be small
on a suitably chosen scale is if $\l$ is approximated very closely by an algebraic number $\eta$, and moreover
we can control $H(\mu_\eta^{(n)})$.

\begin{prp}
\label{prop:initial approx integer trans}For every $\e>0$
there exists $C=C(L_0,\e)>1$ such that the following holds for
all $n\ge N(L_0,\e,C)\ge1$. Let $0<r<n^{-Cn}$ and $\e\le\lambda\le1-\e$
be given, and suppose that $\frac{1}{n}H(\mu_{\lambda}^{(n)};r)<H(p)$.
Then there exists $0<\eta<1$, which is a root of a nonzero polynomial
in $\mathcal{P}_{L_0}^{(n)}$, such that $|\lambda-\eta|<r^{1/C}$ and
\[
H(\mu_{\eta}^{(n)})\le H(\mu_{\lambda}^{(n)};r).
\]
\end{prp}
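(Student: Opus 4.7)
The plan is to extract from the entropy deficit a collection of polynomials in $\cP_{L_0}^{(n)}$ that take very small values at $\lambda$, and then to show that all of them vanish at a common real algebraic number $\eta$ near $\lambda$; the bound $H(\mu_\eta^{(n)})\le H(\mu_\lambda^{(n)};r)$ will then follow because every collision at scale $r$ under $A_\lambda^{(n)}$ is promoted to an exact overlap under $A_\eta^{(n)}$.

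Concretely, by the definition of $H(\mu_\lambda^{(n)};r)$, pick $t\in[0,1]$ with $H(\lfloor r^{-1}A_\lambda^{(n)}+t\rfloor)\le H(\mu_\lambda^{(n)};r)$, and let $\cD$ be the partition of $\Omega_n:=\{1,\ldots,m\}^n$ pulled back from the integer partition of $\R$ by the map $\omega\mapsto \lfloor r^{-1}A_\lambda^{(n)}(\omega)+t\rfloor$. Since $\tfrac{1}{n}H(\mu_\lambda^{(n)};r)<H(p)$, the partition $\cD$ is strictly coarser than the discrete partition of $\Omega_n$, so there exist distinct $\omega,\omega'\in\Omega_n$ in the same atom of $\cD$. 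Because the integers $a_1,\ldots,a_m$ are distinct, the polynomial
\[
P_{\omega,\omega'}(X):=\sum_{j=0}^{n-1}(a_{\omega_j}-a_{\omega'_j})X^j
\]
is a nonzero element of $\cP_{L_0}^{(n)}$, and since $\omega,\omega'$ lie in the same atom of $\cD$ we have $|P_{\omega,\omega'}(\lambda)|<r$. Let $\cA$ denote the collection of all nonzero polynomials that arise in this way.

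I would then apply Lemma \ref{lem:close root single poly} to each $P\in\cA$: for $C$ sufficiently large (depending on $L_0$ and $\e$) the hypothesis $r<n^{-Cn}$ places us in the range of that lemma and produces a root $\eta_P\in\C$ with $|\lambda-\eta_P|\le r^{c/\log L_0}$, where $c=c(\e)>0$. Since $\lambda$ is real and $P$ has real coefficients, $\overline{\eta_P}$ is also a root of $P$ within $r^{c/\log L_0}$ of $\lambda$; if $\eta_P\ne\overline{\eta_P}$ they would be two distinct roots of a polynomial in $\cP_{L_0}^{(n)}$ at mutual distance at most $2r^{c/\log L_0}$, contradicting Mahler's separation bound from Lemma \ref{lem:dist between distinct roots} once $C$ is large enough. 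Hence $\eta_P\in\R$. The same Mahler separation, applied now to the pair $(\eta_P,\eta_{P'})$ for distinct $P,P'\in\cA$, forces $\eta_P=\eta_{P'}$, so that there is a single real root $\eta$ shared by every $P\in\cA$. This $\eta$ is a root of a nonzero element of $\cP_{L_0}^{(n)}$, satisfies $|\lambda-\eta|\le r^{1/C}$ after a final adjustment of $C$, and lies in $(0,1)$ because $\lambda\in[\e,1-\e]$ and $r^{1/C}\ll\e/2$.

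To conclude, since every $P\in\cA$ vanishes at $\eta$, the map $\omega\mapsto A_\eta^{(n)}(\omega)$ is constant on each atom of $\cD$ and is therefore $\cD$-measurable; hence
\[
H(\mu_\eta^{(n)})=H(A_\eta^{(n)})\le H(\lfloor r^{-1}A_\lambda^{(n)}+t\rfloor)\le H(\mu_\lambda^{(n)};r),
\]
as required. The principal obstacle is the quantitative calibration of $C$: it must be large enough in terms of $L_0$ and $\e$ so that the single threshold $r<n^{-Cn}$ simultaneously (i) validates the hypothesis of Lemma \ref{lem:close root single poly}, (ii) makes the cluster $\{\eta_P\}_{P\in\cA}$ small enough in diameter to be squeezed to a single real point by the Mahler separation bound, and (iii) keeps $\eta$ inside $(0,1)$. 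All three constraints are of comparable strength (polynomial vs.\ exponential in $n$), and the hypothesis $r<n^{-Cn}$ is precisely calibrated to accommodate them at once.
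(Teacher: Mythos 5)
Your proposal is correct and takes essentially the same route as the paper: extract nonzero difference polynomials from scale-$r$ collisions, use Lemma \ref{lem:close root single poly} to produce nearby roots, then squeeze them to a single real point via Mahler's separation bound (Lemma \ref{lem:dist between distinct roots}), and finally observe that $A_\eta^{(n)}$ factors through the partition induced by $\lfloor r^{-1}A_\lambda^{(n)}+t\rfloor$. The only cosmetic difference is that the paper defines $\cA$ as \emph{all} nonzero polynomials with coefficients in $D$ and $|P(\lambda)|\le r$, rather than only those arising from collisions at the chosen shift, but this gives the same conclusion; your explicit phrasing of the final step via $\cD$-measurability is a welcome clarification of what the paper only sketches. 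One small technical point to watch: Lemma \ref{lem:close root single poly} requires $l\ge 3$ and the exponent there involves $\log l$, so when $L_0\le 2$ you should use $\max\{L_0,3\}$ (or $\max\{\log L_0,\text{const}\}$, as the paper does) to avoid a degenerate exponent.
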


\begin{proof}
Let $\e>0$, let $C>1$ be large with respect to $L_0$ and $\e$,
let $n\ge1$ be large with respect to $C$, and let $r$ and $\lambda$
be as in the statement of the proposition.

Let $0\le s\le1$ be with,
\begin{equation}
H\left(\left\lfloor r^{-1}\sum_{k=0}^{n-1}a_{\xi_k}\lambda^{k}+s\right\rfloor \right)\le H(\mu_{\lambda}^{(n)};r)<nH(p).
\label{eq:trans with small ent}
\end{equation}
Let $\mathcal{A}$ be the set of all nonzero $\sum_{k=0}^{n-1}d_{k}X^{k}=P(X)\in\Z[X]$
with $|P(\lambda)|\le r$ and $d_{k}\in D$ for each $0\le k<n$.
Then $\mathcal{A}\subset\mathcal{P}_{L_0}^{(n)}\setminus\{0\}$, and
from \eqref{eq:trans with small ent} it follows that $\mathcal{A}$
is nonempty.

Given $P\in\mathcal{A}$ it follows from Lemma \ref{lem:close root single poly}
that there exists $\eta_{P}\in\C$ with $P(\eta_{P})=0$ and,
\[
|\eta_{P}-\lambda|\le(2^{n}\e^{-n}r)^{C^{-1/4}/\max\{\log L_{0},3\}}.
\]
From $r<n^{-Cn}$, since $C$ is large with respect to $L_{0}$, and
since $n$ is large with respect to $C$, it follows that we may assume
$|\lambda-\eta_{P}|<r^{C^{-1/2}}$.

For $Q,P\in\mathcal{A}$,
\[
|\eta_{P}-\eta_{Q}|\le|\eta_{P}-\lambda|+|\lambda-\eta_{Q}|\le2r^{C^{-1/2}}<2n^{-C^{1/2}n}.
\]
Thus, by Lemma \ref{lem:dist between distinct roots} and by assuming
that $C$ is large enough, it follows that $\eta_{P}=\eta_{Q}$. Write
$\eta$ for this common value, then $P(\eta)=0$ for all $P\in\mathcal{A}$.
From this, \eqref{eq:trans with small ent} and by the definition
of $\mathcal{A}$,
\[
H(\mu_{\eta}^{(n)})\le H(\mu_{\lambda}^{(n)};r).
\]

Since $\lambda\in\R$ we have $|\overline{\eta}-\lambda|=|\eta-\lambda|$,
and so $|\eta-\overline{\eta}|\le2n^{-C^{1/2}n}$. For $P\in\mathcal{A}$
we clearly have $P(\overline{\eta})=0$. Thus, another application
of Lemma \ref{lem:dist between distinct roots} gives $\eta=\overline{\eta}$.
Since $\lambda\in(\e,1-\e)$ we may assume $\eta\in(0,1)$,
which completes the proof of the proposition.
\end{proof}

In the course of the proof of Theorem \ref{thm:gen of BV2 main thm}, we will need to show that $\eta$
obtained above belongs to $E_{\dim\mu_\l+\e}^{(n)}$.
To this end, we need to convert our bound on $H(\mu_\eta^{(n)})$ to a bound on
the dimension of $\mu_\eta$.
This is achieved in the next lemma.

\begin{lem}
\label{lem:ub on dim}
We have
\[
\dim\mu_{\lambda}\le\frac{H(\mu_{\lambda}^{(n)})}{n\log\l^{-1}}\text{ for all }n\ge1.
\]
\end{lem}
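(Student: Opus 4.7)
The plan is to reduce the claim to two independent facts: subadditivity of $H(\mu_\lambda^{(n)})$ in $n$, and the bound $\dim\mu_\lambda \le h(\lambda)/\log\lambda^{-1}$ relating pointwise dimension to the entropy rate.

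First I would verify subadditivity. The identity
\[
\sum_{j=0}^{n+m-1}a_{\xi_j}\lambda^j = \sum_{j=0}^{n-1}a_{\xi_j}\lambda^j + \lambda^n\sum_{j=0}^{m-1}a_{\xi_{n+j}}\lambda^j
\]
decomposes $\mu_\lambda^{(n+m)}$ as a convolution of $\mu_\lambda^{(n)}$ with an independent rescaled copy of $\mu_\lambda^{(m)}$. Since Shannon entropy of a discrete random variable is invariant under scaling by a nonzero constant, this yields $H(\mu_\lambda^{(n+m)})\le H(\mu_\lambda^{(n)})+H(\mu_\lambda^{(m)})$. Fekete's lemma then gives $h(\lambda) = \inf_n H(\mu_\lambda^{(n)})/n \le H(\mu_\lambda^{(n)})/n$ for every $n$.

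Next I would establish $\dim\mu_\lambda \le h(\lambda)/\log\lambda^{-1}$. Applying the same splitting to the full random sum gives $\mu_\lambda = \mu_\lambda^{(n)} * (S_{\lambda^n})_*\mu_\lambda$, where $S_t(x)=tx$ and the two factors are independent. I would then invoke the standard subadditivity of the averaged-scale entropy under convolution on $\R$, which at scale $r=\lambda^n$ yields
\[
H(\mu_\lambda;\lambda^n) \le H(\mu_\lambda^{(n)};\lambda^n) + H((S_{\lambda^n})_*\mu_\lambda;\lambda^n).
\]
The first summand is at most the Shannon entropy $H(\mu_\lambda^{(n)})$ because $\mu_\lambda^{(n)}$ is atomic, while the second equals $H(\mu_\lambda;1)$ by the rescaling $r \mapsto \lambda^{-n}r$, and this is $O(1)$ since $\mu_\lambda$ is supported in an interval of diameter $\lesssim L_0/(1-\lambda)$. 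Hence $H(\mu_\lambda;\lambda^n) \le H(\mu_\lambda^{(n)}) + O(1)$.

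To conclude, I would apply the Feng--Hu exact dimensionality theorem \cite{FH-dimension}, which gives
\[
\dim\mu_\lambda = \lim_{n\to\infty}\frac{H(\mu_\lambda;\lambda^n)}{n\log\lambda^{-1}}.
\]
Combining with the two steps above,
\[
\dim\mu_\lambda \le \lim_n \frac{H(\mu_\lambda^{(n)}) + O(1)}{n\log\lambda^{-1}} = \frac{h(\lambda)}{\log\lambda^{-1}} \le \frac{H(\mu_\lambda^{(n)})}{n\log\lambda^{-1}}
\]
for every $n\ge 1$. The only delicate input is the convolution inequality for the averaged entropy $H(\cdot;r)$, which requires some care about how interval partitions interact with addition but is a well-known routine estimate; it is not a real obstacle.
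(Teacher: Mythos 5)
Your proof is correct and follows essentially the same approach as the paper: both use subadditivity of $n\mapsto H(\mu_\lambda^{(n)})$, the characterization $\dim\mu_\lambda=\lim_n H(\mu_\lambda;\lambda^n)/(n\log\lambda^{-1})$, and the comparison $H(\mu_\lambda;\lambda^n)\le H(\mu_\lambda^{(n)})+O(1)$ via the decomposition $\mu_\lambda=\mu_\lambda^{(n)}*(S_{\lambda^n})_*\mu_\lambda$. You merely spell out the step the paper labels ``it is easy to see'' by invoking convolution subadditivity of the averaged-scale entropy and scale invariance; the mathematical content is identical.
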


\begin{proof}
By sub-additivity, 
\begin{equation}
\lim_n\frac{1}{n}H(\mu_{\lambda}^{(n)})=\inf_n\frac{1}{n}H(\mu_{\lambda}^{(n)}).\label{eq:lim=00003Dinf}
\end{equation}
It is easy to see that for every $n\ge1$,
\[
H(\mu_{\lambda};\l^n)=H(\mu_{\lambda}^{(n)};\l^n)+O(1).
\]
Thus,
\[
\dim\mu_{\lambda}=
\lim_n\frac{H(\mu_{\lambda};\l^n)}{n\log\l^{-1}}=
\lim_n\frac{H(\mu_{\lambda}^{(n)};\l^n)}{n\log\l^{-1}}
\le\lim_n\frac{H(\mu_{\lambda}^{(n)})}{n\log\l^{-1}},
\]
and the lemma follows by \eqref{eq:lim=00003Dinf}.
\end{proof}

Next, we consider the situation when the approximating parameter $\eta$ described in the
previous proposition exists.
In this case, we show that we can find a larger value of $n$ for which $\mu_{\lambda}^{(n)}$
has significant entropy on a suitable scale.
This is achieved in the next result.
We will apply it choosing $n$ to be as large as possible subject to the constraint $|\lambda-\eta|<n^{-Cn}$.
Therefore, how large $n$ will be, ultimately depends on the approximation $|\lambda-\eta|$, which we
will control using the indirect hypothesis in the proof of Theorem \ref{thm:gen of BV2 main thm}.

\begin{prp}
\label{prop:cond for large ent int trans}For every $\e>0$
there exists $C=C(L_0,\e)>1$ such that the following holds for
all $n\ge N(L_0,\e,C)\ge1$. Let $\e\le\lambda\le1-\e$
and suppose that there exists $\eta\in\C$, which is a root
of a nonzero polynomial in $\mathcal{P}_{L_0}^{(n)}$, such that $|\lambda-\eta|<n^{-Cn}$.
Then $\frac{1}{n}H(\mu_{\lambda}^{(n)};r)=H(p)$ for all $r\le|\lambda-\eta|^{C}$.
\end{prp}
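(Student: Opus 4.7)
The plan is to argue by contradiction, combining Proposition \ref{prop:initial approx integer trans} with the root–separation bound of Lemma \ref{lem:dist between distinct roots}. Let $C_0=C(L_0,\e)$ be the constant produced by Proposition \ref{prop:initial approx integer trans} for this $\e$. I would choose $C>1$ large with respect to $L_0$, $\e$ and $C_0$ (in particular $C>C_0$ and $C$ large enough that $n^{-Cn}$ beats the separation bound $2^{-n-1}n^{-5n}L_0^{-4n}$ for large $n$), and then pick $N=N(L_0,\e,C)$ large with respect to $C$, so that Proposition \ref{prop:initial approx integer trans} is applicable at the scales that arise below. Since the statement is vacuous if $\eta=\lambda$, I may assume $|\lambda-\eta|>0$.

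Suppose, towards a contradiction, that there exists $0<r\le |\lambda-\eta|^{C}$ with $\tfrac1n H(\mu_\lambda^{(n)};r)<H(p)$. From $|\lambda-\eta|<n^{-Cn}$ we get $r\le n^{-C^{2}n}<n^{-C_0 n}$ provided $C^2\ge C_0$, so the hypotheses of Proposition \ref{prop:initial approx integer trans} are met. It therefore produces $\tilde\eta\in(0,1)$ that is a root of a nonzero polynomial in $\mathcal{P}_{L_0}^{(n)}$ and satisfies
\[
|\lambda-\tilde\eta|<r^{1/C_0}\le |\lambda-\eta|^{C/C_0}.
\]

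Both $\eta$ and $\tilde\eta$ are roots of nonzero polynomials in $\mathcal{P}_{L_0}^{(n)}$. Using the triangle inequality and $|\lambda-\eta|<n^{-Cn}$,
\[
|\eta-\tilde\eta|\le |\eta-\lambda|+|\lambda-\tilde\eta|\le 2|\lambda-\eta|^{C/C_0}<2\,n^{-C^{2}n/C_0}.
\]
By Lemma \ref{lem:dist between distinct roots}, if $\eta\ne\tilde\eta$ then $|\eta-\tilde\eta|\ge 2^{-n-1}n^{-5n}L_0^{-4n}$; for $C$ chosen large enough with respect to $L_0$ and $C_0$ (and $n\ge N$ large enough) this contradicts the previous bound, so $\eta=\tilde\eta$. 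But then $|\lambda-\eta|=|\lambda-\tilde\eta|<|\lambda-\eta|^{C/C_0}$, which is impossible since $C>C_0$ and $0<|\lambda-\eta|<1$.

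There is no serious obstacle in this argument beyond bookkeeping: the whole content is to verify that the scale $r$ is small enough to trigger Proposition \ref{prop:initial approx integer trans}, to use root separation to force the algebraic approximation supplied by that proposition to coincide with the given $\eta$, and then to read off a quantitative contradiction from the two distance estimates. The only point requiring care is the order of quantifiers, namely choosing $C$ large in terms of $L_0,\e,C_0$ first, and only then taking $N$ large in terms of $L_0,\e,C$, so that all three inequalities $r<n^{-C_0n}$, the Mahler separation beats $2n^{-C^{2}n/C_0}$, and $|\lambda-\eta|^{C/C_0}<|\lambda-\eta|$ hold simultaneously.
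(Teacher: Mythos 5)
Your approach is essentially identical to the paper's: under the indirect hypothesis, apply Proposition \ref{prop:initial approx integer trans} to manufacture a second algebraic approximant to $\lambda$, and then use the root-separation bound of Lemma \ref{lem:dist between distinct roots} to obtain a contradiction. The paper organizes the contradiction slightly differently (it notes $|\lambda-\eta'|<r^{1/C}\le|\lambda-\eta|$ directly, so $\eta'\ne\eta$, and then the Mahler bound fails), whereas you first force $\eta=\tilde\eta$ via Mahler and then contradict the strict inequality $|\lambda-\eta|<|\lambda-\eta|^{C/C_0}$; both are fine.

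One algebra slip: the chain $|\eta-\tilde\eta|\le|\eta-\lambda|+|\lambda-\tilde\eta|\le 2|\lambda-\eta|^{C/C_0}$ is wrong, because $|\lambda-\eta|^{C/C_0}<|\lambda-\eta|=|\eta-\lambda|$ (as $C/C_0>1$ and $0<|\lambda-\eta|<1$), so the first summand is not bounded by $|\lambda-\eta|^{C/C_0}$. The correct bound is $|\eta-\tilde\eta|\le 2|\lambda-\eta|<2n^{-Cn}$, which is still far smaller than the Mahler separation $2^{-n-1}n^{-5n}L_0^{-4n}$ for $C$ large, so the argument stands after this small correction.
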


\begin{proof}
Let $\e>0$, let $C>1$ be large with respect to $L_0$ and $\e$,
let $n\ge1$ be large with respect to $C$, and let $\lambda$ and
$\eta$ be as in the statement of the proposition.

Suppose to the contrary
that there exists $0<r\le|\lambda-\eta|^{C}$ with $\frac{1}{n}H(\mu_{\lambda}^{(n)};r)<H(p)$.
By Proposition \ref{prop:initial approx integer trans}, there exists
$\eta'\in(0,1)$, which is a root of a nonzero polynomial in $\mathcal{P}_{L_0}^{(n)}$,
such that $|\lambda-\eta'|<r^{1/C}\le|\lambda-\eta|$. In particular
$\eta\ne\eta'$ and
\[
|\eta-\eta'|\le|\eta-\lambda|+|\lambda-\eta'|\le2n^{-Cn}.
\]
However, as $C$ is assumed to be large enough,
this contradicts Lemma \ref{lem:dist between distinct roots}, which
completes the proof of the proposition.
\end{proof}

\begin{proof}[Proof of Theorem \ref{thm:gen of BV2 main thm}]
Let $\lambda\in(0,1)$ be with $\dim\mu_{\lambda}<s(\l)$.
Suppose to the contrary that there exists
\begin{equation}
0<\e<\frac{1}{3}\min\{1,\log\lambda^{-1}\}(s(\l)-\dim\mu_{\lambda}),\label{eq:restriction on epsilon}
\end{equation}
such that
\begin{equation}
|\lambda-\eta|>\exp(-n^{\e^{-1}})\text{ for all \ensuremath{n\ge\e^{-1}} and }\eta\in E_{\dim\mu_{\lambda}+3\e}^{(n)}.\label{eq:cont assumption}
\end{equation}

Let $C>1$ be large with respect to $a_1,\ldots,a_m$, $\lambda$ and $\e$,
and let $n_{0}\ge1$ be large with respect to $C$. We shall next
define by induction a sequence $n_{0}<n_{1}<\ldots$ of positive integers.

Let $j\ge0$ and suppose that $n_{j}$ has been chosen. Write $q=\left\lceil \frac{Cn_{j}\log n_{j}}{\log\lambda^{-1}}\right\rceil $
and assume first that
\[
H(\mu_{\lambda}^{(q)};q^{-Cq})\geq q\log\lambda^{-1}(\dim\mu_{\lambda}+2\e).
\]
In this case we, set $n_{j+1}=q$. Note that from $\dim\mu_{\lambda}<1$
and \cite{Hoc-self-similar}*{Theorem 1.3},
\begin{equation}
\lim_n\frac{H(\mu_{\lambda}^{(n)};\lambda^{10n})}{n\log\lambda^{-1}}=\dim\mu_{\lambda}.\label{eq:follows from =00005BHo=00005D}
\end{equation}
Thus, by assuming that $n_{0}$ is large enough,
\begin{equation}
H(\mu_{\lambda}^{(n_{j+1})};n_{j+1}^{-Cn_{j+1}}|\lambda^{10n_{j+1}})\ge\e n_{j+1}\log\lambda^{-1}.\label{eq:lb in first case}
\end{equation}

Next suppose that
\[
H(\mu_{\lambda}^{(q)};q^{-Cq})<q\log\lambda^{-1}(\dim\mu_{\lambda}+2\e).
\]
By \eqref{eq:restriction on epsilon},
\[
\log\lambda^{-1}(\dim\mu_{\lambda}+2\e)\le\log\lambda^{-1}(s(\l)-\e)\le H(p)-\e\log\lambda^{-1}.
\]
From this, by Proposition \ref{prop:initial approx integer trans},
and by assuming that $C$ and $n_{0}$ are large enough (with respect
to the specified parameters), it follows that there exists $0<\eta<1$,
which is a root of a nonzero polynomial in $\mathcal{P}_{L_0}^{(q)}$,
such that $|\lambda-\eta|<q^{-C^{1/2}q}$ and
\[
H(\mu_{\eta}^{(q)})\le H(\mu_{\lambda}^{(q)};q^{-Cq})<q\log\lambda^{-1}(\dim\mu_{\lambda}+2\e).
\]

Note that we may assume that $n_{0}$ is sufficiently large so that
\[
H(\mu_{\eta}^{(q)})<q\log\eta^{-1}(\dim\mu_{\lambda}+3\e).
\]
Hence by Lemma \ref{lem:ub on dim},
\[
\dim\mu_{\eta}\le\frac{H(\mu_{\eta}^{(q)})}{q\log\eta^{-1}}<\dim\mu_{\lambda}+3\e,
\]
which implies $\eta\in E_{\dim\mu_{\lambda}+3\e}^{(q)}$.

We take $n_{j+1}$ to be the largest integer $n$ with
$|\lambda-\eta|<n^{-C^{1/2}n}$.
In particular, we have $n_{j+1}\ge q$.
Since
\[
(n_{j+1}+1)^{-C^{1/2}(n_{j+1}+1)}\le|\lambda-\eta|<n_{j+1}^{-C^{1/2}n_{j+1}},
\]
by Proposition \ref{prop:cond for large ent int trans} and by assuming
that $C$ and $n_{0}$ are large enough,
\[
H(\mu_{\lambda}^{(n_{j+1})};(n_{j+1}+1)^{-C(n_{j+1}+1)})=n_{j+1}H(p).
\]
From this, Lemma \ref{diff of ent} and \eqref{eq:follows from =00005BHo=00005D},
it follows that we may assume
\[
H(\mu_{\lambda}^{(n_{j+1})};n_{j+1}^{-Cn_{j+1}}|\lambda^{10n_{j+1}})\ge n_{j+1}(H(p)-\e)-n_{j+1}\log\lambda^{-1}(\dim\mu_{\lambda}+\frac{\e}{\log\lambda^{-1}}).
\]
Thus from \eqref{eq:restriction on epsilon},
\begin{equation}\label{eq:nj+1-second-case}
H(\mu_{\lambda}^{(n_{j+1})};n_{j+1}^{-Cn_{j+1}}|\lambda^{10n_{j+1}})\ge\e n_{j+1}.
\end{equation}

Fix a large integer $N=N(C,n_{0})\ge1$ to be determined later in
the proof. For $1\le j\le N$ set $K_{j}=\frac{C\log n_{j}}{\log\lambda^{-1}}$.
By \eqref{eq:lb in first case} and \eqref{eq:nj+1-second-case}, it follows that
\[
H(\mu_{\lambda}^{(n_{j})};\lambda^{K_{j}n_{j}}|\lambda^{10n_{j}})\ge\e n_{j}\min\{1,\log\lambda^{-1}\}.
\]
We also have $\lambda^{-n_{1}}\ge\max\{2,\lambda^{-2}\}$, $n_{j}\ge C(\log K_{j})^{2}$
and
\[
n_{j+1}\ge\left\lceil \frac{Cn_{j}\log n_{j}}{-\log\lambda}\right\rceil \ge K_{j}n_{j},
\]
if $n_{0}$ is assumed to be large enough with respect to $C$ and
$\lambda$. From all of this together with Lemma \ref{lem:bd of ent of single dig},
it follows that the conditions of Proposition \ref{prop:inequality involving =00007BK_j=00007D}
are satisfied. Thus, by assuming that $C$ is large enough, we get
\begin{equation}
\sum_{j=1}^{N}\frac{1}{\log K_{j}\log\log K_{j}}\le C\left(1+\frac{1}{n_{1}}\sum_{j=1}^{N}\log K_{j}\right).\label{eq:by inequality involving =00007BK_j=00007D}
\end{equation}

Next we estimate how fast the sequence $\{n_{j}\}_{j\ge0}$ may grow.
Let $j\ge0$ and set $q=\left\lceil \frac{Cn_{j}\log n_{j}}{\log\lambda^{-1}}\right\rceil$ as before.
Recall that in the definition of $n_{j+1}$, if the first alternative
occurred then we took $n_{j+1}=q$. Suppose next that the second alternative
has occurred, and let $\eta$ be as obtained during the definition
of $n_{j+1}$. Recall that we have selected $n_{j+1}$ so that $|\lambda-\eta|<n_{j+1}^{-C^{1/2}n_{j+1}}$.
Additionally, from $\eta\in E_{\dim\mu_{\lambda}+3\e}^{(q)}$,
since we may assume $n_{0}\ge\e^{-1}$ and by \eqref{eq:cont assumption},
it follows that $|\lambda-\eta|>\exp(-q^{\e^{-1}})$. Thus,
if $n_{0}$ is large enough, then
\[
n_{j+1}<\log\left(n_{j+1}^{C^{1/2}n_{j+1}}\right)<-\log|\lambda-\eta|<q^{\e^{-1}}<n_{j}^{2\e^{-1}}.
\]
Now by induction it follows that $n_{j}\le n_{0}^{2^{j}\e^{-j}}$
for all $j\ge0$.

The rest of the proof is a simple but long calculation estimating both sides of \eqref{eq:by inequality involving =00007BK_j=00007D}
using primarily the above bounds on the growth of $n_j$, which in the end leads to a contradiction.
This calculation is identical to the end of the proof of Theorem \ref{th:BV2-general} given in Section \ref{sc:BV2-general-proof},
and we do not repeat it here.
\end{proof}

\subsection{Proof of Conjecture \ref{cn:EO} in the case of integral translations}\label{sc:EO int trans}

The purpose of this section is to prove Theorem \ref{th:integral-translations} for transcendental $\l$
generalizing the main result of \cite{Var-Bernoulli}.
We restate it as follows.

\begin{thm}
\label{thm:ver conj for int trans}We have $\dim\mu_{\lambda}=s(\lambda)$
for all transcendental $\lambda\in(0,1)$.
\end{thm}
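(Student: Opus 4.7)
The plan is to argue by contradiction, closely following the structure of Section \ref{sc:EO-proof} but with the considerable simplification that the $\tau$-direction is frozen at zero. Suppose some transcendental $\l\in(0,1)$ satisfies $\dim\mu_\l<s(\l)$, and fix $\e_0>0$ with $\dim\mu_\l+2\e_0<s(\l)$.

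First I will invoke Theorem \ref{thm:gen of BV2 main thm}: for every $N\ge1$ there exist $n\ge N$ and $\eta\in E_{\dim\mu_\l+\e_0}^{(n)}$ with $|\l-\eta|\le\exp(-n^{1/\e_0})$. In particular $\eta$ is algebraic of degree at most $n$, so Hochman's formula for algebraic parameters gives $\dim\mu_\eta=\min\{1,h(\eta)/\log\eta^{-1}\}$; combined with $\dim\mu_\eta<s(\l)-\e_0$ and the closeness of $\eta$ to $\l$, a brief case analysis on whether $s(\l)=1$ or $s(\l)<1$ produces a uniform drop $h(\eta)\le H(p)-\d$ for some $\d>0$ depending only on $\l$. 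I will then apply the integer-translations analog of Theorem \ref{th:BV1} --- a direct extension of the main result of \cite{BV-entropy}, with the coefficient set $\{-1,0,1\}$ replaced by the finite set $D$ and with no curve-exclusion hypothesis needed since $\tau=0$ is fixed --- to extract a uniform Mahler measure bound $M(\eta)\le M$ with $M$ depending only on $\l$, $p$ and $(a_j)$, and crucially independent of $n$.

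Next I fix $h_0$ with $\log\l^{-1}\dim\mu_\l<h_0<\min\{\log\l^{-1},H(p)\}$ and constants $K\in\Z_{>0}$, $c\in(0,1)$ to be determined. Choosing $n'$ so that $M^{-4n'}\le|\l-\eta|<M^{-3n'}$ makes $n'$ arbitrarily large for $N$ large, and Theorem \ref{th:Hoc} yields $H(\mu_\l^{(n')};\mathcal{D}_{c^{n'}})\le h_0 n'$. The pullback $\wt{\mathcal{D}}$ of $\mathcal{D}_{c^{n'}}$ under $\o\mapsto\sum_{j<n'}a_{\o_j}\l^j$ is a partition of $\O$ with $H(\b;\wt{\mathcal{D}})\le h_0 n'<H(p)n'$, so some atom must contain two distinct $n'$-prefixes $(\o_0,\ldots,\o_{n'-1})\ne(\o'_0,\ldots,\o'_{n'-1})$. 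Each such pair yields a nonzero polynomial $P(X)=\sum_{j<n'}(a_{\o_j}-a_{\o'_j})X^j\in\mathcal{P}_{L_0}^{(n')}$ with $|P(\l)|\le c^{n'}$. Applying Lemma \ref{lm:Dimitrov-precise} with $\a=2$, degree bound $n$, coefficient bound $L_0$ and multiplicity $K$ --- valid provided $n'\gg n\log n$ (true for $N$ large) and $c$ is sufficiently small in terms of $M$, $L_0$ and $K$ --- shows that $\eta$ is a zero of every such $P$ of multiplicity at least $K$.

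To close the argument I will invoke Hochman's weak transversality \cite{Hoc-self-similar}: for each $\e>0$ and each fixed finite subset $D'\subset\Z$ there is a constant $K_0=K_0(\e,D')$ such that no nonzero polynomial with coefficients in $D'$ has a zero of multiplicity exceeding $K_0$ in $[\e,1-\e]$. Hochman proves this for $D'=\{-1,0,1\}$ and the argument transfers without essential change to any finite integer set. Since the coefficients of my $P$ lie in the \emph{fixed} finite set $D-D$ --- in contrast to the products $P_1\wt P_2-\wt P_1 P_2$ of Section \ref{sc:EO-proof}, whose coefficients grow with $n'$ --- taking $K>K_0(\e,D-D)$ for any $\e$ with $\eta\in[\e,1-\e]$ forces every $P$ arising from a collision to vanish, contradicting $P\ne0$. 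The main obstacle is the Mahler measure bound cited in the first step: extending \cite{BV-entropy} to the IFS \eqref{eq:IFS-integral} in its full generality. This extension is expected to be routine --- all coefficients in play remain integers in a fixed finite set, and the absence of a $\tau$-coordinate removes the bad-curve issues that make Question \ref{q:BV1-general} delicate --- but it does require a careful re-examination of the arguments of \cite{BV-entropy} and is where the bulk of the technical work will lie.
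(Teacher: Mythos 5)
Your proof is correct and follows essentially the same route as the paper's: algebraic approximation via Theorem~\ref{thm:gen of BV2 main thm}, a uniform Mahler measure bound, production of a polynomial in $\cP_{L_0}^{(n')}$ that is tiny at $\l$, Lemma~\ref{lm:Dimitrov-precise} to force a high-multiplicity zero at $\eta$, and a contradiction with the bounded-multiplicity property of polynomials with coefficients in a fixed finite set. Two remarks, mostly about economy and about the status of your ``main obstacle''.

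First, the middle step can be streamlined. You import Theorem~\ref{th:Hoc} and the pullback partition $\wt\cD$ from Section~\ref{sc:EO-proof}, where that machinery is needed because the three-maps argument must track entropy drop to specific pairs $(Q,\wt Q)$ and then take derivatives. In the $\tau=0$ situation you only need a single nonzero $P\in\cP_{L_0}^{(n')}$ with $|P(\l)|$ super-exponentially small, and that is exactly what Theorem~\ref{thm:follows from =00005BHo=00005D} (a direct restatement of \cite{Hoc-self-similar}*{Theorem~1.1}) hands you, with no partition bookkeeping; the paper's proof applies it directly. Your ``Hochman weak transversality'' step is likewise already Lemma~\ref{lem:bound on num of roots}: with $l=L_0$ fixed, that lemma bounds the multiplicity of a zero of $P\in\cP_{L_0}^{(n)}$ in any disk of radius bounded away from $1$ by a constant $K_0(L_0,\l)$ independent of $n$, which is precisely the assertion you are appealing to. Second, the Mahler-measure step you flag as ``where the bulk of the technical work will lie'' is not an outstanding task: the required statement is Theorem~\ref{thm:large mahler --> large ent}, which as recorded in the appendix follows directly from \cite{BV-entropy}*{Proposition~13} (the argument that removes the $\pm1$ special structure is already spelled out in the proof of \cite{Var-Bernoulli}*{Theorem~9}); you should cite that result as a black box rather than present its re-derivation as the chief remaining obstacle. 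With those two adjustments your proof and the paper's are essentially identical.
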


The following theorem follows directly from
results found in \cite{Hoc-self-similar}, as deduced in \cite{BV-entropy}*{Section 3.4}.
\begin{thm}
\label{thm:formula for dim alg param}Let $\eta\in(0,1)$ be algebraic,
then
\[
\dim\mu_{\eta}=\min\{1,\frac{h(\eta)}{\log\eta^{-1}}\}.
\]
\end{thm}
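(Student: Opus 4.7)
The plan is to deduce this theorem directly from Hochman's main results in \cite{Hoc-self-similar}, which apply essentially off-the-shelf once $\eta$ is algebraic. The argument has two pieces that I will address in turn: verifying Hochman's exponential separation condition, and identifying the ``random walk entropy'' appearing in his formula with the Garsia-type entropy $h(\eta)$.

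First, I would verify exponential separation. An $n$-fold composition $f_{\omega_{0}}\circ\cdots\circ f_{\omega_{n-1}}$ of maps from the IFS \eqref{eq:IFS-integral} equals $x\mapsto\eta^{n}x+\sum_{k=0}^{n-1}a_{\omega_{k}}\eta^{k}$. Hence two distinct compositions either coincide or differ by translation by $Q(\eta)$ where $Q(X)=\sum_{k=0}^{n-1}(a_{\omega_{k}}-a_{\omega'_{k}})X^{k}$ is a nonzero element of $\mathcal{P}_{L_{0}}^{(n)}$. Applying Lemma \ref{lem:lb on val of poly} to $Q$ gives $|Q(\eta)|\ge(L_{0}n)^{-\deg\eta}M(\eta)^{-n}$, so the minimal separation $s_{n}$ between translation parts of distinct $n$-fold compositions satisfies $-\tfrac{1}{n}\log s_{n}\ge\log M(\eta)-o(1)$. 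This is precisely Hochman's exponential separation condition.

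Next, I would invoke \cite{Hoc-self-similar}*{Theorem 1.1} (or its homogeneous specialization in Theorem 1.3 there), which, under exponential separation, yields $\dim\mu_{\eta}=\min\{1,h_{RW}/\log\eta^{-1}\}$, where $h_{RW}$ is the entropy rate of the random walk on the semigroup generated by the maps in \eqref{eq:IFS-integral}, counted modulo exact overlaps. For our homogeneous IFS, two $n$-fold compositions agree if and only if their translation parts $\sum_{k<n}a_{\omega_{k}}\eta^{k}$ agree (the linear parts are always $\eta^{n}$). Consequently the law of the $n$-step walk on the quotient semigroup has the same entropy as $\mu_{\eta}^{(n)}$. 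Dividing by $n$ and passing to the limit (which exists by subadditivity) identifies $h_{RW}$ with $h(\eta)$, and the claimed formula follows.

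There is no substantial obstacle: the only number-theoretic input is the Mahler/Liouville-type lower bound already recorded as Lemma \ref{lem:lb on val of poly}, and the entropy identification is a straightforward unpacking of the definitions in the homogeneous setting. This matches the assertion preceding the theorem that it follows from \cite{Hoc-self-similar} as deduced in \cite{BV-entropy}*{Section 3.4}.
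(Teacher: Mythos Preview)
Your proposal is correct and matches the paper's approach, which simply cites \cite{Hoc-self-similar} and \cite{BV-entropy}*{Section 3.4} without spelling out the details; you have filled in precisely the two standard steps (exponential separation via the Liouville-type bound, then identification of the random walk entropy with $h(\eta)$ in the homogeneous case). One minor slip: the displayed inequality should read $-\tfrac{1}{n}\log s_{n}\le\log M(\eta)+o(1)$, since you are bounding $s_{n}$ from below.
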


The following theorem follows directly from \cite{BV-entropy}*{Proposition 13}.
For the details we refer to the proof of \cite{Var-Bernoulli}*{Theorem 9}.
\begin{thm}
\label{thm:large mahler --> large ent}Let $h\in(0,H(p))$.
There
exists $M>1$ depending only on $h$, $a_1,\ldots,a_m$ and $p_1,\ldots,p_m$,
such that $h(\lambda)>h$ for all algebraic $\lambda\in(0,1)$
with $M(\lambda)\ge M$.
\end{thm}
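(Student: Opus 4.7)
The plan is to derive this theorem from \cite{BV-entropy}*{Proposition 13} in essentially the same way \cite{Var-Bernoulli}*{Theorem 9} is derived there. Contraposing, I would fix an algebraic $\lambda\in(0,1)$ with $h(\lambda)\le h<H(p)$ and aim to bound $M(\lambda)$ by a quantity depending only on $h$, $a_1,\ldots,a_m$ and $p_1,\ldots,p_m$.

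The first step is to observe that the only difference between the present setup and that of \cite{BV-entropy} is that the ``digits'' take values in the finite set $\{a_1,\ldots,a_m\}\subset\Z$ with weights $p_1,\ldots,p_m$, rather than being unbiased $\pm 1$ signs. This affects only two things in the arguments of \cite{BV-entropy}: the maximum possible entropy rate is $H(p)$ instead of $\log 2$, and coincidences $\sum a_{\xi_j}\lambda^j=\sum a_{\xi'_j}\lambda^j$ now produce non-zero polynomials in $\mathcal{P}_{L_0}^{(n)}$ with $L_0=\max_{i,j}|a_i-a_j|$, so the $\ell_1$-length bound on a collision polynomial of degree less than $n$ becomes $nL_0$ rather than $n$. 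With these replacements, the entropy-increase machinery of \cite{BV-entropy} goes through verbatim: the scale-by-scale estimates and iterated convolution arguments only rely on subadditivity of $H(\mu_\lambda^{(n)})$, on the self-similarity identity $\mu_\lambda=\mu_\lambda^{(n)}*(\text{rescaled copy of }\mu_\lambda)$, and on the elementary Diophantine facts (Mahler's root-separation bound and the height estimate $M(P)\le\ell_1(P)$) that are recalled in Section \ref{sc:aux res num}. Invoking \cite{BV-entropy}*{Proposition 13} with the entropy gap $H(p)-h>0$ then produces a bound $M(\lambda)\le M(h,a_1,\ldots,a_m,p_1,\ldots,p_m)$, which is exactly the contrapositive of the statement.

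The main obstacle is simply bookkeeping: one must re-read \cite{BV-entropy}*{Section 5} with the substitutions $\log 2\rightsquigarrow H(p)$ and coefficient bound $1\rightsquigarrow L_0$ in mind, verifying that no step implicitly uses the symmetry of the Bernoulli distribution or the cardinality-two of the digit alphabet. In particular one must check that the dependence of $M$ on the probability vector $p$ and the translations $(a_j)$ factors only through $H(p)$, $L_0$, and a lower bound on $\min_j p_j$, so that a single $M$ suffices for all parameters compatible with the data of the theorem.
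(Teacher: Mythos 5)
Your proposal is correct and takes exactly the same route as the paper: the paper's entire ``proof'' is the one-line remark that the statement follows directly from \cite{BV-entropy}*{Proposition 13}, with details as in the proof of \cite{Var-Bernoulli}*{Theorem 9}. Your added bookkeeping about replacing $\log 2$ by $H(p)$ and the coefficient bound $1$ by $L_0$ matches the (implicit) content of that reference.
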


Recall the definition of the number $L_{0}$ from the beginning of Section \ref{sc:gen of BV2 int trans}.
The following theorem follows directly from \cite{Hoc-self-similar}*{Theorem 1.1}.
\begin{thm}
\label{thm:follows from =00005BHo=00005D}Let $\lambda\in(0,1)$ be
with $\dim\mu_{\lambda}<s(\lambda)$. Then for every $\theta>0$
there exists $N\ge1$, such that for every $n\ge N$ there exists
$0\ne P\in\mathcal{P}_{L_0}^{(n)}$ with $|P(\lambda)|<\theta^{n}$.
\end{thm}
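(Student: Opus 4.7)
The plan is to deduce the theorem directly from \cite{Hoc-self-similar}*{Theorem 1.1}, used in its contrapositive form. Hochman's theorem requires only the exponential separation condition: the existence of some $\rho > 0$ such that
\[
\Delta_n := \min_{\omega \ne \omega' \in \{1,\ldots,m\}^n} \bigl|f_{\omega_1} \circ \cdots \circ f_{\omega_n}(0) - f_{\omega'_1} \circ \cdots \circ f_{\omega'_n}(0)\bigr| \;\ge\; \rho^n
\]
holds for infinitely many $n$; under this hypothesis Hochman concludes that $\dim \mu_\lambda = s(\lambda)$. The contrapositive, specialized to our setting, states that if $\dim\mu_\lambda < s(\lambda)$, then for every $\rho > 0$ we have $\Delta_n < \rho^n$ for all sufficiently large $n$.

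It remains to translate the geometric quantity $\Delta_n$ into the polynomial language of the statement. Since $f_j(x) = \lambda x + a_j$ and all maps share the same contraction $\lambda$, a direct computation gives
\[
f_{\omega_1} \circ \cdots \circ f_{\omega_n}(0) - f_{\omega'_1} \circ \cdots \circ f_{\omega'_n}(0) = \sum_{j=0}^{n-1}(a_{\omega_{j+1}} - a_{\omega'_{j+1}})\lambda^j = P(\lambda),
\]
where $P(X) := \sum_{j=0}^{n-1}(a_{\omega_{j+1}} - a_{\omega'_{j+1}})X^j$ has integer coefficients lying in $D$, so in particular bounded in absolute value by $L_0$. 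Hence $P \in \mathcal{P}_{L_0}^{(n)}$, and because the integers $a_1,\ldots,a_m$ are distinct, $P$ is nonzero precisely when $\omega \ne \omega'$.

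Combining the two ingredients, given $\theta > 0$ apply the contrapositive of \cite{Hoc-self-similar}*{Theorem 1.1} with $\rho = \theta$ to obtain $N$ such that $\Delta_n < \theta^n$ for every $n \ge N$; for each such $n$ pick any pair $\omega \ne \omega'$ realizing this bound and take $P$ to be the corresponding polynomial. Then $0 \ne P \in \mathcal{P}_{L_0}^{(n)}$ with $|P(\lambda)| < \theta^n$, which is the desired conclusion. There is no serious obstacle here; the argument is essentially a dictionary between Hochman's geometric separation condition and the polynomial formulation required in this appendix.
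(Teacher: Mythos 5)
Your proof is correct and takes exactly the route the paper intends: the paper itself just says the theorem "follows directly from \cite{Hoc-self-similar}*{Theorem 1.1}" without elaboration, and what you have written is a careful spelling-out of that deduction. The translation between the geometric separation quantity $\Delta_n$ and the polynomial quantity $P(\lambda)$, including the observations that the coefficients $a_{\omega_{j+1}}-a_{\omega'_{j+1}}$ lie in $D$ (hence are bounded by $L_0$) and that distinctness of $a_1,\ldots,a_m$ forces $P\ne0$ when $\omega\ne\omega'$, is precisely the dictionary the paper has in mind.
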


\begin{proof}[Proof of Theorem \ref{thm:ver conj for int trans}]
Let $\lambda\in(0,1)$ be transcendental, and suppose to the contrary
that $\dim\mu_{\lambda}<s(\lambda)$. Let 
\[
0<\e<\frac{1}{3}(s(\lambda)-\dim\mu_{\lambda}),
\]
let $M\in\Z_{>0}$ be large with respect to $a_1,\ldots,a_m$, $p_1,\ldots,p_m$, $\lambda$ and $\e$,
and let $q_{0}\ge1$ be large with respect to $M$.

By Theorem \ref{thm:gen of BV2 main thm}, there
exist an integer $q\ge q_{0}$ and an algebraic $\eta\in(0,1)$, such
that $\deg\eta<q$,
\[
\dim\mu_{\eta}<\dim\mu_{\lambda}+\e<s(\lambda)-2\e,
\]
and $|\lambda-\eta|<2^{-q^{2}}$. We may assume that $q$ is
sufficiently large so that $|s(\lambda)-s(\eta)|<\e$,
and so $\dim\mu_{\eta}<s(\eta)-\e$.

By Theorem \ref{thm:formula for dim alg param},
\[
\dim\mu_{\eta}=\min\{1,\frac{h(\eta)}{\log\eta^{-1}}\}.
\]
From this, $\dim\mu_{\eta}<s(\eta)-\e$ and Theorem
\ref{thm:large mahler --> large ent}, it follows that we may assume
that $M(\eta)<M$.

Now let $n\ge1$ be with $(2M)^{-n-1}\le|\l-\eta|\le(2M)^{-n}$. We have,
\[
(n+1)\log(2M)\ge-\log|\l-\eta|>q^{2}\log2.
\]
Thus we may assume,
\begin{equation}\label{cond for Ves lem}
\frac{1}{n}\Big(\big(q(M+1)+(M+2)\big)\log n+(q+1)\log L_0+\log2\Big)<\log2,
\end{equation}
so Lemma \ref{lm:Dimitrov-precise} can be applied with $\a=2$.
Additionally, by Theorem \ref{thm:follows from =00005BHo=00005D} and since $n$ is arbitrarily large with respect to $M$, we may assume that there exists $0\ne P\in\cP_{L_0}^{(n)}$ such that $|P(\l)|\le (2M)^{-3Mn}$, which gives
\[
|\l-\eta|\ge(2M)^{-n-1}\ge (2M)^n|P(\l)|^{1/M}.
\]
From this, $|\l-\eta|\le(2M)^{-n}$, $\deg\eta<q$, \eqref{cond for Ves lem} and Lemma \ref{lm:Dimitrov-precise}, it follows that $\eta$ is a zero of $P$ of order at least $M$.

Now, by assuming $\eta<(1+\l)/2$ and that $M$ is sufficiently large with respect to $L_0$ and $\l$, we get a contradiction with Lemma  \ref{lem:bound on num of roots}. This completes the proof of the theorem.
\end{proof}

\bibliography{bibfile}

\end{document}